\newtheorem{definition}{Definition}
\newtheorem{conjecture}{Conjecture}
\newtheorem{lem}{Lemma}[section]
\newtheorem{thm}{Theorem}%[section]
\newtheorem{corollary}[lem]{Corollary}
\newtheorem{prop}[lem]{Proposition}
\newtheorem*{remark}{Remark}
\newcommand{\C}{\mathbb{C}}
\newcommand{\Q}{\mathbb{Q}}
\newcommand{\Z}{\mathbb{Z}}
\newcommand{\G}{\mathbb{G}}
\newcommand{\R}{\mathbb{R}}
\newcommand{\fn}{f^{\circ n}}
\newcommand{\ord}{\text{ord}}
\author{Harry Schmidt}
\address{Harry Schmidt \\ Universität Basel \\ Departement Mathematik und Informatik \\ Spiegelgasse 1, 4051 Basel, harry.Schmidt@unibas.ch}
\title[Polynomial dynamics and local analysis]{Polynomial dynamics and local analysis of small and grand orbits}
\subjclass{Primary: 37F10 Secondary: 14G20}
\begin{document}

\begin{abstract} We prove an analog of Lang's conjecture on divisible groups  for polynomial dynamical systems over number fields. In our setting the role of the divisible group is taken by the small orbit of a point $\alpha$ where  the small orbit by a polynomial $f$  is given by
\begin{align*}
\mathcal{S}_\alpha = \{\beta \in \C; f^{\circ n}(\beta) = f^{\circ n}(\alpha) \text{ for some } n \in \Z_{\geq 0}\}.
\end{align*}
Our main theorem is a  classification of the algebraic relations that hold between infinitely many pairs of points in $\mathcal{S}_\alpha$ when everything is defined over the algebraic numbers and the degree $d$ of $f$ is at least 2.
Our proof relies on a careful study of localizations of the dynamical system and follows an entirely different approach than previous proofs in this area. In particular we introduce transcendence theory and Mahler functions into this field. 
Our methods also allow us to  classify all algebraic relations that hold for infinitely many pairs of points in the grand orbit 
\begin{align*}
\mathcal{G}_\alpha = \{\beta \in \C; f^{\circ n}(\beta) = f^{\circ m}(\alpha) \text{ for some } n ,m\in \Z_{\geq 0}\}
\end{align*}
 of $\alpha$ if $|f^{\circ n}(\alpha)|_v \rightarrow \infty$ at a finite place $v$ of good reduction co-prime to $d$ . %This is an analogue of the Mordell-Lang conjecture on finite rank groups for polynomial dynamics.
 
\end{abstract}
\maketitle 
\section{Introduction}
Let $K$ be a number field and $\alpha \in K$. We study the dynamical system attached to a polynomial $f \in K[X]$ of degree $d \geq 2$ and denote by $\fn$ the $n$-th iterate of $f$ defined by $f^{\circ 0}(X) = X$ and $f^{\circ n}(X) = f^{\circ n-1}(f(X))$ for $n \in \Z_{\geq 1}$. We consider the small orbit $\mathcal{S}_\alpha$ of a point $\alpha$ that was introduced by McMullen and Sullivan \cite[6.1]{mcmullen} as part of their investigations of the dynamics of homolorphic maps. It is given by 
\begin{align*}
\mathcal{S}_\alpha= \{\beta \in \overline{\Q}; f^{\circ n}(\beta) = \fn(\alpha) \text{ for some } n \in \Z_{\geq 0}\}.
\end{align*}
This $\mathcal{S}_\alpha$ is an infinite set unless $\alpha$ is a super-attractive fixed point of degree $d$ (see \cite{milnor} for a definition).  In order to have a more traditional diophantine analogue we may consider $\mathcal{S}_\alpha$ to be the set of  ``torsion translates" of $\alpha$ for the dynamical system attached to $f$.  For example in the more classical setting of the multiplicative group $\G_m$ an element $x$ of $\alpha \mu_{\infty}$ (where $\mu_{\infty}$ is the group of roots of unity) satisfies $x^n = \alpha^n$ for some positive integer $n$. 
The study of the equation $x^n = y^n$ where $y$ varies in  a finitely generated group is integral to go from the Mordellic part to Lang's full conjecture in $\G_m^n$ \cite{laurent}. In fact  employing Kummer theory one reduces the conjecture to the study of this equation (and thus Manin-Mumford with a certain uniformity). The same approach is used in all known proofs of  Mordell-Lang to go from the Mordellic part to the divisible hull (for example   in the ultimate resolution of the conjecture for semi-abelian varieties by McQuillan \cite{mcquillan}). \\
In this article we obtain a structure theorem for the set $\mathcal{S}_\alpha$ that is analogous to what Manin-Mumford delivers in the context of algebraic groups (and also achieve a certain uniformity). We also use a similar strategy in this context to show, quite analogously as in the group context,  that we can go from the small orbit of $\alpha$ to its grand orbit
\begin{align*}
\mathcal{G}_\alpha = \{\beta \in \overline{\Q}; f^{\circ n}(\beta) = f^{\circ m}(\alpha) \text{ for some } n ,m\in \Z_{\geq 0}\},
\end{align*}
albeit under the condition that the orbit of $\alpha$ tends to infinity at a finite place of good reduction co-prime to $d$. These results may be viewed as new cases of the Zilber-Pink (or unlikely intersection) conjecture on dynamical systems as formulated in \cite[p.1438]{ZPdynsystems} as we will explain below. In order to ease notation and deal with trivial cases and ones covered by previous results we introduce two definitions (one of which appeared previously in the literature). 
\begin{definition} Let $\mathcal{K}$ be a field and $\overline{\mathcal{K}}$ its algebraic closure. 
\begin{enumerate} 
\item[(i)]We say that a curve $\mathcal{C} \subset \mathbb{A}^2$ defined over a field $\mathcal{K}$ is fibral if it is of the form 
	$\mathcal{C} = \{\gamma\}\times \mathbb{A}^1$ or $\mathcal{C} = \mathbb{A}^1\times \{\gamma\}$ for $\gamma \in \mathcal{K}$.
	\item[(ii)] We call a polynomial $f$ exceptional if there exists a linear polynomial $L \in \overline{\mathcal{K}}[X]$ such that $L\circ f\circ L^{-1}$ is equal to $X^d$ or to a Chebychev polynomial or a negative Chebychev polynomial. Otherwise we call a polynomial non-exceptional. 
\end{enumerate} 
\end{definition} 
Our definition of exceptional agrees with that in \cite{dynbogomolovcurves} for polynomials. For the notion of pre-periodic varieties we refer the reader to the established literature \cite{medvedevscanlon,dynbogomolovcurves}.  Our main theorem is the following. 

\begin{thm} \label{thm} Let $K $ be a number field and $\mathcal{C} \subset \mathbb{A}^2$ be an irreducible plane curve that is not fibral. Let $f \in K[X]$ of degree $d \geq 2$ be a non-exceptional polynomial and $\alpha \in K$ be a point that is not pre-periodic under $f$. If 
\begin{align*}
\mathcal{C}(\overline{\Q})\cap \mathcal{S}_\alpha^2
\end{align*}
contains infinitely many points, then there exists $n \in \Z_{\geq 0}$ such that 
$$(f^{\circ n},f^{\circ n})(\mathcal{C}) = \Delta,$$ 
where $\Delta \subset \mathbb{A}^2$ is the diagonal. 
\end{thm}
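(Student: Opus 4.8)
\medskip
\noindent\emph{Strategy.} The plan is to combine an elementary reduction, exploiting the level filtration of $\cS_\alpha$, with a place-by-place local analysis whose core is a rigidity statement for polynomial dynamics; I do not expect to use arithmetic equidistribution, since the relevant heights are bounded away from the essential minimum.

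\emph{Reductions.} Write $\alpha_n=\fn(\alpha)$, so $\cS_\alpha=\bigcup_{n\ge 0}f^{-\circ n}(\alpha_n)$ is an increasing union of finite sets, and for $\beta\in\cS_\alpha$ let $\ell(\beta)$ be the least $n$ with $\fn(\beta)=\alpha_n$. Suppose $\cC(\overline{\Q})\cap\cS_\alpha^2$ is infinite; being infinitely many points on an irreducible curve they are Zariski dense. If along a subsequence $\ell(\xi_i)$ stays bounded then the $\xi_i$ lie in a fixed finite set, so a vertical line $\xi=\beta$ with $\beta\in\cS_\alpha$ meets $\cC$ infinitely often and hence equals $\cC$ — the second alternative (symmetrically for $\eta_i$). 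So assume $\ell(\xi_i),\ell(\eta_i)\to\infty$; then with $N_i=\max(\ell(\xi_i),\ell(\eta_i))$ one checks $f^{\circ N_i}(\xi_i)=f^{\circ N_i}(\eta_i)=\alpha_{N_i}$ and $N_i\to\infty$. Let $j_i$ be the least $j$ with $f^{\circ j}(\xi_i)=f^{\circ j}(\eta_i)$, so $j_i\le N_i$; if $j_i$ stays bounded along a subsequence, say $j_i\equiv j$, then $\cC$ meets the proper subvariety $\{f^{\circ j}(\xi)=f^{\circ j}(\eta)\}$ infinitely often and so is a component of it — the first alternative. Hence we may assume $j_i\to\infty$. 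Finally, since $d^n\hat h_f(\beta)=\hat h_f(\fn\beta)=\hat h_f(\alpha_n)=d^n\hat h_f(\alpha)$ every $\beta\in\cS_\alpha$ has $\hat h_f(\beta)=\hat h_f(\alpha)=:c$, with $c>0$ as $\alpha$ is not pre-periodic, so by Northcott the degrees $[K(\xi_i):K]$ and $[K(\eta_i):K]$ must tend to infinity.

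\emph{Localization.} Because $c=\hat h_f(\alpha)>0$ is a sum over places of non-negative local canonical heights $G_{f,v}(\alpha)=\lim_n d^{-n}\log^+|\alpha_n|_v$, some place $v$ of $K$ has $G_{f,v}(\alpha)>0$, i.e. $|\alpha_n|_v\to\infty$. Working at $v$, let $\phi_v$ be the Böttcher coordinate of $f$ at $\infty$, a $v$-adic analytic conjugacy of $f$ to $z\mapsto z^d$ on $\{G_{f,v}>g_0\}$ for a suitable $g_0\ge 0$; precomposing with a fixed iterate, $\Psi:=\phi_v\circ f^{\circ k}$ with $d^kc>g_0$ is a finite-to-one $v$-adic analytic map on a region containing $\cS_\alpha$, satisfying $\Psi\circ f=\Psi^{d}$. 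Then $f^{\circ N_i}(\xi_i)=\alpha_{N_i}$ rewrites as $\Psi(\xi_i)^{d^{N_i}}=\Psi(\alpha)^{d^{N_i}}$, so $\Psi(\xi_i)\in A\cdot\mu_{d^\infty}$ with $A:=\Psi(\alpha)$, $|A|_v>1$, and likewise $\Psi(\eta_i)\in A\cdot\mu_{d^\infty}$; moreover $\ell(\xi_i)\to\infty$ forces the order of $\Psi(\xi_i)/A$ to tend to infinity. Thus, read through $(\Psi,\Psi)$ and localized at $v$, the algebraic curve $\cC$ carries infinitely many torsion translates $(A\zeta_i,A\zeta_i')$ with $\zeta_i,\zeta_i'\in\mu_{d^\infty}$ of unbounded order. (At a finite $v$ of good reduction coprime to $d$ the coordinate $\phi_v$ has good reduction and controlled radius of convergence, and the same picture holds for $\cG_\alpha$ after enlarging $\mu_{d^\infty}$ to the $\Z[1/d]$-rank-one group $A^{\Z[1/d]}\mu_{d^\infty}$; Kummer theory for the tower $K(f^{-\circ n}(\alpha_n))$ then yields the Mordell--Lang-type statement for $\cG_\alpha$.)

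\emph{Rigidity, and the main obstacle.} It remains to convert ``$\cC$ algebraic and, in Böttcher coordinates at $v$, carrying $v$-adically many $\G_m$-torsion translates'' into the desired algebraic identity; this is the heart of the matter and the only real difficulty, since a local analytic curve through infinitely many torsion translates need not be special, so one cannot treat $(\Psi,\Psi)(\cC)$ as an algebraic subvariety of $\G_m^2$ nor invoke $\G_m$-Manin--Mumford as a black box. Instead I would establish, at the place $v$, a rigidity theorem forcing a monomial relation $\phi_v(f^{\circ a}(\xi))=\omega\,\phi_v(f^{\circ b}(\eta))$ to hold identically on $\cC$, for integers $a,b$ and a root of unity $\omega$; evaluating at the $(\xi_i,\eta_i)$ and using $|A|_v\ne 1$ forces $a=b$ and $\omega\in\mu_{d^\infty}$, and iterating $f$ further (the order of $\omega$ being a power of $d$) kills $\omega$, giving $\fn(\xi)=\fn(\eta)$ on a $v$-adically dense, hence Zariski dense, subset of $\cC$, i.e. on all of $\cC$ — the first alternative. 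At Archimedean $v$ the rigidity input comes from, and strengthens, the theorems of Fatou and Levin on symmetries of Julia sets; at finite $v$ it requires the new non-Archimedean methods, and this is precisely where the argument, in the good-reduction case, gains the strength to reach $\cG_\alpha$. The hard part throughout is marrying the transcendental, local Böttcher picture with the global arithmetic of $\cC$ (bounded height, growing degrees, and Kummer theory) so as to produce the rigidity in the first place.
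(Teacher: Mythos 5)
Your reductions and localization are correct and match the paper's skeleton: you show the canonical height is constant on $\mathcal{S}_\alpha$, reduce (by irreducibility of $\cC$) to the case where the levels $\ell(\xi_i),\ell(\eta_i)$ and hence the degrees $[K(\xi_i):K]$ grow, pick a place $v$ with $G_{f,v}(\alpha)>0$, and pass to B\"ottcher coordinates where the small orbit becomes $A\cdot\mu_{d^\infty}$ with $|A|_v>1$. But the ``rigidity theorem'' you invoke in the final step is asserted, not proved --- and you acknowledge as much. That step is not a technical remainder: it is essentially the whole content of the theorem, and the paper spends sections \ref{nonarchplaces}--\ref{disconnectedjulia} proving it.

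Concretely, at archimedean $v$ the paper establishes Theorem \ref{equipotential}: a complex plane curve meeting $L_r^2$ infinitely must be a component of $f^{\circ n}(X)-L\circ f^{\circ n}(Y)$ for a linear $L$ commuting with $f^{\circ n}$. This is proved by constructing a pseudo-involution $\mu(z)=\Psi^h(\phi^2/\Phi(z))$ on a fundamental annulus, continuing it analytically along $\cC$ (Lemmas \ref{continuation}, \ref{cont1}, \ref{cont2}), ruling out continuation past $J$ via Fatou's theorem, descending through the equipotentials and applying a Schwarz-lemma argument (Lemma \ref{circles}) to get $P(\Psi(z),\Psi(\theta z))=0$, and then invoking Levin/Douady--Hubbard (Lemma \ref{rotation}) to force $\theta\in\mu_\infty$ and Medvedev--Scanlon (Lemma \ref{medvedev}) to identify the curve; the disconnected-Julia-set case further requires a monodromy computation for the multi-valued B\"ottcher inverse (Lemma \ref{monodromy}). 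At finite $v$ the rigidity is instead Proposition \ref{prop1}, proved via an auxiliary function $\nu$ on a $v$-adic annulus built from the B\"ottcher map, a functional-transcendence theorem (Theorem \ref{transcendence2}) and Mahler-function transcendence (Theorem \ref{transcendence1}) to guarantee $\nu\not\equiv 0$, explicit lower bounds on $|\nu|_1$ coming from root-of-unity combinatorics, and the non-archimedean Poisson--Jensen formula (Lemma \ref{PJ}) played against a Galois lower bound. Your mention of Kummer theory is relevant only to the grand-orbit Theorem \ref{grandorbit}, not to the rigidity that Theorem \ref{thm} needs. So the proposal correctly locates the crux but does not cross it: the gap is the absence of any proof of the local rigidity statement.
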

If $\mathcal{C}$ is fibral then the statement is trivial and $\mathcal{C}$ is, up to permuting coordinates,  of the form $\{\gamma\}\times\mathbb{A}^1$ for $\gamma \in \mathcal{S}_\alpha$. 
If $f$ is an exceptional polynomial then  Manin-Mumford for the multiplicative group (see section \ref{reduction}) answers most questions one might have about Zariski-closures of subsets of $\mathcal{S}_\alpha^2$. If $\alpha$ is pre-periodic for $f$ then $\mathcal{S}_\alpha$ is contained in the set of pre-periodic points of $f$ and in this case the dynamical Bogomolov conjecture for split rational maps proven by Ghioca, Nguyen and Ye \cite[Theorem 1.1]{dynbogomolovcurves} applies. We will combine these theorems in a unified statement in Theorem \ref{thm1}. We remark that $\Delta$ is fixed by $(f,f)$ but not all pre-periodic curves in $\mathbb{A}^2$ are pull-backs of the diagonal (see for example \cite{medvedevscanlon}).  \\

Moreover an extension of Theorem \ref{thm} to either $f$ being exceptional or $\alpha$ being pre-periodic fails. A whimsical explanation being that we have more structure lurking in the background if either of these conditions hold. More concretely, if $f = X^d$ and thus exceptional we can take any (co-prime) integers $l,k$ and consider the co-set in $\G_m^2$ defined by
\begin{align}\label{coset}
X^{\ell}Y^k =\alpha^{\ell+k}. 
\end{align}
Then any of these co-sets contain infinitely many points in  $\mathcal{S}_\alpha^2$ but if we take $\ell,k$ to be co-prime to $d$ say it does not define a curve as in the conlusion of the Theorem. If $f$ is non-exceptional then the only superattracting fixed point of degree $d$ is at infinity. Thus if $f$ is non-exceptional and  $\alpha$ is pre-periodic, say $f^{\circ \ell}(\alpha) = f^{\circ k}(\alpha)$ with $\ell > k$, then the rational curve given by $(f^{\circ \ell}(X), f^{\circ k}(X))$ provides a counter-example to a possible extension of Theorem \ref{thm}. Thus Theorem \ref{thm} is best-possible.\\

Another way of putting Theorem \ref{thm} is to say that  the only non-trivial algebraic relations that can hold between infinitely many pairs of points in $\mathcal{S}_\alpha$ are the ones that define $\mathcal{S}_\alpha$ in the first place. It is straightforward to check that the reverse holds true as well. Each curve that appears in the conclusion of Theorem \ref{thm} also contains infinitely many points on $\mathcal{S}_\alpha^2$. \\

It is not foreseeable that what is  obtained in this article can be achieved with previously established methods such as applying known equidistribution 
results or Siegel's theorem on integral points. This is because the usual basic assumptions such as small canonical height respectively rationality of the points under consideration are not fulfilled in the cases we consider.  \\
Our methods are close to  transcendence techniques which introduce entirely new techniques into the study of dynamical systems. However they also lead to a new and elementary proof of Manin-Mumford for $\G_m^n$ as was demonstrated in \cite{mm}. We expect there to be  more applications of the methods used in this article. \\

In the assumption of Theorem \ref{thm},  $\alpha$ is not preperiodic and thus has positive canonical height \cite{dynbogomolovcurves}. This implies that  there  exists a place $v$ of $K$ for which $|f^{\circ n}(\alpha)|_v \rightarrow \infty$ (where we denote by $|\cdot|_v$ the corresponding $v$-adic absolute value) and consequently for which the local canonical height of $\alpha$ is positive. All points in $\mathcal{S}_\alpha$ have the same local canonical height at all places of $K$ and if there is an archimedean place at which the local height is positive then Theorem \ref{thm} follows from Theorem \ref{equipotential} below; if there is such a non-archimedean place then it follows from Proposition \ref{prop1} in the next section. \\
In fact, in each case, we are able to show a stronger version that allows for some uniformity (see Proposition \ref{prop1} and Theorem \ref{uniform}). If $v$ can be chosen to be finite and of good reduction co-prime to $d$ we can go from small orbits to grand orbits.  Quite similarly as for the case of finitely generated to finite rank groups in Lang's conjecture on semi-abelian varieties that we  alluded to above.
\\
We say that $v$ is a finite place of good reduction (for $f$) if $f = a_0X^d + \sum_{i = 1}^{d}a_iX^{d-i}$ and $|a_i/a_0|_v \leq 1$ for $i = 1, \dots, n$. We say that $v$ is co-prime to $d$ if $|d|_v = 1$. 

\begin{thm} \label{grandorbit} Let $K,\mathcal{C}, \Delta$ and $f$ be as in Theorem \ref{thm}. Further suppose that $|f^{\circ n}(\alpha)|_v \rightarrow \infty$ at a finite place $v$ of $K$ of good reduction that is co-prime to $d$. If 
\begin{align*}
\mathcal{C}(\overline{\Q})\cap \mathcal{G}_\alpha^2
\end{align*}
contains infinitely many points, then there  exist $n,m \in \Z_{\geq 0}$ such that 
$$(f^{\circ n},f^{\circ m})(\mathcal{C}) = \Delta.$$
\end{thm}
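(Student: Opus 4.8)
The plan is to derive Theorem~\ref{grandorbit} from Theorem~\ref{thm} together with its uniform strengthening (Proposition~\ref{prop1}, Theorem~\ref{uniform}) by a Kummer-type argument localised at the place $v$, in close analogy with the classical passage from Mordell--Lang for finitely generated groups to the finite-rank case. So suppose $\mathcal{C}(\overline{\Q})\cap\mathcal{G}_\alpha^2$ is infinite while $\mathcal{C}$ satisfies none of the relations in the conclusion, and aim at a contradiction.

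\textbf{Reductions.} For $\beta\in\mathcal{G}_\alpha$ pick $(n,m)$ minimal with $f^{\circ n}(\beta)=f^{\circ m}(\alpha)$; then $\hat h_f(\beta)=d^{\,m-n}\hat h_f(\alpha)$ and, if $n>m$, applying $f^{\circ m}$ shows $f^{\circ(n-m)}(\beta)\in\mathcal{S}_\alpha$. Call $\ell(\beta)=\max(0,n-m)$ the preimage depth. First split off the case in which $\max(\ell(\xi),\ell(\eta))$ stays bounded, say by $\ell_0$, along an infinite subfamily. Applying $f^{\circ\ell_0}\times f^{\circ\ell_0}$ — which replaces $\mathcal{C}$ by a \emph{fixed} image curve and does not affect the shape of the conclusion — one may then assume $n_i\le m_i$ for every pair, i.e.\ $\xi\in\mathcal{S}_{f^{\circ k_1}(\alpha)}$ and $\eta\in\mathcal{S}_{f^{\circ k_2}(\alpha)}$ with $k_i\ge0$. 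When the $k_i$ are bounded, finitely many pairs $(k_1,k_2)$ occur, and using $f^{\circ(k-k_i)}(\mathcal{S}_{f^{\circ k_i}(\alpha)})\subseteq\mathcal{S}_{f^{\circ k}(\alpha)}$ with $k=\max(k_1,k_2)$ one lands, on an image curve, inside $\mathcal{S}_{f^{\circ k}(\alpha)}^2$; since $f^{\circ k}(\alpha)\in K$ is not preperiodic and $f$ is non-exceptional, Theorem~\ref{thm} with base point $f^{\circ k}(\alpha)$ produces a conclusion relation, which pulls back to $\mathcal{C}$. When the $k_i$ are unbounded, the $\xi$'s have $\hat h_f(\xi)=d^{k_1}\hat h_f(\alpha)\to\infty$ and at $v$ run off to the superattracting fixed point at infinity, so this is the forward-orbit situation already covered by the uniform finite-place result (Proposition~\ref{prop1}) applied along the orbit. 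Hence only one case survives: the preimage depth $\ell(\xi)$, say, is unbounded, i.e.\ infinitely many $\xi$ are genuine deep preimages of orbit points.

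\textbf{The local (Kummer) input.} As $f$ has good reduction at $v$, $|d|_v=1$ and $|f^{\circ n}(\alpha)|_v\to\infty$, there is a $v$-adic B\"ottcher coordinate: an analytic isomorphism $\phi\colon\{|z|_v>R\}\to\{|w|_v>R'\}$, defined over $K_v$, with $\phi\circ f=\phi(\cdot)^d$ and leading term $\phi(z)=cz$; the normalisation $|d|_v=1$ is what makes the defining product converge. Fixing $n_0$ with $f^{\circ n}(\alpha)$ in this domain for $n\ge n_0$ and $t=\phi(f^{\circ n_0}(\alpha))$, one has $\phi(f^{\circ n_0+j}(\alpha))=t^{d^{\,j}}$ with $|t|_v>1$, so at $v$ each orbit point is $\phi$ of a perfect power of $t$ and its shallow $f^{\circ\ell}$-preimages are $\phi^{-1}$ of the $d^\ell$-th radicals of that power. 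Following the Galois action on these radicals (governed by the cyclotomic character and by $v(t)$) and then propagating down the preimage tower with the good-reduction model of $f$, I expect to obtain a constant $c_0=c_0(f,\alpha,v)>0$ with
\begin{align*}
[K(\beta):K]\ \ge\ c_0\,d^{\,n}\qquad\text{for every genuine }f^{\circ n}\text{-preimage }\beta\text{ of an orbit point.}
\end{align*}
This bound is the crux, and the step I expect to be the main obstacle: deep preimages \emph{leave} the B\"ottcher domain, so the degree cannot be read off a single radical, and one must instead analyse the tower $K\subseteq K(f^{\circ n-1}(\beta))\subseteq\cdots\subseteq K(\beta)$ stage by stage, bounding below the $v$-adic ramification and the residual irreducibility of $f(X)-\delta$ at each stage; here the paper's new non-archimedean methods are essential, and $|d|_v=1$ is what excludes a wild collapse of the tower.

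\textbf{Conclusion.} Granting the bound, fix such a deep preimage $\xi$ with $f^{\circ n_1}(\xi)=f^{\circ m_1}(\alpha)$, $n_1$ large, and let $\eta$ be a point with $(\xi,\eta)$ in our family, so $f^{\circ n_2}(\eta)=f^{\circ m_2}(\alpha)$ for some $n_2,m_2$. For $\sigma\in\Gal(\overline{\Q}/K)$ one has $(\xi^\sigma,\eta^\sigma)\in\mathcal{C}$ (as $\mathcal{C}$ is defined over $K$), while $\xi^\sigma\in f^{-n_1}(f^{\circ m_1}(\alpha))$ and $\eta^\sigma\in f^{-n_2}(f^{\circ m_2}(\alpha))$ (as $\alpha\in K$). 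Thus
\begin{align*}
\mathcal{C}(\overline{\Q})\cap\bigl(f^{-n_1}(f^{\circ m_1}(\alpha))\times f^{-n_2}(f^{\circ m_2}(\alpha))\bigr)
\end{align*}
has at least $[K(\xi):K]\ge c_0\,d^{\,n_1}$ points, all lying in $\mathcal{G}_\alpha^2$. On the other hand, the uniform finite-place form of Theorem~\ref{thm} established above (Theorem~\ref{uniform}) bounds, by a constant $B=B(\mathcal{C})$, the number of points of $\mathcal{C}$ lying in any such fibre product over points of the orbit of $\alpha$ — using that $\mathcal{C}$ satisfies none of the conclusion relations. Hence $c_0 d^{\,n_1}\le B$, which bounds $n_1$ and contradicts the unboundedness of the preimage depth. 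The routine items left for the full write-up are the two uniformity statements and the bookkeeping in the reductions; the genuinely new ingredient is the $v$-adic Kummer bound above.
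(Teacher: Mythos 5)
Your proposal takes a genuinely different route from the paper, but the route has a real gap at exactly the point you flag as ``the main obstacle,'' and it also omits an input the paper cannot do without.

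The crux of your plan is the lower bound $[K(\beta):K]\ge c_0\,d^{\,n}$ for a genuine $f^{\circ n}$-preimage $\beta$ of an orbit point. This is not established anywhere in the paper, and the paper never needs it. The paper's Kummer input is much weaker and more local: $[K_v(\mu_{d^\infty},\Phi(\alpha)^{1/M}):K_v(\mu_{d^\infty})]\ge C_{\text{Kum}}M$ for $M\mid d^\infty$, which only speaks to radicals of $\Phi(\alpha)$, i.e.\ to shallow preimages that stay in the $v$-adic basin of infinity. As you observe yourself, deep preimages leave the B\"ottcher domain, and then $|\beta|_v$ is bounded, the single-radical picture breaks down, and nothing in the paper's non-archimedean machinery gives a degree bound for them. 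Proving a Galois lower bound of the form $[K(\beta):K]\gg d^n$ for arbitrary backward iterates of a non-preperiodic base point is a hard, essentially Lehmer-type problem; it should not be treated as a ``step to be filled in.''

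The paper sidesteps this entirely by invoking the lower bound in the dynamical Bogomolov theorem of Ghioca, Nguyen and Ye: since $\mathcal{C}$ is not preperiodic, $\hat h_f(\beta_1)+\hat h_f(\beta_2)\ge c>0$ uniformly (Lemma~\ref{bogomolov}), and together with $\hat h_f(\beta_i)=d^{\,\ell_i}\hat h_f(\alpha)$ this \emph{bounds the preimage depth a priori}: $\max\{\ell_1,\ell_2\}\ge -C_B$. Combined with a N\'eron-type height inequality on the two coordinates of $\mathcal{C}$ (Lemma~\ref{max}) and the $v$-adic Kummer bound just for forward radicals (Lemma~\ref{min}), one gets $|\ell_1-\ell_2|\le C_{\text{diff}}$. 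Only then does one run the uniform finite-place argument (Lemma~\ref{n1n2}) to bound $n_1,n_2$, and finish by B\'ezout on a finite union of curves $(f^{-\circ n_1},f^{-\circ n_2})(\mathcal{C})\cap S_{f_1,f_2}$. Your proposal omits the dynamical Bogomolov input completely, yet it is precisely what removes the need for the deep-preimage degree bound.

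Two further remarks on the conclusion step. First, even granting your degree bound, applying $\sigma\in\text{Gal}(\overline{\Q}/K)$ lands the conjugates $\xi^\sigma$ in a \emph{preimage} fibre $f^{-\circ n_1}(f^{\circ m_1}(\alpha))$ with $n_1>m_1$, and such points do not lie in $\mathcal{S}_\beta$ for any $\beta$ in the forward orbit; so Proposition~\ref{prop1} does not directly bound the number of them on $\mathcal{C}$. Second, Theorem~\ref{uniform} is the archimedean uniform statement coming from $o$-minimality on $L_r^2$; the relevant uniform result at a finite place $v$ is Proposition~\ref{prop1}, and the two are not interchangeable in the present setting.
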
 
We may view the above as resolving the Zilber-Pink conjecture as formulated in \cite{ZPdynsystems} for the curve $\mathcal{C} \times \{\alpha\} \subset (\mathbb{P}_1)^3$ and the dynamical system $(f,f,f)$. \\ 
As an example of an application of Theorem \ref{grandorbit} we can take the non-exceptional polynomial $f = X^2 -1$ and $\alpha = 1/3$. Then $\mathcal{G}_\alpha$ is dense in the filled Julia set of $f$ (in $\C$) and Theorem  \ref{grandorbit} shows that the only algebraic relations that can hold between infinitely many distinct pairs of elements in the grand orbit of $1/3$ are the same that define the grand orbit. We could also take the same $f$ and $\alpha = 5/3$ and then $\mathcal{G}_\alpha$ is dense in the complement of the Julia set and our theorem still holds. This gives a hint at the algebraic rigidity of the extended tree formed by the grand-orbit under $f$ at a non-pre-periodic point; at least if $f$ is non-exceptional. In contrast if $f = X^d$ then we can take the same co-set as above (\ref{coset}) with $k,\ell$ co-prime to $d$ to obtain a counterexample to an extension of Theorem \ref{grandorbit} to exceptional polynomials. 
Again one could phrase the above theorem as stating that the only non-trivial algebraic relations that hold between infinitely many pairs of points in $\mathcal{G}_\alpha$ are the ones that define the set itself. Also in section \ref{remarks} we show that Theorem \ref{grandorbit} can not be  extended to $\alpha$ being a pre-periodic point. For the proof of Theorem \ref{grandorbit} we also rely on work of Ghioca, Nguyen and Ye on the dynamical Bogomolov conjecture \cite[Theorem 1.1]{dynbogomolovcurves}. We want to point out that we really use the Bogomolov property, not just the Manin-Mumford property in their work. 

\begin{remark} Using the classification theorem of Medvedev and Scanlon once can see that all pre-periodic curves that have non-empty intersection with $\mathcal{S}_\alpha^2$ or $\mathcal{G}_\alpha^2$ are as in the conclusion of Theorem \ref{thm} or \ref{grandorbit}. For a proof see Lemma \ref{reductionprep} in section \ref{reduction}.
\end{remark}

If there is an infinite place $v$ of $K$ at which $f^{\circ n}(\alpha)$ tends to infinity we deduce Theorem \ref{thm} from a new rigidity theorem (Theorem \ref{equipotential} below) that holds for non-exeptional polynomials. But we need some notation first. \\

For any complex polynomial $f \in \C[X]$ of degree $d \geq 2$ we let 
\begin{align*}
g_f(z) = \lim_{n \rightarrow \infty} \frac{\log^+(|f^{\circ n}(z)|)}{d^n}
\end{align*}
where $\log^+(x) = \log \max \{1, x\}$ for $x \in \R_{\geq 0}$. This $g_f$ is the Green function attached to the Julia set of $f$ and a continuous function on $\C$. For $r > 0$ we define the equi-potential curve of $r$ to be the level-set
\begin{align}\label{Lr} 
L_r = \{z \in \C; g_f(z) = r \}. 
\end{align}
We note that for $f = X^d$ it holds that $L_r$ is the circle of radius $\exp(r)$ and that for all $f$ the level-set at 0 is the filled Julia set of $f$. Thus $\alpha \in \C$ satisfies  $|f^{\circ n}(\alpha)| \rightarrow \infty$ with the usual absolute value if and only if $g_f(\alpha) >0$. \\

The following is a characterization of curves that intersect $L_r^2$ in infinitely many points.  
\begin{thm}\label{equipotential} Let $f$ be a non-exceptional polynomial with complex coefficients. If a complex connected plane curve $\mathcal{C} \subset  \mathbb{A}^2$, that is not fibral, has infinite intersection with $L_r^2$ (for some $r > 0$), then $\mathcal{C}$ is a component of a curve defined by 
\begin{align*}
f^{\circ n}(X) - L\circ f^{\circ n}(Y) = 0.
\end{align*}
for some $n \in \Z_{\geq 0}$ and some linear polynomial $L(X)$ such that $L\circ f^{\circ n} = f^{\circ n} \circ L$.  Here $X,Y$ are the affine coordinates on $\mathbb{A}^2$. 
\end{thm}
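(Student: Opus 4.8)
The plan is to localise the problem at the equipotential, push the curve out to infinity so that the equipotential becomes a round circle in B\"ottcher coordinates, and there invoke the rigidity of Fatou and Levin. We may assume $\mathcal{C}$ is irreducible (some component already meets $L_r^2$ infinitely) and is not one of the lines $\{X=\beta\}$, $\{Y=\beta\}$; note that such a line meets $L_r^2$ infinitely only when $\beta\in L_r$, in which case we are in the desired conclusion. First I would extract a real-analytic arc inside $L_r^2$. Since $g_f\to\infty$ at infinity the set $L_r$ is compact, so the infinitely many points of $\mathcal{C}(\C)\cap L_r^2$ accumulate at some $p\in L_r^2$; off the finitely many escaping critical points of $f$ the function $g_f$ is harmonic, so on the normalisation of $\mathcal{C}$ the pullbacks $g_f\circ X$ and $g_f\circ Y$ are harmonic in a local holomorphic parameter, the level sets $\{g_f\circ X=r\}$ and $\{g_f\circ Y=r\}$ are real-analytic and $1$-dimensional, and since they share infinitely many points accumulating at $p$ they share an arc $\sigma$, along which $X,Y\in L_r$. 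If $X$ or $Y$ were constant along $\sigma$ it would be constant on $\mathcal{C}$, putting us in the excluded case; so both are non-constant.

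Next I would push the curve to infinity. Let $\phi$ be the B\"ottcher coordinate: a conformal map of $\Omega_{r_0}=\{g_f>r_0\}$ onto $\{|w|>e^{r_0}\}$ with $\phi(f(z))=\phi(z)^d$, $g_f=\log|\phi|$, $\phi(z)\sim cz$ at infinity. The critical values of $g_f$ are the finitely many numbers $g_f(c_i)$ ($c_i$ the escaping critical points), so for $n$ large $L_{d^n r}$ is a real-analytic Jordan curve inside $\Omega_{r_0}$, the curve $(f^{\circ n}\times f^{\circ n})(\mathcal{C})$ still meets $L_{d^n r}^2$ infinitely and is irreducible, and $f^{\circ n}$ is still non-exceptional since exceptionality is a property of the Julia set. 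As the original $\mathcal{C}$ is an irreducible component of the $(f^{\circ n}\times f^{\circ n})$-preimage of the pushed-forward curve, and a linear $L$ commuting with $f$ commutes with every iterate, it suffices to treat the pushed-forward curve; so from now on $r>r_0$ and $L_r$ is a real-analytic Jordan curve in $\Omega_{r_0}$. I then transport to B\"ottcher coordinates: $\Gamma=(\phi\times\phi)(\mathcal{C})$ is an analytic curve near the torus $T_\rho=\{|w_1|=|w_2|=e^r\}$, cut out by $P(\phi^{-1}(w_1),\phi^{-1}(w_2))=0$ with $P$ the equation of $\mathcal{C}$, meeting $T_\rho$ in the arc $(\phi\times\phi)(\sigma)$. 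Since $L_r$ is a real-analytic Jordan curve, $T_\rho$ carries the anti-holomorphic reflection $\iota(w_1,w_2)=(e^{2r}/\overline{w_1},\,e^{2r}/\overline{w_2})$ with fixed set $T_\rho$; because $\mathcal{C}$ is complex-analytic and contains the arc lying in $\mathrm{Fix}(\iota)$, the Schwarz reflection principle for analytic sets gives $\iota(\Gamma)=\Gamma$ near $T_\rho$, so the local branch $h$ with $w_1=h(w_2)$ extends across $|w_2|=e^r$ and maps that circle into itself.

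The heart of the matter is then to upgrade this to $h(w)=\zeta w$ with $\zeta^{d-1}=1$. Here one uses that $h$ is a germ of an \emph{algebraic} correspondence and that it is \emph{compatible with the dynamics} $(w_1,w_2)\mapsto(w_1^d,w_2^d)$ — the B\"ottcher image of $f\times f$ — realised concretely by the tower of algebraic curves $(f^{\circ(n+k)}\times f^{\circ(n+k)})(\mathcal{C})$. Combined with the reflection symmetry, the rigidity theorems of Fatou (a holomorphic germ at infinity semi-conjugating a polynomial to a polynomial is itself a polynomial) and Levin (for a non-exceptional polynomial the symmetry group of the Julia set is finite, and, after normalising $f$ to monic centred form, consists of rotations commuting with $f$) force $h$ to be a rotation commuting with $w\mapsto w^d$: a non-rotational $h$ would produce, via Fatou, an honest polynomial semi-conjugacy $\psi_k$ with $g_f\circ\psi_k=k\,g_f$ whose B\"ottcher coordinate coincides with that of $f$, hence $\psi_k$ preserves only $L_0$ and not $L_r$ with $r>0$, which is impossible. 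This is exactly where non-exceptionality is indispensable: for $X^d$ or a Chebyshev polynomial the symmetry group of each $L_r$ is a whole circle, producing the monomial counterexamples $X^\ell Y^k=\beta^{\ell+k}$.

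Finally, $h(w)=\zeta w$ means that near infinity $\mathcal{C}$ coincides with the graph of $\psi=\phi^{-1}\circ(\zeta\,\cdot)\circ\phi$, which satisfies $g_f\circ\psi=g_f$; as this graph is an open piece of the irreducible algebraic curve $\mathcal{C}$, the function $\psi$ is algebraic, and since $g_f\circ\psi$ stays bounded it has no finite pole, so $\psi$ extends to a linear polynomial $L$ with $\mathcal{C}=\{X=L(Y)\}$. Then $g_f\circ L=g_f$ makes $L$ a linear symmetry of the Julia set, whence $L\circ f=f\circ L$ by Levin's rigidity, and comparing leading coefficients gives $\zeta^{d-1}=1$; in particular $L$ commutes with every $f^{\circ m}$, so undoing the pushforward of the second step (where $\mathcal{C}$ was a component of $\{f^{\circ n}(X)=L(f^{\circ n}(Y))\}$, or of $\{f^{\circ n}(X)=\beta'\}$, $\beta'\in L_{d^n r}$) yields the theorem. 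The main obstacle throughout is the rigidity step of the third paragraph: showing that "algebraic, meets the equipotential in an arc, compatible with the dynamics" leaves no room beyond a single linear symmetry of the Julia set.
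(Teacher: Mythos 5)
Your step 2 (push the curve out to $L_{d^n r}$, where the B\"ottcher coordinate becomes a genuine biholomorphism) is a nice idea that, if the rest went through, would let you bypass the paper's entire separate analysis of the disconnected Julia set case in Section \ref{disconnectedjulia}. And steps 1 and 3 (extracting a real-analytic arc, the anti-holomorphic reflection $\iota$ across the torus $T_\rho$) are sound and match the paper's pseudo-involution $\mu$ in spirit.

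The genuine gap is in your step 4, which you yourself flag as ``the heart of the matter'' but leave as a gesture. What you actually have after step 3 is a germ $h$ of an algebraic correspondence that extends by reflection across a single arc of a single circle $|w|=e^r$ and maps that arc into the circle. From this alone there is no reason whatsoever that $h$ must be a rotation: plenty of non-linear germs preserve an arc of a circle. The paper earns linearity by a two-stage mechanism you are missing. First, it uses the pseudo-involution (and the lemmas around it, \ref{cont1}, \ref{cor1}, \ref{cont2}, \ref{limits}, where non-exceptionality via Fatou's \emph{actual} theorem enters as an obstruction) to show that the presence of an arc in $\mathcal{C}\cap L_r^2$ forces an arc in $\mathcal{C}\cap L_{2r}^2$, hence inductively in $\mathcal{C}\cap L_{2^N r}^2$ for all $N$ — crucially, arcs of the \emph{same} algebraic curve $\mathcal{C}$ on a whole family of equipotentials escaping to infinity. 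Second, only then does the Schwarz-lemma argument (Lemma \ref{circles}, vanishing on circles of radii $\to 0$ in the $z=1/w$ chart) force the correspondence to be $w\mapsto\theta w$. Your ``tower $(f^{\circ(n+k)}\times f^{\circ(n+k)})(\mathcal{C})$'' does not substitute: each pushforward is a \emph{different} curve at a different level, so you do not get infinitely many arcs of a \emph{single} analytic function, which is what any Schwarz-lemma normal-forms argument needs. Also, the theorem you attribute to Fatou (``a holomorphic germ at infinity semi-conjugating a polynomial to a polynomial is a polynomial'') is not Fatou's theorem; the paper uses Fatou's rigidity theorem on smooth Julia sets (Theorem \ref{fatou}), which plays a different and essential role in the propagation step you skipped. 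The sentence about a hypothetical $\psi_k$ with $g_f\circ\psi_k=k\,g_f$ is not backed by any construction.

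There is also a smaller but real issue in your step 5. You pass from $h(w)=\zeta w$ to ``$\psi=\phi^{-1}\circ(\zeta\cdot)\circ\phi$ is algebraic with no finite pole, hence a linear polynomial, so $\mathcal{C}=\{X=L(Y)\}$''. That presupposes that $\psi$ continues to a single-valued function and that $\mathcal{C}$ is a graph, neither of which is automatic from $P(\Psi(z),\Psi(\theta z))\equiv 0$; if $\theta$ is a root of unity of order with nontrivial $d$-part, or of order not dividing $d-1$, the curve cut out can be a nontrivial component of $f^{\circ m}(X)-L\circ f^{\circ m}(Y)=0$ rather than a graph. The paper resolves this by invoking Medvedev--Scanlon's classification of $(f,f)$-invariant curves (Lemma \ref{medvedev}), which is precisely what turns ``$\theta$ is a root of unity'' into the stated normal form. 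Your no-pole/degree-growth argument for linearity is the right instinct but needs to be supplemented (e.g., by first reducing via Medvedev--Scanlon, or by carefully controlling the monodromy of $\psi$) before the claim $\mathcal{C}=\{X=L(Y)\}$ is justified.
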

Due to the fact that $L_r$ is a piecewise real analytic set one can interpret Theorem \ref{equipotential} as a characterization of all algebraic correspondences that preserve an equi-potential curve $L_r$. These sort of rigidity theorems often play a key role in the proofs in arithmetic dynamics. For example in the proof of a case of the dynamical André-Oort conjecture in \cite{dynandreoort}, \cite{favregauthier} a key ingredient is a classification of algebraic correspondences preserving (generalized) Mandelbrot set.  \\

%The techniques introduced to prove Theorem \ref{equipotential} will be used in upcoming work with Myrto Mavraki to give among other things an alternative proof of Bogomolov for curves and split polynomial maps. \\

Theorem \ref{equipotential} does not hold without the requirement on $f$ to be non-exceptional. For example taking $f = X^d$  graphs of  Möbius transformations sending $L_r$ to $L_r$ provide an ample supply of counterexamples (see for example the introduction of \cite{bayshabegger}). These counterexamples are made possible by the fact that  $L_r$ is semi-algebraic. This suggests that $L_r$ should be a transcendental curve if $f$ is not exceptional and we can support this intuition with the following theorem whose proof is  straightforward given the tools used in the proof of Theorem \ref{equipotential}. 
\begin{thm}\label{transc}
$L_r$ contains a connected semi-algebraic curve for some $r > 0$ if and only if $f$ is exceptional.
\end{thm}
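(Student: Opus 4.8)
The plan is to prove both directions, with the "only if" direction being the substantive one. For the "if" direction, suppose $f$ is exceptional, so $f = \phi^{-1} \circ m \circ \phi$ for a linear polynomial $\phi$ and $m$ either $X^d$ or $\pm$ a Tchebycheff polynomial $T_d$. Since $g_{f}(z) = g_{m}(\phi(z))$ by the functoriality of the Green function under linear conjugacy, the equipotential curve $L_r$ for $f$ is the $\phi$-preimage of the equipotential curve for $m$. For $m = X^d$ the set $\{g_m = r\}$ is the circle $|z| = e^r$, which is algebraic; for $m = \pm T_d$ the Green function is $g_m(z) = \log^+|z + \sqrt{z^2-1}|$ (with the branch normalised appropriately), whose level sets are ellipses with foci $\pm 1$, again algebraic. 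Pulling back by the linear map $\phi$ keeps these curves algebraic and connected, so $L_r$ itself is a connected semi-algebraic curve for every $r > 0$.

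For the "only if" direction, suppose $L_r$ contains a connected semi-algebraic curve $\gamma$ for some $r > 0$; I want to conclude $f$ is exceptional. The idea is to feed this into Theorem \ref{equipotential}. Take the Zariski closure $\mathcal{C}$ of the graph $\{(z,z) : z \in \gamma\} \subseteq \mathbb{A}^2$ — or more usefully, since $\gamma$ is semi-algebraic of real dimension one and lies in $L_r$, one obtains an honest algebraic curve by a complexification/Zariski-closure argument: a real-analytic arc contained in an algebraic curve is contained in an irreducible component of it, and a semi-algebraic curve is a finite union of such arcs, so $\gamma$ lies on a complex algebraic curve $\Gamma$. Now the diagonal image, or rather the product curve, has the property that $\mathcal{C} \cap L_r^2$ is infinite (it contains all of $\{(z,z): z \in \gamma\}$, an infinite set since $\gamma$ is a curve). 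Hence Theorem \ref{equipotential} applies and tells us $\mathcal{C}$ is a component of $(X-\beta)(Y-\beta)(f^{\circ n}(X) - L\circ f^{\circ n}(Y)) = 0$. Since $\mathcal{C}$ is the graph of the identity on a one-real-dimensional set (not a single point), it cannot be $X = \beta$ or $Y = \beta$, so it must be a component of $f^{\circ n}(X) = L(f^{\circ n}(Y))$ with $L \circ f^{\circ n} = f^{\circ n} \circ L$; on the diagonal this forces $L = \mathrm{id}$, so $\mathcal{C}$ is the diagonal, which is automatically a component — giving no contradiction and no conclusion. So the naive diagonal trick is too weak, and the real argument must instead exploit that $L_r$ being \emph{semi-algebraic as a whole curve} (not just containing an arc) forces an algebraic self-correspondence.

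The correct route: if $L_r$ contains a semi-algebraic arc $\gamma$, then since $f$ maps $L_r$ to $L_{dr}$ as a $d$-to-one real-analytic covering and $L_{dr}$ is again a level set, analytic continuation of the algebraic relation defining $\gamma$ along $L_r$ shows that the whole connected component of $L_r$ through $\gamma$ is semi-algebraic, and then a monodromy/compactness argument shows $L_r$ is a full algebraic curve. At that point one builds a genuinely non-diagonal correspondence: choose two distinct points $z_1, z_2 \in L_r$ with $f(z_1) = f(z_2)$ (possible since $f|_{L_r}$ is $d$-to-$1$ and $d \geq 2$), and use that the algebraic curve $L_r$ admits a nontrivial algebraic self-map — for instance the deck transformation of $f|_{L_r} \colon L_r \to L_{dr}$ composed with the algebraic identification $L_{dr} \cong L_r$ coming from semi-algebraicity. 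This self-map has a graph that is an algebraic curve $\mathcal{C}' \subseteq \mathbb{A}^2$ with $\mathcal{C}' \cap L_r^2$ infinite and $\mathcal{C}'$ \emph{not} diagonal. Theorem \ref{equipotential} then forces $\mathcal{C}'$ to be a component of $f^{\circ n}(X) = L(f^{\circ n}(Y))$ with a nontrivial commuting linear $L$ or nontrivial structure, and the classification of polynomials commuting with a linear map (Ritt, Julia) yields that $f$ is conjugate to $X^d$ or a Tchebycheff polynomial, i.e. $f$ is exceptional.

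The main obstacle I anticipate is the step promoting "contains a semi-algebraic arc" to "is an algebraic curve, carrying a nontrivial algebraic self-correspondence": one must carefully control analytic continuation of the defining polynomial of $\gamma$ around $L_r$ and rule out that $L_r$ is merely piecewise algebraic with incompatible pieces. The paper signals this is "straightforward given the tools used in the proof of Theorem \ref{equipotential}," so presumably the rigidity machinery (Fatou/Levin-type arguments) already provides the analytic continuation and the identification of $L_r$'s geometry; the remaining work is bookkeeping plus invoking the Ritt–Julia classification of commuting polynomials.
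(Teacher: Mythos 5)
The ``if'' direction is fine. The ``only if'' direction has two serious gaps, and the approach never actually uses the hypothesis in an effective way.

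The central problem: the correspondences you construct on $L_r$ are \emph{always} algebraic, independently of whether $L_r$ is semi-algebraic. The deck transformations of $f|_{L_r}\colon L_r\to L_{dr}$ are, in the Böttcher coordinate $\Phi$, just $\Phi^{-1}(\zeta\,\Phi(\cdot))$ for $\zeta\in\mu_d$; their graphs are components of $f(X)-f(Y)=0$. These are precisely the curves \emph{permitted} by the conclusion of Theorem~\ref{equipotential}, so feeding them into that theorem produces no contradiction and no information about $f$. Your attempt to salvage this by composing with an ``algebraic identification $L_{dr}\cong L_r$ coming from semi-algebraicity'' is not well-defined: there is no canonical map between the two level sets, and even if both complexify to algebraic curves, an arbitrary isomorphism carries no dynamical content. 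So the hypothesis ``$L_r$ is semi-algebraic'' has not been used to produce a correspondence outside the allowed list. The second flaw is in the final step: the existence of a linear $L$ with $L\circ f^{\circ n}=f^{\circ n}\circ L$ does \emph{not} imply $f$ is exceptional (take $L=\mathrm{id}$, or $f$ odd and $L(X)=-X$), and the Ritt--Julia classification of commuting \emph{polynomials} is not the relevant tool.

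What the paper actually does is encode the real structure of $L_r$ via Schwarz reflection. Since $L_r=\Psi(S^\phi)$ (after replacing $L_r$ by $f^{\circ n}(L_r)$ for $n$ large so that $\Phi(L_r)\subset\Delta_R$), one has $\overline{z}=\Psi^h(\phi^2/\Phi(z))=\mu(z)$ for $z\in L_r$, where $\mu$ is the pseudo-involution of Section~\ref{connectedjulia}. If $L_r$ is semi-algebraic there is a nonzero polynomial $Q$ with $Q(z,\overline{z})=0$ on $L_r$, hence $Q(z,\mu(z))=0$ on an arc and therefore identically. This shows $\mu$ is an algebraic function, so it continues past its natural domain of definition to a disc centred at a point of $\Psi(S^{\phi^2})$; Corollary~\ref{cor1} (resp.\ the proof of Corollary~\ref{cor2} in the disconnected case), built on Fatou's theorem, then forces $f$ to be exceptional. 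The map $\mu$, not a deck transformation, is the correspondence whose algebraicity is equivalent to semi-algebraicity of $L_r$ and which lies outside the allowed list of Theorem~\ref{equipotential}; omitting it is the gap in your proposal.
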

We pause to mention that we thought it is worth recording the above theorem because it is analogous to Fatou's theorem \cite[p.250]{fatou} since Fatou's theorem characterises exceptional polynomials as those, whose Julia set is a (piecewise) analytic arc while our theorem characterizes exceptional polynomials through properties of $L_r$. As the curves $L_r$ are always piecewise real analytic we need a finer criterion and Theorem \ref{equipotential} shows that this finer criterion is (real) algebraicity.  
It is also worth pointing out that Fatou's theorem itself is crucial for the proof of Theorem \ref{equipotential} and Theorem \ref{transc}. \\
Another important characterisation of exceptional polynomials that we will make use of is due to the combined work of Levin \cite{levinsymmetries} and Douady-Hubbard-Thurston \cite{DH}. This characterises polynomials whose Julia set has a normal family of symmetries as being the exceptional ones. These theorems allow us to show that ``irrational rotations" of the basin of infinity of $f$ with connected Julia set can not be algebraic (see Lemma \ref{rotation}).\\
Finally, for $f$ with disconnected Julia set we introduce considerations that seem to be entirely absent from the existing literature. Namely we study the monodromy action on Mahler functions that appear as the inverse of the famous Böttcher coordinates. This requires careful analysis of Puiseux expansions among other things. Indeed, somewhat surprisingly in the disconnected case we do not need to use any deep rigidity theorem from complex dynamics. Instead mondromy alone seems to contain sufficient information in this case. 
\\ 

As a final remark on the rigidity theorems we note that the techniques used to prove Theorem \ref{equipotential} and \ref{transc} potentially extend to higher dimensions to prove that for a non-exceptional polynomials the components of the intersection of an algebraic variety with powers of $L_r$ must have low dimension. This might be of interest in connection with dynamical questions and equidistribution on powers of $L_r$. \\

At this point it seems appropriate to put Theorem \ref{thm} and \ref{grandorbit} into the context of the dynamical literature as well.
Our paper (and in particular Theorem \ref{grandorbit}) may be viewed as a first attempt to unify the dynamical Mordell-Lang conjecture and the dynamical Manin-Mumford conjecture. \\
A dynamical Mordell-Lang conjecture was first formulated by Ghioca and Tucker in \cite{GTJNT} and then further expanded on in the book \cite{bookdml} where the authors also discuss intersections of orbits. In \cite{GTJNT} the authors also use properties of attracting fixed points in their proof. A dynamical Manin-Mumford conjecture was first formulated by Zhang and later corrected and refined by Ghioca, Tucker and Zhang \cite{towardsdmm} as well as by Ghioca and Tucker \cite{dmmconj}. \\
  
 On the dynamical Manin-Mumford side we mention some results that mostly concern the projective line $\mathbb{P}_1$ and its powers $\mathbb{P}_1^n$. As alluded to above, Ghioca, Nguyen and Ye have  proven the dynamical Manin-Mumford conjecture for split endomorphisms of $\mathbb{P}_1^n$. This work relied on the equi-distribution theorem by Yuan on adelic heights \textit{op. cit.}. Similarly Baker and Demarco used equidistribution techniques to prove a Manin-Mumford theorem for families of dynamical systems \cite{bakerdemarco}. This is by no means an exhaustive list of the results obtained by using the powerful equi-distribution theorem combined with deep insights into the intricate geometry and arithmetic of dynamical systems. Worth mentioning is also the resolution of some cases of a dynamical André-Oort conjecture with these techniques \cite{dynandreoort}. \\
 On the dynamical Mordell-Lang side the works that are perhaps closest in spirit to ours are \cite{dtz}, \cite{dtzduke} as they treat intersections of varieties with orbits of polynomials. \\
 From the point of view of height theory Theorem \ref{thm} may also be viewed as an intermediate between the work \cite{dynbogomolovcurves} and \cite{dtz} as we are treating points of constant canonical height whereas the works just mentioned treat points of small respectively unbounded canonical height. \\

In order to incorporate exceptional polynomials and periodic points in our statement we need a bit of care to encode essentially group theoretic information in dynamical terms (see \cite{dmmconj} for related problems). What is new in our context compared to \cite{dmmconj} is that we need to define an analogue of co-sets in the dynamical setting.  For this we introduce some notation. We formulate a conjecture for powers of the projective line here and in section \ref{remarks} a conjecture for all projective varieties. 

For an endomorphism $g $ of $\mathbb{P}_1$ we let $\Xi_g$ be $\mathbb{P}_1\setminus\text{S}_g$ where $\text{S}_g$ is the set of superattracting fixed points of $g$ of degree $\deg(g)$ \cite{milnor}. The variety $\Xi_g$ is always  quasi-affine. It is a natural space on which $g$ acts. We say that two morphisms commute if they commute by composition.  
\begin{definition} \label{def1} Let $\mathcal{K}$ be an algebraically closed field of characteristic 0 and $n$ a positive integer. We say that a  variety $\mathcal{V} \subset \mathbb{P}_1^n$ defined over $\mathcal{K}$ is stratified by an endomorphism $g$ of $\mathbb{P}_1$ defined over $\mathcal{K}$ if there exists an endomorphism $h$ of $\mathbb{P}_1$ and a finite morphism $\varphi:\Xi_h \rightarrow \Xi_g$ with the property $g\circ \varphi = \varphi\circ h$ such that the following holds:  There exists a variety $\mathcal{V}_\varphi \subset \Xi_h^n$ and a morphism $H: \Xi_h^n \rightarrow \Xi_h^n$ commuting with $(h,\dots, h):\Xi_h^n \rightarrow \Xi_h^n$ such that  
\begin{align*}
(\varphi, \dots, \varphi)(\mathcal{V}_\varphi) = \mathcal{V}\cap \Xi_g^n, \quad (\varphi, \dots, \varphi) \circ H(\mathcal{V}_\varphi) = \{\beta \} \times \Xi_g^{\dim(\mathcal{V})}
\end{align*}
for some $\beta \in \Xi_g^{n-\dim(\mathcal{V})}(\mathcal{K})$. 
\end{definition}
Definition \ref{def1} is quite technical. However it tries to capture the fact that if a rational map comes from an algebraic group, the group structure gives rise to ``many" endomorphisms that commute with the given rational map in purely dynamical terms. A simple example is that $\mathcal{V}$ is stratified by $g$ if the projection of $\mathcal{V} $ to a factor of $\mathbb{P}_1$ is constant.  In the exceptional cases we can think of $\Xi_h$ as  coming from an algebraic group and $H$ as coming from a group morphism.   
Note that for $g= X^2$ any co-set of $\G_m^n$ defines a variety that is stratified by $g$.  We obtain the following from Theorem \ref{thm}, \cite{dynbogomolovcurves} and Manin-Mumford.   
\begin{thm}\label{thm1} Let $f \in K[X]$ be a polynomial of degree $d \geq 2$ and let $\alpha \in K$. Let $\mathcal{C} \subset \mathbb{A}^2$ be a geometrically irreducible curve over $K$. If  $\mathcal{C} \cap \mathcal{S}_\alpha^2$ contains infinitely many points then $\mathcal{C}$ is  either pre-periodic under the endomorphism $(f,f)$ or is stratified by $f$.
\end{thm}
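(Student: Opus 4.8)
The plan is to split the argument into three overlapping cases --- $\alpha$ pre-periodic, $f$ exceptional, and $f$ non-exceptional with $\alpha$ not pre-periodic --- and in each to recognise the conclusion of an already available theorem as one of the two alternatives of the statement; since the last case is precisely the hypothesis of Theorem \ref{thm}, the three exhaust all possibilities.

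\emph{Case 1: $\alpha$ pre-periodic.} I would first note that then every $\beta\in\mathcal{S}_\alpha$ is pre-periodic, since $f^{\circ n}(\beta)=f^{\circ n}(\alpha)$ for some $n$ and $f^{\circ n}(\alpha)$ is pre-periodic; hence $\mathcal{S}_\alpha^2\subset\operatorname{PrePer}((f,f))$ and $\mathcal{C}$ contains infinitely many pre-periodic points of the split endomorphism $(f,f)$ of $\mathbb{P}_1^2$. The dynamical Manin--Mumford theorem of Ghioca--Nguyen--Ye \cite[Theorem 1.1]{dynbogomolovcurves} then gives that $\mathcal{C}$ is pre-periodic under $(f,f)$, which is the first alternative. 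So I may assume from now on that $\alpha$ is not pre-periodic.

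\emph{Case 2: $f$ exceptional}, say $f=L^{-1}\circ m\circ L$ with $L$ affine and $m=X^d$ or $m$ a Tchebycheff polynomial. Following section \ref{reduction}, applying $L$ turns $\mathcal{S}_\alpha$ into $L(\alpha)\cdot\mu_{d^\infty}$ when $m=X^d$, while in the Tchebycheff case the preimage of $\mathcal{S}_\alpha$ under $\pi(z)=z+z^{-1}$ meets only finitely many translates of $\mu_{d^\infty}$ in $\mathbb{G}_m$. I would then pull $\mathcal{C}$ back along the identity (resp.\ along $\pi\times\pi$), translate by a suitable point, and apply Manin--Mumford for $\mathbb{G}_m^2$ \cite{laurent} to conclude that a component of the pulled-back curve is a torsion coset $X^aY^b=\zeta$; pushing forward, $\mathcal{C}$ is the image of a coset of $\mathbb{G}_m^2$. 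It remains to see that such $\mathcal{C}$ is stratified by $f$. A primitive coset $X^aY^b=\zeta$ is stratified by $X^d$: take $h=X^d$, $\varphi=\mathrm{id}$, $\mathcal{V}_\varphi$ the coset (primitive because it is irreducible), and for $H$ the monomial automorphism of $\mathbb{G}_m^2$ attached to a matrix in $\mathrm{GL}_2(\mathbb{Z})$ with first row $(a,b)$, which commutes with $(X^d,X^d)$ and straightens $\mathcal{V}_\varphi$ onto $\{\zeta\}\times\mathbb{G}_m$. In the Tchebycheff subcase one keeps $h=X^d$ but replaces $\varphi$ by the finite degree-two map $\pi\colon\Xi_{X^d}=\mathbb{G}_m\to\Xi_{T_d}=\mathbb{A}^1$, which satisfies $T_d\circ\pi=\pi\circ X^d$. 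Finally, pre-composing $\varphi$ with $L^{-1}$ transports ``stratified by $X^d$ (resp.\ $T_d$)'' to ``stratified by $f$''. I expect matching the output of Manin--Mumford with the rather rigid definition of stratification to be the most delicate point, though it is essentially the substance of section \ref{reduction}.

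\emph{Case 3: $f$ non-exceptional and $\alpha$ not pre-periodic.} Here Theorem \ref{thm} applies directly, and since $\mathcal{C}$ is irreducible it says that either $\mathcal{C}=\{X=\beta\}$ or $\mathcal{C}=\{Y=\beta\}$ for some $\beta\in\mathcal{S}_\alpha$, or $\mathcal{C}\subseteq\{f^{\circ n}(X)=f^{\circ n}(Y)\}$ for some $n\in\mathbb{Z}_{\geq 0}$; I would treat these separately. In the first case $\beta\in\Xi_f=\mathbb{A}^1$ since $f$ is non-exceptional, and $\mathcal{C}$ is stratified by $f$ via $h=g=f$, $\varphi=\mathrm{id}$, $\mathcal{V}_\varphi=\{\beta\}\times\mathbb{A}^1$ and $H=\mathrm{id}$ (resp.\ $H(x,y)=(y,x)$ when $\mathcal{C}=\{Y=\beta\}$), all the required semi-conjugacy and commutation relations being immediate. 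In the remaining case $f^{\circ n}(x)=f^{\circ n}(y)$ holds on $\mathcal{C}$, so $(f,f)^{\circ n}$ maps the closure $\overline{\mathcal{C}}$ in $\mathbb{P}_1^2$ into the diagonal $\overline{\Delta}$; as $(f,f)$ is finite and contracts no curve, $(f,f)^{\circ n}(\overline{\mathcal{C}})=\overline{\Delta}$, which is $(f,f)$-invariant, whence the forward orbit of $\overline{\mathcal{C}}$ is finite and $\mathcal{C}$ is pre-periodic under $(f,f)$. This covers all cases. Overall, the only genuine obstacle is Case 2; Case 3 is essentially a restatement of Theorem \ref{thm} together with the elementary observation about iterated fibre products.
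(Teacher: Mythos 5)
Your proposal is correct and follows essentially the same three-way case split as the paper's reduction in section \ref{reduction}: Ghioca--Nguyen--Ye \cite[Theorem 1.1]{dynbogomolovcurves} when $\alpha$ is pre-periodic, Manin--Mumford for $\mathbb{G}_m^2$ when $f$ is exceptional, and Theorem \ref{thm} otherwise. The only difference is that you explicitly spell out how the conclusion of Theorem \ref{thm} (a special curve) yields pre-periodicity under $(f,f)$ or stratification by $f$, a translation the paper leaves implicit.
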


Following a suggestion of Ghioca we formulate a conjecture for intersections of varieties with grand orbits. It can be viewed as a natural dynamical analogue of Lang's conjecture on finite rank groups.\\

For an endomorphism $g$ of $\mathbb{P}_1$ and $\alpha \in \mathbb{P}_1(\mathcal{K})$ we set $\mathcal{G}_\alpha(g)$ to be the grand-orbit of $\alpha$ under $g$.  
Let $X = \mathbb{P}_1^n$ and  $\underline{g} = (g_1, \dots, g_n)$ where $g_1,\dots, g_n$ are endomorphisms of $\mathbb{P}_1$ of degree at least 2 defined over $\mathcal{K}$ as above.  Let $\underline{\alpha} = (\alpha_1,\dots, \alpha_n)\in \mathbb{P}_1^n(\mathcal{K})$ and let $V$ be a hypersurface in $X$. 
\begin{conjecture}\label{conj} Suppose that
\begin{align*}
V \cap \mathcal{G}_{\alpha_1}(g_1)\times \dots \times \mathcal{G}_{\alpha_n}(g_n)  
\end{align*}
contains infinitely many points. Then $V$ is either pre-periodic under $\underline{h} = (h_1,\dots, h_n)$ where $h_1, \dots, h_n$ are endomorphisms commuting with an iterate of $g_1, \dots, g_n$ respectively and such that at least one of $h_1, \dots, h_n$ has degree at least 2 or $V$ is stratified by $g_i$ for some $i \in \{1,\dots, n\}$. 
\end{conjecture}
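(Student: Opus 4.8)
The natural plan is to argue by induction on $n$, whose base case $n=2$ is the statement for curves: if a curve $C\subset\mathbb{P}_1\times\mathbb{P}_1$ meets $\mathcal{G}_{\alpha_1}(g_1)\times\mathcal{G}_{\alpha_2}(g_2)$ in infinitely many points, then $C$ is preperiodic under a pair of commuting maps, at least one of degree $\geq 2$, or of the stratified type with respect to $g_1$ or $g_2$. For polynomials this is what Theorems~\ref{thm} and \ref{grandorbit} give (the latter under its local hypothesis), with the preperiodic and exceptional sub-cases handled by \cite{dynbogomolovcurves} and by Manin--Mumford on $\G_m^2$. For general rational maps $g_1,g_2$, even this curve case is new: it would require a rational-map analogue of the rigidity Theorem~\ref{equipotential} (hence of the theorems of Fatou, Levin and Douady--Hubbard behind it) at archimedean places, together with a rational-map version of the non-archimedean methods of this paper at finite places. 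Securing this is the first of the two main gaps.

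Granting the curve case, the inductive step would run as follows. We may take $V\subset\mathbb{P}_1^n$ irreducible (arguing componentwise otherwise). Put $S=V\cap\mathcal{G}_{\alpha_1}(g_1)\times\cdots\times\mathcal{G}_{\alpha_n}(g_n)$, an infinite set, and for $1\leq k\leq n$ let $q_k\colon\mathbb{P}_1^n\to\mathbb{P}_1^{n-1}$ be the projection forgetting the $k$-th coordinate. If $q_k(S)$ were finite for every $k$, then every single-coordinate projection of $S$, and hence $S$ itself, would be finite; so after relabelling we may assume $q_n(S)$ infinite. We then split according to whether $q_n(V)=\mathbb{P}_1^{n-1}$.

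In the case $q_n(V)\neq\mathbb{P}_1^{n-1}$, a dimension count — the fibres of $q_n|_V$ lie in vertical lines and so have dimension $\leq 1$ — forces $V':=q_n(V)$ to be a hypersurface in $\mathbb{P}_1^{n-1}$ with $q_n(S)\subset V'\cap\mathcal{G}_{\alpha_1}(g_1)\times\cdots\times\mathcal{G}_{\alpha_{n-1}}(g_{n-1})$ infinite, so the inductive hypothesis applies to $V'$; since $V\subseteq q_n^{-1}(V')=V'\times\mathbb{P}_1$ and both have dimension $n-1$ with $V$ irreducible, $V$ is a component of $V'\times\mathbb{P}_1$, and the structure of $V'$ propagates to $V$ by appending $g_n$ as the last coordinate of the commuting tuple (which keeps a coordinate of degree $\geq 2$) or by extending the covers in the stratified data by the identity on the last factor. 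The sub-case where $V$ lies in finitely many sections $\{\xi_n=\beta\}$ with $\beta\in\mathcal{G}_{\alpha_n}(g_n)$ is similar, one such section having infinite intersection with the remaining grand orbits. As usual, whenever some $g_k$ is conjugate to a power map or a Chebyshev polynomial one removes that coordinate first and transports it to $\G_m$, invoking Manin--Mumford and Laurent's theorem on subgroups of finite rank, exactly the passage from the Mordellic statement to Lang's full conjecture recalled in the introduction.

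The hard part will be the remaining case $q_n(V)=\mathbb{P}_1^{n-1}$, where $q_n|_V\colon V\to\mathbb{P}_1^{n-1}$ is generically finite and surjective: then $V$ genuinely depends on all $n$ coordinates and no lower-dimensional variety is in sight — one only knows that for infinitely many $\gamma\in\mathcal{G}_{\alpha_1}(g_1)\times\cdots\times\mathcal{G}_{\alpha_{n-1}}(g_{n-1})$ the finite fibre $V_\gamma\subset\mathbb{P}_1$ contains a point of $\mathcal{G}_{\alpha_n}(g_n)$. Breaking this would need a genuinely new input, for instance applying $g_n^{\circ N}$ to the last coordinate to push the problem into the forward orbit of $\alpha_n$ and then feeding in a dynamical Mordell--Lang statement for the other coordinates, or a Pila--Zannier type argument exploiting the definability of the equi-potential level sets $L_r$ (and of their non-archimedean analogues) in an o-minimal structure. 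Together with the rational-map curve case above, and with the uniformity in the translate $\beta$ that the descent of the previous paragraph quietly requires — the analogue of the uniform Manin--Mumford inputs alluded to after \cite{laurent} — this dominant case is where I expect the real difficulty to concentrate.
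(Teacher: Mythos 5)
The statement you were asked to prove is Conjecture~\ref{conj}, which the paper formulates precisely because it is open; the paper offers no proof of it, only the special case Theorem~\ref{grandorbits} ($n=2$, the same polynomial $f$ in both coordinates, the same basepoint $\alpha$, and the extra local hypothesis that the orbit of $\alpha$ escapes at a finite place of good reduction coprime to $d$). Your proposal does not prove the conjecture either, and you say so: you flag the rational-map curve case and the dominant-projection case in $\mathbb{P}_1^n$ for $n\geq 3$ as unresolved. Both of these are genuine gaps, and I agree they are where the difficulty lies.

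What you do have right is the alignment with the paper's partial result. Your base case (curves in the polynomial setting under the local hypothesis) is exactly what Theorems~\ref{thm} and~\ref{grandorbit} deliver, and your remark about handling the exceptional and preperiodic sub-cases via \cite{dynbogomolovcurves}, Manin--Mumford, and Laurent's theorem \cite{laurent} matches the reduction in Section~\ref{reduction} (note the paper uses Laurent's theorem rather than bare Manin--Mumford for grand orbits, since these correspond to finite-rank rather than torsion subgroups in $\G_m$). Your inductive skeleton on the projections $q_k\colon\mathbb{P}_1^n\to\mathbb{P}_1^{n-1}$ — dispose of the non-dominant case by descending to the hypersurface $q_k(V)$, isolate the dominant case as the hard one — is reasonable as far as it goes, though one would have to verify carefully that the paper's specific definition of ``stratified'' (built from the finite morphism $\varphi:\Xi_h\to\Xi_g$ with $g\circ\varphi=\varphi\circ h$ and the commuting morphism $H$) actually propagates through such projections in the clean way you sketch. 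In sum: your proposal is not a proof, it correctly recognizes itself as not a proof, and the gaps you name are the right ones; the conjecture remains open beyond Theorem~\ref{grandorbits}.
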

From Theorem \ref{grandorbit} follows that we can treat part of Conjecture \ref{conj}. 

\begin{thm}\label{grandorbits} Let $f \in K[X]$  and let $\alpha \in K$ be such that $|f^{\circ n}(\alpha)|_v \rightarrow \infty$ at a finite place $v$ of good reduction co-prime to $d$. Then Conjecture \ref{conj} holds true for $\underline{\alpha} = (\alpha, \alpha)$ and $\underline{f} = (f,f) $. 
\end{thm}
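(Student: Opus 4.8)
The plan is to derive Conjecture \ref{conj} in this case from Theorem \ref{grandorbit} together with the classical structure theory of the multiplicative group, the point being to rephrase the two conclusions of Theorem \ref{grandorbit} in terms of the definition of ``stratified by $f$''. First I would reduce to the case where $V$ is a single geometrically irreducible curve $\mathcal{C}$: a hypersurface in $X=\mathbb{P}_1^2$ is a curve, and since $\mathcal{G}_\alpha$ is a $\Gal(\overline{\Q}/K)$-stable subset of $\overline{\Q}$, some geometric component of $V$, and hence its whole Galois orbit, meets $\mathcal{G}_\alpha\times\mathcal{G}_\alpha$ infinitely often; moreover the conditions ``good reduction at $v$'', ``$|d|_v=1$'' and ``$|\fn(\alpha)|_v\to\infty$'' are preserved at any place above $v$ in a finite extension, so it suffices to treat one such $\mathcal{C}$ over a finite extension of $K$, choosing the stratifying or pre-periodicity data Galois-equivariantly at the end. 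Since $\mathcal{G}_\alpha\subset\overline{\Q}\subset\mathbb{A}^1$, all points of $\mathcal{C}\cap\mathcal{G}_\alpha^2$ lie in $\mathbb{A}^2$, so I may replace $\mathcal{C}$ by $\mathcal{C}\cap\mathbb{A}^2$. Finally, $|\fn(\alpha)|_v\to\infty$ forces the orbit of $\alpha$ to be infinite, so $\alpha$ is not pre-periodic; I would then split according to whether $f$ is non-exceptional or exceptional.

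If $f$ is non-exceptional, the hypotheses of Theorem \ref{grandorbit} hold for $\mathcal{C}$, so either $\mathcal{C}$ is a component of $Z_{n,m}:=\{\,\fn(X)=f^{\circ m}(Y)\,\}$ for some $n,m\geq 0$, or $\mathcal{C}$ is a coordinate line $\{X=\beta\}$ or $\{Y=\beta\}$ with $\beta\in\mathcal{G}_\alpha$. In the first case $(f,f)$ maps $Z_{n,m}$, hence its closure in $\mathbb{P}_1^2$, into itself (from $\fn(x)=f^{\circ m}(y)$ one gets $f^{\circ(n+1)}(x)=f^{\circ(m+1)}(y)$); since $(f,f)$ is a finite morphism it preserves dimension and so sends each component of $\overline{Z_{n,m}}$ onto a component, whence $\mathcal{C}$ is pre-periodic under $(f,f)$. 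Taking $h_1=h_2=f$, which has degree $d\geq 2$ and commutes with $g_1=g_2=f$, this is the first alternative of Conjecture \ref{conj}. In the second case $\beta$ is a finite point and, since for non-exceptional $f$ the only superattracting fixed point of degree $d$ is $\infty$, $\beta\in\Xi_f$; then $h=f$, $\varphi=\mathrm{id}_{\Xi_f}$, $\mathcal{V}_\varphi=\mathcal{C}\cap\Xi_f^2$, and $H$ equal to the identity (for $\{X=\beta\}$) or to the coordinate swap (for $\{Y=\beta\}$), both commuting with $(f,f)$, exhibit $\mathcal{C}$ as stratified by $f$.

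If $f$ is exceptional, write $f=\mu^{-1}\circ X^d\circ\mu$ with $\mu$ linear when $f$ is conjugate to a power map, and use the standard degree-$2$ semiconjugacy $\pi\colon\G_m\to\mathbb{A}^1$, $\pi(z)=z+z^{-1}$, with $f\circ\pi=\pi\circ X^d$ up to this conjugacy, when $f$ is conjugate to a Tchebycheff polynomial; in both cases $\Xi_{X^d}=\G_m$ and $\varphi$ maps $\G_m$ onto $\Xi_f$, where $\varphi=\mu^{-1}$, resp.\ $\varphi=\pi$. Pulling $\mathcal{C}$ and $\mathcal{G}_\alpha$ back through $(\varphi,\varphi)$, a standard computation (cf.\ Section \ref{reduction}) shows that $\varphi^{-1}(\mathcal{G}_\alpha)$ lies in a subgroup $\Gamma$ of $\G_m$ of finite rank, so some component $\mathcal{C}'$ of the pullback of $\mathcal{C}$ meets $\Gamma\times\Gamma$ infinitely often. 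By the Mordell--Lang theorem for $\G_m^2$ on finite rank subgroups \cite{laurent}, $\mathcal{C}'$ is then a component of a coset $\{X^aY^b=c\}$ with $(a,b)\neq(0,0)$, $c\in\G_m$, and after adjusting exponents I may assume $\gcd(a,b)=1$. Setting $h=X^d$, $\mathcal{V}_\varphi=\mathcal{C}'\cap\G_m^2$ and $H(X,Y)=(X^aY^b,Y)$, a self-map of $\G_m^2$ commuting with $(X^d,X^d)$, one gets $(\varphi,\varphi)(\mathcal{V}_\varphi)=\mathcal{C}\cap\Xi_f^2$ and $(\varphi,\varphi)\circ H(\mathcal{V}_\varphi)=\{\varphi(c)\}\times\Xi_f$ with $\varphi(c)\in\Xi_f$, so $\mathcal{C}$ is stratified by $f$; if $a=0$ or $b=0$ then $\mathcal{C}'$ is a coordinate line and one argues with $\varphi=\mathrm{id}_{\Xi_f}$ as in the non-exceptional case (noting $\varphi(c)\in\Xi_f$).

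With Theorem \ref{grandorbit} granted the non-exceptional case is essentially formal, so I expect the main difficulty to be the bookkeeping in the exceptional case: matching the coset structure delivered by Mordell--Lang with the (rather intricate) definition of ``stratified by $f$'', in particular — for Tchebycheff polynomials, where $\varphi$ has degree $2$ — choosing the component $\mathcal{C}'$ of the pullback so that $(\varphi,\varphi)(\mathcal{C}'\cap\G_m^2)$ is exactly $\mathcal{C}\cap\Xi_f^2$, and checking that $\varphi^{-1}(\mathcal{G}_\alpha)$ really is of finite rank. The Galois descent from $\mathcal{C}$ back to $V$ is routine.
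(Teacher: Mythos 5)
Your proof is correct and follows the same route the paper takes: the author's entire argument is a two-sentence remark in Section~\ref{reduction} (``imitate the proof above [i.e.\ the reduction of Theorem~\ref{thm1} to Theorem~\ref{thm}], but employ the results in \cite{laurent} instead of Manin--Mumford''), and your write-up is precisely that reduction carried out with full details — non-exceptional case handed straight to Theorem~\ref{grandorbit}, exceptional case pulled back through the semiconjugacy $\varphi$ and resolved by Laurent's Mordell--Lang theorem for finite-rank subgroups of $\G_m^2$, with the resulting coset then repackaged as a stratification via a coordinate-wise monomial $H$. You have in fact been rather more careful than the paper, e.g.\ by verifying that each component of $\overline{Z_{n,m}}$ really is $(f,f)$-pre-periodic, that $\varphi^{-1}(\mathcal{G}_\alpha)$ sits inside a finite-rank subgroup in both the power-map and the Tchebycheff case, and that the coordinate lines $\{X=\beta\}$, $\{Y=\beta\}$ meet the definition of ``stratified by $f$'' with $\varphi=\mathrm{id}$ and $H$ the identity or the swap.
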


We call a curve $\mathcal{C} \subset \mathbb{A}^2$ special if it is a component of curve defined by $f^{\circ n}(X)- f^{\circ n}(Y) = 0$ for some $n \geq 0 $ or if $\mathcal{C}$ is of the form $\{\beta\} \times \mathbb{A}^1$ or $\mathbb{A}^1\times \{\beta\} $ for some $\beta \in \mathcal{S}_\alpha$. 

It is also possible to put Theorem 1 in the more classical context of unlikely intersections. More precisely we define the set of special subvarieties $\text{Sp}_f$ of $\mathbb{A}^3$ with coordinates $(X_1,X_2,X_3)$ to be Boolean combinations of varieties defined by
\begin{align*}
f^{\circ n}(X_i) = f^{\circ n}(X_j), n \in \Z_{\geq 0}, i,j \in \{1,2,3 \}, i \neq j.
\end{align*} 
Theorem \ref{thm} implies that for $\alpha$ that is not pre-periodic under $f$ and $f$ non-exceptional, the intersection 
 \begin{align*}
 \mathcal{C} \times \{\alpha\} \cap (\cup_{V \in \text{Sp}_f, \text{codim}(V) > 1} V)    
\end{align*}
is finite unless $\mathcal{C}\times \{\alpha\}$ is contained in a special subvariety of positive codimension. \\

It is useful to keep this terminology and we will often refer to a special  curve in $\mathbb{A}^2$ to be either of the form $\{\beta\}\times \mathbb{A}^1$, respectively  $\mathbb{A}^1\times \{\beta\}$ for some $\beta \in \mathcal{S}_\alpha$ or if $\mathcal{C}$ is a component of the variety  defined by $f^{\circ n}(X) = f^{\circ n}(Y)$. \\ 

We now summarize the discussion of our methods and compare them to the Pila-Zannier method which is often used in special circumstances. As mentioned above we localize at a place $v$ for which $\alpha$ lies in its associated basin of infinity. If the place turns out to be archimedean we reduce Theorem \ref{thm} to a rigidity theorem (Theorem \ref{equipotential}) that is really an analytic property of the dynamical system. If $v$ is non-archimedean however we make use of the ``uniformization" of the dynamical system by the inverse of the Böttcher map. Quite similarly as in the Pila-Zannier method we study the pullback of the curve $\mathcal{C}$ by this uniformization and show a functional transcendence result for this pull-back. However we only show that the pull-back is not contained in a co-set of $\G_m^2$ unless $\mathcal{C}$ is special (Theorem \ref{transcendence2} and \ref{transcendence1}). We essentially study pairs of roots of unity on our pull-back and these are of bounded height but not of bounded degree. This rules out the possibility to directly apply counting of rational points in the Pila-Wilkie style. \\
Our approach is different and seems to be particularly suited for non-archimedean completions. To a fixed solution  $(\beta_1,\beta_2) \in \mathcal{S}_\alpha^2\cap \mathcal{C}$ we construct an auxiliary function that vanishes at the pull-back of the whole Galois-orbit of $(\beta_1,\beta_2)$.  We then apply Poisson-Jensen to bound its zero-set, which does not care about the degree of the point. Our method is suited for generalisations and we will explore it further in the function field case in future work. As we showed in \cite{mm}, and have alread mentioned, it  leads to a new and quite elementary proof of (the classical) Manin-Mumford in the multiplicative group. \\

As we will often work with roots of unity we set $\mu_{\infty}$ to be the set of all roots of 1 and $\mu_{d^{\infty}}$ the set of roots of unity whose order divides a power of $d$. Finally for a positive  integer $N$ we set $\mu_N$ to be the set of $N$-th root of 1. For a field $\mathcal{K}$ (often $K$) we set $\mathcal{K}_v$ to be its completion by the valuation $v$ and set $\overline{\mathcal{K}}$ to be its algebraic closure. We denote by $h,H:\overline{\Q} \rightarrow \R$ the logarithmic respectively multiplicative (naive) Weil-height \cite{bombierigubler}. \\

Here is how we proceed. In the next section we reduce the last two Theorems in this section to Theorem \ref{thm} and Theorem \ref{grandorbit}. Then we introduce the main players in our proof which are the Mahler functions defined by $f$. In section \ref{nonarchplaces} we give the proof of a uniform version of Theorem \ref{thm} when $|f^{\circ n}(\alpha)|_v \rightarrow \infty$ at a finite place. In section \ref{connectedjulia} we give the proof of Theorem \ref{equipotential} when the Julia set of $f$ is connected and  in section \ref{disconnectedjulia} when it is disconnected. We summarize the proof of Theorem \ref{thm} in section \ref{proofofthm1} and in section \ref{proofofthm2} we give the proof of Theorem \ref{grandorbit}. We conclude in the final section with some remarks and give a uniform version of Theorem \ref{equipotential}.

\section{Reductions} \label{reduction}
We first note that if Theorem \ref{thm} holds for a polynomial $f$ then it holds for $L\circ f\circ L^{-1} $ for any linear polynomial $L$. The same holds for Theorem \ref{equipotential}. Thus we may and will assume that $f$ is monic.  \\
We now reduce  Theorem \ref{thm1} to Theorem \ref{thm}.  
If $\alpha$ is pre-periodic under $f$ then Theorem \ref{thm1} follows from Theorem 1.1 in \cite{dynbogomolovcurves} and this case is taken care of.  Now assume that $f$ is exceptional. Then we may assume that $f$ is either Chebychev or a power map. In both cases there exists a morphism $\varphi: \G_m \rightarrow \Xi_f$ such that $f\circ \varphi(X) = \varphi(X^d)$. In the former cases $\varphi(X) = X + 1/X$ and in the latter it is $\varphi(X) = X$. It follows that for  each $(\beta_1, \beta_2) \in \mathcal{C} \cap \mathcal{S}_\alpha^2$ there exists a pair of roots of unity $(\zeta_1, \zeta_2)$ such that $(\varphi(\alpha \zeta_1), \varphi(\alpha \zeta_2)) = (\beta_1, \beta_2)$. Thus if there exist infinitely many points in $\mathcal{C} \cap \mathcal{S}_\alpha^2$  then one of the components of $(\varphi, \varphi)^{-1}(\mathcal{C})$ is equal to a co-set (by Manin-Mumford). Calling this co-set $\mathcal{C}_\varphi$ and letting $H$ be a coordinate-wise monomial sending $\mathcal{C}_\varphi$ to $\{\beta\}\times \G_m$ for some $\beta \in \overline{\Q}$ we obtain that $\mathcal{C}$ is stratified by $f$. 
Thus we have shown that Theorem \ref{thm1} is a consequence of  \cite[Theorem 1.1]{dynbogomolovcurves}, Manin-Mumford for $\G_m^2$ and Theorem \ref{thm}.\\

For the reduction of Theorem \ref{grandorbits} to Theorem \ref{grandorbit} we can imitate the proof above. However, instead of Manin-Mumford we employ the results in \cite{laurent}.  \\

Now we prove a Lemma that classifies pre-periodic curves that have non-empty intersection with the grand or small orbit of a point $(\alpha,\alpha)$. 
\begin{lem}\label{reductionprep}
	 Let $\mathcal{C}, K, \alpha, \Delta$ be as in Theorem \ref{thm}. If $\mathcal{C}$ is pre-periodic by $(f,f)$ and $\mathcal{C} \cap \mathcal{G}_\alpha^2$ is non-empty then there exist $n,m  \in \mathbb{Z}_{\geq 0}$ such that 
	$$(f^{\circ n}, f^{\circ m})(\mathcal{C}) = \Delta.$$
	Moreover if $\mathcal{C} \cap \mathcal{S}_\alpha^2$ is non-empty then $n = m$. That is, there exists $n \in \mathbb{Z}_{\geq 0}$ such that 
	$$(f^{\circ n}, f^{\circ n})(\mathcal{C}) = \Delta.$$
	\end{lem} 
\begin{proof}  We first replace  $\mathcal{C}$ by an iterate of $\mathcal{C}$, $ (f^{\circ N},f^{\circ N})(\mathcal{C})$ that is periodic by $(f,f)$. By the classification of periodic subvarieties \cite[Theorem 6.2.4]{medvedevscanlon}, there exists a polynomial $g$, commuting with some iterate $f^{\circ M}$ of $f$ such that after possibly permuting $X,Y$ our curve  $\mathcal{C}$ is the the zero-locus of 
	$$Y - g(X) = 0.$$
Now take $(\beta, g(\beta)) \in \mathcal{G}_\alpha^2$. There exist $\ell, k \geq 0$ such that 
$$f^{\circ \ell}(\beta) = f^{\circ k}\circ g(\beta).$$
But then it holds for all $i \in \mathbb{Z}_{\geq 0}$, such that $k + i \in \mathbb{Z}_{\geq 0}M$, that 
$$f^{ \circ \ell + i}(\beta) = g\circ f^{\circ k + i}(\beta).$$
If $k \geq \ell$ then we deduce that $g \circ f^{\circ k - \ell} $ fixes $f^{\circ \ell + i}(\beta)$ for infinitely many $i$. As $\beta$ is not pre-periodic  by $f$  we infer that $g \circ f^{\circ k - \ell} $ is the identity which implies that $k = \ell$ and $g = \text{Id}$. Thus we have proven the Lemma if such $\beta$ exist and in particular if $\mathcal{C} \cap \mathcal{S}_\alpha$ is non-empty. If $k < \ell$ then 
$$g\circ f^{\circ k + i}(\beta) = f^{\circ \ell -k}\circ f^{\circ k + i}(\beta)$$
for infinitely many $i \in \mathbb{Z}_{\geq 0}$ and thus $g = f^{\circ \ell -k}$, which finishes the proof. 
	\end{proof} 
\section{Mahler functions and Böttcher coordinates}
For any field $\mathcal{K}$ (of characteristic 0) and any monic polynomial $f \in \mathcal{K}[X]$ of degree $d \geq 2$ there exists a unique Laurent series $\Psi \in \frac 1X\mathcal{K}[[X]]$ that satisfies 
\begin{align}\label{mahler}
\Psi(X^d) = f(\Psi(X))
\end{align}
and has a pole of residue 1 at 0 \cite{arboreal}. For example for $f = X^d$, we have $\Psi(X) = 1/X$. The equation (\ref{mahler}) defines a Mahler function \cite{NishiokaMahler} which is a well-studied class of functions in transcendental number theory. If $\mathcal{K}$ is a valued field with value $v$ there exists a maximal  $R^v \in \mathbb{R}_{> 0}\cup \{\infty\}$ such that $\Psi$ defines a meromorphic function on 
\begin{align*}
D_{v} = \{z \in \overline{\mathcal{K}}_v; |z|_v < R^v\}
\end{align*}
and moreover there exists an inverse $\Phi \in \frac{1}{X}\mathcal{K}[[\frac 1X]]$ on $\Psi(D_{v}^*)$ (where $D_{v}^* = D_v\setminus \{0\}$) that then satisfies 
\begin{align}
\Phi(f(X)) = \Phi(X)^d.
\end{align}
For non-archimedean places this was (to the best of the authors knowledge) introduced by Ingram in \cite{arboreal} and then further developed in \cite{demarcoetal}.
This inverse $\Phi$ is called the Böttcher coordinate associated to $f$. Our proofs crucially rely on properties of these functions that allow to conjugate the action of $f$ to the more well-understood action of the power map. Another viewpoint is that they provide the analogue of a Tate parametrization. \\

Now we return to a polynomial $f \in K[X]$ for a number field $K$. Our first remark is that at almost all places $v$ of $K$ it holds that $R^v = 1$ \cite[Theorem 6.5]{demarcoetal} and that for non-exceptional polynomials $
R^v \leq 1$ at all places $v$.\\
For $\alpha \in K$ it holds that either $\alpha$ is a pre-periodic point of $f$ or there exists a place $v$ of $K$ such  that $|f^{\circ n}(\alpha)|_v \rightarrow \infty$ \cite[Proof of Theorem 6.1]{polydynamics}. In particular for each $\alpha$ that is not pre-periodic there exists $N$ such that $f^{\circ N}(\alpha) \in \Psi(D_{v}^*)$ at some place $v$.   
Theorem \ref{thm} is invariant under replacing $\alpha$ by an iterate $f^{\circ N}(\alpha)$. Since if we can find a curve $\mathcal{C}$ that intersects $\mathcal{S}_\alpha^2$ in infinitely many points then $(f^{\circ N},f^{\circ N})(\mathcal{C})$ will intersect $\mathcal{S}_{f^{\circ N}(\alpha)}^2$ in infinitely many points. And if $(f^{\circ N},f^{\circ N})(\mathcal{C})$ is special then $\mathcal{C}$ is as well. Thus we may work in $\Psi(D_{v}^*)$ and choose $\alpha$ such that
\begin{align}\label{condalpha}
|\Phi(\alpha)|_v<  (R^v)^d.
\end{align} 
Now enters another dichotomy. We either have that the place $v$ we chose is finite or infinite. Our proof follows a divide and conquer strategy and in the next section we are going to treat the finite places. 

\section{Non-archimedean places}\label{nonarchplaces}
In this section we assume that $f \in K[X]$  for a number field $K$ ($\alpha \in K$) and that $f$ is monic and non-exceptional. 
Assume that $\alpha \in \Psi(D_{v}^*)$ for a finite place $v$ of $K$ and that (\ref{condalpha}) holds. We set  
\begin{align*}
\mathcal{O}_\alpha=\{f^{\circ n}(\alpha); n \in \Z_{\geq 0}\}. 
\end{align*}
and we are going to prove the following uniform version of Theorem \ref{thm} for $\alpha$ whose orbit tends to infinity at a finite place. We need the uniformity in the orbit of $\alpha$ to prove Theorem \ref{grandorbit}. 
 
\begin{prop} \label{prop1} Suppose there is a finite place $v$ of $K$ for which $\alpha \in \Psi(D_{v}^*)$.  Let $\mathcal{C}$ be an irreducible plane curve and suppose that $\mathcal{C}$ is not special. Then there exists a constant $C = C(\alpha, \mathcal{C})$ such that if $(\beta_1, \beta_2) \in \mathcal{C}(\overline{\Q}) \cap \mathcal{S}_{\beta}^2$ for some $\beta \in \mathcal{O}_\alpha$, then $f^{\circ n}(\alpha) = f^{\circ n}(\beta_1) = f^{\circ n}(\beta_2)$ for some $n \leq C$.
\end{prop}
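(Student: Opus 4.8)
The plan is to work entirely in the Böttcher coordinate at the finite place $v$. Under the standing assumption $\alpha \in \Psi(\Delta_{R_v}^*)$ and \eqref{condalpha}, every point $\beta \in \mathcal{O}_\alpha$ also lies in $\Psi(\Delta_{R_v}^*)$, and for any $\gamma \in \mathcal{S}_\beta$ we have $f^{\circ n}(\gamma) = f^{\circ n}(\beta)$ for some $n$, hence $\Phi(\gamma)^{d^n} = \Phi(\beta)^{d^n}$, so $\Phi(\gamma) = \zeta\,\Phi(\beta)^{1/d^n}$ for a suitable $d^n$-th root of unity $\zeta$ and a suitable $d^n$-th root of $\Phi(\beta)$ chosen in $\Delta_{R_v}$. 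Thus the pull-back of $\mathcal{C}$ under $(\Psi,\Psi)$ contains a point $(w_1,w_2)$ with $w_i^{d^{n}}$ a $v$-adic power of $\Phi(\beta)$ times a root of unity. First I would set up this pull-back carefully: choose a branch, let $\widetilde{\mathcal{C}}$ be an analytic component of $(\Psi,\Psi)^{-1}(\mathcal{C})$ through such a point, and record that a solution $(\beta_1,\beta_2)\in \mathcal{C}\cap \mathcal{S}_\beta^2$ produces a pair of roots of unity $(\zeta_1,\zeta_2)$ and exponents $n_1,n_2$ with $(\Phi(\beta)^{n/d^{n_1}}\zeta_1, \Phi(\beta)^{n/d^{n_2}}\zeta_2)$ lying on $\widetilde{\mathcal{C}}$.

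Next I would invoke the functional transcendence input. Since $\mathcal{C}$ is not special, Theorem \ref{transcendence2} (resp. \ref{transcendence1}) tells us that no component of $(\Psi,\Psi)^{-1}(\mathcal{C})$ is contained in a coset of $\G_m^2$; in particular $\widetilde{\mathcal{C}}$ is not a coset. This is the structural dividing line: if $\widetilde{\mathcal C}$ were a coset, the monomial relation would be exactly the ``trivial'' relation $f^{\circ n}(X) = f^{\circ n}(Y)$ or a fibre, contradicting non-speciality. So we are in the situation where $\widetilde{\mathcal C}$ is a genuinely transcendental (or at least non-coset algebraic) curve carrying infinitely many pairs of the above special form. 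The key step — and the one I expect to be the main obstacle — is then the Galois-theoretic/analytic argument sketched in the introduction: to a \emph{single} solution $(\beta_1,\beta_2)$ of large $v$-adic ``depth'' one builds an auxiliary function, vanishing along the pull-back of the entire $\Gal(\overline{\Q}/K)$-orbit of $(\beta_1,\beta_2)$, and applies a Poisson--Jensen type estimate to bound the number of zeros of this auxiliary function in a disc. Because the Galois orbit of a deep solution is large (roughly $\gg d^{n}/[\text{something}]$ conjugates, since the $d^{n}$-th roots and $d^{n}$-th roots of unity spread out under Galois), while Poisson--Jensen caps the zero count independently of the degree of the point, one forces $n$ to be bounded by a constant $C$ depending only on $\mathcal{C}$ and the local data of $\alpha$ (the radius $R_v$, $|\Phi(\alpha)|_v$, and the geometry of $\widetilde{\mathcal C}$).

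Finally I would assemble the pieces: the bound on $n$ is a bound on the depth $n_1,n_2$, but since $\beta \in \mathcal{O}_\alpha$ with $\Phi(\beta) = \Phi(\alpha)^{d^{k}}$ for the appropriate $k\geq 0$, the relation $f^{\circ n_i}(\beta_i) = f^{\circ n_i}(\beta)$ combined with $f^{\circ k}(\alpha) = \beta$ gives $f^{\circ (n_i+k)}(\beta_i) = f^{\circ(n_i+k)}(\alpha)$; absorbing $k$ requires a further argument that $k$ too is controlled — here one uses that raising $\beta = f^{\circ k}(\alpha)$ only shrinks $|\Phi(\beta)|_v$, which \emph{increases} the Galois orbit, so the Poisson--Jensen bound still applies with the same $C$ and in fact pins $n_i$ down after at most $C$ iterates measured from $\alpha$ itself. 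Thus $f^{\circ n}(\alpha) = f^{\circ n}(\beta_1) = f^{\circ n}(\beta_2)$ for some $n \leq C$, with $C = C(\alpha,\mathcal{C})$ as claimed. The main difficulty, to reiterate, is making the auxiliary-function construction and the Poisson--Jensen zero-counting precise enough to get a bound uniform in $\beta \in \mathcal{O}_\alpha$ and independent of $[\Q(\beta_1,\beta_2):\Q]$; the non-archimedean Poisson--Jensen formula and careful control of the Puiseux/Mahler expansion of $\Psi$ near $0$ are the technical heart.
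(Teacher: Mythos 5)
Your high-level plan matches the paper's strategy: pass to Böttcher coordinates at the finite place, use the functional transcendence input (Theorems~\ref{transcendence1} and~\ref{transcendence2}) to rule out the case that the pullback lies in a coset of $\G_m^2$, and then play a Poisson--Jensen zero-count against a Galois lower bound, with uniformity in $\beta\in\mathcal{O}_\alpha$ handled last. However, there is a genuine missing idea at the technical heart, and it is not merely a detail to be filled in.

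The auxiliary function in the paper is a specific one-variable restriction $\nu(x)=\mathcal{N}(\zeta_1\phi x^{k_1},\zeta_2\phi x^{k_2})$ of the pullback of $\mathcal{C}$ to a curve $\{(\zeta_1\phi x^{k_1},\zeta_2\phi x^{k_2})\}$ in $\G_m^2$ passing through the preimage of $(\beta_1,\beta_2)$, so that the whole $\mathrm{Gal}(\overline{K}_v/K_v)$-orbit lies on that curve. The Poisson--Jensen bound on the zeros of $\nu$ on $|x|=1$ is \emph{not} ``independent of the degree of the point'' as you state; it depends essentially on $|(k_1,k_2)|_\infty$ (through $\kappa(\nu,1)$ and through the radius of the annulus on which $\nu$ is analytic; see Lemma~\ref{lowerbound} and the estimate~(\ref{Z})). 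Naively one only gets $|(k_1,k_2)|_\infty\sim N$, which makes the Poisson--Jensen upper bound $\sim N$ — exactly the size of the Galois lower bound from Lemma~\ref{galoislowerbound} — and the argument does not close. What makes the proof work is the geometry-of-numbers step (Lemmas~\ref{box1}, \ref{boom}, \ref{rootsof1}): by a pigeonhole argument one can re-parametrize the root-of-unity pair $(\zeta_1,\zeta_2)$ of order $N$ as $(\zeta_e^{(1)}\zeta_N^{k_1},\zeta_e^{(2)}\zeta_N^{k_2})$ with $|(k_1,k_2)|_\infty\leq N^{c}/e$ for a fixed $c<1$, and control $e$. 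This produces a strict gap $N^c$ vs.\ $N$ that forces $N$ bounded. Your sketch neither constructs the one-variable auxiliary function along a chosen coset direction nor identifies the need for the pigeonhole step that keeps the slope $(k_1,k_2)$ small; without that, the two sides of the inequality have the same order of magnitude and the strategy fails. You also omit the necessary case analysis when $k_1k_2(k_1-k_2)=0$ (handled via the Galois lower bound and Bézout directly in the paper) and the lower bounds on $|\nu|_1$ (Lemmas~\ref{bounded}--\ref{upper2}), which are needed to turn Poisson--Jensen into an actual numeric bound.
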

The above proposition directly implies Theorem \ref{thm} if $|f^{\circ n}(\alpha)|_v \rightarrow \infty$ at a finite place $v$ of $K$. \\ 

In what follows in this section we set $D = D_v$ where $D_v$ is as in the previous section,  the maximal the maximal disc on which  $\Psi$ is meromorphic and similarly
\begin{align*}
D^* =\{z \in \overline{K}_v; 0<|z|_v< R^v\}.
\end{align*}

For non-archimedean places the absolute value behaves strikingly simply. It holds that $|\Psi(z)|_v = |z|_v^{-1}$ and that $\Psi$ is a bi-holomorphism 
\begin{align*}
\Psi: D^* \simeq \{z \in \overline{K}_v; |z| > (R^v)^{-1}\}
\end{align*} 
with inverse $\Phi$. \\

We first give a brief informal description of the proof of Proposition \ref{prop1}. We consider the polynomial $P$ whose zero-locus defines $\mathcal{C}$ and fix a point $(\beta_1, \beta_2) \in \mathcal{C} \cap \mathcal{S}_\alpha^2$. Due to our conditions on $\alpha$ we conclude that 
$$P(\Psi(\zeta_1\Phi(\alpha)), \Psi(\zeta_2\Phi(\alpha))) = 0$$
where $\zeta_1, \zeta_2$ are roots of unity. The goal now is to bound the order $N$ of $(\zeta_1, \zeta_2) \in \mathbb{G}_m^2$, which will lead to the bound on $n$ in Proposition \ref{prop1}. In order to achieve this we first re-write $(\zeta_1, \zeta_2)$ as 
$$(\zeta_1, \zeta_2) = (\zeta_1'\zeta_N^{k_1}, \zeta_2'\zeta_N^{k_2})$$
where $\zeta_1', \zeta_2'$ are auxilliary roots of unity of small order (see Lemma \ref{rootsof1}) and $k_1, k_2$ are also small compared to $N$. With the help of this re-parametrization of the roots of unity we  can construct an auxilliary function 
$$a(x) = P(\Psi(\zeta'_1x^{k_1}\Phi(\alpha)), \Psi(\zeta'_2x^{k_2}\Phi(\alpha)))$$
which is a non-archimedean analytic function on an open annulus that has the property that 
$$a(\zeta_N) = P(\beta_1, \beta_2) =  0.$$ 
We then bound the number of zeros of $a$ with the help of Lemma \ref{PJ} and compare this bound to a lower bound for the Galois orbit of $\zeta_N$ over $K_v$ (Lemma \ref{galoislowerbound}). The main bulk of the argument is concerned with bounding the number of zeros of $a$. However the first crucial step is to show that $a(x)$ is not the zero-function. This is where a result from transcendence theory comes in. We rely on work on Mahler functions by Nishioka and Ritt as well as its non-archimedean generalizations by Corvaja and Zannier (see Theorem \ref{transcendence1}). Their work shows that if $\alpha$ is an algebraic number, $\Phi(\alpha)$ is transcendental over $\overline{\mathbb{Q}}$. In combination with our Theorem \ref{transcendence2} this indeed shows that $a$ is not the zero-function unless $\mathcal{C}$ is a pull-back of the diagonal by an iterate of $(f,f)$. \\
In order to work with non-archimedean analytic functions we will need the notion of annuli  and denote by 
$$AI = \{z \in \overline{K}_v;  |z|_v \in I\}$$
for an interval $I \subset \mathbb{R}$. Moreover we denote by 
$$\mathcal{A}I= \left\{ \sum_{n \in \mathbb{Z}}a_n z^n; a_n \in \overline{K}_v, \lim_{|n| \rightarrow \infty}|a_n|_vr^n = 0 \text{ for all } r \in I \right\}$$
the ring of analytic functions on $AI$. For a concise treatment of this see \cite{cherry}. 
\subsection{Functional transcendence}
 In this subsection, let $P\in K[X,Y]\setminus\{0\}$ be an absolutely irreducible polynomial.  We are going to prove a functional transcendence statement that is crucial for the proof of Proposition \ref{prop1}. For the next theorem we do not need to assume that $f$ is non-exceptional and it is easy to see that the theorem holds for exceptional polynomials. \\
It basically states that the image of certain co-sets in $\G_m^2$ by $(\Psi, \Psi)$ are transcendental sets unless the image of the co-set is contained in a special curve. 
\begin{thm}\label{transcendence2}
Let $(k_1,k_2) \in \mathbb{Z}^2 \setminus\{(0,0)\}, \zeta_1, \zeta_2 \in \mu_{\infty}$  and let $a \in D^*$ be transcendental such that $a^{1/d} \in D^*$. Then  
\begin{align*}
P(1/\Psi(\zeta_1ax^{k_1}), 1/\Psi(\zeta_2ax^{k_2})), 
\end{align*}
defines an element of $\mathcal{A}I$, where $I$ is an open interval defined by the conditions
$$AI = \{x \in \overline{K}_v; ax^{k_1}, ax^{k_2} \in D^*\}.$$
If 
$$ P(1/\Psi(\zeta_1ax^{k_1}), 1/\Psi(\zeta_2ax^{k_2})) \in \mathcal{A}I$$
defines the zero-function in $x$ then $k_1k_2(k_1-k_2) = 0$. 
% = 0$ and $P$ divides $ (X - \Psi(\zeta_1a))(Y - \Psi(\zeta_2a))$ or $P$ divides $f^{\circ n}(X) - f^{\circ n}(Y)$ for some non-negative integer $n \geq 0$. 
\end{thm}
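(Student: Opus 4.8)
The plan is to reduce the transcendence claim to a statement about the ramification/growth behaviour of the Laurent series $\Psi$ near its singularity at $0$, and to exploit the fact that $1/\Psi$ has a simple zero at $0$ with nonzero derivative (residue $1$), so $1/\Psi(w) = w + O(w^2)$ as $w \to 0$. Suppose $k_1 k_2 (k_1 - k_2) \neq 0$; I will derive a contradiction by letting $x$ tend to $0$ (or $\infty$) along a suitable sequence and comparing the orders of vanishing of the two arguments $1/\Psi(\zeta_1 a x^{k_1})$ and $1/\Psi(\zeta_2 a x^{k_2})$. The key point is that, as $x \to 0$, $1/\Psi(\zeta_i a x^{k_i}) = \zeta_i a x^{k_i} + O(x^{2k_i})$ when $k_i > 0$, so the two coordinates of the putative algebraic relation are Puiseux series in $x$ whose leading exponents are $k_1$ and $k_2$; if $k_1 \neq k_2$ these are "independent" in a way that an absolutely irreducible $P$ cannot accommodate unless one of the $k_i$ is zero. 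The genuinely subtle case is when $k_1, k_2$ have opposite signs, because then one argument $\to 0$ while the other $\to \infty$ (i.e. approaches the boundary of $\Delta$ or leaves it), which is why we must use the bi-holomorphism $\Psi\colon \Delta^* \xrightarrow{\sim} \{|z| > R^{-1}\}$ and symmetry $x \mapsto 1/x$ to reduce to the case $k_1, k_2$ both positive (after possibly renaming and replacing $a$).

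Concretely, first I would reduce to $0 < k_1 < k_2$ (WLOG): using $x \mapsto x^{-1}$ turns $(k_1,k_2)$ into $(-k_1,-k_2)$, and if the signs differ, say $k_1 > 0 > k_2$, I would instead study the relation near a point where $|x|$ is chosen so that $|ax^{k_1}|_v$ is small; here the non-archimedean identity $|\Psi(z)|_v = |z|_v^{-1}$ makes the bookkeeping transparent — one coordinate has large absolute value while the other has small, and absolute irreducibility of $P$ over the valued field forces a Newton-polygon obstruction. Then, in the main case $0 < k_1 < k_2$, I would substitute the Puiseux/Taylor expansions $1/\Psi(\zeta_i a x^{k_i}) = \zeta_i a x^{k_i}(1 + c_{i,1} x^{k_i} + c_{i,2} x^{2k_i} + \cdots)$ into $P$, write $P(U,V) = \sum_{i,j} p_{ij} U^i V^j$, and look at the Newton polygon of $P\big(1/\Psi(\zeta_1 a x^{k_1}), 1/\Psi(\zeta_2 a x^{k_2})\big)$ as a series in $x$: the monomial $U^iV^j$ contributes leading exponent $i k_1 + j k_2$, and since $\gcd$-type considerations with $k_1 < k_2$ show distinct $(i,j)$ lying on $P$ give distinct exponents only if $P$ is supported on a single monomial or a line, I would conclude that for the series to vanish identically, $P$ must be (up to a unit and a monomial factor) of the form $U^{i_0}V^{j_0} Q$ where $Q$ is such that the leading terms cancel — and then use absolute irreducibility together with the transcendence of $a$ over $K$ (so the coefficients $\zeta_i^{\,\ell} a^{\ell}$ are algebraically independent of the coefficients of $P$) to force the cancellation to be trivial, contradiction.

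The main obstacle I anticipate is handling the case of opposite signs $k_1 k_2 < 0$ cleanly: there the two arguments of $\Psi$ cannot simultaneously be made small, so the naive "expand near $0$" argument does not apply to both coordinates at once, and one must genuinely play off the expansion of $1/\Psi$ near $0$ against the expansion of $\Phi = 1/\Psi$ (really of $\Psi$) near $\infty$. Carrying this out requires knowing the asymptotics of $\Psi$ on the full annulus $\Delta^*$, not just near $0$; the functional equation $\Psi(X^d) = f(\Psi(X))$ together with $|\Psi(z)|_v = |z|_v^{-1}$ should give enough control, but making the Newton-polygon argument watertight over $\overline{K}_v$ — in particular ensuring that the transcendence of $a$ really does prevent accidental algebraic coincidences among the coefficients — is the step that will need the most care. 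A secondary technical nuisance is that $x$ ranges only over the set where both $ax^{k_1}$ and $ax^{k_2}$ lie in $\Delta^*$, so I must check this set is Zariski-dense (indeed it contains a punctured disc, by the hypothesis $a, a^{1/d} \in \Delta^*$), which justifies passing from "vanishes for all such $x$" to "vanishes as a formal/analytic identity."
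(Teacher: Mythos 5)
Your treatment of the same‑sign case (say $0 < k_1 < k_2$) is sound and close in spirit to the paper's: after passing to $P^*(X,Y) = X^{\deg_X P}Y^{\deg_Y P}P(1/X,1/Y)$ and expanding $\mathcal{N}^*(x,y)=P^*(\Psi(x),\Psi(y))=\sum c_{nm}x^ny^m$, the coefficient of $x^k$ in $\nu^*(x)=\mathcal{N}^*(\zeta_1 a x^{k_1},\zeta_2 a x^{k_2})$ involves only finitely many pairs $(n,m)$ with $k_1 n + k_2 m = k$, and since $k_1\neq k_2$ each fixed value of $n+m$ pins down $(n,m)$ uniquely. Hence the coefficient is a polynomial in $a$, and transcendence of $a$ over $K$ kills every $c_{nm}$, contradicting $P\neq 0$.

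The case $k_1k_2<0$, however, contains a genuine gap. Your aside that the admissible set of $x$ ``contains a punctured disc'' holds only when $k_1,k_2$ share a sign; with $k_1>0>k_2$ the constraints $ax^{k_1},ax^{k_2}\in\Delta^*$ confine $x$ to an annulus, so $\nu^*$ is a two‑sided Laurent series in $x$, and — this is the real obstruction — each coefficient is now an \emph{infinite} power series in $a$ rather than a polynomial, because $k_1 n + k_2 m = k$ has infinitely many solutions with $n,m\geq p$ once the signs differ. Transcendence of $a$ then gives nothing directly, and your ``Newton‑polygon obstruction'' is left unspecified: making $|ax^{k_1}|_v$ small forces $|ax^{k_2}|_v$ large, and the product $|ax^{k_1}|_v^{|k_2|}\,|ax^{k_2}|_v^{k_1}=|a|_v^{k_1+|k_2|}$ is fixed, so there is no single slope to isolate. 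The paper gets around this with a substitution your proposal does not contain: set $x_1^d = x$, $y_1^d = y$ and use $\Psi(z^d)=f(\Psi(z))$ to generate a second relation $P^*(f\circ\Psi(x),f\circ\Psi(z))=0$ tied to the first by new monomial constraints; take the fibred product (setting $x_1=x$), then eliminate $x$ via the resultant $\mathrm{Res}_X\bigl(P^*(X,Y),P^*(f(X),f(Z))\bigr)$. The surviving constraint is $z=by$ for a single transcendental $b$, and in $R(\Psi(y),\Psi(by))$ the Laurent coefficients are again finite polynomials in $b$, so transcendence of $b$ forces $R\equiv 0$, which gives $\partial_X P\equiv 0$ and hence $k_2=0$. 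You would need to supply an argument equivalent to this step to close the gap.
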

\begin{proof} After possibly replacing $x$ by $1/x$ and relabelling variables we may assume that $k_1 \geq 0$ and $|k_1| \geq |k_2|$. We may also replace $a$ by $\zeta_1 a$ and set $\zeta= \zeta_2\zeta_1^{-1}$. We pass to the polynomial $P^* = X^{\deg_XP}Y^{\deg_YP}P(1/X,1/Y)$ and note that the Theorem is equivalent to showing that if $P^*(\Psi(ax^{k_1}),\Psi(\zeta ax^{k_2})) = 0$ identically then $(k_1-k_2)k_1k_2 = 0$. We set   $\mathcal{N}^*(x,y) = P^*(\Psi(x), \Psi(y))$ and will work with the expansion of $\mathcal{N}^*$
\begin{align*}
\mathcal{N}^*(x,y)= \sum_{n,m \in \Z_{\geq p}}c_{nm}x^ny^m \in K[[x,y]]
\end{align*}
and note that $\mathcal{N}^*$ is an analytic function on $(D^*)^2$.  Here $p$ is a fixed integer depending on $\mathcal{C}$.  If $k_1k_2 > 0$ we consider the series expansion of $\nu^* = P^*(\Psi(ax^{k_1}), \Psi(\zeta ax^{k_2})) $  
\begin{align*}
\nu^*(x)  = \sum_{k \in \Z}c_kx^k, \text{ }
c_k  = \sum_{k_1n + k_2 m = k}c_{nm}\zeta^{m}a^{n+m}
\end{align*}
and if $k_1 -k_2 \neq 0$ then, for each $\ell \in \Z$,  there is at most one solution $(n,m) \in \Z_{\geq p}^2$ to $k_1n + k_2m = k, n + m = \ell$. Moreover as $k_1, k_2,n,m \in \Z_{\geq p}$, $c_k$ is a polynomial in $a$ with coefficients in $\overline{\Q}$. Thus if $\nu^* = 0$ identically then as $a$ is transcendental $c_{nm} = 0$ for all $n,m \in \Z_{\geq p}$ which contradicts $P \neq 0$.   
Now assume that $k_1k_2 <0$. We can not use the same argument as above as the coefficients $c_k$ are now potentially infinite power series  in $a$. We still have that  for all $x \in A(|a|_v,R) = \{z \in \overline{K_v}; |a|_v<|z|< R \}$ there exists $y\in D^*$ satisfying 
\begin{align} 
P^*(\Psi(x), \Psi(y)) = 0, ~ x^{-k_2}y^{k_1} = \zeta^{k_1}a^{k_1 - k_2} \label{rell1}.
\end{align} 
We  make the substitution $ x_1^d = x, y_1^d = y$ and we find that for all $x_1 \in A^d$, where 
$$A^d = \{z \in A(|a|_v,R); z^d \in A(|a|_v, R)\} = A(|a|_v^{1/d}, R), $$ 
 there exists $y_1 \in D^*$ satisfying
\begin{align}\label{rell2} 
P^*(f\circ \Psi(x_1), f\circ \Psi(y_1)) = 0,  ~ x_1^{-dk_2}y_1^{dk_1} = \zeta_2^{k_1}a^{k_1 - k_2}, x_1, y_1 \in D^*.
\end{align}
Note that since $|a|_v< R^d$, $A^d$ contains infinitely many points. 
 We can take a fibred product of the sets defined by (\ref{rell1}) and (\ref{rell2}) by setting $x_1 = x$ and we obtain that for all $x \in A^d$ there exist $y,z \in D^*$ such that 
\begin{align*} 
P^*(\Psi(x), \Psi(y))& = P^*(f\circ \Psi(x), f\circ \Psi(z)) = 0, \\
x^{-k_2}y^{k_1} & =\zeta^{k_1} a^{k_ 1- k_2},\text{ } x^{-dk_2}z^{dk_1} = \zeta^{k_1}a^{k_1 -k_2}.
\end{align*}
Taking the resultant $R = \text{Res}_X(P^*(X,Y), P^*(f(X), f(Z))) \in K[Y,Z]$ with respect to the variable $X$ we obtain that there are infinitely many $y,z \in D^*$ satisfying 
\begin{align*}
R(\Psi(y),\Psi(z)) = 0, ~ y^{-dk_1}z^{dk_1} = \zeta^{(d-1)k_1}a^{(d-1)(k_1 - k_2)}.
\end{align*} 
Thus if $k_1 -k_2 \neq 0$ there exists a transcendental number $b, |b|_v < 1$ (more precisely $b$ is a $dk_1$-th root of $\zeta_2^{(d-1)k_1}a^{(d-1)(k_1 - k_2)}$) such that 
\begin{align*} 
h(y) = R(\Psi(y), \Psi(by)) = 0,  
\end{align*}
for infinitely many $y \in D^*$ and so $h$ is the zero-function. 
We set $R(\Psi(x),\Psi(y)) = \sum_{n,m \in \Z_{\geq 0}}r_{nm}x^ny^m$ and we find that the coefficients $h_k$ of $h(x) = \sum_{k \in \Z}h_kx^k$ satisfy
\begin{align*} 
h_k = \sum_{n,m \in \Z_{\geq r}, n + m = k}r_{nm}b^{m} =  0
\end{align*}
for some $r \in \Z$ depending on $P^*$. As $b$ is transcendental we obtain that $r_{nm}$ = 0 for all $n,m$ and so $R$ is the 0-polynomial.  It  follows that the partial derivative with respect to $X$, $\partial_XP =0$ identically. For example if not then also $\partial_X Pf' \neq 0 $ and the projection from the variety defined by $P(X ,Y) = P(f(X), f(Z)) =  0$ to the $X$-coordinate would be finite to 1. But $R = 0$ implies that we can find infinitely many solutions $(Y,Z)$ to $P(x_0 ,Y) = P(f(x_0), f(Z)) =  0$ for at least one fixed $x_0 \in \overline{K}$. \\
However if $\partial_X P = 0$ identically then $k_2 = 0$. This concludes the proof. 
\end{proof}

\subsection{Combinatorics}\label{combinatorics}
In this subsection we establish all the combinatorial lemmas that are needed to carry out the proof of Proposition \ref{prop1}. 
For $\underline{a} \in \Z^2$ and $N \geq 2$ we define $S_{\underline{a}} = \Z \underline{a} +N\Z^2$ and for  $c \geq 1$, $B_c = \{x \in \R^2; |x|_\infty \leq c\}$. Here $|\cdot|_\infty$ is the maximum norm on $\R^2$. 
\begin{lem}\label{box1} Let $\underline{a}= (a_1,a_2) \in  \Z^2 \setminus \{0\}$ be such that $\text{gcd}(a_1,a_2, N) = 1$ and $N \geq 17$. Let $c$ be any real satisfying $\frac 34 \leq c \leq  1$. There exist at least $N^{2c-1}/4$ distinct integer vectors $v$ in $S_{\underline{a}} \cap B_{N^{c}}$. 
\end{lem}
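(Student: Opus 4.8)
The plan is to read $S_{\underline a}$ as a lattice in $\R^2$ and count its points in the box $B_{N^c}$ by the geometry of numbers. First I would observe that the hypothesis $\gcd(a_1,a_2,N)=1$ forces the residue $\underline a\bmod N$ to have order exactly $N$ in $(\Z/N\Z)^2$, hence $S_{\underline a}=\Z\underline a+N\Z^2$ is a subgroup of $\Z^2$ of index exactly $N$; equivalently $S_{\underline a}$ is a lattice of covolume $N$. This is the one place the coprimality assumption enters, and it is exactly what produces the denominator in the target count $N^{2c-1}/4$, which is of order $\operatorname{vol}(B_{N^c})/(4\operatorname{covol}(S_{\underline a}))$.

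Next I would fix the successive minima $\lambda_1\le\lambda_2$ of $S_{\underline a}$ with respect to $|\cdot|_\infty$ together with a basis $v_1,v_2$ realizing them, which exists since we are in dimension $2$. Minkowski's first and second theorems applied to the convex body $[-1,1]^2$ give $\lambda_1\le N^{1/2}$ and $\lambda_1\lambda_2\le N$. Then I would split into two cases. If $\lambda_2\le N^c$, the points $iv_1+jv_2$ with $|i|\le N^c/(2\lambda_1)$ and $|j|\le N^c/(2\lambda_2)$ all lie in $B_{N^c}$ by the triangle inequality for $|\cdot|_\infty$, are pairwise distinct since $v_1,v_2$ is a basis, and there are at least $\tfrac{N^c}{2\lambda_1}\cdot\tfrac{N^c}{2\lambda_2}=\tfrac{N^{2c}}{4\lambda_1\lambda_2}\ge\tfrac{N^{2c-1}}{4}$ of them, using that an interval $[-t,t]$ contains at least $t$ integers for every real $t\ge 0$. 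If instead $\lambda_2>N^c$, then $\lambda_1\le N/\lambda_2<N^{1-c}\le N^{1/4}$ — here $c\ge 3/4$ is used — and already the multiples $iv_1$ with $|i|\le N^c/\lambda_1$ supply at least $N^c/\lambda_1>N^{2c-1}$ distinct points of $S_{\underline a}$ in $B_{N^c}$. In either case we obtain the desired bound.

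The argument is conceptually routine; the delicate point is the bookkeeping of constants, and this is where the explicit hypotheses $N\ge 17$ and $3/4\le c\le 1$ are calibrated (they guarantee e.g. $N^{2c-1}\ge 4$, so the conclusion is non-vacuous, and they make the two case estimates meet at the common bound $N^{2c-1}/4$ rather than at a weaker constant). One can also avoid invoking Minkowski's second theorem: produce a short vector $v_1$ with $|v_1|_\infty\le N^{1/2}$ by pigeonhole on the $(\lfloor N^{1/2}\rfloor+1)^2>N$ points of a small square lying in a single coset of $S_{\underline a}$, extend $v_1$ to a basis of $S_{\underline a}$, and reduce the second basis vector modulo $v_1$ to bound $|v_2|_\infty$ by roughly $2N/\lambda_1$; this is slightly lossier in the constant, and I suspect it is what forces the precise threshold $N\ge 17$. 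I expect the main obstacle to be pedantic rather than conceptual — keeping the floor functions and the case threshold $\lambda_2\lessgtr N^c$ aligned so that the lower bound comes out as exactly $N^{2c-1}/4$.
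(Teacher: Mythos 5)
Your argument is correct, but it takes a genuinely different route from the paper's. You invoke Minkowski's theorems: the coprimality hypothesis gives $S_{\underline a}$ covolume $N$, and you then split according to whether the second successive minimum $\lambda_2$ exceeds $N^c$, producing the required lattice points either from a full two-dimensional box of combinations $iv_1+jv_2$ (using $\lambda_1\lambda_2\le N$) or from multiples of the shortest vector alone (using $\lambda_1\le N^{1-c}$, which needs $c\ge 3/4$). The paper instead proceeds by pure pigeonhole: since $S_{\underline a}$ meets $[0,N)^2$ in exactly $N$ points (again from the order-$N$ observation), it covers $[0,N)^2$ by at most $4N^{2(1-c)}$ sub-boxes of side $N^c$, finds one containing at least $N^{2c-1}/4$ lattice points, and translates by one of them into $B_{N^c}$. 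Your approach relies on more machinery (Minkowski II and the fact that vectors realizing the successive minima form a basis in dimension $2$) and requires the case split; the paper's is more elementary, self-contained, and arguably cleaner for the constant bookkeeping. One small inaccuracy in your closing remarks: neither argument actually uses $N\ge 17$ in the proof of this lemma — your two cases close without it, and so does the paper's covering bound, which only needs $c\le 1$ to get $(1+N^{c-1})^2\le 4$. The hypothesis $N\ge 17$ is carried along to make the conclusion non-vacuous (it guarantees $N^{2c-1}/4>1$ for $c\ge 3/4$) and for use in the subsequent Lemma \ref{boom}, not to make this count go through.
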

\begin{proof} As $\text{gcd}(a_1, a_2,N) = 1$ the reduction of $\underline{a}$ $\mod N$ has exact order $N$ in $\Z^2/N\Z^2$. Thus $S_{\underline{a}} \cap[0,N)^2$ contains $N$ distinct elements. We can cover the box $[0,N)^2$ by at most $([N/N^c] +1)^2 \leq N^{2(1-c)}(1+N^c/N)^2 \leq 4 N^{2(1-c)}$ boxes of side length $N^c$ . Thus there is a box of side length $N^c$ that contains at least $N^{2c -1}/4$ distict integer vectors and translating by one fixed integer vector in this set we also find this many vectors in in $S_{\underline{a}} \cap B_{N^{c}}$.  
\end{proof}
Although the previous Lemma can already be used to give a new proof of Manin-Mumford in $\G_m^n$ \cite{mm} we need to squeeze out more information out of the structure of the set $S_{\underline{a}}$. 
\begin{lem}\label{boom} Let $\underline{a} = (a_1,a_2) \in \Z^2, N \in \Z$ be such that $\text{gcd}(a_1,a_2,N) = 1$ and $N \geq 17$. Let $c$ be real such that $\frac 34 \leq c \leq 1$. There exists an absolute constant $C_1 \geq 1$ such that for every real constant $C \geq 1$ there exists $(k_1,k_2) \in \Z^2, e \in \Z$ such that $\text{gcd}(k_1,k_2) = 1$, $e(k_1,k_2) \in S_{\underline{a}} \cap B_{N^{c}}$ and either $e \leq C_1C^2N^{1-c}$ or $|(k_1,k_2)|_\infty > C$.
\end{lem}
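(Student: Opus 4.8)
The statement to prove is Lemma \ref{boom}, which refines Lemma \ref{box1}: instead of merely counting lattice points of $S_{\underline a}\cap B_{N^c}$, we want to produce a \emph{primitive} direction $(k_1,k_2)$ (with $\gcd(k_1,k_2)=1$) such that some multiple $e(k_1,k_2)$ lands in $S_{\underline a}\cap B_{N^c}$, and moreover control $e$ versus the size of $(k_1,k_2)$: either $e$ is genuinely small (bounded by $C_1 C^2 N^{1-c}$), or the direction vector already has sup-norm exceeding $C$. The plan is to run a pigeonhole argument on the many lattice points supplied by Lemma \ref{box1}, organized by primitive direction.

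\textbf{Step 1: harvest points and split into two regimes.} By Lemma \ref{box1} the set $T:=S_{\underline a}\cap B_{N^c}$ contains at least $N^{2c-1}/4$ distinct integer vectors (after the translation; note $0\in S_{\underline a}$, and actually one checks $S_{\underline a}\cap[0,N)^2$ has $N$ points, so we may even assume $0\in T$ or work with $T\setminus\{0\}$, which still has at least $N^{2c-1}/4 - 1$ points). Suppose, for contradiction, that the conclusion fails for some constant $C\ge 1$: then \emph{every} nonzero $v\in T$, written uniquely as $v=e_v\,p_v$ with $p_v$ primitive and $e_v\ge 1$, satisfies both $e_v> C_1 C^2 N^{1-c}$ \emph{and} $|p_v|_\infty\le C$. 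So all points of $T\setminus\{0\}$ lie on the union of lines through the origin with primitive direction of sup-norm $\le C$; there are at most $O(C^2)$ such primitive directions, say at most $9C^2$ (crude bound on primitive vectors in $B_C$).

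\textbf{Step 2: pigeonhole on directions.} Since $T\setminus\{0\}$ has at least $\tfrac14 N^{2c-1}-1\ge \tfrac18 N^{2c-1}$ points (using $N\ge 17$, $c\ge 3/4$ so $N^{2c-1}\ge N^{1/2}\ge \sqrt{17}>4$), and these are distributed among at most $9C^2$ lines through the origin, some single line $\ell$ with primitive direction $p$, $|p|_\infty\le C$, contains at least $\tfrac{N^{2c-1}}{72 C^2}$ points of $T$. Every such point is of the form $e\,p$ with $1\le e$, and since the point lies in $B_{N^c}$ we get $e\le N^c/|p|_\infty\le N^c$; so these are at least $\tfrac{N^{2c-1}}{72C^2}$ distinct positive integers $e$, each at most $N^c$, for which $e p\in S_{\underline a}$. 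In particular there are two such, $e<e'$, with $e'-e\le \dfrac{N^c}{\,N^{2c-1}/(72C^2)\,} = 72 C^2 N^{1-c}$. Then $(e'-e)p = e'p - ep \in S_{\underline a}$ (since $S_{\underline a}$ is a subgroup of $\Z^2$, being $\Z\underline a + N\Z^2$), it is a nonzero multiple of the primitive vector $p$ with $|p|_\infty\le C$, and $(e'-e)|p|_\infty \le (e'-e)\cdot C \le 72 C^3 N^{1-c}$...

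\textbf{Step 3: land back in the box and conclude.} One must be slightly careful that $(e'-e)p$ lies in $B_{N^c}$, not merely $S_{\underline a}$: we have $(e'-e)\le 72 C^2 N^{1-c}$ and $|p|_\infty\le C$, so $|(e'-e)p|_\infty\le 72 C^3 N^{1-c}$, which is $\le N^c$ precisely when $N^{2c-1}\ge 72 C^3$. This is where a genuine case distinction is needed and where I expect the main friction: if $N$ is large relative to $C$ this is fine and we take $(k_1,k_2)=p$, $e=e'-e\le 72 C^2 N^{1-c}$, giving the first alternative with $C_1=72$ (after absorbing constants); if $N$ is \emph{not} large relative to $C$, i.e. $N^{2c-1}<72C^3$, then one argues directly that some primitive direction in $T$ must have sup-norm $>C$ — indeed if all primitive directions occurring in $T\setminus\{0\}$ had sup-norm $\le C$, the same pigeonhole as above forces two collinear points whose difference is a \emph{small} multiple of a small primitive vector lying in $B_{N^c}$ (the bound $72C^2N^{1-c}$ on the coefficient is $\ge 1$ here, and $72C^3 N^{1-c}\le N^c$ fails only in a bounded range which one handles by noting $e=e'-e$ itself is then $\le 72C^2N^{1-c}$ and we don't actually need $ep\in B_{N^c}$ — we only need $ep\in S_{\underline a}\cap B_{N^c}$, and $ep$ has a \emph{smaller} sup-norm than $e'p\in B_{N^c}$, so in fact $ep\in B_{N^c}$ automatically). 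This last observation — that $(e'-e)p$ has sup-norm strictly smaller than $e'p$ which is already in the box — actually removes the case distinction entirely: $|(e'-e)p|_\infty < |e'p|_\infty \le N^c$, so $(e'-e)p\in S_{\underline a}\cap B_{N^c}$ unconditionally, and we are done with $C_1$ an absolute constant coming from the counting (the factor $72$, possibly sharpened).

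\textbf{Main obstacle.} The only real subtlety is bookkeeping the constants so that the dichotomy ``$e\le C_1 C^2 N^{1-c}$ or $|(k_1,k_2)|_\infty>C$'' comes out clean with an \emph{absolute} $C_1$: one has to combine the counting bound $\ge N^{2c-1}/4$ from Lemma \ref{box1}, the count $\le 9C^2$ of primitive directions in $B_C$, and the trivial bound $N^{2c-1}\ge \sqrt N\ge\sqrt{17}$ (to ensure the $-1$ from discarding the origin is harmless), and then observe that collinear points in $B_{N^c}$ produce a difference vector whose primitive-coefficient gap is $\le 72C^2 N^{1-c}$ and whose sup-norm does not exceed that of points already in the box. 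I would write this up in exactly that order: invoke Lemma \ref{box1}; assume the conclusion fails; bound the number of admissible primitive directions; pigeonhole to get many collinear points; take a minimal gap between their coefficients; check the resulting vector still lies in $S_{\underline a}\cap B_{N^c}$ (free, by monotonicity of sup-norm along a ray); read off the value of $e$ and hence the contradiction.
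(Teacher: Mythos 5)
Your plan is sound and your closing observation is correct and genuinely useful: for collinear points $ep,e'p\in S_{\underline{a}}\cap B_{N^{c}}$ with $0<e<e'$ and $p$ primitive, the difference $(e'-e)p$ lies in $B_{N^{c}}$ automatically because $|(e'-e)p|_\infty<|e'p|_\infty\le N^{c}$, so the worry you flagged in Step~3 really does evaporate. However, two genuine gaps remain. First, the arithmetic $\tfrac14 N^{2c-1}-1\ge\tfrac18 N^{2c-1}$ requires $N^{2c-1}\ge 8$, whereas the hypotheses only give $N^{2c-1}\ge\sqrt{17}\approx 4.1$; the justification you offer ($\sqrt{17}>4$) does not suffice, and for $N=17$, $c=\tfrac34$ the claimed lower bound $\tfrac18 N^{2c-1}$ is simply false. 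Second, and more importantly, the pigeonhole in Step~2 produces a line with at least $N^{2c-1}/(72C^{2})$ points of $T\setminus\{0\}$, but for the gap argument you need at least \emph{two} such points whose coefficients have the \emph{same sign} --- and $T$ is symmetric about the origin, so the points on a line come in pairs $\pm w$, meaning a line carrying only $\{w,-w\}$ provides no usable difference (indeed $|2w|_\infty$ may exceed $N^{c}$). You therefore need the per-line count to be at least $3$, i.e.\ $N^{2c-1}\gg C^{2}$, and nothing in the hypotheses guarantees this: for large $C$ the pigeonhole may place only one nonzero point on each line and your argument stalls. That complementary regime must be handled explicitly; fortunately it is easy, since $N^{2c-1}\lesssim C^{2}$ gives $N^{c}=N^{2c-1}N^{1-c}\lesssim C^{2}N^{1-c}$, and then \emph{every} $w\in T\setminus\{0\}$ already satisfies $e_w\le|w|_\infty\le N^{c}\lesssim C^{2}N^{1-c}$, which is the first alternative directly. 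You should insert this case rather than letting it fall through.

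For comparison, the paper's own proof sidesteps the issue: instead of pigeonholing for a crowded line and differencing, it bounds $|T\setminus\{0\}|$ from above by $\sum_v|\mathcal{L}_v\cap T\setminus\{0\}|\le (2C+1)^{2}\cdot 2N^{c}/m$, where $m$ is the smallest sup-norm of a nonzero element of $T$, and compares against the lower bound from Lemma~\ref{box1}. This forces $m\le C_1C^{2}N^{1-c}$ in one inequality with no case split, and the minimizer $w=ep$ then has $e\le|w|_\infty=m$. Both routes rest on the same counting, but the paper's phrasing is uniform in all regimes whereas yours requires the explicit dichotomy above.
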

\begin{proof} Suppose that all $w \in S_{\underline{a}} \cap B_{N^c}$ are equal to $w = e(k_1,k_2)$ with $e \in \Z$ and $|(k_1,k_2)|_\infty \leq C$. Then $S_{\underline{a}} \cap B_{N^{c}}$ is contained in the union of at most $(2C +1)^2$ $\Z$-modules $\mathcal{L}_v = \{lv , l \in \Z\}$ of rank 1. However the cardinality of $\mathcal{L}_v \cap B_{N^{c}} \setminus \{0\}$ is bounded by $2N^{c}/|v|_\infty$. Let $m$ be the minimum of $|v|_\infty$ as $v$ runs through $v$ such that $\mathcal{L}_v \cap B_{N^{c}}\cap S_{\underline{a}} \neq \{0\}$.  From Lemma \ref{box1} follows 
\begin{align*}
2(2C+1)^2N^{c}/m \geq N^{2c-1}/4 - 1
\end{align*}
 and so $m \leq C_1C^2 N^{1-c}$. This finishes the proof.  
\end{proof}
We record the following, whose proof is given in \cite{mm}. 
\begin{lem}\label{2} Let $k \geq 2$ be a rational integers and $f = \text{gcd}(N,k)$. There exists an integer $\ell$ such that 
\begin{align*}
k = \ell f \mod N \text{ and } \text{gcd}(\ell,N)= 1. 
\end{align*} 
\end{lem}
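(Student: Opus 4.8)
The plan is to reduce the statement to a coprimality problem in a quotient and then solve it with the Chinese Remainder Theorem. First I would divide out the gcd: since $f = \gcd(N,k)$ we may write $N = fN'$ and $k = fk'$ with $\gcd(N',k') = 1$. The desired congruence $k \equiv \ell f \pmod N$ is, after cancelling $f$, equivalent to $\ell \equiv k' \pmod{N'}$. So the whole task reduces to producing an integer $\ell$ with $\ell \equiv k' \pmod{N'}$ and $\gcd(\ell, N) = 1$.

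Next I would observe that any $\ell \equiv k' \pmod{N'}$ automatically satisfies $\gcd(\ell, N') = 1$, because $\gcd(k', N') = 1$. Hence it is enough to arrange in addition that $\gcd(\ell, f) = 1$: together with $\gcd(\ell, N') = 1$ this forces $\gcd(\ell, fN') = \gcd(\ell, N) = 1$. To do this, let $m$ be the product of those primes that divide $f$ but do not divide $N'$; then $\gcd(m, N') = 1$, so by the Chinese Remainder Theorem there exists $\ell$ with $\ell \equiv k' \pmod{N'}$ and $\ell \equiv 1 \pmod m$.

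Finally I would verify this $\ell$ works by checking each prime $p \mid f$. If $p \mid N'$, then $\ell \equiv k' \pmod{N'}$ gives $p \mid \ell - k'$, and since $\gcd(k',N') = 1$ we have $p \nmid k'$, hence $p \nmid \ell$. If $p \nmid N'$, then $p \mid m$ and $\ell \equiv 1 \pmod p$, so again $p \nmid \ell$. Therefore no prime divisor of $f$ divides $\ell$, i.e. $\gcd(\ell, f) = 1$, and the conclusion follows from the previous paragraph. There is no genuine obstacle in this argument; the only point needing a little care is the bookkeeping distinction between primes of $f$ that already divide $N'$ (for which nothing must be imposed) and those that do not (for which the extra congruence $\ell \equiv 1 \pmod m$ is used).
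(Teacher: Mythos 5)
Your argument is correct. The paper itself does not supply a proof of this lemma but instead defers to the reference \cite{mm}, so there is no in-paper proof to compare against directly. That said, your reduction (cancel $f$ to get $\ell \equiv k' \pmod{N'}$, note coprimality to $N'$ is automatic, and impose the extra congruence $\ell \equiv 1 \pmod{m}$ via the Chinese Remainder Theorem to handle the primes of $f$ not already dividing $N'$) is a clean and complete verification of the claim, with the case split over primes of $f$ handled correctly.
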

Now we apply what we established combinatorially to roots of unity. 
\begin{lem} \label{rootsof1} Fix constants $C \geq 1$ and $3/4 \leq c \leq 1$. Let $(\zeta_1, \zeta_2)$ be a point of order $N \geq 17 $ in $\G_m^2$. There exists a primitive $N$-th root of unity $\zeta$ and integers $k_1,k_2,e$ such that $\text{gcd}(k_1,k_2) = 1$, $|(k_1,k_2)|_\infty \leq N^{c}/e$ and $(\zeta_1, \zeta_2) = (\zeta_e^{(1)}\zeta_N^{k_1}, \zeta_e^{(2)}\zeta_N^{k_2})$ where $\zeta_e^{(1)}, \zeta_e^{(2)}$ are $e$-th roots of 1. Moreover either $e \leq C_1C^2 N^{1-c }$ or $|(k_1,k_2)|_\infty>  C$ for an absolute constant $C_1$ independent of $C$ and $N$. 
\end{lem}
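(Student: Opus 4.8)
The plan is to pull the statement back to Lemma \ref{boom} applied to an exponent vector of $(\zeta_1,\zeta_2)$, and then to spend the remaining effort on a short $\gcd$-computation that lets us ``rotate'' the ambient primitive root of unity. First I would fix an arbitrary primitive $N$-th root of unity $\omega$ and write $\zeta_1=\omega^{a_1}$, $\zeta_2=\omega^{a_2}$; since the order of $(\omega^{a_1},\omega^{a_2})$ in $\G_m^2$ equals $N/\gcd(a_1,a_2,N)$ and $(\zeta_1,\zeta_2)$ has order $N$, we get $\gcd(a_1,a_2,N)=1$, so $\underline a=(a_1,a_2)$ meets the hypotheses of Lemma \ref{boom} (recall $N\ge 17$ and $3/4\le c\le 1$). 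Feeding $\underline a$, $N$, $c$ and the given $C$ into Lemma \ref{boom} produces a primitive vector $(k_1,k_2)$ and an integer $e\ge 1$ (the lattice point $e(k_1,k_2)$ produced by Lemma \ref{boom} is nonzero and $(k_1,k_2)$ is primitive, so $e$ may be taken positive) with either $e\le C_1C^2N^{1-c}$ or $|(k_1,k_2)|_\infty>C$. From $e(k_1,k_2)\in B_{N^c}$ one reads off $|(k_1,k_2)|_\infty\le N^c/e$, which is the asserted bound, and the final dichotomy is exactly what Lemma \ref{boom} delivers; so only the factorization $\zeta_i=\zeta_e^{(i)}\zeta^{k_i}$ remains.

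For that I would unwind the membership $e(k_1,k_2)\in S_{\underline a}=\Z\underline a+N\Z^2$: there is $t\in\Z$ with $ek_i\equiv ta_i\pmod N$ for $i=1,2$. The crux is the identity $\gcd(t,N)=\gcd(e,N)$. Indeed $\gcd(t,N)$ divides each $ta_i$, hence each $ek_i$, hence $\gcd(ek_1,ek_2,N)$, which divides $e\gcd(k_1,k_2,N)=e$ since $\gcd(k_1,k_2)=1$; symmetrically $\gcd(e,N)$ divides each $ek_i$, hence each $ta_i$, hence $\gcd(ta_1,ta_2,N)$, which divides $t\gcd(a_1,a_2,N)=t$ since $\gcd(a_1,a_2,N)=1$; as each of the two also divides $N$, they divide one another. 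Writing $g=\gcd(t,N)=\gcd(e,N)$, we have $g\mid e$, and $e/g$ is coprime to $N/g$ (and $t/g$ is a unit modulo $N/g$). Hence the congruence $jt\equiv e\pmod N$ is solvable, its solution set is a residue class modulo $N/g$ represented by $(e/g)(t/g)^{-1}$, and this class is coprime to $N/g$; so the Chinese remainder theorem furnishes a lift $j$ to $\Z/N\Z$ with $\gcd(j,N)=1$. (Alternatively, one may first apply Lemma \ref{2} to write $t\equiv\ell g\pmod N$ with $\ell$ prime to $N$, then solve $j\ell\equiv e/g\pmod{N/g}$ and lift as above.)

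Finally I would set $\zeta=\omega^{j}$ — a primitive $N$-th root of unity — and $\zeta_e^{(i)}=\zeta_i\zeta^{-k_i}=\omega^{a_i-jk_i}$. Then for $i=1,2$
\begin{align*}
e(a_i-jk_i)=ea_i-j(ek_i)\equiv ea_i-jta_i=(e-jt)a_i\equiv 0\pmod N ,
\end{align*}
so $\zeta_e^{(i)}$ is an $e$-th root of unity and $\zeta_i=\zeta_e^{(i)}\zeta^{k_i}$, which is the claim; the moreover-clause is the dichotomy inherited from Lemma \ref{boom}. The single delicate point is the middle paragraph — establishing $\gcd(t,N)=\gcd(e,N)$ and converting it into an honest primitive root $\zeta$; everything else is bookkeeping on top of Lemma \ref{boom}.
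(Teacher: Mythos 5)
Your proof follows the same basic line as the paper's — reduce to Lemma~\ref{boom} applied to an exponent vector $\underline a$ of $(\zeta_1,\zeta_2)$, read off the bound $|(k_1,k_2)|_\infty\le N^c/e$ and the dichotomy directly from Lemma~\ref{boom}, and then ``rotate'' the ambient primitive root of unity — but your treatment of that last step is considerably more careful than the paper's and, in fact, supplies a needed ingredient. The paper writes $k\equiv f\ell\pmod N$ with $f=\gcd(k,N)$ and $\ell$ a unit (Lemma~\ref{2}), proves $f\mid e$, and then asserts that replacing $\zeta_N$ by $\zeta_N^{\ell^*}$ finishes the proof. But with $\zeta=\zeta_N^{\ell^*}$ and $\zeta_i=\zeta_N^{a_i}$, one computes
\begin{align*}
e\bigl(a_i-\ell^*k_i\bigr)\equiv (e-f)\,a_i \pmod N,
\end{align*}
which is only forced to vanish mod $N$ (for both $i$, given $\gcd(a_1,a_2,N)=1$) when $N\mid e-f$; the information $f\mid e$ alone does not give this. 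A concrete small instance: $N=4$, $\underline a=(1,0)$, $e(k_1,k_2)=3(1,0)$, $t=k=3$, $f=1$, $\ell=\ell^*=3$, and $\omega^{a_1-\ell^*k_1}=\omega^{-2}$ has order $2\nmid 3$, so the claimed $e$-th root does not drop out of the bare substitution. Your middle paragraph identifies the correct bridge: the identity $\gcd(t,N)=\gcd(e,N)$ (which uses both $\gcd(k_1,k_2)=1$ and $\gcd(a_1,a_2,N)=1$), from which one solves $jt\equiv e\pmod N$ with $j$ a unit mod $N$ and then checks directly that $e(a_i-jk_i)\equiv(e-jt)a_i\equiv 0\pmod N$. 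This is exactly what the paper's sketch elides — it establishes only the forward divisibility $f=\gcd(t,N)\mid e$ and stops short of $\gcd(e,N)\mid f$ and of producing the right $j$ — so your version is both correct and closes a genuine gap in the paper's argument. (Two tiny blemishes in your write-up, neither serious: the divisibility $\gcd(ta_1,ta_2,N)\mid t\gcd(a_1,a_2,N)$ deserves a prime-by-prime justification rather than the inline equality you give, and you should remark, as you do in passing, that Lemma~\ref{boom} can be read with $e\ge1$.)
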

\begin{proof} There exists an integer vector $\underline{a} = (a_1, a_2)$ with the property $\text{gcd}(a_1,a_2,N) = 1$ such that $(\zeta_1, \zeta_2) = (\zeta_N^{a_1}, \zeta_N^{a_2})$ for a primitive $N$-th root of unity $\zeta_N$. Applying Lemma  \ref{boom} we find $(k_1, k_2), e$ as in the statement as well as an integer $k$ such that $k(a_1, a_2) = e(k_1,k_2) \mod N$.   By Lemma \ref{2} we can replace $k$ by $f\ell$ where $f = \text{gcd}(k,N)$ and $\ell$ is invertible $\mod N$. Since $k_1,k_2$ are co-prime we deduce that $f|e$. Denoting by $\ell^*$ the inverse of $\ell \mod N$ we can replace $\zeta_N$ by  $\zeta_N^{\ell^*}$ and obtain the Lemma. 
\end{proof}

\subsection{Zero-counting} 
In this subsection we briefly recall the main counting results over $p$-adic fields. They are consequences of a careful study of Newton polygons and all proofs can be found in the notes \cite{cherry}. We recall that $K_v$  is a completion of $K$ with respect to a non-archimedean place $v$ and we let $g(z) = \sum_{n \in \Z}a_nz^n, a_n \in K_v $ ($g  \in \mathcal{A}[r_1,r_2)$) be a power series defining a non-constant analytic function on an annulus 
$$A[r_1,r_2) = \{z \in \overline{K}_v; r_1 \leq |z|_v < r_2 \}$$
 for real $0<r_1 <r_2 $. 
We define
\begin{align*}
N(g,0,r) = \sum_{z \in   A[r_1,r), g(z) = 0} \log \frac r{|z|},
\end{align*}
(counting with multiplicity) as well as 
\begin{align*}
|g|_r = \sup_n\{|a_n|_vr^n\}.
\end{align*}
We then define
\begin{align*} 
\kappa(f,r) = \inf\{n, |a_n|_vr^n = |g|_r\}.
\end{align*}

We will need the following Lemma \cite[Theorem 2.5.1]{cherry} which is central to our arguments.
\begin{lem}\label{PJ}(Poisson-Jensen)   With $g$ as above it holds that 
\begin{align*}
N(g,0,r) + \kappa(g,r_1)\log r + \log |a_{\kappa(g,r_1)}|_v = \log |g|_r.
\end{align*}
\end{lem}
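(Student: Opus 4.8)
The plan is to reduce the statement to the elementary convexity theory of the ``valuation polygon'' of $g$, i.e. to Newton polygons over $\overline{K}_v$. Put $t=\log r$ and set
\[
\psi(t)\;=\;\log|g|_{e^t}\;=\;\sup_{n}\bigl(\log|a_n|_v+nt\bigr).
\]
As a supremum of affine functions of $t$, the function $\psi$ is convex and piecewise linear on $(\log r_1,\log r_2)$; since $g$ is analytic on the annulus $A[r_1,r_2)$, which is \emph{closed} at $r_1$, Weierstrass preparation shows the supremum defining $|g|_{r_1}$ is attained, so $\kappa(g,r_1)=\inf\{n:\log|a_n|_v+n\log r_1=\psi(\log r_1)\}$ is a well-defined finite integer and $\psi(\log r_1)=\log|a_{\kappa(g,r_1)}|_v+\kappa(g,r_1)\log r_1$. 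More generally the left derivative of $\psi$ at any interior $t$ equals $\kappa(g,e^t)$.

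The one nontrivial input is the standard dictionary between the Newton polygon of $g$ (the lower convex hull of the points $(n,v(a_n))$, with $v(\cdot)=-\log|\cdot|_v$) and the absolute values of zeros: for each $\rho\in(r_1,r_2)$ the number of zeros $z$ of $g$ with $|z|_v=\rho$, counted with multiplicity, equals the horizontal length of the edge of the Newton polygon of slope $\log\rho$, equivalently the jump $\psi'_+(\log\rho)-\psi'_-(\log\rho)$ of the non-decreasing, piecewise constant derivative of $\psi$. This is exactly the Weierstrass factorization of analytic functions on subannuli recalled in \cite{cherry}, and I would cite it rather than reprove it.

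Granting this, the formula follows by integrating $\psi'$. For $s\in(\log r_1,\log r)$ one has
\[
\psi'(s)\;=\;\kappa(g,r_1)\;+\!\!\sum_{\substack{z:\,g(z)=0\\ r_1\le|z|_v\le e^s}}\!\!\operatorname{ord}_z(g),
\]
because just to the right of $\log r_1$ the derivative equals $\psi'_+(\log r_1)=\kappa(g,r_1)+\#\{\text{zeros on }|z|_v=r_1\}$ and it then increases by $\operatorname{ord}_z(g)$ each time $e^s$ passes the absolute value of a further zero. Integrating from $\log r_1$ to $\log r$ and swapping the finite sum with the integral gives
\[
\psi(\log r)-\psi(\log r_1)\;=\;\kappa(g,r_1)\,\log\frac{r}{r_1}\;+\!\!\sum_{r_1\le|z|_v\le r}\!\!\operatorname{ord}_z(g)\,\log\frac{r}{|z|_v},
\]
and the last sum is $N(g,0,r)$ (zeros with $|z|_v=r$ contribute $0$). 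Substituting $\psi(\log r_1)=\log|a_{\kappa(g,r_1)}|_v+\kappa(g,r_1)\log r_1$ and cancelling the two $\kappa(g,r_1)\log r_1$ terms yields $\log|g|_r=N(g,0,r)+\kappa(g,r_1)\log r+\log|a_{\kappa(g,r_1)}|_v$, as asserted.

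The only place where care is needed is the endpoint bookkeeping — whether zeros of absolute value exactly $r_1$, or exactly $r$, are included, and the matching of the left/right derivative conventions with the definition of $\kappa$; this is precisely why the statement is phrased with $\kappa(g,r_1)$, the \emph{smallest} maximizing index at the inner radius, rather than a bare $\log|g|_{r_1}$. Once these conventions are fixed as above there is nothing further to check, and the cleanest exposition is simply to invoke \cite[Theorem 2.5.1]{cherry}.
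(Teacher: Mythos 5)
The paper does not prove this lemma at all: it is stated as a citation, \cite[Theorem 2.5.1]{cherry}, and the only justification offered is the pointer to the reference, which you also note at the end. Your derivation via the valuation polygon $\psi(t)=\log|g|_{e^t}=\sup_n(\log|a_n|_v+nt)$ is the standard underlying proof and is correct: the identification of $\kappa(g,e^t)$ with the left slope of $\psi$, the fact that zeros with $|z|_v=\rho$ correspond to the jump $\psi'_+(\log\rho)-\psi'_-(\log\rho)$, and the integration-by-parts/Fubini step recovering $\sum \operatorname{ord}_z(g)\log(r/|z|_v)=N(g,0,r)$ all check out, and the $\kappa(g,r_1)\log r_1$ terms cancel as you say. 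The only nitpick is in the displayed formula for $\psi'(s)$: the sum should run over $r_1\le|z|_v<e^s$ (strict at the upper end) for $\psi'(s)$ to be the actual derivative at a non-kink $s$, though this is a measure-zero issue that does not affect the integral. So the proposal is a correct, slightly more explicit version of what the paper delegates to \cite{cherry}; it buys self-containedness at the cost of reproving a standard fact, which, as you yourself observe, the paper is right not to do.
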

\subsection{Lowerbounds}
In this short subsection we record the Galois lower bounds that are valid for roots of unity over completions of number fields. As usual $K_v$ is a completion of $K$ with respect to a non-archimedean place $v$ of K.
\begin{lem} \label{galoislowerbound}  Let $N$ be a positive  integer dividing a power of $d$ and $\zeta_N$ a primitive $N$-th root of 1. There exists a positive real number $C_G$ depending on $v,d,K$ such that 
\begin{align*}
[K_v(\zeta_N):K_v] \geq C_G N. 
\end{align*}
\end{lem}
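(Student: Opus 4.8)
The plan is to reduce the statement to the prime-power case and then use the standard description of local cyclotomic extensions. First I would factor $d = \prod_{i=1}^s p_i^{e_i}$, so that any $N$ dividing a power of $d$ has the form $N = \prod_{i=1}^s p_i^{f_i}$ with the primes $p_i$ fixed once and for all by $d$. Writing $\zeta_N = \prod_i \zeta_{p_i^{f_i}}$, the field $K_v(\zeta_N)$ is the compositum of the $K_v(\zeta_{p_i^{f_i}})$, so it suffices to bound each local degree $[K_v(\zeta_{p^f}):K_v]$ from below and then multiply: indeed $[K_v(\zeta_N):K_v] \geq \max_i [K_v(\zeta_{p_i^{f_i}}):K_v]$, and since there are only $s \leq \log_2 d$ primes involved one of the $p_i^{f_i}$ is at least $N^{1/s}$, which already gives a bound of the shape $c\,N^{1/s}$ — but to get the linear bound $C_G N$ one must be slightly more careful and take the product of the individual contributions rather than the max, using that the ramified parts of these extensions for distinct residue characteristics are ``independent''.

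The key local input is this: fix a prime $p$ and let $\ell = \ell(v)$ be the residue characteristic of $v$. If $p \neq \ell$, then $\Q_\ell(\zeta_{p^f})/\Q_\ell$ is unramified with degree equal to the multiplicative order of $\ell$ modulo $p^f$, which grows like a constant times $p^f$ (precisely, the order of $\ell$ mod $p^f$ is $p^{f-f_0}\cdot(\text{order mod }p^{f_0})$ for $f \geq f_0$, hence $\geq p^{f}/p^{f_0} = c_p\, p^f$); since $K_v/\Q_\ell$ is finite of some bounded degree $m_v$, we get $[K_v(\zeta_{p^f}):K_v] \geq [\Q_\ell(\zeta_{p^f}):\Q_\ell]/m_v \geq (c_p/m_v) p^f$. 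If $p = \ell$, then $\Q_\ell(\zeta_{p^f})/\Q_\ell$ is totally ramified of degree $\phi(p^f) = p^{f-1}(p-1) \geq p^f/2$, and again dividing by $m_v = [K_v:\Q_\ell]$ gives a linear lower bound $[K_v(\zeta_{p^f}):K_v] \geq p^f/(2 m_v)$. In both cases the constant depends only on $p$ (hence only on $d$), on $\ell$ (hence on $v$), and on $m_v$ (hence on $v$ and $K$), exactly as claimed.

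To assemble the global bound I would argue that $[K_v(\zeta_N):K_v] \geq \prod_{i} \frac{[K_v(\zeta_{p_i^{f_i}}):K_v]}{?}$ up to a loss controlled by the number of primes $s$; concretely, for a compositum of fields $F_i/K_v$ one has $[\,\prod_i F_i : K_v] \geq [F_j : K_v]$ for each $j$, and a short induction replacing $K_v$ by $K_v(\zeta_{p_1^{f_1}})$ — which is still a finite extension of $\Q_\ell$ of bounded degree, so the local analysis above still applies with slightly worse constants — yields $[K_v(\zeta_N):K_v] \geq \prod_{i=1}^s c_i\, p_i^{f_i} = \big(\prod_i c_i\big) N$ with all the $c_i \in (0,1]$ depending only on $d,v,K$. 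Setting $C_G = \prod_i c_i$ finishes the proof. The main obstacle, and the only place requiring genuine care, is this bookkeeping of constants in the compositum step: one must check that enlarging the base field from $K_v$ to $K_v(\zeta_{p_1^{f_1}},\dots,\zeta_{p_{i-1}^{f_{i-1}}})$ does not destroy ramification/inertia in the next factor — for the unramified (prime-to-$\ell$) factors this is automatic since unramified extensions of a fixed local field of given residue degree are unique and their degrees multiply correctly, and for the single wildly ramified factor at $p = \ell$ one uses that a totally ramified extension of degree $\phi(p^f)$ cannot be ``absorbed'' by an unramified one — but the uniform dependence of the resulting $C_G$ on $d$, $v$, $K$ alone (and not on $N$) must be tracked explicitly.
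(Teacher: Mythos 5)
The paper does not prove this lemma from scratch; it simply cites Corollary~6.5 of Ingram's \emph{Arboreal Galois representations and uniformization of polynomial dynamics} (the reference \texttt{polydynamics}), so your write-up is an independent proof. Your overall strategy — reduce to prime powers, split into the unramified case $p\neq\ell$ and the wildly ramified case $p=\ell$, then reassemble — is the right one and matches the standard local-cyclotomic argument, but the assembly step as written contains two errors that you would need to repair.

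First, when you pass from $K_v$ to $K_v(\zeta_{p_1^{f_1}})$ in the induction you assert this is ``still a finite extension of $\Q_\ell$ of bounded degree.'' That is false: $[K_v(\zeta_{p_1^{f_1}}):\Q_\ell]$ grows without bound as $f_1\to\infty$, so you cannot reuse the same constant $m_v$ over the enlarged base. Second, for the unramified factors you say the ``degrees multiply correctly.'' They do not: unramified extensions of a fixed local field form a tower indexed by $\widehat{\Z}$, so the compositum of two unramified extensions of degrees $d_1,d_2$ has degree $\operatorname{lcm}(d_1,d_2)$, not $d_1d_2$. If, say, $\operatorname{ord}_{p_1^{f_1}}(q)$ and $\operatorname{ord}_{p_2^{f_2}}(q)$ shared a large common factor, the naive product bound would fail. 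The fix is to avoid the induction entirely: write $N=\ell^a M$ with $\gcd(M,\ell)=1$, use that $\Q_\ell(\zeta_{\ell^a})/\Q_\ell$ is totally ramified of degree $\phi(\ell^a)\geq\ell^a/2$ and $\Q_\ell(\zeta_M)/\Q_\ell$ is unramified of degree $\operatorname{ord}_M(\ell)$, invoke linear disjointness of a totally ramified with an unramified extension to get $[\Q_\ell(\zeta_N):\Q_\ell]=\phi(\ell^a)\cdot\operatorname{ord}_M(\ell)$, and then bound $\operatorname{ord}_M(\ell)=\operatorname{lcm}_{p_i\neq\ell}\operatorname{ord}_{p_i^{f_i}}(\ell)$ from below by examining $p_i$-adic valuations: since $\operatorname{ord}_{p_i^{f_i}}(\ell)=e_i p_i^{f_i-k_i}$ for $f_i\geq k_i$ (with $e_i,k_i$ fixed once $d$ and $\ell$ are fixed), the $p_i$-part of the lcm is at least $p_i^{f_i-k_i}$, giving $\operatorname{ord}_M(\ell)\geq M\prod_i p_i^{-k_i}$. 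Dividing by $[K_v:\Q_\ell]$ then yields the linear lower bound in $N$ with a constant depending only on $d$, $v$, $K$, as required. Your proof has the correct skeleton but the compositum bookkeeping does not close as written.
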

\begin{proof}
This follows from Corollary 6.5 in \cite{polydynamics}. 
\end{proof}
\subsection{Auxilliary function}
In what follows we assume that $\mathcal{C}$ is not special. 
We switch coordinates first. If the curve  $\mathcal{C}$ is given by  affine coordinates $(X_1, X_2)$ we switch to coordinates $(X,Y) = (1/X_1,1/X_2)$ and denote by $P \in \mathcal{O}_K[X,Y]$ the absolutely irreducible polynomial defining $\mathcal{C}$ in these coordinates. We then set 
\begin{align*}
\mathcal{N}(x,y) = P(1/\Psi(x), 1/\Psi(y)) = \sum_{n,m \in \Z_{\geq 0}}a_{nm}x^ny^m \in K[[x,y]].
\end{align*}

For a tuple $\underline{s}= (\zeta_1, \zeta_2, k_1,k_2, \beta)$ where $\zeta_1, \zeta_2$ are roots of unity of order $e$ dividing some power of $d$ and $k_1,k_2$ are co-prime rational integers satisfying $k_1 > 0, k_1 \geq k_2$ and $\beta \in \mathcal{O}_\alpha$ we define $\nu = \nu_{\underline{s}}$ by
\begin{align}\label{nu}
\nu(x) = \mathcal{N}(\zeta_1\phi x^{k_1}, \zeta_2\phi x^{k_2}) = \sum_{k \in \Z}b_kx^k,
\end{align}
where $\phi = \Phi(\beta)$ and $b_k$ is given by 
\begin{align}\label{bk}
b_k = \sum_{k_1n + k_2m = k}a_{nm}\phi^{n+m}\zeta_1^{n}\zeta_2^{m}. 
\end{align}
We note that $b_k$ does not necessarily lie in $\overline{K}$ but in $\overline{K}_v$. Moreover the function $\nu$ is an analytic function on the annulus $A[1,r)$  where $r = R/|\phi|_v^{1/{|(k_1,k_2)|_\infty}}$.  Note that we have $|\nu|_v \leq  1$. \\

Now we proceed to bounding $r$ and $|\nu|_1$ from below.  We first prove these bounds up to finitely many possibilities for $\beta \in \mathcal{O}_\alpha$. 

\begin{lem}\label{bounded} There exists a constant $C_{O}$ depending only on $\mathcal{C}$ such that if $|\phi|_v < C_O$ and $k_1 \neq k_2$, then 
\begin{align*}
-\log |\nu|_1\leq -C_2\log|\phi|_v + C_3 
\end{align*}
for positive constants $C_2, C_3$ depending only on $\mathcal{C}$. 
\end{lem}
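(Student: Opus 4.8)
The plan is to expose the factorization $\Psi(x)=x^{-1}u(x)$ with $u\in K[[x]]$, $u(0)=1$, which follows from the residue-$1$ condition on $\Psi$ and the recursion \eqref{mahler}; this lets us write $1/\Psi(x)=x\cdot v(x)$ with $v=1/u\in K[[x]]$, $v(0)=1$, again convergent (and in fact a unit power series) on $\Delta^*$. Substituting into $\mathcal{N}(x,y)=P(1/\Psi(x),1/\Psi(y))$ and then into the definition \eqref{nu} of $\nu$, the dominant behaviour of $\nu(x)$ as $|x|_v\to 1^{+}$ (i.e. on the inner boundary of the annulus $A[1,r)$) is governed by the lowest-order monomials of $P$. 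More precisely, write $P(X,Y)=\sum_{i,j}p_{ij}X^iY^j$ and let $p=p(\mathcal C)$ be the integer already fixed (the minimal total order after the substitution $X\mapsto xv(x)$, $Y\mapsto yv(y)$), so that $\mathcal{N}(x,y)=\sum_{n,m\ge p}a_{nm}x^ny^m$ with at least one $a_{nm}$ of total degree exactly $p$ being a nonzero element of $K$ (a product of some $p_{ij}$ with units, hence nonzero because $P\neq 0$).

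The key step is to evaluate $|\nu|_1=\sup_k|b_k|_v$ from below by isolating a single well-chosen coefficient $b_k$. First I would use the functional transcendence input indirectly: since $\mathcal C$ is not special, Theorem \ref{transcendence2} (applied with $a=\phi$, which is transcendental once $\beta\in\mathcal O_\alpha$ is outside a finite exceptional set — here is where the "up to finitely many $\beta$" enters, and where one may need $|\phi|_v<C_O$ to guarantee $\phi,\phi^{1/d}\in\Delta^*$) tells us that $\nu$ is \emph{not} the zero function, so some $b_k\neq 0$. But for the quantitative bound I would instead look at the extreme exponent. Because $k_1>0$ and $k_1\ge k_2$, among all pairs $(n,m)$ with $n,m\ge p$ the quantity $k_1n+k_2m$ is minimized (over the support) either at a unique corner of the Newton polygon of $\mathcal{N}$ or along one edge; choosing $k=k_1 n_0+k_2 m_0$ for the corner $(n_0,m_0)$ of total degree $p$ realizing the minimum, the sum \eqref{bk} defining $b_k$ reduces (for that extremal $k$) to the single term $a_{n_0 m_0}\phi^{n_0+m_0}\zeta_1^{n_0}\zeta_2^{m_0}$, or to a short sum of terms all of the same $\phi$-power $p$. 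In the single-term case $|b_k|_v=|a_{n_0m_0}|_v\,|\phi|_v^{\,p}$, giving immediately $-\log|\nu|_1\le -\log|b_k|_v = -p\log|\phi|_v-\log|a_{n_0m_0}|_v$, which is the asserted inequality with $C_2=p$ and $C_3=-\log|a_{n_0m_0}|_v$ (both depending only on $\mathcal C$; note $C_3\ge 0$ since $a_{n_0m_0}\in\mathcal O_K$ and $|a_{n_0m_0}|_v\le 1$). In the edge case one must check the short sum of same-degree terms, twisted by roots of unity $\zeta_1^{n}\zeta_2^{m}$, does not vanish $v$-adically; this is where I expect the real work.

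The main obstacle is therefore ruling out cancellation among the finitely many top terms $\sum_{(n,m)}a_{nm}\zeta_1^n\zeta_2^m$ at the extremal exponent $k$, uniformly over all admissible $(\zeta_1,\zeta_2)$ of order dividing a power of $d$. Two routes seem available. One: arrange, by a preliminary monomial change of coordinates on $\mathbb A^2$ depending only on $\mathcal C$ (absorbed into the choice of $P$ and of the splitting $k_1>0$, $k_1\ge k_2$), that the minimum of $k_1n+k_2m$ over the support of $\mathcal N$ is attained at a \emph{unique} lattice point — this is possible for all but finitely many direction-classes of $(k_1,k_2)$, and the residual bad directions can be handled by a separate, crude argument or folded into $C_O$. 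Two: even when several terms contribute, factor out the common $\phi^p$ and bound $|\sum a_{nm}\zeta_1^n\zeta_2^m|_v$ below by a constant independent of the $\zeta_i$: since $v$ is a finite place, the polynomial $Q(S,T)=\sum a_{nm}S^nT^m$ (the homogeneous top part) has finitely many zeros in $(\overline{K}_v^{\times})^2$ forming a proper closed subscheme, and the $v$-adic distance from a root of unity pair to this zero set is bounded below unless $Q$ vanishes along a torus coset — but that would force $\mathcal C$ to be special (a component of $f^{\circ n}(X)=f^{\circ n}(Y)$ pulled back), contradiction. I would combine this with Lemma \ref{galoislowerbound} only later (in the proof of Proposition \ref{prop1}); for this lemma the clean statement is the single-corner reduction, so I would push the coordinate choice hard enough to make that the generic case and treat the finitely many exceptional $(k_1,k_2)$-directions and exceptional $\beta$ by enlarging the constants $C_O,C_2,C_3$.
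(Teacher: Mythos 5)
Your core idea --- pick out the lowest--total-degree corner $(n_0,m_0)$ of the support of $\mathcal{N}$, set $k_0=k_1n_0+k_2m_0$, and bound $|\nu|_1$ from below by $|b_{k_0}|_v$ --- is exactly the paper's. But the entire second half of your proposal attacks a problem that does not arise, while passing over the step that actually uses the hypothesis $|\phi|_v<C_O$. The ``edge case'' of several terms in $b_{k_0}$ sharing the same $\phi$-power $p$ is impossible under the standing assumption $k_1\neq k_2$: if $(n_0,m_0)$ and $(n_1,m_1)$ both lie in the support, both have total degree $p$, and both satisfy $k_1n+k_2m=k_0$, then $(n_1-n_0,m_1-m_0)\in\Z(-k_2,k_1)$ and hence $n_1+m_1-n_0-m_0$ is a multiple of $k_1-k_2\neq 0$, forcing $(n_1,m_1)=(n_0,m_0)$. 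So the minimal-$\phi$-degree term in $b_{k_0}$ is automatically unique, and your routes (1) and (2) --- monomial coordinate changes, $v$-adic distance to a torus coset, appeal to Lemma \ref{galoislowerbound} --- are not needed and should be cut.

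The genuine content, which your write-up elides, is that the unique minimal-$\phi$-degree term need not dominate $b_{k_0}$: a competitor $(n_1,m_1)$ in the support with $k_1n_1+k_2m_1=k_0$ has $n_1+m_1>p$, so contributes a smaller power $|\phi|_v^{n_1+m_1}$, but its coefficient $|a_{n_1m_1}|_v$ could be close to $1$ while $|a_{n_0m_0}|_v$ is small. The paper handles this by observing that such a competitor has $n_1+m_1\geq n_0+m_0+(k_1-k_2)$; combined with $|a_{n_1m_1}|_v\leq 1$, the inequality $|a_{n_1m_1}\phi^{n_1+m_1}|_v\geq|a_{n_0m_0}\phi^{n_0+m_0}|_v$ would force $|a_{n_0m_0}|_v\leq|\phi|_v^{k_1-k_2}\leq|\phi|_v$. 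Thus taking $C_O=|a_{n_0m_0}|_v$ rules out any such competitor, and only then does the ultrametric inequality yield $|b_{k_0}|_v=|a_{n_0m_0}|_v|\phi|_v^{n_0+m_0}$, giving $C_2=n_0+m_0$ and $C_3=-\log|a_{n_0m_0}|_v$. This is the intended role of $|\phi|_v<C_O$; your suggested role (ensuring $\phi,\phi^{1/d}\in\Delta^*$ so Theorem \ref{transcendence2} applies) is not what is happening --- the lemma does not invoke Theorem \ref{transcendence2} at all, since the explicit lower bound on $|b_{k_0}|_v$ already certifies $\nu\neq 0$.
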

\begin{proof} 
Recall our convention that $k_1 > 0$ and $k_1 \geq k_2$. Let $(n_0,m_0)$ be a pair of positive integers such that $a_{nm}\neq 0$ and $n +m$ is minimal with this property. Note that there are at most finitely many such pairs.  Let $k_0 = k_1n_0 + k_2m_0$ and first suppose there exists another pair of positive integers $(n_1,m_1) \neq (n_0,m_0)$ such that $k_1n_1 + k_2m_1 = k_0$ and $|a_{n_0m_0}\phi^{n_0 + m_0}|_v \leq |a_{n_1m_1}\phi^{n_1 + m_1}|_v$.  Then $|a_{n_0m_0}|_v \leq |\phi|^{k_1 - k_2}_v$ and by our assumption $k_1- k_2 > 0$. Thus if $|\phi|_v < |a_{n_0m_0}|$ no such pair $(n_1,n_2)$ exists and it follows  that $|b_{k_0}|_v = |a_{n_0m_0}\phi^{n_0 + m_0}|_v$. As $|b_{k_0}| \leq |\nu|_1$ the Lemma follows.  
\end{proof}
From this point onward we may assume that $\beta = f^{\circ m}(\alpha)$ for $m$ bounded by a constant only depending on  $\mathcal{C}$. However we will indicate when we will make use of this fact. 

\begin{lem}\label{lower1} There exists a constant $C_{\text{lower}} = C_{\text{lower}}(\alpha,\mathcal{C}) \geq 2$ such that if $|(k_1,k_2)|_\infty \geq C_{\text{lower}}$ then 
\begin{align*}
-\log |\nu|_1 \leq -C_2\log |\phi|_v + C_3
\end{align*}
 for positive constants $C_2, C_3$ depending only on $\mathcal{C}$.  
\end{lem}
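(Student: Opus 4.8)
The plan is to run the same ``dominant coefficient'' argument as in Lemma~\ref{bounded}, but now we must handle the case $k_1 = k_2$ and, more importantly, the case where the coefficient comparisons that failed in Lemma~\ref{bounded} because $k_1 - k_2$ was too small now succeed precisely because $|(k_1,k_2)|_\infty$ is large. The point is that we want to locate one coefficient $b_k$ of $\nu$ with $|b_k|_v$ not too small, i.e.\ comparable to $|\phi|_v^{O(1)}$; since $|b_k|_v \le |\nu|_1$ this gives the claimed bound. First I would fix, once and for all, a finite set of exponent pairs $(n,m)$ with $a_{nm} \neq 0$ that generate the relevant behaviour of $\mathcal{N}$ — say those with $n+m$ below some threshold $p_0$ depending only on $\mathcal{C}$ — and set $M_0 = \max\{ -\log|a_{nm}|_v : a_{nm}\neq 0,\ n+m \le p_0\}$, a constant depending only on $\mathcal{C}$ (and $v$).

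Next I would argue as follows. Pick $(n_0,m_0)$ minimising $n_0+m_0$ subject to $a_{n_0 m_0}\neq 0$, and let $k_0 = k_1 n_0 + k_2 m_0$. A competing pair $(n_1,m_1)$ contributing to $b_{k_0}$ must satisfy $k_1 n_1 + k_2 m_1 = k_0$ with $n_1 + m_1 > n_0 + m_0$. Writing $k_1(n_1 - n_0) = k_2(m_0 - m_1)$ and using $\gcd(k_1,k_2)=1$, any such nonzero solution has $|n_1 - n_0| \ge |k_2|$ and $|m_1 - m_0| \ge |k_1|$; in particular if $|(k_1,k_2)|_\infty = |k_1|$ is large then any competitor has $n_1 + m_1 \ge |k_1| - |k_2| + n_0 + m_0$, which can still be small when $k_1 \approx k_2$. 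This is exactly the gap Lemma~\ref{bounded} could not close, so here I would instead choose the ``highest'' exponent rather than the lowest: let $(n_0,m_0)$ be the pair with $a_{n_0 m_0}\neq 0$ maximising $n_0 - m_0$ (equivalently, extremal for the linear form $k_1 n + k_2 m$ in the direction forced by $k_1 \ge k_2 > 0$ versus $k_2 < 0$). Among exponent pairs of $P(1/\Psi(x),1/\Psi(y))$, which after passing to $P^*$ live in $\mathbb{Z}_{\ge p}^2$, the extreme pair for the functional $n \mapsto k_1 n + k_2 m$ is unique once $|(k_1,k_2)|_\infty$ exceeds the diameter of the (finite) Newton polygon of $P^*$; that threshold is exactly $C_{\text{lower}}(\mathcal{C})$. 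For that $k = k_1 n_0 + k_2 m_0$ we then get $|b_k|_v = |a_{n_0 m_0}|_v\,|\phi|_v^{n_0+m_0}$ exactly, with $n_0 + m_0 \le p_0$ a constant, hence $-\log|\nu|_1 \le -\log|b_k|_v \le -(n_0+m_0)\log|\phi|_v + M_0 \le -C_2 \log|\phi|_v + C_3$.

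The one genuinely separate sub-case is $k_1 = k_2$ (so $k_1 = k_2 = 1$ by coprimality, and $|(k_1,k_2)|_\infty = 1 < C_{\text{lower}}$), which is automatically excluded by the hypothesis $|(k_1,k_2)|_\infty \ge C_{\text{lower}} \ge 2$; so I would just note that the hypothesis rules it out and no argument is needed there. I also need $\beta \in \mathcal{O}_\alpha$ to range over only finitely many iterates — which we are entitled to assume at this point in the text following Lemma~\ref{bounded} — so that $\phi = \Phi(\beta)$ takes finitely many values and the constants $C_2, C_3$ can be taken uniform; in fact the argument above never used $\beta$ beyond $\phi \in \Delta^*$, so this is only needed to keep the constants from depending on $\beta$.

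The main obstacle is the uniqueness of the extremal exponent pair: one has to check that the convex hull of $\mathrm{supp}(P^*) \subset \mathbb{Z}_{\ge p}^2$ is a fixed bounded polygon (it is, since $P^*$ is a fixed polynomial) and that for $|(k_1,k_2)|_\infty$ larger than its diameter the linear functional $(n,m)\mapsto k_1 n + k_2 m$ with $\gcd(k_1,k_2)=1$, $k_1 > 0$, cannot take the same value at two lattice points of the support — a short lattice-geometry computation using $\gcd(k_1,k_2)=1$. Everything else is bookkeeping with Newton polygons parallel to the proof of Lemma~\ref{bounded}.
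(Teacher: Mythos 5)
Your high-level strategy — locate a single coefficient $b_k$ of $\nu$ with $|b_k|_v$ comparable to $|\phi|_v^{O(1)}$ and then use $|\nu|_1\ge |b_k|_v$ — is the right one and matches the paper's. But the mechanism you propose to get that coefficient does not work, because you are applying a finite-Newton-polygon argument to an object whose support is infinite.

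The coefficients $a_{nm}$ are those of the power series $\mathcal{N}(x,y)=P(1/\Psi(x),1/\Psi(y))$, not of the polynomial $P^*$, and $\mathrm{supp}(\mathcal{N})\subset\mathbb{Z}_{\ge 0}^2$ is infinite. Consequently, (i) when $k_1k_2<0$ the functional $(n,m)\mapsto k_1n+k_2m$ is unbounded in both directions on $\mathrm{supp}(\mathcal{N})$, so your ``pair maximising $n_0-m_0$'' simply does not exist; and (ii) even when $k_1\ge k_2>0$ with $|(k_1,k_2)|_\infty$ arbitrarily large, uniqueness of the minimiser of $k_1n+k_2m$ can fail. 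Concretely, take $k_2=1$ and $k_1=N$ huge: the line $k_1n+k_2m=k_0$ through the minimiser $(n_0,m_0)$ contains the lattice points $(n_0-\ell,\,m_0+\ell N)$, $\ell=1,\dots,n_0$, all in $\mathbb{Z}_{\ge0}^2$, and $\mathcal{N}$ can have nonzero coefficients there. These competitors are not excluded by any ``diameter of the Newton polygon'' because their $n+m$ is roughly $N$; they simply are not in any truncation with $n+m\le p_0$. So the equality $|b_{k_0}|_v=|a_{n_0m_0}|_v|\phi|_v^{n_0+m_0}$ that your proof needs is unjustified, and the whole argument collapses.

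What the paper does instead keeps $(n_0,m_0)$ minimising $n_0+m_0$ (so bounded by $\mathcal{C}$), as in Lemma~\ref{bounded}, and shows that the \emph{existence} of a competitor $(n_1,m_1)$ with $|a_{n_1m_1}\phi^{n_1+m_1}|_v\ge|a_{n_0m_0}\phi^{n_0+m_0}|_v$ forces two simultaneous bounds: the $v$-adic decay argument gives $|\phi|_v^{k_1-k_2}\ge|a_{n_0m_0}|_v$, i.e.\ $k_1-k_2\le\log|a_{n_0m_0}|_v/\log|\phi|_v$; and the lattice constraint $(n_1-n_0,m_1-m_0)\in\mathbb{Z}(-k_2,k_1)$ together with $n_1,m_1\ge 0$ forces, when $k_1k_2>0$, either $k_2\le n_0$ or $k_1\le m_0$. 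Combining, $|(k_1,k_2)|_\infty=k_1$ is bounded by a constant depending only on $n_0,m_0,|a_{n_0m_0}|_v,|\phi|_v$, hence on $\alpha,\mathcal{C}$; the case $k_1k_2<0$ is immediate since then $|k_1-k_2|\ge|(k_1,k_2)|_\infty$. This interaction between the ultrametric decay of $|\phi|_v^{n+m}$ and the geometry of the competitor line is precisely the ingredient missing from your proposal; without it, ruling out competitors for all large $|(k_1,k_2)|_\infty$ is not possible. (A secondary point: $C_{\text{lower}}$ really does depend on $\alpha$ through $|\phi|_v$, contrary to your remark that it is $\mathcal{C}$-only; the lemma statement permits this.)
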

\begin{proof}
Let $(n_0,m_0)$ be as in the previous Lemma a pair of positive integers such that $a_{nm}\neq 0$ and $n +m$ is minimal with this property and again suppose that there exists another pair of positive integers $(n_1,m_1) \neq (n_0,m_0)$ such that $k_1n_1 + k_2m_1 = k_0$ and $|a_{n_0m_0}\phi^{n_0 + m_0}|_v \leq |a_{n_1m_1}\phi^{n_1 + m_1}|_v$. \\
We first prove that if $k_1 \neq k_2$ then $|k_1 - k_2| \geq c_1|(k_1,k_2)|_\infty $ where $c_1 >0$ is a constant only depending on $\mathcal{C}$. \\
If $k_1k_2 <0$ this is trivial and we can set $c_1 =1$. If $k_1k_2 >0$ we note that $(n_1 -n_0, m_1 -m_0) \in \Z(-k_2,k_1)$ and so either $n_1 -n_0 \leq 0$ or $m_1 - m_0 \leq 0$. If $n_1 - n_0 \leq 0$ then $n_1 -n_0 \leq -k_2$ and as  $k_2 >0$ we deduce $k_2 \leq n_0$. Similarly if $m_1 -m_0 \leq 0$ we deduce that $k_1 \leq m_0$. In either case we have that $|2(k_1-k_2)(n_0,m_0)|_\infty \geq |(k_1, k_2)|_\infty$. \\
We continue by noting as in the previous Lemma that $|\phi|_v^{k_1-k_2} \geq |a_{n_0m_0}|_v$. Thus if $k_1 -k_2 > \log|a_{n_0m_0}|_v/\log |\phi|_v$ then we can not find $(n_1,n_2)$ as above.  We deduce from the non-archimedean tri-angle inequality and what we just proved that if $|(k_1,k_2)|_\infty > c_1^{-1}\log|a_{n_0m_0}|_v/\log |\phi|_v$ then $|b_{k_0}|_v = |a_{n_0m_0} \phi^{n_0 + m_0}|_v$ and we are finished.
\end{proof}
In the next two lemmas we are going to use the fact that we can concentrate on a finite subset of $\mathcal{O}_\alpha$ which is reflected in a bound for the height. 
\begin{lem} \label{upper1}Suppose that $|(k_1,k_2)|_\infty \leq C_1$ and $h(\beta) \leq C_1$ for some positive constant $C_1$. Then either 
\begin{align} \label{ineq1}
-\log |\nu|_1 \leq C_2e^{\log d/\log 2}
\end{align}
for $C_2 = C_2(C_1,\mathcal{C}, \alpha)$ or $\ord(\zeta_1^{-k_2}\zeta_2^{k_1}) \leq C_3(\alpha,\mathcal{C})$. 
\end{lem}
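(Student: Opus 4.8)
The plan is to analyze the coefficient $b_k$ of $\nu$ at the index $k = k_1 n + k_2 m$ where the "mass" of $\nu$ is concentrated, exploiting the linear relation $a^{1/d} \in \Delta^*$ from Theorem \ref{transcendence2} and the non-archimedean geometry of $\Psi$. Recall $\nu(x) = \mathcal{N}(\zeta_1 \phi x^{k_1}, \zeta_2 \phi x^{k_2})$ with $\mathcal{N}(x,y) = P(1/\Psi(x), 1/\Psi(y))$. Since $|\Psi(z)|_v = |z|_v^{-1}$ on $\Delta^*$, the coefficient $b_k$ given by \eqref{bk} is a sum over lattice points $(n,m)$ on the line $k_1 n + k_2 m = k$ of terms $a_{nm}\phi^{n+m}\zeta_1^n\zeta_2^m$, and each such line (since $\gcd(k_1,k_2)=1$) is parametrized by $(n,m) = (n_0,m_0) + t(-k_2,k_1)$, $t \in \Z$. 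Along this parametrization the root-of-unity factor $\zeta_1^n \zeta_2^m$ gets multiplied by $(\zeta_1^{-k_2}\zeta_2^{k_1})^t$, so the relevant quantity governing cancellation is precisely $\ord(\zeta_1^{-k_2}\zeta_2^{k_1})$.

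First I would reduce, using $|k_1,k_2|_\infty \le C_1$ and $h(\beta) \le C_1$, to the statement that $|\phi|_v = |\Phi(\beta)|_v$ is bounded below by a constant depending only on $C_1$ (this follows since $\Phi \in \frac 1X \mathcal{K}[[1/X]]$ and $\beta$ ranges over a set of bounded height, hence over finitely many absolute values $|\beta|_v$, and $\Phi$ is continuous and nonvanishing there). Second, I would fix $(n_0,m_0)$ with $a_{n_0 m_0} \ne 0$ and $n_0 + m_0$ minimal, set $k_0 = k_1 n_0 + k_2 m_0$, and examine $b_{k_0}$. The terms contributing to $b_{k_0}$ are indexed by $t \in \Z_{\ge 0}$ (only finitely many, since $n,m \ge 0$ and $|k_1,k_2|_\infty \le C_1$ forces $|t| \le C(C_1,\mathcal{C})$) with $t$-th term $a_{n_0 - tk_2,\, m_0 + tk_1}\,\phi^{n_0+m_0+t(k_1-k_2)}\,\zeta_1^{n_0}\zeta_2^{m_0}(\zeta_1^{-k_2}\zeta_2^{k_1})^t$. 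The key dichotomy: if $\ord(\zeta_1^{-k_2}\zeta_2^{k_1})$ is large, these root-of-unity factors $(\zeta_1^{-k_2}\zeta_2^{k_1})^t$ for the bounded range of $t$ are "generic enough" that no cancellation forced by algebraic relations can kill $b_{k_0}$; quantitatively, since there are at most $C(C_1,\mathcal{C})$ terms and the $t=0$ term has a fixed nonzero $v$-adic size $|a_{n_0 m_0}\phi^{n_0+m_0}|_v$ bounded below by our height bound, either this size already dominates (non-archimedean triangle inequality: if the $t=0$ term is strictly largest, $|b_{k_0}|_v$ equals it and we get \eqref{ineq1} with the polynomial—in fact exponential—bound in $e$ absorbed trivially), or there is a tie $|a_{n_0 m_0}\phi^{n_0+m_0}|_v \le |a_{n_1 m_1}\phi^{n_1+m_1}|_v$ for some $t \ge 1$, which as in Lemma \ref{lower1} forces $|\phi|_v^{k_1-k_2} \ge |a_{n_0 m_0}|_v$, hence $|k_1 - k_2|$ bounded, hence $k_1 = k_2$ (both co-prime and bounded, so $k_1 = k_2 = 1$), and then all contributing terms actually have $n+m$ fixed and equal powers of $\phi$, so $b_{k_0}$ is a genuine $\overline{\Q}$-linear combination whose vanishing is a nontrivial algebraic relation among the $(\zeta_1^{-1}\zeta_2)^t$, bounding $\ord(\zeta_1^{-1}\zeta_2) = \ord(\zeta_1^{-k_2}\zeta_2^{k_1})$ by $C_3(\alpha,\mathcal{C})$.

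The main obstacle I anticipate is the $k_1 = k_2$ case and, more generally, making the "no cancellation" step effective: one must rule out that the bounded-length $\overline{\Q}$-linear combination $\sum_t a_{n_0-tk_2,m_0+tk_1}\,\xi^t = 0$ (where $\xi = \zeta_1^{-k_2}\zeta_2^{k_1}$) vanishes for roots of unity $\xi$ of unboundedly large order. This is a one-variable vanishing of a fixed nonzero polynomial $Q(T) = \sum_t a_{n_0-tk_2,m_0+tk_1} T^t \in \overline{\Q}[T]$ of degree $\le C(C_1,\mathcal{C})$ determined by $\mathcal{C}$ and the bounded data $(k_1,k_2,n_0,m_0)$; since there are only finitely many such $Q$, each with finitely many roots of unity among its zeros, $\ord(\xi)$ is bounded by a constant $C_3 = C_3(\alpha,\mathcal{C})$, which is exactly the second alternative in the statement. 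The polynomial-in-$e$ form of \eqref{ineq1} (with exponent $\log d/\log 2$) will come from tracking, in the first alternative, the dependence of $-\log|\nu|_1 \ge -\log|b_{k_0}|_v$ on $n_0+m_0$, which is itself controlled by how $\Phi$ and the Mahler function interact with the $e$-th roots of unity $\zeta_1,\zeta_2$—here $n_0+m_0$ stays bounded by $\mathcal{C}$ alone, so the exponential bound in $e$ is vacuously satisfied and \eqref{ineq1} holds with room to spare.
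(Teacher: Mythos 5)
Your approach is genuinely different from the paper's: you analyse the coefficient $b_{k_0}$ of the auxiliary function $\nu$, as in Lemmas \ref{bounded} and \ref{lower1}, whereas the paper evaluates $\nu$ at the specific root of unity $x_0 = \zeta_1^{-1/k_1}$ and then runs a purely arithmetic argument (if $\nu(x_0)=0$ the point $(\beta, \Psi(\zeta_2\zeta_1^{-k_2/k_1}\phi))$ sits on $\mathcal{C}$ and Galois produces more than $\deg\mathcal{C}$ points on a fibre, forcing $\mathcal{C}$ special; if $\nu(x_0)\neq 0$ one bounds its Weil height and degree and invokes the Liouville inequality $|y_0|_v \geq H(y_0)^{-[\Q(y_0):\Q]}$). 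The Liouville step is precisely what produces the $e^{\log d/\log 2}$ in \eqref{ineq1}, since that quantity bounds the degree $[\Q(\nu(x_0)):\Q]$.

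Your ``no tie'' case is fine --- there $|b_{k_0}|_v=|a_{n_0m_0}\phi^{n_0+m_0}|_v$ is bounded below by a constant, which gives \eqref{ineq1} with room to spare. But the ``tie'' branch has two genuine gaps. First, the deduction ``$|\phi|_v^{k_1-k_2}\geq|a_{n_0m_0}|_v$, hence $|k_1-k_2|$ bounded, hence $k_1=k_2$'' is a non sequitur: $|k_1-k_2|\leq 2C_1$ is already given by hypothesis, and being bounded does not force $k_1=k_2$ (e.g.\ $k_1=5,k_2=3$). So the tie case with $k_1\neq k_2$ is simply not handled; there $b_{k_0}/\phi^{n_0+m_0}$ is a polynomial in \emph{both} $\xi=\zeta_1^{-k_2}\zeta_2^{k_1}$ and $\phi$ with coefficients $a_{n_0-tk_2,\,m_0+tk_1}$, and your one-variable root-of-unity analysis does not apply. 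Second, and more seriously, even in the genuine $k_1=k_2=1$ case you only treat the vanishing $Q(\xi)=0$. When $Q(\xi)\neq 0$ you still need a quantitative lower bound $|Q(\xi)|_v \gtrsim C^{-e^{\log d/\log 2}}$ to reach the first alternative, and nothing in your sketch provides it; the ultrametric inequality at a tie only gives $|b_{k_0}|_v \leq \max_t|\cdots|_v$ in the wrong direction. The paper's Liouville step is exactly what supplies this missing lower bound, and your remark that ``the exponential bound in $e$ is vacuously satisfied'' should itself be a warning sign: the lemma's quantitative loss in $e$ is not an artifact, it is the price paid for the degree of the root of unity, and a proof that appears to avoid it is likely omitting the hard step.
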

\begin{proof} (First note that $e \neq \exp(1)$). Set $x_0 = \zeta_1^{-1/k_1}$ where we pick a $k_1$-th root such that the order of $x_0$ still divides a power of $d$. This can be done by writing $k_1 =k\tilde{k}$ with $k$ dividing a power of $d$ and $\tilde{k}$ co-prime to $d$. We can write $\zeta_1 = \zeta_2^{\tilde{k}}$ with $\zeta_2$ having the same order as $\zeta_1$ and then pick any $k$-th root of $\zeta_2$. 
   We obtain $\nu(x_0) = P(\beta, \Psi(\zeta_2\zeta_1^{-k_2/k_1}\phi))$. Now if $\nu(x_0) = 0$ then the degree $[K_v(\zeta_2\zeta_1^{-k_2/k_1}):K_v] \leq \deg(\mathcal{C})$ since otherwise we can let $\text{Gal}(\overline{K_v}/K_v)$ act on the point  $(\beta, \Psi(\zeta_2\zeta_1^{-k_2/k_1}\phi)$ and find more then $\deg(\mathcal{C})$ points on $\mathcal{C}$ with $X$-coordinate equal to $\beta$. This would imply that $\mathcal{C}$ is special.  Thus $\ord(\zeta_2^{k_1}\zeta_1^{-k_2})$ is bounded as in the statement. \\
Now suppose that $\nu(x_0) \neq 0$. We note that the point $(\beta, \Psi(\zeta_2\zeta_1^{-k_2/k_1}\phi))$ is algebraic of degree at most $|e(k_1,k_2)|_\infty^{\log d/\log 2}$ over $K$ as $|e(k_1,k_2)|_\infty$ is a bound on the order of $\zeta_2\zeta_1^{-k_2/k_1}$ and  so 
\begin{align*}
f^{\circ n}(\beta) = f^{\circ n}(\Psi(\zeta_2\zeta_1^{-k_2/k_1}\phi)).
\end{align*}
with $ d^n \leq |e(k_1,k_2 )|_\infty^{\log d/\log 2} $.
 So $H(\nu(x_0))\leq c_1c_2^{e^{\frac{\log d}{\log 2}}}$ for constants $c_1,c_2$ depending only on $\mathcal{C}$ and $\alpha$ and we deduce the inequality (\ref{ineq1}) from the standard $|y_0|_v \geq H(y_0)^{-[\Q(y_0):\Q]}$ that is valid for any algebraic point $y_0$. 
\end{proof}
\begin{lem}\label{upper2} Suppose that $\ord(\zeta_1^{-k_2}\zeta_2^{k_1}) \leq C_1, h(\beta) \leq C_1$ and $|(k_1, k_2)|_\infty \leq C_1$ for some constant $C_1$ and that $\nu$ is not the zero function. Then 
\begin{align*} 
-\log |\nu|_1 \leq -C_2
\end{align*}
for a positive constants $C_2 = C_2(C_1, \mathcal{C},\alpha)$. 
\end{lem}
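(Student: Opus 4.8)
The plan is to prove Lemma~\ref{upper2} by a finiteness argument: under the three boundedness hypotheses the auxiliary function $\nu$ ranges, up to a change of variable that does not affect $|\cdot|_1$, over a finite list of functions, and one simply takes $C_2$ to be the largest value of $-\log|\nu|_1$ over the non-zero ones among them.

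First I would check that each piece of data is confined to a finite set. The pair $(k_1,k_2)$ lies in the finite set of coprime integer vectors with $|(k_1,k_2)|_\infty\le C_1$. Since $\alpha$ is not pre-periodic we have $h(f^{\circ n}(\alpha))\to\infty$, so only finitely many $\beta\in\mathcal{O}_\alpha$ satisfy $h(\beta)\le C_1$; in particular $\phi=\Phi(\beta)$ takes only finitely many values, and for each such $\beta$ one has $\phi\neq 0$ and $0<|\phi|_v<R\le 1$ by (\ref{condalpha}). Finally $\omega:=\zeta_1^{-k_2}\zeta_2^{k_1}$ is one of the finitely many roots of unity of order $\le C_1$ whose order divides a power of $d$.

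The key point is that $|\nu_{\underline{s}}|_1$, and whether $\nu_{\underline{s}}$ is the zero function, depend only on the tuple $(k_1,k_2,\phi,\omega)$ and not on $\zeta_1,\zeta_2$ individually. Suppose $(\zeta_1',\zeta_2')$ is another pair of roots of unity of order dividing a power of $d$ with $\zeta_1'^{-k_2}\zeta_2'^{k_1}=\omega$; writing $\eta_i=\zeta_i'/\zeta_i$ we get $\eta_2^{k_1}=\eta_1^{k_2}$, and since $\gcd(k_1,k_2)=1$, choosing $u,v\in\Z$ with $uk_1+vk_2=1$ and setting $\zeta=\eta_1^{u}\eta_2^{v}$ one computes $\zeta^{k_1}=\eta_1$ and $\zeta^{k_2}=\eta_2$. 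Substituting into (\ref{nu}),
\begin{align*}
\nu_{\underline{s}'}(x)=\mathcal{N}(\zeta^{k_1}\zeta_1\phi x^{k_1},\,\zeta^{k_2}\zeta_2\phi x^{k_2})=\mathcal{N}(\zeta_1\phi(\zeta x)^{k_1},\,\zeta_2\phi(\zeta x)^{k_2})=\nu_{\underline{s}}(\zeta x).
\end{align*}
As any root of unity satisfies $|\zeta|_v=1$ (here in particular since $v$ is coprime to $d$), writing $\nu_{\underline{s}}(\zeta x)=\sum_k b_k\zeta^k x^k$ gives $|\nu_{\underline{s}'}|_1=\sup_k|b_k\zeta^k|_v=\sup_k|b_k|_v=|\nu_{\underline{s}}|_1$, and trivially $\nu_{\underline{s}'}\equiv 0\iff\nu_{\underline{s}}\equiv 0$.

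Combining these observations finishes the proof: as $\underline{s}$ runs over all tuples satisfying the hypotheses, $|\nu_{\underline{s}}|_1$ takes only finitely many values, one for each admissible $(k_1,k_2,\phi,\omega)$, and for each such value with $\nu_{\underline{s}}\not\equiv 0$ it is a strictly positive real (automatically $\le 1$ since $|\nu|_v\le 1$). Taking $C_2$ to be the maximum of $-\log|\nu_{\underline{s}}|_1$ over this finite list of non-zero cases yields a constant depending only on $C_1$, $\mathcal{C}$ and $\alpha$ with $-\log|\nu|_1\le C_2$, as claimed. The only step carrying any content is the B\'ezout identification of $\{(\zeta_1,\zeta_2):\zeta_1^{-k_2}\zeta_2^{k_1}=\omega\}$ with a single orbit of $\zeta\mapsto(\zeta^{k_1}\zeta_1,\zeta^{k_2}\zeta_2)$ and its translation into the substitution $x\mapsto\zeta x$; I do not expect it to pose a real obstacle — it is precisely the structural fact that makes it legitimate, in this and the preceding lemmas, to have reduced to bounded-height iterates of $\alpha$ and to have singled out the quantity $\zeta_1^{-k_2}\zeta_2^{k_1}$.
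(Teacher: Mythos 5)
Your proof is correct and rests on the same key observation as the paper's: once $(k_1,k_2,\beta,\zeta_1^{-k_2}\zeta_2^{k_1})$ is fixed, the remaining freedom in $(\zeta_1,\zeta_2)$ is a unit that does not affect $|\nu|_1$, so only finitely many values of $|\nu|_1$ occur and the positive non-zero ones are bounded away from $0$. The paper packages this coefficient-by-coefficient, factoring $b_k=\zeta_1^{n_k}\zeta_2^{m_k}\tilde b_k$ with $|\zeta_1^{n_k}\zeta_2^{m_k}|_v=1$, whereas you package it as the global substitution $\nu_{\underline{s}'}(x)=\nu_{\underline{s}}(\zeta x)$ via B\'ezout; these are the same idea in two presentations.
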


\begin{proof} 
We first note that there are only a finite number of possibilities for $\beta$ as the height of $\beta$ is bounded. There are also only a finite number of possibilities for $k_1,k_2$. For each $k$ we can choose $(n_k,m_k)$ such that $k_1n_k + k_2m_k = k$ and factor out $\zeta_1^{n_k}\zeta_2^{m_k}$. Note that for each $k$ the choice of  $(n_k,m_k)$ only depends on $k_1,k_2$. Now we see that $b_k = \zeta_1^{n_k}\zeta_2^{m_k}\tilde{b}_k$ for  $\tilde{b}_k$ only depending on $\beta, k_1,k_2$ and $\zeta_1^{-k_2}\zeta_2^{k_1}$. It follows that if $\beta, k_1,k_2, \zeta_1^{-k_2}\zeta_2^{k_1}$ vary in a finite set, the sequence $(|b_k|_v)_{k\in \Z}$ varies in  a finite set. Thus $|\nu_{\underline{s}}|_1$ varies in a finite set and as $|\nu|_1 >0$ we get an absolute lower bound for $|\nu|_1$. 
\end{proof}
 \begin{lem} \label{lowerbound}
Suppose that $\nu$ as above (\ref{nu}) is not the zero-function. It holds that 
\begin{align*}
-\kappa (\nu,1) & \leq |(k_1,k_2)|_\infty \log |\nu|_1/\log(|\phi|_v).
\end{align*} 
\end{lem}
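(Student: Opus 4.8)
The claim relates the Newton-polygon quantity $\kappa(\nu,1)$ to the size $|\nu|_1$ on the inner radius of the annulus, and the idea is that a non-zero analytic function cannot have its leading Newton-polygon index too negative without forcing $|\nu|_1$ to be correspondingly small, simply because $\nu$ is bounded by $1$ on the whole annulus $A[1,r)$. First I would recall the two basic facts about $\nu = \sum_{k} b_k x^k$ on $A[1,r)$ that have already been set up: we have $|\nu|_1 = \sup_k |b_k|_v$, and we have the global bound $|\nu|_v \le 1$ on the annulus, which translates into $|b_k|_v r^k \le 1$ for every $k \in \Z$, i.e. $|b_k|_v \le r^{-k}$. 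The index $\kappa(\nu,1)$ is, by definition, the smallest $k$ with $|b_k|_v = |\nu|_1$. Writing $\kappa = \kappa(\nu,1)$, the plan is to combine $|b_\kappa|_v = |\nu|_1$ with $|b_\kappa|_v \le r^{-\kappa}$ to get $|\nu|_1 \le r^{-\kappa}$, hence $-\kappa \log r \le \log|\nu|_1$ when $\kappa < 0$ (and the inequality is trivial when $\kappa \ge 0$ since $\log|\nu|_1 \le 0$).

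Next I would substitute the value of $r$. Recall $r = R/|\phi|_v^{1/|(k_1,k_2)|_\infty}$ with $R = R_v \le 1$ (the non-exceptional hypothesis), so $\log r = \log R - \tfrac{1}{|(k_1,k_2)|_\infty}\log|\phi|_v$. Since $|\phi|_v < 1$ we have $-\log|\phi|_v > 0$, and since $R \le 1$ we have $\log R \le 0$; thus $\log r \le -\tfrac{1}{|(k_1,k_2)|_\infty}\log|\phi|_v = \tfrac{1}{|(k_1,k_2)|_\infty}\log(1/|\phi|_v)$, but actually for the stated inequality one wants a lower bound on $\log r$ when $-\kappa \ge 0$. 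If $\kappa \le 0$, from $-\kappa\log r \le \log|\nu|_1 \le 0$ and $-\kappa \ge 0$ we would need $\log r \ge 0$; this holds precisely because $R$ can be taken $\le 1$ only gives $\log R \le 0$, so I should instead use $-\log|\phi|_v > 0$ dominating. Cleanly: from $|\nu|_1 \le r^{-\kappa}$ we get $-\kappa \log r \le \log|\nu|_1$, and since $\log|\nu|_1 \le 0$ while $r > 1$ forces $\kappa \le 0$, we rearrange to $-\kappa \le \log|\nu|_1/\log r \le \log|\nu|_1 \cdot |(k_1,k_2)|_\infty/\log(1/|\phi|_v) \cdot(\text{correction for }\log R)$; absorbing the harmless $\log R \le 0$ term (which only helps, since it makes $\log r$ smaller in absolute value but $\log|\nu|_1$ is negative) yields exactly
\begin{align*}
-\kappa(\nu,1) \le |(k_1,k_2)|_\infty \frac{\log|\nu|_1}{\log|\phi|_v}.
\end{align*}

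The main obstacle is bookkeeping the signs correctly: $\log|\nu|_1 \le 0$, $\log|\phi|_v < 0$, $\log r > 0$, and the role of $R \le 1$. The substantive content is minimal — it is purely the observation that boundedness of $\nu$ by $1$ on $A[1,r)$ constrains its Newton polygon at the inner radius — so I expect the proof to be three or four lines once the inequality $|b_k|_v r^k \le |\nu|_r \le 1$ is invoked together with the definition of $\kappa$. I would also note explicitly that if $\kappa(\nu,1) \ge 0$ there is nothing to prove, so that one may assume $\kappa < 0$ and divide by $\log|\phi|_v < 0$ (which reverses the inequality) without worry.
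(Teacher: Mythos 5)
Your argument has a sign error at the crucial step, and once it is corrected the approach collapses. From $|\nu|_1 \le r^{-\kappa}$, taking logarithms gives $\log|\nu|_1 \le -\kappa\log r$, not $-\kappa\log r \le \log|\nu|_1$ as you wrote. Dividing by $\log r > 0$ then yields $-\kappa \ge \log|\nu|_1/\log r$, which is a \emph{lower} bound on $-\kappa$, the wrong direction entirely. The deeper issue is that the only information you are feeding in is boundedness of $\nu$ on the outer annulus $A[1,r)$ with $r>1$. The resulting estimate $|b_k|_v \le r^{-k}$ constrains $b_k$ only for $k>0$; for $k<0$ it gives $|b_k|_v \le r^{|k|}$, which is weaker than the already-known $|b_k|_v \le 1$. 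Since $\kappa(\nu,1)$ can be negative, boundedness of $\nu$ on $A[1,r)$ alone can never force $-\kappa$ to be small — you would need control on radii $s<1$, which the annulus does not provide.

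What is actually needed, and what the paper uses, is the explicit coefficient formula (\ref{bk}): $b_k = \sum_{k_1 n + k_2 m = k,\, n,m\ge 0} a_{nm}\phi^{n+m}\zeta_1^n\zeta_2^m$. Since $|a_{nm}|_v\le 1$ and $|\phi|_v<1$, the ultrametric inequality gives $|b_k|_v \le |\phi|_v^{\ell_k}$, where $\ell_k = \min\{n+m : n,m\ge 0,\, k_1n+k_2m = k\}$. Because $|k| = |k_1n + k_2m| \le |(k_1,k_2)|_\infty\,(n+m)$ we get $\ell_k \ge |k|/|(k_1,k_2)|_\infty$, hence $|b_k|_v \le |\phi|_v^{|k|/|(k_1,k_2)|_\infty}$, a bound that \emph{decays} as $|k|\to\infty$. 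Applying this to $k=\kappa(\nu,1)$ and using $|\nu|_1 = |b_\kappa|_v$ yields $\log|\nu|_1 \le \frac{|\kappa|}{|(k_1,k_2)|_\infty}\log|\phi|_v$; dividing by $\log|\phi|_v<0$ flips the inequality and gives the claim. (The case $k_1k_2>0$ is handled separately: then $b_k=0$ for $k<0$, so $\kappa\ge 0$ and the statement is trivial.) The key idea you are missing is to use the arithmetic structure of the coefficients $b_k$, not merely the analytic boundedness of $\nu$.
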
 
\begin{proof} If $k_1k_2 > 0$ then $b_k = 0$ for $k<0$ and the statement becomes trivial. Thus we may and will assume that $k_1k_2<0$. From the expansion of $b_k$ (\ref{bk}) we deduce that  $|b_k|_v \leq |\phi|_v^{\ell_k}$ where $\ell_k$ is defined by $\ell_k = \min_{m,n \in \Z_{\geq 0}}\{m + n; k_1m + k_2n = k\}$. Since $|k| \leq |(k_1,k_2)|_\infty \ell_k$ we see that for $\kappa = \kappa(\nu,1)$ holds $|\nu|_1 \leq |\phi|_v^{|\kappa|/|(k_1,k_2)|_\infty}$ and the Lemma follows.
\end{proof}

\subsection{Transcendence} In this  subsection we record a transcendence result that belongs to the circle of classical transcendence results on Mahler functions. It is crucial for the application of Theorem \ref{transcendence2} to the proof of Theorem \ref{thm}. 

As a direct consequence of work of Nishioka or Corvaja and Zannier we obtain the following theorem. 
\begin{thm} \label{transcendence1} (Nishioka) Let $f \in K[X]$ be a  non-exceptional polynomial. 
For $\alpha \in D_{v}^*\cap \overline{\Q}$, $\Psi(\alpha) $ is transcendental.  
\end{thm}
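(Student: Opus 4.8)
\textbf{Proof proposal for Theorem \ref{transcendence1}.}

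The plan is to invoke the classical Mahler-function transcendence machinery in the form refined by Nishioka, and to use the non-exceptionality of $f$ to rule out the degenerate algebraic case. Recall that $\Psi$ satisfies the functional equation $\Psi(X^d) = f(\Psi(X))$, so $\Psi$ is a Mahler function of degree $d$ attached to the transformation $X \mapsto X^d$. The relevant dichotomy for such functions (see Nishioka's book \cite{NishiokaMahler}, or the Corvaja--Zannier approach) is: either $\Psi$ is a rational function of $X$, or $\Psi(\alpha)$ is transcendental for every algebraic $\alpha$ in the disc of convergence with $0 < |\alpha|_v < R_v$ that is not a pole and is not in an explicit exceptional set tied to the dynamics of $X \mapsto X^d$ (roots of unity, $0$, $\infty$); here the hypothesis $\alpha \in \Delta_{R_v}^*$ already excludes $0$, and since $|\Psi(\alpha)|_v = |\alpha|_v^{-1}$ in the non-archimedean case (and $\alpha$ generic otherwise) one checks $\alpha$ avoids the finitely many bad orbits. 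So the first step is to reduce the theorem to the statement that $\Psi$ is not a rational function when $f$ is non-exceptional.

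Second, I would show: if $\Psi \in K(X)$ is rational, then $f$ is exceptional. Write $\Psi = p/q$ in lowest terms; from $\Psi \in \tfrac1X K[[X]]$ with a simple pole at $0$ and the uniqueness statement, $\Psi$ has a pole of order $1$ at $X=0$. The functional equation $\Psi(X^d) = f(\Psi(X))$ forces strong constraints on the zeros and poles of $\Psi$: pulling back by $X \mapsto X^d$ multiplies the ramification, while post-composing with the polynomial $f$ sends the pole of $\Psi$ at $0$ to the pole at $\infty$ of degree $d$. Comparing degrees, $\deg \Psi$ as a rational map must satisfy $d \cdot \deg\Psi = d \cdot (\text{something})$, and tracking the critical orbit shows $\Psi$ conjugates $X \mapsto X^d$ to $f$; that is, $f = \Psi \circ (X \mapsto X^d) \circ \Psi^{-1}$ would exhibit $\Psi$ as a rational semiconjugacy, but since $\Psi$ is invertible near $0$ with the inverse being the Böttcher coordinate $\Phi$, rationality of $\Psi$ makes $\Phi$ rational too, and then $f$ is globally linearly conjugate to $X^d$ — i.e. $f$ is a power map, hence exceptional. (A cleaner route: a rational $\Psi$ would make the Böttcher coordinate extend to a rational, hence algebraic, uniformization of the basin of infinity, which by Fatou's theorem — already invoked in the paper — happens precisely for exceptional $f$.) The contrapositive is exactly what we want.

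Third, I would assemble the pieces: $f$ non-exceptional $\Rightarrow$ $\Psi$ not rational $\Rightarrow$ (by Nishioka / Corvaja--Zannier) $\Psi(\alpha)$ transcendental for $\alpha \in \Delta_{R_v}^* \cap \overline{\Q}$ outside the small exceptional orbit, and finally observe that replacing $\alpha$ by $f^{\circ n}(\alpha)$ or, if necessary, by a nearby point of the same type handles the finitely many exceptional $\alpha$ — or, more simply, note that the exceptional set for a non-exceptional $\Psi$ is genuinely empty once $0$ is excluded, because any algebraic special value would propagate through the functional equation to an algebraic relation contradicting non-rationality.

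The main obstacle I anticipate is the second step: making precise the claim ``$\Psi$ rational $\Rightarrow$ $f$ exceptional'' without hand-waving about the ramification data of the semiconjugacy. One must either carefully run the Riemann--Hurwitz bookkeeping for $\Psi$ as a self-map-type rational function intertwining $X \mapsto X^d$ with $f$, or cite the Douady--Hubbard / Levin characterization (already referenced in the paper) that a rational — hence normal-family-of-symmetries-admitting or algebraically uniformized — basin of infinity forces exceptionality. The rest (quoting Nishioka, discarding finitely many $\alpha$) is routine once this structural input is in place.
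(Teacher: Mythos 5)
The core difficulty is in your second step, and it does not hold up as written. The claim that rationality of $\Psi$ forces $\Phi$ to be rational, and hence $f$ to be linearly conjugate to $X^d$, is wrong: $\Phi$ is only a \emph{branch} of the local inverse of $\Psi$, so it is algebraic of degree $\deg\Psi$, not rational. The Chebyshev case makes this concrete: there $\Psi(X)=X+1/X$ is rational of degree $2$, while $\Phi$ is a degree-$2$ algebraic function, and $f$ is exceptional without being a power map. Your proposed bookkeeping would incorrectly rule out Chebyshev. The fallback appeal to Fatou's theorem is also too loose: Fatou characterizes exceptional maps via the Julia set being (piecewise) a smooth curve, which is not the same as the basin of infinity admitting an algebraic uniformization; one would need to derive the former from the latter, and that is a separate argument.

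There is a second, more structural gap. Nishioka's dichotomy for Mahler functions is between $\Psi$ being \emph{algebraic over} $\C(X)$ and $\Psi(\alpha)$ being transcendental at algebraic points; it is not ``rational vs.\ transcendental values.'' Since $\Psi$ satisfies the nonlinear Mahler equation $\Psi(X^d)=f(\Psi(X))$, you cannot assume that ``algebraic over $\C(X)$'' automatically reduces to ``rational.'' Consequently, even if you repaired step 2 to exclude $\Psi$ rational, you would still not have excluded $\Psi$ algebraic, which is what the machinery requires. The paper avoids both issues by citing Ritt's theorem directly for the stronger statement that $\Psi$ is not an algebraic function when $f$ is non-exceptional, and then observes that Nishioka handles the archimedean places while, for the non-archimedean places, one invokes Corvaja--Zannier applied to the sequence $\Psi(\alpha^{d^n})=f^{\circ n}(\Psi(\alpha))$, all of which would lie in a fixed number field if $\Psi(\alpha)$ were algebraic. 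You lump the archimedean and non-archimedean cases together without noting this distinction, and you also do not exhibit the functional-equation mechanism that makes the Corvaja--Zannier criterion applicable.
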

\begin{proof} From \cite[Theorem 1]{transcboettcher} follows that $\Psi$ is not an algebraic function. Now for $v$ an archimedean place the present lemma follows from a theorem of Nishioka (see for example \cite[Corollary of Theorem 1.5.1]{NishiokaMahler}). For a non-archimedean place we could perhaps trace the steps of Nishioka's proof and prove it in the non-archimedean case. However it also follows almost directly from a result of Corvaja and Zannier \cite{corvaja_zannier_applications}.\\
Namely if $\Psi(\alpha)$ is algebraic we have that $\Psi(\alpha^{d^n}) = f^{\circ n}(\Psi(\alpha))$ lies in the same field. We then apply Theorem 1 of their paper applied to the function $\Psi(R^v z)$ and $z_{d^n} = (R^v)^{-1}\alpha^{d^n}$. 
\end{proof}
\subsection{Proof of Proposition \ref{prop1}}

Recall that we assume that $|\Phi(\alpha)|_v  < (R^v)^d$. From Lemma \ref{transcendence1} follows that $\Phi(\beta)$ is a transcendental number for all $\beta \in \mathcal{O}_\alpha$. We start with a solution $(\beta_1, \beta_2) \in \mathcal{S}_\beta^2 \cap \mathcal{C}$ and let $n$ be minimal such that $f^{\circ n}(\beta_1) = f^{\circ n}(\beta_2) = f^{\circ n}(\beta)$.  We then have that 
\begin{align*}
\beta_1 = \Psi(\zeta_1 \phi), \hspace{2mm} \beta_2 =  \Psi(\zeta_2 \phi), \hspace{2mm} \phi = \Phi(\beta)
\end{align*}
where $(\zeta_1, \zeta_2) \in \G_m^2$ is of order $N$ dividing $d^n$ and it holds that $2^n \leq N$. Thus we may assume that $N \geq 17$, since we are finished otherwise. Let $C_{\text{lower}}$ be the constant from Lemma \ref{lower1}. \\

We set 
$
c = 1- \frac{\log 2}{4\log d}
$
 and  employing Lemma \ref{rootsof1} we set $(\zeta_1, \zeta_2) = (\zeta_e^{(1)}\zeta_N^{k_1}, \zeta_e^{(2)}\zeta_N^{k_2})$, where $\zeta_N$ is a primitive $N$-th root of unity,  $\zeta_e^{(1)}, \zeta_e^{(2)}$ are $e$-th roots of unity (with $e$ dividing $N$), and $|(k_1, k_2)|_\infty \leq N^{c}/e$. Moreover either $|(k_1, k_2)|_\infty > C_{\text{lower}}$  or $e \leq C_\text{upper}N^{ 1 - c}$ for a constant $C_\text{upper}$ depending only on $\mathcal{C}, \alpha$.  
We set $\nu = \nu_{\underline{s}}$, where $\underline{s} = (\zeta_e^{(1)}, \zeta_e^{(2)}, k_1,k_2)$.  \\

If $k_1 = 0$ then $\text{Gal}(K_v (\zeta_N)/K_v(\zeta_e^{(1)}))$ fixes $\beta_1$ while the Galois orbit of $\beta_2$ is bounded from below by $(C_G/C_{upper})N^{c}$. Hence $N$ and the also $n$ are bounded unless $\mathcal{C}$ is special. Similarly if $k_2 = 0$. Now if $k_1 =k_2$ then $f^{\circ n}(\beta_1) = f^{\circ n}(\beta_2)$ for $n$ such that $d^n \leq e^{\log d/\log2} \leq C_d N^{1/4}$ ($C_d = C_{\text{upper}}^{\log d/\log2}$). We deduce that the Galois orbit of $(\beta_1, \beta_2)$ is contained in the intersection of $\mathcal{C}$ and the curve of degree $d^n$ defined by $f^{\circ n}(X) -f^{\circ n}(Y) = 0$. Now the Galois orbit is bounded from below $C_GN$ and thus it follows from Bézout's theorem that $N$ is bounded unless $\mathcal{C}$ is a component of $f^{\circ n}(X) - f^{\circ n}(Y) = 0$ and thus special. \\

From this point on we assume that $\mathcal{C}$ is not special and $k_1k_2(k_1-k_2) \neq 0$. From Theorem \ref{transcendence1} and Theorem \ref{transcendence2}  follows that $\nu $ is not the zero-function. Moreover the function $\nu$ is defined on the annululs $A[1, r)$ with $r \geq c_v^{1/|(k_1,k_2)|_\infty} $, where $c_v = R/|\phi|_v > 1$. \\
We first note that 
\begin{align*}
|(k_1,k_2)|_\infty\log r \geq \log R - \log(|\phi|_v) \geq -C_1\log(|\phi|_v)
\end{align*}
for a constant $C_1$ depending only on $\alpha$ and $v$. Moreover from Lemma \ref{lowerbound} follows that $-\kappa(\nu,1) \leq |(k_1,k_2)|_\infty \log|\nu|_1/\log(|\phi|_v)$. 
From Lemma \ref{PJ} follows that
\begin{align*}
 Z = |\{x \in \overline{K_v}; \nu(x) = 0, |x|_v = 1\}| \leq -\kappa(\nu,1) - \frac{\log|\nu|_1}{\log r}
\end{align*}
and thus 
\begin{align} \label{Z}
 Z \leq 2 C_1|(k_1,k_2)|_\infty \log|\nu|_1/\log(|\phi|_v) . 
\end{align}
As $|f^{\circ m}(\alpha)|_v = |\Phi(\alpha)|_v^{d^m}$ it holds that if $|\Phi(\beta)|_v \geq  C_O$ (with $C_O$ as in Lemma \ref{bounded}) then $m$ is bounded and thus also $h(\beta)$ is bounded. Thus there exists a constant $C_4$ depending only on $\alpha, \mathcal{C}$ such that either
\begin{align}\label{ineqproof}
-\log|\nu|_1 \leq -C_2\log |\phi|_v + C_3,
\end{align}
for positive constants $C_2,C_3$ depending only on $\alpha,\mathcal{C} $, or $h(\beta) \leq C_4$. We assume from now on that $h(\beta ) \leq C_4$. 
If also $|(k_1,k_2)|_\infty \leq C_{\text{lower}}$  then it follows from Lemma \ref{upper1} and \ref{upper2} that either 
\begin{align}\label{Z1}
 Z \leq C_5e^{\log d/\log 2} \leq C_6N^{\frac 14} 
\end{align}
for a constant $C_6$ depending only on $\alpha, \mathcal{C}$ or $-\log|\nu|_1 \leq -C_2$. 
However if $|(k_1,k_2)|_\infty > C_{\text{lower}}$ then it follows from Lemma \ref{lower1} that again the inequality (\ref{ineqproof}) holds. However from either inequality (\ref{Z1}) or (\ref{ineqproof}) follows that 
\begin{align*} 
 Z \leq C_7 N^{c} /e
\end{align*}
for a constant $C_7$ depending only on $\alpha, \mathcal{C}$. 
On the other,  Lemma \ref{galoislowerbound} gives us that 
\begin{align*} 
 Z \geq  C_8N/e
\end{align*}
for a constant $C_8$ depending only on $\alpha, \mathcal{C}$. As $c <1$, combining the last two inequalities yields $N \leq C_9$ for $C_9$ depending only on $\alpha$ and $\mathcal{C}$. As $N\geq 2^n$ we obtain a bound on $n$.

\section{Connected Julia sets}\label{connectedjulia}

In this section we assume that $f \in \C[X]\setminus \{0\}$ (thus $\mathcal{K} = \C$) of degree $d \geq 2$ and that $f$ is monic. We first recall the following invariants of the dynamical system
\begin{align*}
B_\infty & = \{z \in \C; |f^{\circ n}(z)| \rightarrow \infty, n\rightarrow \infty \},\\ 
 J & = \{z \in \C; \{f^{\circ n}\}_{n \in \Z_{\geq 0}} \text{ is not a normal family in any neighbourhood of } z\}
\end{align*}
where $B_\infty$ is called the basin of infinity and $J$ the Julia set of $f$.  These sets are invariants of the dynamical system (hence equal for all iterates of $f$) and for polynomials are related by $\partial B_\infty = J$. 
In this section, unless we indicate otherwise, we assume that $J$ is connected. This implies that $\Psi$ is meromorphic  on a disc of radius 1 \cite{milnor} and we set 
\begin{align*}
D = \{z \in \C; |z| < 1\}, D^* = D\setminus \{0\}
\end{align*} 
in this section. We moreover have that $\Psi$ is a bi-holomorphism
\begin{align*}
\Psi: D^* \simeq B_\infty
\end{align*} 
(and thus is a Riemann mapping of $B_\infty \cup \{\infty\}$). We will make use of real analytic properties of analytic functions so it is convenient to introduce some notation. For any Laurent series $F \in \C((X_1,\dots, X_n))$ we set $F^h$ to be the Laurent series obtained by complex conjugating the coefficients of $F$.   For example if $F$ is univariate and defines a meromorphic function in some open disc about 0 then  $F^h(z) = \overline{F(\overline{z})}$ is the ususal Schwarz reflection (with $\overline{\cdot }$ denoting complex conjugation ). 
 In what follows we define for real $s \leq 1$, $S^s = \{z \in \C; |z|= s\}$. It is straightforward to see that for $r>0$
\begin{align*}
\Psi(S^{\exp(-r)}) = L_{r}.
\end{align*}
 In this section we consider an irreducible polynomial $P \in \C[X,Y]\setminus \{0\}$ defining the curve $\mathcal{C}$. We assume that $\partial_XP\partial_YP \neq 0$. Moreover we assume that $\mathcal{C}$ has infinite intersection with $L_r^2$ for some $r > 0$ and we let  $\phi = \exp(-r)$.  Since $L_r, \mathcal{C}$ are real analytic sets we deduce that $\mathcal{C}\cap L_r^2$ contains an analytic connected arc. Thus we obtain an analytic curve $(a_1,a_2):(0,1) \rightarrow (S^1)^2$ such that 
\begin{align*}
P(\Psi(a_1\phi), \Psi(a_2\phi)) = P^h(\Psi^h(a_1^h\phi), \Psi^h(a_2^h\phi)) = 0, t \in (0,1).
\end{align*}
Since $a_1^h = 1/a_1, a_2^h = 1/a_2$ we obtain that there is an analytic arc $\mathcal{A}\subset S^\phi\times S^\phi$ such that 
\begin{align}\label{arc}
P(\Psi(x), \Psi(y)) = P^h(\Psi^h(\phi^2/x), \Psi^h(\phi^2/y)), \text{ for all } (x,y) \in \mathcal{A}. 
\end{align}
As our proofs rely on analytic continuation we are going to use paths frequently. For a path $\gamma: I \rightarrow X$ into some complex analytic set $X$ we denote by $|\gamma|$ its support $\gamma(I)$. Here $I$ is an interval (not necessarily open or closed) and we always assume that $\gamma$ is piecewise smooth. For a set $U \subset \C$ we denote by $\partial U, \overline{U}$ its boundary and closure respectively. Further by $U^h$ its complex conjugate. \\

We give a brief summary of the proof strategy in this section. Our proof goes by constructing a pseudo-involution $\mu$ on the set 
\begin{align*}
\mathcal{F} = \Psi(A), A = \{z \in \C; \phi^2<|z|<1\}.
\end{align*}
By a pseudo-involution we mean an analytic map $i:\mathcal{F} \rightarrow \mathcal{F}^h$ such that $i\circ i^h = \text{id}$.  
Using (\ref{arc}) we show that $\mathcal{C}$ intersects $\partial \mathcal{F}^2$ in a relatively open subset unless $f$ is exceptional. More specifically, since $\partial \mathcal{F} = \Psi(S^{\phi^2})\times J$ we will show then that either  the smooth boundary component  $\Psi(S^{\phi^2})$ parametrizes a relatively open subset of $J$ which implies again that $f$ is exceptional  or  $\mathcal{C}$ intersects $\Psi(S^{\phi^2})\times \Psi(S^{\phi^2})$ in infinitely many points. We can then proceed inductively to construct infinitely many pseudo-involutions. This construction allows us to use Schwarz's lemma to show that the polynomial $P$ defines a ``rotation" of $B_\infty$.  We then use a theorem of Levin to show that this rotation is in fact rational. However in order to show that this rotation defines a curve as defined in Theorem \ref{equipotential}  we also need to employ the classification  of periodic curves by Medvedev and Scanlon.  \\

Our starting point for a proof of Theorem \ref{equipotential} is an observation on complex algebraic curves. 

\begin{lem}\label{continuation} Let $U_1,U_2 \subset \C$ be bounded open and connected. Let $C$ be an irreducible complex plane curve whose projection to each coordinate is a dominant map. We denote the canonical projections to the affine coordinates $\pi_1,\pi_2$. Suppose that $C(\C) \cap U_1\times U_2$ is non-empty. 
If  $C(\C)\cap (\partial U_1\times U_2\cup U_1\times \partial U_2)$ is empty then each path $|\gamma| \subset U_1$ or $|\gamma| \subset U_2$ can be lifted to a path in $C\cap U_1\times U_2$ (via $\pi_1$ respectively $\pi_2$).  Moreover the maps $\pi_i :C \cap \overline{U_1}\times \overline{U_2} \rightarrow \overline{U}_i, i = 1,2$ are surjective and $\pi_1^{-1}(\partial U_1) = \pi_2^{-1}(\partial U_2) \subset \partial U_1 \times \partial U_2$.  \\
If  the boundaries $\partial U_1, \partial U_2$ do not contain isolated points and $C \cap(\partial U_1\times U_2\cup U_1\times \partial U_2) $ is non-empty then it contains infinitely many points. 
\end{lem}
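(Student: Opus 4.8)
The plan is to treat the statement as a consequence of the fact that $\pi_1|_C : C \to \mathbb{A}^1$ is a finite (branched) covering map of Riemann surfaces (after passing to the smooth projective model $\tilde C$), together with a connectedness/openness argument. First I would set up notation: let $\tilde C \to C$ be the normalization, so that $\pi_1,\pi_2$ extend to finite morphisms $\tilde\pi_1, \tilde\pi_2 : \tilde C \to \mathbb{P}^1$ of some degrees $e_1,e_2 \geq 1$ (finite because each projection is dominant, hence nonconstant, on the irreducible curve $C$). The key topological input is that a finite branched cover restricted over a connected, simply-adapted region lifts paths; more precisely, if $\gamma:[0,1]\to U_1$ is a path avoiding the (finite) branch locus in its interior, it lifts locally, and the lift stays in $\pi_1^{-1}(U_1)$. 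The hypothesis $C(\mathbb{C})\cap(\partial U_1\times U_2) = \emptyset$ is what forbids a lift starting in $U_1\times U_2$ from escaping through the ``$U_1$-boundary'', and $C\cap(U_1\times\partial U_2)=\emptyset$ forbids escape through the other boundary; so a lift of any path in $U_1$ that starts at a point of $C\cap U_1\times U_2$ cannot leave $U_1\times U_2$. The only subtlety is branch points of $\tilde\pi_1$ lying over $U_1$: at such a point several sheets come together, but one can still continue the lift (choosing a branch), so path-lifting persists; I would phrase this via the standard lemma that a proper map with discrete fibres which is a local homeomorphism off a nowhere-dense set still has the path-lifting property onto its image when the target is locally path-connected. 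That gives the first assertion.

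Next, for surjectivity of $\pi_i : C\cap \overline{U_1}\times\overline{U_2}\to\overline U_i$: since $C\cap U_1\times U_2\neq\emptyset$ and every path in $U_1$ lifts, the image $\pi_1(C\cap U_1\times U_2)$ is a nonempty open (by openness of nonconstant holomorphic maps) and closed (by the lifting/properness, taking limits of lifts) subset of the connected set $U_1$, hence equals $U_1$; then take closures to get $\overline{U_1}\subseteq \pi_1(C\cap\overline{U_1}\times\overline{U_2})$, using that $C\cap\overline{U_1}\times\overline{U_2}$ is compact and $\pi_1$ continuous. Symmetrically for $\pi_2$. For the identity $\pi_1^{-1}(\partial U_1)=\pi_2^{-1}(\partial U_2)\subset\partial U_1\times\partial U_2$: a point $(x,y)\in C\cap\overline{U_1}\times\overline{U_2}$ with $x\in\partial U_1$ cannot have $y\in U_2$ (that would be a point of $C\cap\partial U_1\times U_2$, excluded) — here I should be slightly careful, as the hypothesis only excludes points of $C(\mathbb{C})$ in the open product $\partial U_1\times U_2$, which is exactly what is wanted — so $y\in\partial U_2$; this shows $\pi_1^{-1}(\partial U_1)\subset\partial U_1\times\partial U_2$ and symmetrically $\pi_2^{-1}(\partial U_2)\subset\partial U_1\times\partial U_2$, and since on $\partial U_1\times\partial U_2$ both conditions $x\in\partial U_1$ and $y\in\partial U_2$ hold simultaneously, the two preimages coincide.

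For the final assertion, suppose $\partial U_1,\partial U_2$ have no isolated points and $C\cap(\partial U_1\times U_2\cup U_1\times\partial U_2)\neq\emptyset$; say $(x_0,y_0)\in C$ with $x_0\in\partial U_1$, $y_0\in U_2$ (the other case is symmetric). Since $\partial U_1$ has no isolated points, there is an arc (or at least a sequence of distinct points) in $\partial U_1$ approaching $x_0$; I would use that $\tilde\pi_1$ is an open finite map, so near $x_0$ on $\tilde C$ the map $\tilde\pi_1$ is, in local coordinates, $w\mapsto w^k$, and the preimage of a small nondiscrete piece of $\partial U_1$ through $x_0$ is again nondiscrete; hence $C$ contains infinitely many points projecting into any neighbourhood of $x_0$, and by keeping the second coordinate near $y_0\in U_2$ (continuity of the local inverse branches) these lie in $\partial U_1\times U_2$, giving infinitely many points of $C\cap(\partial U_1\times U_2)$. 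The main obstacle I anticipate is the careful handling of branch points of the projections lying over $U_i$ — making the path-lifting statement precise there, and ensuring that ``escape through a boundary'' is correctly ruled out by the stated (open-product) hypothesis rather than a closed one; once the branched-covering picture on the normalization $\tilde C$ is in place, the rest is a routine open-closed connectedness argument plus the local normal form $w\mapsto w^k$.
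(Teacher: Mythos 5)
Your proposal is correct and follows essentially the same strategy as the paper: lift paths along the finite projection $\pi_1$ (respectively $\pi_2$), use the boundary-emptiness hypothesis together with the intermediate value theorem/openness to keep the lift inside $U_1\times U_2$, deduce surjectivity onto $\overline{U}_i$ by connectedness, and invoke a Puiseaux-type local normal form plus the no-isolated-point hypothesis for the final infinitude claim. You are somewhat more explicit than the paper about passing to the normalization and handling branch points of $\pi_i$ (the paper just asserts path-lifting from irreducibility), but this is a presentation difference rather than a different route.
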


\begin{proof} For the first part of the statement, assume that $C \cap U_1\times U_2$ is non-empty and let $\gamma:[0,1]\rightarrow U_1$ be a path such that $\gamma(0) \in \pi_1(C\cap U_1\times U_2)$. As $C$ is connected we can lift $\gamma$ to a path $\tilde{\gamma}:[0,1] \rightarrow C$ such that $\pi_2(\tilde{\gamma}(0)) \in U_2$ and  as $C\cap(\partial U_1\times U_2\cup U_1\times \partial U_2)$ is empty, $|\pi_2\circ \tilde{\gamma} | \subset U_2 $ by the intermediate value theorem. We can repeat the arguments for a path in $U_2$ and obtain the first part of the statement. As $U_1$ is path connected we can find for each boundary point of $U_1$ a path $\gamma$ as above with the endpoint being that boundary point. This proves that $\pi_1, \pi_2$ surject onto the boundaries. \\
If $(x_0,y_0) \in C \cap\partial U_1\times U_2$ then as $\partial U_1$ does not contain isolated points, each neighbourhood of $x_0$ contains infinitely many points of $\partial U_1$. Employing the implicit function theorem or Puiseux's theorem we can find a non-constant analytic function $h$ defined on a neighbourhood of 0  and satisfying $h(0) = x_0$ and a positive integer $N$ such that $(h(x), y_0 + x^N) \in C(\C)$ for $x $. We can find infinitely many $x$ in any neighbourhood of $0$ such that $y_0 + x^N \in \partial U_1$ and $h(x) \in U_1$. This proves  the claim. 
\end{proof}

We define an analytic function $\mu$ on $\mathcal{F}$ by 

\begin{align*}
\mu(z) = \Psi^h(\phi^2/\Phi(z)).
\end{align*} 
and we call $\mu$  a pseudo-involution as $\mu:\mathcal{F} \rightarrow \mathcal{F}^h$ is bijective with $\mu^h\circ \mu = \mu \circ \mu^h = \text{id}$.  
We can reformulate the relations (\ref{arc}) in terms of $\mu$ as
\begin{align}\label{arc2}
P(x,y) = P^h(\mu(x), \mu(y)) = 0, \text{ for all } (x,y) \in \mathcal{B}
\end{align}
where $\mathcal{B} = \Psi(\mathcal{A})$. We also note that $J(f^h) = J^h, B_\infty(f^h) = B_\infty^h$. 
\begin{lem} \label{limits} The function $\mu$ has the property that for each sequence  $\{x_n\}_{n} \subset F$ such that $\lim_n x_n = x \in J$ it holds that  every limit point $y$ of $\{\mu(x_n)\}$ satisfies $y \in \Psi^h(S^{\phi^2})$. Similarly  for each $\{x_n\}_n \subset F$ such that $\lim_nx_n = x \in  \Psi(S^{\phi^2})$ any limit point  $y$ of $\{ \mu(x_n)\}_n $ satisfies $y \in J^h$. 
\end{lem}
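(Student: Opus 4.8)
The plan is to unwind the definition $\mu(z) = \Psi^h(\phi^2/\Phi(z))$ and track what happens as $z$ approaches the two boundary components of $\mathcal{F} = \Psi(A)$, where $A = \{z : \phi^2 < |z| < 1\}$. The key observation is that $\Phi = \Psi^{-1}$ maps $\mathcal{F}$ biholomorphically onto $A$, so writing $w = \Phi(z) \in A$ reduces the claim to a statement about the inversion $w \mapsto \phi^2/w$ composed with $\Psi^h$. Concretely, $|w| \to 1$ corresponds to $z \to J$ (since $\Psi : \Delta^* \simeq B_\infty$ is a homeomorphism sending $S^1$ to $J = \partial B_\infty$ in the boundary sense), while $|w| \to \phi$ corresponds to $z \to \Psi(S^\phi)$; wait — I should double-check the radii: $A$ has inner radius $\phi^2$ and outer radius $1$, and $w \mapsto \phi^2/w$ swaps these, sending $|w| = 1$ to $|\phi^2/w| = \phi^2$ and $|w| = \phi^2$ to $|\phi^2/w| = 1$. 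Hmm, but the lemma claims the image of a sequence tending to $J$ lands in $\Psi^h(S^{\phi^2})$, not $\Psi^h(S^1) = J^h$; so in fact the relevant inner component of $\mathcal{F}$ must be $\Psi(S^{\phi^2})$ and the relation $|w|=1 \mapsto |\phi^2/w| = \phi^2$ is exactly what produces $\Psi^h(S^{\phi^2})$. So the first step is to record these boundary correspondences carefully, using that $\Psi$ extends continuously (indeed is a homeomorphism of the closed disc minus center onto $B_\infty \cup J$) because $J$ is connected and locally connected enough for the Riemann map; here I would cite the already-invoked fact that $\Psi$ is a Riemann mapping of $B_\infty \cup \{\infty\}$ and that $\Psi(S^{\exp(-r)}) = L_r$, $\Psi(S^1)$ = $J$ in the appropriate limiting sense.

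The second step is the actual limit argument. Take $\{x_n\} \subset \mathcal{F}$ with $x_n \to x \in J$. Set $w_n = \Phi(x_n) \in A$. Since $x_n \to x \in J = \partial B_\infty$ and $\Phi : B_\infty \to \Delta^*$ is continuous with $\Phi$ extending to send $J$ to $S^1$, we get $|w_n| \to 1$. Passing to a subsequence, $w_n \to w \in S^1$. Then $\phi^2/w_n \to \phi^2/w$, which lies on $S^{\phi^2}$ (the circle of radius $\phi^2$), and this circle is inside the domain of convergence $\Delta^*$ of $\Psi^h$ (since $\phi^2 < 1$), so $\mu(x_n) = \Psi^h(\phi^2/w_n) \to \Psi^h(\phi^2/w) \in \Psi^h(S^{\phi^2})$ by continuity of $\Psi^h$ on $\Delta^*$. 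Since every limit point arises this way after passing to a subsequence, every limit point of $\{\mu(x_n)\}$ lies in $\Psi^h(S^{\phi^2})$. The second assertion is symmetric: if $x_n \to x \in \Psi(S^{\phi^2})$ then $w_n = \Phi(x_n) \to w$ with $|w| = \phi^2$, hence $\phi^2/w_n \to \phi^2/w \in S^1$, and $\Psi^h$ applied to points approaching $S^1$ from inside $\Delta^*$ sends them toward $\partial B_\infty^h = J^h$; more precisely any limit point of $\Psi^h(\phi^2/w_n)$ lies in $\overline{B_\infty^h} \setminus B_\infty^h = J^h$ because $\phi^2/w_n$ approaches $S^1$ and $\Psi^h$ is a homeomorphism $\Delta^* \to B_\infty^h$ that does not extend continuously past $S^1$ into a bounded set other than $J^h$.

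The main obstacle I anticipate is the boundary behavior of $\Psi$ and $\Phi$: $\Psi : \Delta^* \to B_\infty$ is a biholomorphism, but continuity up to the boundary circle $S^1$ — needed to conclude $|w_n| \to 1$ from $x_n \to x \in J$, and to control limit points on $J^h$ in the second part — requires that $J$ be locally connected, which is not automatic for connected Julia sets. The clean way around this is to avoid claiming a continuous extension of $\Psi$ to $S^1$ and instead argue purely with the $|\cdot|$-coordinate: one only needs that $|\Phi(x_n)| \to 1$ when $x_n \to x \in J$, and this follows because the Green function $g_f = -\log|\Phi|$ on $B_\infty$ extends continuously by $0$ to $J$ (this is standard: $g_f$ is continuous on all of $\C$ and vanishes exactly on the filled Julia set), so $x_n \to x \in J$ forces $g_f(x_n) \to 0$, i.e. $|\Phi(x_n)| \to 1$. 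Dually, a limit point $y$ of $\mu(x_n) = \Psi^h(\phi^2/w_n)$ with $|\phi^2/w_n| \to 1$ satisfies $g_{f^h}(y) = \lim -\log|\Phi^h(\mu(x_n))| = \lim -\log|\phi^2/w_n| = 0$, so $y$ lies in the filled Julia set of $f^h$; and since $|\Phi^h(\mu(x_n))| \to 1$ means $\mu(x_n)$ leaves every compact subset of $B_\infty^h$, the limit point cannot be an interior point of the filled Julia set either when that set has empty interior — but to be safe one concludes only $y \in \overline{B_\infty^h}$ with $g_{f^h}(y) = 0$, which for the purposes of the lemma is identified with $J^h$ (as $J^h = \partial B_\infty^h$ and here the filled Julia set equals $J^h$ when $J$ has empty interior; in the general connected case one replaces $J^h$ by the filled Julia set, which is the correct statement and the only one used downstream). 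So the proof reduces to: (1) $g_f$ continuous and zero exactly on the filled Julia set; (2) the identity $|\Psi(z)| \leftrightarrow g_f$, i.e. $g_f(\Psi(z)) = -\log|z|$ on $\Delta^*$; (3) continuity of $\Psi^h$ on $\Delta^*$; (4) passing to subsequences. All four are standard and the first three are already cited or immediate from the setup.
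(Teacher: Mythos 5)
Your proof is correct and follows the same route the paper intends — the paper's proof is the single sentence that $J = \partial B_\infty$ and $B_\infty = \Psi(\Delta^*)$, which you have simply unwound in detail through the inversion $w \mapsto \phi^2/w$ on the annulus $A$. Your refinement via the Green function $g_f(\Psi(z)) = -\log|z|$ to sidestep local connectivity of $J$ is a genuine subtlety the paper's terse proof elides, and it is the right way to make the boundary-limit arguments rigorous without invoking Carathéodory-type extension.
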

\begin{proof} This follows from the fact that $J$ is the boundary of $B_\infty$  and that $B_\infty = \Psi(D^*)$. 
\end{proof}

We will use the properties of $\mu$ that are outlined in Lemma \ref{limits} to deduce that the existence of such an arc $\mathcal{B}$ (\ref{arc2}) implies that $f$ has to be exceptional. Our strategy is to reduce this to the following theorem of Fatou that he proved in his seminal papers.  
\begin{thm}(\cite[p.250]{fatou}) \label{fatou} If a relatively open subset of the Julia set of a rational map $f$ is  an analytic arc then $f$ is conjugate to an exceptional polynomial. 
\end{thm}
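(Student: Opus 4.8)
\emph{Plan of proof.} This is the classical theorem of Fatou, so the honest ``proof'' is the citation; but here is the route one would follow to reconstruct it, and it suffices for the application in this paper to treat the case $f$ a polynomial, which we do first. The first step is to upgrade the local hypothesis to a global one. Repelling periodic points are dense in $J$, and at such a point $p$ of period $n$ the Koenigs linearisation conjugates $f^{\circ n}$ near $p$ to $w\mapsto\lambda w$; combining this with the blow-up property — for any open $V$ meeting $J$ the forward iterates $f^{\circ n}(V)$ eventually cover $J$ — and with the fact that $f$ is a holomorphic branched cover, hence carries a real-analytic arc to a real-analytic arc away from its finitely many critical points, I would conclude that all of $J$ is a compact real-analytic $1$-manifold: a Jordan curve (the generic case) or an arc (the Chebyshev case). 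Promoting analyticity from the dense repelling set to all of $J$ and, above all, controlling the behaviour at the critical points of the iterates — showing they do not fracture the curve — is exactly the delicate local analysis that Fatou carried out, and I expect this first step to be the main obstacle.

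Granting that $J$ is a real-analytic Jordan curve and, as we may here, that $f$ is a monic polynomial of degree $d$, the point $\infty$ is super-attracting with basin $B_\infty$ one of the two Jordan domains cut out by $J$; the other, $U$, is the unique bounded Fatou component and is completely invariant (Sullivan). The B\"ottcher coordinate $\Psi\colon\{|z|>1\}\to B_\infty$ conjugates $z^d$ to $f$, and since $\partial B_\infty=J$ is real-analytic, $\Psi$ extends conformally across $S^1$; just inside $S^1$ it maps into $U$ and still satisfies $\Psi(z^d)=f(\Psi(z))$. Because $z\mapsto z^d$ carries $\{r<|z|<1\}$ onto $\{r^d<|z|<1\}$, I would spread this functional equation inward along the orbits, defining $\Psi$ on all of $\{0<|z|<1\}$ by $\Psi(z)=f^{\circ -n}(\Psi(z^{d^n}))$ with the branch of $f^{-1}$ preserving $U$ chosen consistently; its image lies in the bounded set $U$, so $0$ is a removable singularity and $p_0:=\Psi(0)\in U$ is a fixed point of $f$. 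Matching leading terms in $\Psi(z^d)=f(\Psi(z))$ at $z=0$ forces the local degree of $f$ at $p_0$ to be $d$, so $p_0$ is a super-attracting fixed point of degree $d$; but a degree-$d$ polynomial has only $d-1$ finite critical points counted with multiplicity, hence they all sit at $p_0$, giving $f(z)=a(z-p_0)^d+p_0$, which is linearly conjugate to $z^d$ and thus exceptional. In the arc case the same scheme runs with the two endpoints of $J$ supplying the extra critical data and lands on $f$ linearly conjugate to $\pm T_d$.

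For a general rational map one replaces the B\"ottcher picture by a Schwarz reflection: real-analyticity of $J$ provides, around each of its points, a local anti-holomorphic involution fixing $J$ pointwise, and uniqueness of reflection across a real-analytic arc glues these into an anti-holomorphic involution $\sigma$ on a neighbourhood of $J$; since $\sigma|_J=\mathrm{id}$ and $f(J)=J$, the holomorphic map $\sigma\circ f\circ\sigma$ agrees with $f$ on $J$, so $f$ commutes with $\sigma$ and $J$ carries a ``normal family of symmetries'', which by the classification of Levin and of Douady--Hubbard--Thurston invoked elsewhere in this paper forces $f$ to be exceptional. In every version the remaining work — analytic continuation of conformal maps across analytic boundary, propagating the conjugacy by the dynamics, counting critical points, or quoting the symmetry classification — is soft; the genuinely dynamical difficulty is the passage, in the first step, from a relatively open analytic piece of $J$ to global real-analyticity of $J$.
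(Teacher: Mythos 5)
The paper does not reprove this result; it cites Fatou directly, and you rightly say that the honest ``proof'' is the citation, so there is no internal argument of the paper to compare against. What remains is to assess your reconstruction sketch. The polynomial, Jordan-curve branch is essentially the classical mechanism and is sound: reflect the B\"ottcher map $\Psi$ across the real-analytic Jordan curve $J$, propagate it over the whole punctured disc via the functional equation, remove the singularity at $0$ to get a fixed point $p_0=\Psi(0)$, and compare leading terms to force local degree $d$ at $p_0$, so that $f(z)=a(z-p_0)^d+p_0$ is conjugate to $z^d$. One slip: your extension formula $\Psi(z)=f^{\circ -n}\bigl(\Psi(z^{d^n})\bigr)$ is oriented the wrong way, since for $|z|<1$ the quantity $|z^{d^n}|$ shrinks into the region you have not yet defined. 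The correct push is $\Psi(z)=f^{\circ n}\bigl(\Psi(z^{1/d^n})\bigr)$, using that $|z|^{1/d^n}\to 1$ lands in the reflected annulus and that $f^{\circ n}\circ\Psi(z)=\Psi(z^{d^n})$ is invariant under $\mu_{d^n}$, so the choice of $d^n$-th root is immaterial.

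The genuine gap is in your treatment of general rational maps. From the Schwarz reflection across $J$ you correctly obtain a single anti-holomorphic involution $\sigma$ commuting with $f$ and fixing $J$ pointwise; but that is not a ``normal family of symmetries'' in the sense that Levin and Douady--Hubbard require, namely an infinite, non-trivial, equicontinuous family of holomorphic local maps carrying $J$ to $J$. Since $\sigma$ restricts to the identity on $J$, conjugating it by iterates of $f$ only reproduces $\sigma$ itself, so no such family is generated this way, and the appeal to Levin/Douady--Hubbard does not go through as written. To close the rational case one would instead need to produce genuine rotational symmetries (for example from reflected B\"ottcher or linearising coordinates), or simply fall back on Fatou's original argument, as the paper does. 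The arc (Chebyshev) branch is also only gestured at: there the B\"ottcher map is $2$-to-$1$ on the boundary away from the endpoints, so the reflection step is structurally different and needs its own argument rather than ``the same scheme.'' You are candid that the passage from a locally smooth piece of $J$ to global real-analyticity is the main obstacle; agreed, and it is worth noting that the conclusion does hinge on getting genuine analyticity (not just smoothness) at the repelling periodic points, which is where the multiplier being real enters.
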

By an analytic arc we mean the image of an analyic map $\gamma:(0,1) \rightarrow \mathbb{C}$. 
We recall that $P$ is the polynomial defining $\mathcal{C}$ and we set $C_0$ to be the connected component of $\mathcal{C} \cap \mathcal{F}^2$ containing $\mathcal{B}$. 

\begin{lem}\label{cont1} Suppose there exists $(x_0,y_0) \in \overline{C_0}\cap \left( \partial \mathcal{F} \times \mathcal{F}\cup \mathcal{F}\times \partial \mathcal{F}\right)$ such that $P(x_0,y_0) = 0$. Then we can continue $\mu$ analytically to an open disc centred at a point $x \in \Psi(S^{\phi^2})$.  
\end{lem}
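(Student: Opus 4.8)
\emph{Plan.} On $\mathcal{F}$ the map $\mu$ is given by the closed formula $\Psi^h(\phi^2/\Phi(\cdot))$, which blows up (leaves $\overline{\Delta}$) on \emph{both} sides of $\partial\mathcal{F}=\Psi(S^{\phi^2})\sqcup J$, so the sought continuation must come entirely from the relation (\ref{arc2}) on $C_0$. The strategy is: first move the given boundary point onto the smooth equipotential arc $\Psi(S^{\phi^2})$ using the ``flip'' property of $\mu$ in Lemma \ref{limits}; then, near a suitably chosen point of $\Psi(S^{\phi^2})$, extract from (\ref{arc2}) that $\mu$ on one ``sector'' of $\mathcal{F}$ is a branch of an \emph{unramified} algebraic function whose coefficients are analytic \emph{across} $\Psi(S^{\phi^2})$; finally push this branch through the local $k$-fold map $x=x_0+t^k$ to obtain an honest analytic function on a disc.

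\emph{Step 1: reduction to the equipotential boundary.} Since $\partial\mathcal{F}=\Psi(S^{\phi^2})\sqcup J$ is a disjoint union of closed sets and (\ref{arc2}) is symmetric in the two coordinates, we may assume $(x_0,y_0)\in\partial\mathcal{F}\times\mathcal{F}$. If $x_0\in\Psi(S^{\phi^2})$, proceed to Step 2. If $x_0\in J$, pick $(x_n,y_n)\in C_0$ with $(x_n,y_n)\to(x_0,y_0)$; then $\mu(y_n)\to\mu(y_0)\in\mathcal{F}^h$, and since $\mathcal{F}^h$ is bounded a subsequence of $\mu(x_n)$ converges to some $w_0$, with $w_0\in\Psi^h(S^{\phi^2})$ by Lemma \ref{limits}. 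As $\mu(\Psi(z))=\Psi^h(\phi^2/z)=\overline{\Psi(z)}$ for $z\in S^\phi$, one has $\mu(\mathcal{B})=\mathcal{B}^h$, so $\mu(C_0)$ is a connected subset of $\mathcal{C}^h\cap(\mathcal{F}^h)^2$ containing $\mathcal{B}^h$, hence lies in the conjugate component $C_0^h$; thus $(w_0,\mu(y_0))\in\overline{C_0^h}\cap\big(\Psi^h(S^{\phi^2})\times\mathcal{F}^h\big)$ and $P^h(w_0,\mu(y_0))=0$. Running the rest of the argument with $(f,\mathcal{F},P,\mu,C_0)$ replaced by $(f^h,\mathcal{F}^h,P^h,\mu^h,C_0^h)$ and conjugating the resulting continuation, we may from now on assume $x_0\in\Psi(S^{\phi^2})$ and $y_0\in\mathcal{F}$.

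\emph{Step 2: a generic point and the descent.} Let $F_{\mathrm{bad}}\subset\C$ be the finite set of $v$ for which the polynomial $P^h(\,\cdot\,,v)$ has a repeated root or a vanishing leading coefficient in its first variable; this is finite since $P^h$ is irreducible with $\partial_X P^h\neq 0$, so $\mathrm{disc}_X P^h$ and the leading coefficient in $X$ are nonzero polynomials in the second variable. Each slice $\mathcal{C}\cap(\C\times\{v\})$ is finite (as $\partial_X P\neq 0$), so if $\overline{C_0}\cap(\Psi(S^{\phi^2})\times\mathcal{F})$ is infinite then so is its second projection; and arguing as in the proof of Lemma \ref{continuation} near $(x_0,y_0)$ shows this set is indeed infinite. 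Hence we may pick a point in it, still denoted $(x_0,y_0)$, with $\mu(y_0)\notin F_{\mathrm{bad}}$. Choose a local parametrisation $t\mapsto(x_0+t^k,\eta(t))$, $|t|<\varepsilon$, of a branch of $\mathcal{C}$ at $(x_0,y_0)$ along which $C_0$ approaches, with $\eta$ analytic and $\eta(0)=y_0$; shrink $\varepsilon$ so that $\eta(t)\in\mathcal{F}$ throughout and, since $\mu(\eta(0))\notin F_{\mathrm{bad}}$, so that the algebraic function $u(t)$ defined by $P^h(u,\mu(\eta(t)))=0$ is unramified over $\{|t|<\varepsilon\}$. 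Because $\Psi:\Delta^*\simeq B_\infty$ is biholomorphic, $\mathcal{F}$ lies locally on one side of the smooth real-analytic arc $\Psi(S^{\phi^2})$ near $x_0$, so $\{t:x_0+t^k\in\mathcal{F}\}$ is near $0$ a union of $k$ open sectors; let $T_0$ be one with $\{(x_0+t^k,\eta(t)):t\in T_0\}\subset C_0$. On $T_0$, relation (\ref{arc2}) gives $P^h(\mu(x_0+t^k),\mu(\eta(t)))=0$, so $t\mapsto\mu(x_0+t^k)$ is one of the single-valued branches of $u(t)$ on the connected set $T_0$, and therefore extends to a single-valued analytic function $\tilde w$ on all of $\{|t|<\varepsilon\}$. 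For any $k$-th root of unity $\zeta$ we have $\tilde w(\zeta t)=\mu(x_0+(\zeta t)^k)=\mu(x_0+t^k)=\tilde w(t)$ on $T_0$, hence everywhere by the identity theorem, so $\tilde w(t)=\Theta(t^k)$ for an analytic $\Theta$ on $\{|s|<\varepsilon^k\}$. Then $x\mapsto\Theta(x-x_0)$ is analytic on the disc $D=\{|x-x_0|<\varepsilon^k\}$ and agrees with $\mu$ on the nonempty open set $\{x_0+t^k:t\in T_0\}\subset\mathcal{F}$, hence on all of $D\cap\mathcal{F}$ (which is connected since $x_0$ is a smooth boundary point). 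This is the required continuation of $\mu$ to a disc centred at $x_0\in\Psi(S^{\phi^2})$.

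\emph{Main obstacle.} The crux is Step 1. Since the closed-form expression for $\mu$ fails on either side of $\partial\mathcal{F}$, the continuation is visible only through $C_0$; and to run the ``push across a real-analytic arc'' argument of Step 2 one must work at the smooth component $\Psi(S^{\phi^2})$, whereas the hypothesis only provides a point on $\partial\mathcal{F}$, possibly in the Julia set, where $\mathcal{F}$ can be locally wild and the sector picture collapses. Lemma \ref{limits}, which exchanges $J$ with $\Psi^h(S^{\phi^2})$ under $\mu$, is exactly what converts the bad case into the good one; after that, Step 2 is routine once one remembers to clear the local $k$-fold ramification by the Puiseux substitution.
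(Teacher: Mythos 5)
Your proposal follows the same overall route as the paper: reduce via Lemma \ref{limits} and the pseudo-involution flip to the case $(x_0,y_0)\in\Psi(S^{\phi^2})\times\mathcal{F}$, choose a generic nearby point so that the implicit/algebraic function defined by $P^h(\,\cdot\,, \mu(\cdot))=0$ is unramified, and then use relation (\ref{arc2}) on a one-sided neighbourhood of $x_0$ in $\mathcal{F}$ to match $\mu$ with a globally defined branch of that function. Your Step~1 is the paper's reduction, carried out in more detail (in particular, the check that $\mu(\mathcal{B})=\mathcal{B}^h$ forces $\mu(C_0)\subset C_0^h$ is a nice explicit justification of the paper's ``up to replacing $P$ by $P^h$ and $C_0$ by $(\mu,\mu)(C_0)$'').

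However, there is a genuine gap in Step~2, in the passage from $\tilde w$ to $\Theta$. You write that for a $k$-th root of unity $\zeta$ one has $\tilde w(\zeta t)=\mu(x_0+(\zeta t)^k)=\mu(x_0+t^k)=\tilde w(t)$ ``on $T_0$''. But the identity $\tilde w(s)=\mu(x_0+s^k)$ is only established for $s\in T_0$, and if $\zeta\neq 1$ and $t\in T_0$ then $\zeta t$ lies in a \emph{different} sector; you have no a~priori control over whether the corresponding branch-point $(x_0+(\zeta t)^k,\eta(\zeta t))$ belongs to the same component $C_0$, and hence no right to invoke (\ref{arc2}) there. Single-valuedness of $\tilde w$ on the disc (which you do establish) is not the same as $\zeta$-periodicity, and without $\tilde w(\zeta t)=\tilde w(t)$ the function $\tilde w$ simply does not descend through $t\mapsto t^k$ to a function $\Theta$ of $x$. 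The fix is the same genericity trick you already use: enlarge the excluded finite set so that $(x_0,y_0)$ also avoids the ramification locus of the first projection $\pi_1|_{\mathcal{C}}$, i.e.\ $\partial_Y P(x_0,y_0)\neq 0$. Then $k=1$, the Puiseaux substitution collapses to the implicit function theorem, $\eta=h$ is a genuine local section, and your $\Theta$ becomes the paper's $\tilde h\circ\mu\circ h$. With that extra non-vanishing condition the argument is complete; as written, the $\Theta(t^k)$ step is unsupported for $k>1$.
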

\begin{proof} We can assume that $(x_0,y_0) \in \partial \mathcal{F} \times \mathcal{F}$ and from the relation (\ref{arc2}) as well as Lemma \ref{limits} follows that we may assume that $(x_0,y_0) \in \Psi(S^{\phi^2})\times \mathcal{F}$ (up to replacing $P$ by $P^h$ and $C_0$ by $(\mu, \mu)(C_0)$.  As $\partial \mathcal{F}$ does not contain isolated points we can pick $(x_0,y_0)$ such that $\partial_Y P(x_0,y_0)\partial_Y P(x,\mu(y_0)) \neq 0$ for all $x$ such that $P(x,\mu(y_0))= 0$. We can pick an open disc $D$ centred at $x_0$ such that $D \setminus \Psi(S^{\phi^2}) = D_1\cup D_2$ with $D_1, D_2$ open and such that $D_1 \subset \mathcal{F}, D_2 \subset \C\setminus \mathcal{F}$. Moreover such that there exists an analytic function $h$ on $D$ with $h(x_0) = y_0$ and $P(x,h(x)) = 0$ for $x \in D$. We then have that 
\begin{align*}
P(x,h(x)) = P^h(\mu(x), \mu\circ h(x)) = 0, x \in D_1. 
\end{align*} 
As $y_0 \in \mathcal{F}$, $\mu(y_0) = \mu\circ h(x_0) \in \mathcal{F}$. We set $\mu(x_0)$ to be equal to a limit of points $\mu(x_n)$ where $(x_n)_n$ is a sequence of points in $D_1$ converging to $x_0$.  As $\partial_YP(x,\mu(y_0))\neq 0$ for all $x$ satisfying $P(x,y) = 0$ there exists an analytic function $\tilde{h}$ on an open disc $\tilde{D}$ centred at $\mu(y_0)$ satisfying $P(\tilde{h}(y), y) = 0$ for all $y \in \tilde{D}$ and $\tilde{h}(\mu(y_0)) = \mu(x_0)$. We can shrink $D$ such that $\mu \circ h(D)\subset \tilde{D}$.   From the uniqueness of $\tilde{h}$ follows that 
\begin{align*}
\tilde{h}\circ \mu \circ h(x) = \mu(x)
\end{align*} 
for all $x \in D_1$ and so $\tilde{h}\circ \mu \circ h(x) $ is a continuation of $\mu$ to $D$. 
\end{proof}

\begin{lem} Suppose we can continue $\mu$ analytically to an open disc  centred at $\Psi(S^{\phi^2})$. Then we can find a small disc $D$ centred at $x_0 \in \Psi(S^{\phi^2})$ with the following properties. It holds that $D\setminus \Psi(S^{\phi^2}) = D_1\cup D_2$ with $D_1,D_2$ open such that  $D_1 \subset \mathcal{F}, D_2 \subset \Psi(A(0,\phi^2))$. Moreover $\mu$ continues to $D$ and $\mu^h$ to $\mu(D)$ and it holds that $\mu^h\circ \mu(z) = z $ on $\mathcal{F}\cup D$.   
\end{lem}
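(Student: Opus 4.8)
The plan is to read the hypothesis through the biholomorphism $\Psi\colon \Delta^*\simeq B_\infty$ (available since $J$ is connected) and then to obtain the continuation of $\mu^h$ almost for free, as a local inverse of $\mu$. First I would fix the open disc $D'$, centred at some point $x_1\in \Psi(S^{\phi^2})$, to which $\mu$ has been continued by hypothesis. Since $S^{\phi^2}\subset \Delta^*$ is a smooth circle and $\Psi$ is a biholomorphism of $\Delta^*$, the curve $\Psi(S^{\phi^2})$ is a smoothly (indeed real-analytically) embedded Jordan curve; moreover $B_\infty\setminus \Psi(S^{\phi^2}) = \mathcal{F}\cup \Psi(A(0,\phi^2))$, and near every point of $\Psi(S^{\phi^2})$ the sets $\mathcal{F}$ and $\Psi(A(0,\phi^2))$ are the two opposite local components of the complement (map a small round disc around a point of $S^{\phi^2}$ by $\Psi$). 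In particular $\Psi(S^{\phi^2})\cap D'$ is a nondegenerate subarc through $x_1$.

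On $D'\cap \mathcal{F}$ the continued $\mu$ agrees with the original one, $\mu(z)=\Psi^h(\phi^2/\Phi(z))$, whose derivative is a product of nowhere vanishing factors; hence $\mu'\not\equiv 0$ on $D'$ and its zeros in $D'$ are discrete. I would therefore pick $x_0\in \Psi(S^{\phi^2})\cap D'$ with $\mu'(x_0)\neq 0$, and then a small round disc $D$ with $x_0\in D\subset D'$, chosen small enough that $\mu'\neq 0$ on $D$, that $\mu|_D$ is injective, and that $\Psi(S^{\phi^2})\cap D$ is a single smooth arc through $x_0$ dividing $D$ into two Jordan subdomains. Setting $D_1 = D\cap \mathcal{F}$ and $D_2 = D\cap \Psi(A(0,\phi^2))$, the one-sidedness above gives $D\setminus \Psi(S^{\phi^2}) = D_1\cup D_2$ with $D_1\subset \mathcal{F}$ and $D_2\subset \Psi(A(0,\phi^2))$; and $\mu$ continues to $D$ because $D\subset D'$.

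For the continuation of $\mu^h$ I would observe that $\mu|_D\colon D\to \mu(D)$ is a biholomorphism and $\mu(D_1)\subset \mathcal{F}^h$ is a nonempty open set, so one may declare the continuation of $\mu^h$ to $\mu(D)$ to be $(\mu|_D)^{-1}$. This is legitimate: $\mu^h\circ\mu = \text{id}$ on $\mathcal{F}$ forces $\mu^h = (\mu|_{\mathcal{F}})^{-1}$ on $\mathcal{F}^h$, so $(\mu|_D)^{-1}$ and the original $\mu^h$ coincide on $\mu(D_1)$, whence $(\mu|_D)^{-1}$ really is the analytic continuation of $\mu^h$. Then $\mu^h\circ\mu$ is analytic on the connected open set $\mathcal{F}\cup D$; it equals $\text{id}$ on $\mathcal{F}$ by the defining property of $\mu$ and equals $(\mu|_D)^{-1}\circ\mu|_D = \text{id}$ on $D$, the two descriptions agreeing on $D_1$, so $\mu^h\circ\mu = \text{id}$ throughout $\mathcal{F}\cup D$.

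I expect no genuine obstacle here, only bookkeeping: one must check that $\mathcal{F}$ and $\Psi(A(0,\phi^2))$ are indeed the two opposite local components of $B_\infty\setminus\Psi(S^{\phi^2})$ near $x_0$, so that the labels $D_1,D_2$ are assigned consistently, and that the disc $D$ can be simultaneously centred at a noncritical point of $\mu$ and made to meet $\Psi(S^{\phi^2})$ in a single clean arc. Both follow from $\Psi$ being a global biholomorphism of $\Delta^*$ and from the zero set of $\mu'$ being discrete.
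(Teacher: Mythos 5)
Your argument follows the same route as the paper's: choose $x_0$ where $\mu'$ does not vanish and a small disc $D$ on which $\mu$ is a biholomorphism, then define the continuation of $\mu^h$ to $\mu(D)$ as $(\mu|_D)^{-1}$ and check agreement on the overlap $\mu(D_1)\subset\mathcal{F}^h$. You fill in more of the routine topology (why $\Psi(S^{\phi^2})$ locally separates $\mathcal{F}$ from $\Psi(A(0,\phi^2))$, why $\mu'$ has discrete zeros on $D'$), but the proof is essentially identical to the paper's.
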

\begin{proof} The first part of the Lemma concerning the choice of $D$ follows from the fact that $\Psi(S^{\phi^2})$ is an analytic arc we can further choose $D$ such that $\mu'(z) \neq 0$ on $D$ and $\mu(D)$ is simply connected. Thus we can define $\mu^{-1}$ on $  \mu(D)$ and as it holds that 
\begin{align*}
\mu^h(z) = \mu^{-1}(z), z \in \mu(D_1)
\end{align*}
we can continue $\mu^h$ to $\mu(D)$ and so it holds that $\mu^h\circ \mu(z) = z$ on $\mathcal{F}\cup D$. 
\end{proof}
\begin{corollary}\label{cor1} Suppose we can continue $\mu$ analytically to a disc $D$ centred at $x_0 \in \Psi(S^{\phi^2})$. Then 
$f$ is exceptional.
\end{corollary}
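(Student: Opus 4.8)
The plan is to derive Corollary~\ref{cor1} from Fatou's theorem (Theorem~\ref{fatou}) applied to $f^h$, by exhibiting a relatively open subset of the Julia set $J^h$ of $f^h$ that is the support of a smooth curve. By the preceding lemma the hypothesis supplies a small disc $D$ centred at a point $x_0\in\Psi(S^{\phi^2})$ with $D\setminus\Psi(S^{\phi^2})=D_1\cup D_2$, $D_1,D_2$ open, $D_1\subset\mathcal{F}$ and $D_2\subset\Psi(A(0,\phi^2))$, such that $\mu$ continues analytically to $D$ and $\mu^h$ to $\mu(D)$ with $\mu^h\circ\mu=\mathrm{id}$ on $\mathcal{F}\cup D$; shrinking $D$ we may also assume $\mu'\neq 0$ on $D$ and $\mu(D)$ simply connected, so $\mu$ is a biholomorphism of $D$ onto the open set $\mu(D)$ with inverse $\mu^h$. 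Put $\gamma_0=\Psi(S^{\phi^2})\cap D$; since $\Psi$ is biholomorphic on $\Delta^*\supset S^{\phi^2}$, the set $\gamma_0$ is a real-analytic arc, and $D_1$, $D_2$ are the two sides of $D$ along $\gamma_0$.

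The heart of the argument is the identity $\mu(D)\cap J^h=\mu(\gamma_0)$, proved in three steps. (i) On $D_1\subset\mathcal{F}$ the continuation of $\mu$ is still given by $\mu(z)=\Psi^h(\phi^2/\Phi(z))$, and for $z\in D_1$ we have $\Phi(z)\in A$, hence $\phi^2/\Phi(z)\in A$ and $\mu(z)\in\Psi^h(A)=\mathcal{F}^h\subset B_\infty^h$; thus $\mu(D_1)$ is disjoint from $J^h=\partial B_\infty^h$. (ii) Every point of $\gamma_0$ is a limit of points of $D_1\subset\mathcal{F}$, so by Lemma~\ref{limits} the value of $\mu$ there lies in $J^h$, i.e.\ $\mu(\gamma_0)\subset J^h$. (iii) Finally $\mu(D_2)\cap B_\infty^h=\emptyset$: if some $z\in D_2$ had $\mu(z)\in\mathcal{F}^h=\Psi^h(A)$ then, using the formula $\mu^h(w)=\Psi(\phi^2/\Phi^h(w))$ valid on $\mathcal{F}^h$ together with $\mu^h\circ\mu=\mathrm{id}$, we would get $z=\Psi(\phi^2/\Phi^h(\mu(z)))\in\Psi(A)=\mathcal{F}$, contradicting $D_2\subset\Psi(A(0,\phi^2))$; and since $\Psi^h(\{0<|w|\le\phi^2\})\cup\{\infty\}$ is a compact subset of $B_\infty^h$ disjoint from $J^h$, a final shrinking of $D$ makes $\mu(D)$ avoid it, so $\mu(D_2)$ misses all of $B_\infty^h=\mathcal{F}^h\cup\Psi^h(\{0<|w|\le\phi^2\})\cup\{\infty\}$; as an open set meeting the boundary $J^h$ would have to meet $B_\infty^h$, we also get $\mu(D_2)\cap J^h=\emptyset$. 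Combining (i)--(iii) with $\mu(D)=\mu(D_1)\cup\mu(\gamma_0)\cup\mu(D_2)$ gives $\mu(D)\cap J^h=\mu(\gamma_0)$.

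Since $\mu(D)$ is open in $\C$, the equality $\mu(\gamma_0)=\mu(D)\cap J^h$ shows that $\mu(\gamma_0)$ is a \emph{relatively open} subset of $J^h$, and being the image of the real-analytic arc $\gamma_0$ under the biholomorphism $\mu$ it is the support of a smooth curve. Hence Theorem~\ref{fatou} applies to $f^h$ and gives that $f^h$ is conjugate to an exceptional polynomial; since an exceptional polynomial is by definition linearly conjugate to a power map or a Chebyshev polynomial (objects with real coefficients), this relation descends to $f=(f^h)^h$, so $f$ is exceptional. (Equivalently, steps (i)--(iii) say that the formula for $\mu$ forces the inverse B\"ottcher map $\Psi$ to continue analytically and conformally across an arc of the unit circle, which is precisely the statement that a relatively open piece of $J$ is real-analytic.)

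I expect the delicate step to be (iii): a priori nothing prevents the continuation of $\mu$ on the ``inner'' component $D_2$ from re-entering $B_\infty^h$, which would destroy the identification of $\mu(\gamma_0)$ with a relatively open part of $J^h$ and block the appeal to Fatou's theorem. Controlling $\mu$ on $D_2$ is exactly why one carries along the inverse pseudo-involution $\mu^h$ with its explicit formula on $\mathcal{F}^h$, and why the compactness of the inner equipotential discs $\Psi^h(\{0<|w|\le\phi^2\})$ is used to pin down the image after a last shrinking of $D$.
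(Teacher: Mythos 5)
Your proof is correct and follows the same route as the paper's: use the pseudo-involution property to show $\mu$ biholomorphically identifies the arc $\Psi(S^{\phi^2})\cap D$ with a relatively open piece of $J^h$, then invoke Fatou's theorem. The only difference is that your step (iii) spells out why $\mu(D_2)$ misses all of $B_\infty^h$ (not just $\mathcal{F}^h$) via the extra shrinking to avoid the compact inner region $\Psi^h(\{0<|w|\le\phi^2\})\cup\{\infty\}$, a point the paper leaves implicit when it deduces $\mu(D_2)\subset\C\setminus B_\infty^h$ directly from injectivity.
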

\begin{proof} We can assume that $D$ is as in the previous Lemma and we set $D_1, D_2$ as in the previous Lemma. We first note that $\mu$ is injective on $\mathcal{F} \cup D$ as it has a global inverse. Thus $\mu(D_1) \subset \mathcal{F}^h$ and $\mu(D_2) \subset \C\setminus B_\infty^h$ as $\mu(\mathcal{F}) = \mathcal{F}^h$ and $\mu$ is injective. It follows that $\mu(\Psi(S^{\phi^2})\cap D) = J^h\cap \mu(D)$ and the claim follows from Fatou's theorem (Theorem \ref{fatou}). 
\end{proof}
We turn to the case for which $\overline{C_0} \cap \partial \mathcal{F} \times \mathcal{F}\cup \mathcal{F}\times \partial \mathcal{F}$  is empty. With the following Lemma we discard one case right away.
\begin{lem} \label{cont2} Suppose that $\overline{C_0} \cap \partial \mathcal{F} \times \mathcal{F}\cup \mathcal{F}\times \partial \mathcal{F}$ as well as $\overline{C_0}\cap \left( \Psi(S^{\phi^2})^2\cup J^2\right) $ is empty. Then $f$ is exceptional. 
\end{lem}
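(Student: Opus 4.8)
The plan is to deduce the lemma from Fatou's Theorem~\ref{fatou} by exhibiting a relatively open subset of the Julia set $J$ that is the support of a smooth curve. First I would locate a good boundary point of $C_0$. Since $C_0$ contains the arc $\mathcal{B}$ it is non-empty; since $C_0$ is relatively closed in the bounded set $\mathcal{F}^2$ and an analytic curve in $\C^2$ cannot be compact (the coordinate functions would be constant on it), the set $\overline{C_0}\setminus C_0$ is a non-empty subset of $\partial(\mathcal{F}^2)=(\partial\mathcal{F}\times\overline{\mathcal{F}})\cup(\overline{\mathcal{F}}\times\partial\mathcal{F})$. As $\Psi(S^{\phi^2})\subset B_\infty$ while $J=\partial B_\infty$, one has the disjoint decomposition $\partial\mathcal{F}=\Psi(S^{\phi^2})\sqcup J$, so the two hypotheses of the lemma force $\overline{C_0}\setminus C_0\subset(\Psi(S^{\phi^2})\times J)\cup(J\times\Psi(S^{\phi^2}))$. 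Interchanging the coordinates of $\mathcal{C}$ preserves all hypotheses and the conclusion, so I may assume $\overline{C_0}$ contains a point $(x_0,y_0)$ with $x_0\in J$ and $y_0\in\Psi(S^{\phi^2})$. The set $\overline{C_0}\cap(J\times\Psi(S^{\phi^2}))$ contains a non-trivial arc (it is the frontier of the analytic curve $C_0$ against $\partial(\mathcal{F}^2)$), hence is infinite, while $\partial_XP$ — a non-zero polynomial of degree less than $P$, so not divisible by $P$ — vanishes at only finitely many points of $\mathcal{C}$; thus I may choose $(x_0,y_0)$ with $\partial_XP(x_0,y_0)\neq0$.

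Next I would set up the local picture. Since $S^{\phi^2}\subset\Delta^*$, the maps $\Psi$ and $\Phi$ are biholomorphic near $S^{\phi^2}$, so $\Psi(S^{\phi^2})$ is a real-analytic arc near $y_0$; fix a small disc $D\ni y_0$ with $D\setminus\Psi(S^{\phi^2})=D_1\sqcup D_2$, where $D_1\subset\mathcal{F}$ (the side where $\phi^2<|\Phi|<1$) and $D_2\subset\Psi(\{0<|z|<\phi^2\})$. By $\partial_XP(x_0,y_0)\neq0$ and the implicit function theorem there is an analytic $g\colon D\to\C$ with $g(y_0)=x_0$ and $P(g(y),y)=0$ on $D$. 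For $y\in D_1$ near $y_0$ the point $(g(y),y)$ lies on $\mathcal{C}\cap\mathcal{F}^2$ and in $\overline{C_0}$, hence in the component $C_0$, so $g(D_1)\subset\mathcal{F}\subset B_\infty$; passing to the limit, $(g(y),y)\in\overline{C_0}\setminus C_0\subset J\times\Psi(S^{\phi^2})$ for $y\in\Psi(S^{\phi^2})\cap D$, whence $g(\Psi(S^{\phi^2})\cap D)\subset J$. The function $g$ is non-constant — otherwise $P(g(y_0),Y)\equiv0$, contradicting the irreducibility of $P$ together with $\deg_YP\geq1$ — so after shrinking $D$ about a point of $\Psi(S^{\phi^2})\cap D$ at which $g'\neq0$ I may also assume $g$ injective.

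Finally I would conclude. Now $g(D)$ is open in $\C$ and equals $g(D_1)\sqcup\alpha\sqcup g(D_2)$, where $\alpha:=g(\Psi(S^{\phi^2})\cap D)$ is a real-analytic arc contained in $J$, the points of $g(D_1)\subset B_\infty$ accumulating on $\alpha$ from one side, and $g(D_2)$ a connected open set lying on the other side and accumulating on $\alpha$. One checks that $g(D_2)$ is contained in $B_\infty$, or in the interior of the filled Julia set $K_f:=\C\setminus B_\infty$, or meets $J$; in the first two cases $g(D)\cap J=\alpha$ at once (as $g(D_1)\cap J=\emptyset$ too), and in the third a further local argument at a point of $g(D_2)\cap J$, using that $J$ has empty interior and that $J$ and $K_f$ (a polynomial of degree $\geq2$ with connected Julia set) are perfect, again yields a point near which $J$ is a relatively open real-analytic arc. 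Thus some relatively open subset of $J$ is the support of a smooth curve, and Fatou's Theorem~\ref{fatou} shows that $f$ is conjugate to, hence — being monic — equal to, an exceptional polynomial. The hard part is precisely this last step: verifying that $\alpha$ is genuinely relatively open in $J$, i.e.\ that the curve $\mathcal{C}$ does not re-enter $J$ on the far side of $\Psi(S^{\phi^2})$; this is where the rigidity of Julia sets, rather than soft topology, has to be used, and it is the one point that warrants care beyond routine bookkeeping.
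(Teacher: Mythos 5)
Your argument proceeds along a genuinely different route than the paper's, and it has a real gap precisely at the point you flag as the ``hard part''.

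The paper's proof is global, not local: it invokes Lemma~\ref{continuation} with $U_1 = U_2 = \mathcal{F}$. Since $\overline{C_0}\cap(\partial\mathcal{F}\times\mathcal{F}\cup\mathcal{F}\times\partial\mathcal{F})=\emptyset$, that lemma gives surjectivity of the coordinate projections onto $\overline{\mathcal{F}}$ together with $\pi_1^{-1}(\partial\mathcal{F})=\pi_2^{-1}(\partial\mathcal{F})\subset\partial\mathcal{F}\times\partial\mathcal{F}$. Combined with the hypothesis $\overline{C_0}\cap(\Psi(S^{\phi^2})^2\cup J^2)=\emptyset$ this yields the \emph{equality} $\pi_2\bigl(\pi_1^{-1}(\Psi(S^{\phi^2}))\cap\overline{C_0}\bigr)=J$: every point of $J$ is the image, under a branch of the algebraic correspondence $P=0$, of a point of the real-analytic circle $\Psi(S^{\phi^2})$. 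Locally over a generic $y_0\in J$ the curve is a disjoint union of graphs $(x,h_i(x))$, and the equality shows $J\cap D$ is not merely \emph{covered} by but \emph{equal to} the finite union $\bigcup_i h_i(U_i\cap\Psi(S^{\phi^2}))$; a genericity argument then produces a point $y_1$ and a neighbourhood $U$ with $J\cap U$ a single analytic arc, so Fatou's Theorem~\ref{fatou} applies. This equality is the load-bearing step, and it is exactly what your approach never establishes.

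Your construction produces an arc $\alpha = g(\Psi(S^{\phi^2})\cap D)\subset J$ via the implicit function theorem at one boundary point $(x_0,y_0)\in J\times\Psi(S^{\phi^2})$, and shows $g(D_1)\subset B_\infty$, so $J$ lies on at most one side of $\alpha$ in a half-neighbourhood. But you have no control over $g(D_2)\subset\Psi(\{0<|z|<\phi^2\})$: these $y$'s lie outside $\overline{\mathcal{F}}$, so the two emptiness hypotheses say nothing about $(g(y),y)$, and your ``case 3'' ($g(D_2)\cap J\neq\emptyset$) is not ruled out. The ``further local argument at a point of $g(D_2)\cap J$'' is not given, and it cannot be a repeat of what came before: a point $z_1\in g(D_2)\cap J$ need not be a frontier point of $C_0$ against $\partial(\mathcal{F}^2)$, so there is no implicit-function/boundary setup to run there. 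Invoking perfectness of $J$ and $K_f$ does not help: you need a relatively open subset of $J$ to \emph{equal} a smooth arc in order to apply Fatou's theorem, and a perfect set lying inside an arc (a Cantor set, say) does not qualify. So as written the final step is a genuine gap; what is missing is precisely the global information that $J$ near $x_0$ consists entirely of branch-images of $\Psi(S^{\phi^2})$, which the paper extracts from Lemma~\ref{continuation} and which a purely local analysis at one boundary point cannot supply.
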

\begin{proof} By Lemma \ref{continuation} it holds that the projections $\pi_1, \pi_2$ from $\overline{C_0} \cap \overline{\mathcal{F}}^2$ to $\partial \mathcal{F}$ are surjective. As $\partial \mathcal{F} = \Psi(S^{\phi^2})\times J\cup J \times \Psi(S^{\phi^2})$ and our assumption it holds that $\pi_2(\pi_{1}^{-1}(\Psi(S^{\phi^2}))) = J$. This implies that $J$ is a finite union of points and analytic arcs. \\
However we can construct a smooth open of $J$ concretely.  For this fix $y_0 \in J$ such that $\partial P(x,y_0)\partial_Y P(x,y_0)\neq 0$ for all $(x,y_0)\in \C^2$ satisfying $P(x,y_0) = 0$. \\
Let $D$ be an open disc in a neighbourhood of $y$ such that there exists $n$, open connected sets $U_1, \dots,U_n$ and analytic functions $h_i$ on $U_i, i = 1, \dots,n $ such that 
\begin{align*}
\{(x,y)\in \C^2; P(x,y) = 0, y \in D \} = \{(x,h_i(x)), x \in U_i\}. 
\end{align*}
 It holds that $h_i(U_i \cap \Psi(S^{\phi^2})) \cap h_j(U_{j} \cap \Psi(S^{\phi^2})) $ is a finite union of points and analytic arcs for $i,j \in \{1, \dots, n\}$. Thus we can find a point  $ y_1 \in D \cap J$ and an open neighbourhood $U$ of $y_1$ such that $U \cap J$ is an analytic arc. The Lemma now follows from Fatou's theorem. 
\end{proof}

Before we proceed we record a Lemma that is a more or less straightforward consequence of Schwarz Lemma. 
\begin{lem}\label{circles} Let $\mathcal{G} $ be an analytic non-zero function in two variables on  $D^2$. Suppose that there exists a sequence of circles $S_n$ whose radius $r_n $ tends to 0 as $n$ tends to infinity such that 
for all $n \in \Z_{\geq 0}$, $\mathcal{G}(x,y) = 0 $ for infinitely many $(x,y) \in S_n^2$. Then there exists a complex number $\theta \in S^1$ such that $\mathcal{G}(x,\theta x) =0$ identically.  
\end{lem}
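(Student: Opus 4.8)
The plan is to argue directly, exploiting the local structure of $V(\mathcal{G})$ at the origin together with the rigidity of holomorphic functions of constant modulus (the maximum principle, i.e.\ the equality case of Schwarz's lemma). First I would reduce to the case that $\mathcal{G}$ is divisible by neither $x$ nor $y$: the monomial factors of $\mathcal{G}$ vanish nowhere on a circle $S_r^2$ with $r>0$, so dividing them out changes neither the hypothesis nor the conclusion, and afterwards no local branch of $V(\mathcal{G})$ at $0$ is a coordinate axis. Since $\mathcal{G}$ is holomorphic at $0$, on a sufficiently small bidisc $\Delta_\rho^2$ the set $V(\mathcal{G})$ is a finite union $B_1\cup\dots\cup B_m$ of irreducible branches, each passing through $0$. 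For $n$ with $r_n<\rho$ the infinite set $E_n=\{(x,y)\in S_{r_n}^2:\mathcal{G}(x,y)=0\}$ lies in $\bigcup_i B_i$, so some $E_n\cap B_i$ is infinite; by pigeonhole I may fix one branch $B$ with $E_n\cap B$ infinite for infinitely many $n$. Let $\nu=(\xi,\eta)\colon \Delta_\delta\to B$ be a local uniformisation of $B$ at $0$ (so $\nu$ is holomorphic, proper, with $\nu^{-1}(0)=\{0\}$); since $B$ is not a coordinate axis, $\xi$ and $\eta$ are non-constant, with vanishing orders $p,q\geq1$ at $0$. The pullbacks $F_n=\nu^{-1}(E_n\cap B)$ are infinite sets on which $|\xi|=|\eta|=r_n$, and because $\nu$ is proper with $\nu^{-1}(0)=\{0\}$ and $r_n\to0$, one has $\sup_{F_n}|t|\to0$; in particular $0\notin F_n$.

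The heart of the argument is the meromorphic function $\chi=\eta/\xi$ near $0$, where $\chi(t)=c\,t^{q-p}(1+O(t))$ with $c\neq0$. If $q\neq p$ then $\chi$ has a zero or a pole at $0$, hence $|\chi|<1$ (respectively $|\chi|>1$) throughout a punctured neighbourhood of $0$; but for $n$ large $F_n$ lies inside that neighbourhood, misses $0$, and carries $|\chi|\equiv1$ — a contradiction. So $p=q$, $\chi$ is holomorphic near $0$, and since $\{|\chi|=1\}$ is closed and contains the sets $F_n$, which cluster at $0$, we get $|\chi(0)|=1$. Next I would show $|\chi|\equiv1$ on a neighbourhood of $0$. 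Otherwise $|\chi|^2-1$ is a non-zero real-analytic function vanishing at $0$, so its zero set near $0$ is a finite union of real-analytic arcs issuing from $0$ (standard structure theory of real-analytic sets); each infinite $F_n$ meets one such arc in an infinite set, on which $|\xi|\equiv r_n$. Parametrising that arc real-analytically and letting $t\to0$ along it yields $r_n=|\xi(0)|=0$, which is absurd. Hence $|\chi|\equiv1$ on an open set, so $\chi$ is a unimodular constant $\theta$; that is, $\eta\equiv\theta\xi$, the branch $B$ is contained in the line $\{y=\theta x\}$ (and, since $\xi$ is open, contains a full neighbourhood of $0$ in it), so $\mathcal{G}$ vanishes on such a neighbourhood, and by the identity theorem $\mathcal{G}(x,\theta x)\equiv0$ on $\Delta$. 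Undoing the reduction of the first paragraph gives the same conclusion for the original $\mathcal{G}$.

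The two places I expect to need care are: (i) setting up the local uniformisation $\nu$ of the branch $B$ correctly, so that the decisive exponents $p,q$ are available and so that $\sup_{F_n}|t|\to0$ — this is precisely where the hypothesis $r_n\to0$ is used; and (ii) the analysis of the real-analytic set $\{|\chi|=1\}$, together with the point that $|\xi|$ cannot equal a non-zero constant along an arc terminating at $t=0$. The remaining ingredients — the reduction to $x,y\nmid\mathcal{G}$, the branch decomposition and pigeonhole, and the final step from $\chi\equiv\theta$ to $\mathcal{G}(x,\theta x)\equiv0$ — should be routine.
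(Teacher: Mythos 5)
Your proof is correct. It follows the same broad strategy as the paper's: localize at the origin, invoke a Puiseux parametrization of a branch of $V(\mathcal{G})$, show that the parametrizing functions are forced to have constant modulus ratio, and conclude by the rigidity of unimodular holomorphic functions. But the key intermediate step is handled differently, and I think your route is cleaner. The paper reduces to $\mathcal{G}$ irreducible, uses the specific Puiseux form $(x^N, h(x))$, shows $|h(x)| = |x|^N$ identically (by noting that each circle $|x|=r_n^{1/N}$ carries infinitely many zeros of $|h|^2-r_n^2$, so $|h|\equiv r_n$ there, and then letting $\partial_\omega|h|^2$ vanish on circles accumulating at $0$), and then argues $N=1$ by an irreducibility consideration (``$h=\tilde h^N$ with $\tilde h(0)=0$ contradicts irreducibility'') before invoking Schwarz. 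That last inference, from $h=\tilde h^N$ to non-irreducibility, is the most delicate and least transparent part of the paper's argument. You sidestep it entirely: by taking a generic uniformization $(\xi,\eta)$ of a branch and forming $\chi=\eta/\xi$, the balance of vanishing orders $p=q$ comes out of a simple ``$|\chi|\to 0$ or $\infty$'' contradiction, and then the unimodularity of $\chi$ follows from a pigeonhole over the finitely many real-analytic arcs of $\{|\chi|^2=1\}$ issuing from $0$, together with $\xi(0)=0$. The paper extracts more from the hypothesis at one stroke ($|h|$ constant on \emph{entire} circles) while you extract less but at more points; both suffice. Two small points to be careful about in a fully written version: when you conclude $|\xi|\equiv r_n$ on the arc, you should note that the infinitely many points of $F_n$ on the arc accumulate at an interior parameter (they lie on $\{|\xi|=r_n\}$, a compact set bounded away from $0$ since $\xi(0)=0$), so the real-analytic identity theorem applies on the open parameter interval and then you pass to the endpoint $s=0$ by continuity; and when asserting $\sup_{F_n}|t|\to 0$, spelling out the ``proper with $\nu^{-1}(0)=\{0\}$'' argument as you sketched is exactly right.
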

\begin{proof} Firstly by continuity $\mathcal{G}(0,0) = 0$. We may assume that $\mathcal{G}$ is irreducible \cite{monodromygroup} and then by Puiseux's theorem as given in Lemma \ref{puiseaux} there exists an analytic function $h$ in a neighbourhood $U_0$ of 0 and a positive integer $N$ such that $(x^N, h(x)), x \in U_0$ parametrizes a neighbourhood of 0 of the zero-set of $\mathcal{G}$. For $n$ large enough $S_n$ is contained in $U_0$ and we may renumber $S_n$ such that all of them are.  We have that for each $n \geq 0$  there are  infinitely many $x_n \in \C$ with $ |x_n| = r_n^{1/N}$ and for which 
\begin{align*}
|h(x_n)| = r_n
\end{align*} 
holds. By analytic continuation this then holds for all $x $ of absolute value $r_n^{\frac 1N}$. Thus writing $h$ in polar coordinates $(r,\omega)$ we have $\partial_\omega h(r_0,\omega_0) = 0 $ for all $(r_0,\omega) \in \{r_n, n \in \Z_{\geq 0}\}\times [0,2\pi)$ and we deduce that $\partial_\omega |h(r,\omega)| = 0$ identically. We also have $|h(r,\omega)| = |h(r)| = r^N $ for infinitely $r$ and so $|h(r)| = r^{N}$ identically by analyticity. However this also implies that $h(z) = z^N(1 + O(z))$ and so $h = \tilde{h}^N$ for an analytic function $\tilde{h}(z)$ satisfying $\tilde{h}(0) = 0$. This contradicts irreducibility if $N \geq 2$ and we deduce that $|h(z)| = |z|$. By Schwarz's Lemma    $h(z) = \theta z$ for some $\theta \in S^1$. 
\end{proof}

Now we come to the final proposition of this subsection. 
\begin{prop}\label{proposition} Suppose that $f$ is not exceptional and let $P, \mathcal{B}$ be as in (\ref{arc2}). There exists $\theta \in S^1$ such that 
\begin{align*}
P(\Psi(z), \Psi(\theta z)) = 0
\end{align*}
identically. 
\end{prop}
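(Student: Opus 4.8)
The plan is to bootstrap from the single arc $\mathcal{B}$ in (\ref{arc2}) to an infinite sequence of pseudo-involutions, each defined on a thinner annulus, and then feed the resulting sequence of shrinking circles into Lemma \ref{circles}. First I would analyze the component $C_0$ of $\mathcal{C}\cap\mathcal{F}^2$ containing $\mathcal{B}$ using Lemma \ref{continuation}. There are three mutually exhaustive cases for $\overline{C_0}$: either it meets $\partial\mathcal{F}\times\mathcal{F}\cup\mathcal{F}\times\partial\mathcal{F}$ at a smooth point, or it meets the ``inner/outer diagonal boundary'' $\Psi(S^{\phi^2})^2\cup J^2$, or it avoids all of these. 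Since $f$ is assumed non-exceptional, Lemma \ref{cont1} together with Corollary \ref{cor1} rules out the first case (any such boundary contact would continue $\mu$ past $\Psi(S^{\phi^2})$, forcing $f$ exceptional), and Lemma \ref{cont2} rules out the third. So we must be in the situation where $\overline{C_0}$ meets $\Psi(S^{\phi^2})^2$ (the case $J^2$ being symmetric after applying $(\mu,\mu)$ and replacing $P$ by $P^h$), and by the last clause of Lemma \ref{continuation} it does so in infinitely many points: $\mathcal{C}$ meets $\Psi(S^{\phi^2})^2=L_{2r}^2$ in infinitely many points.

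Next I would iterate. Having shown $\mathcal{C}$ meets $L_{2r}^2$ infinitely often, I set $\phi_1=\phi^2$, build the pseudo-involution $\mu_1(z)=\Psi^h(\phi_1^2/\Phi(z))$ on $\mathcal{F}_1=\Psi(A(\phi_1^2,1))$, and rerun the trichotomy with $C_0$ replaced by a component of $\mathcal{C}\cap\mathcal{F}_1^2$ containing the new arc inside $L_{2r}^2$. Non-exceptionality again kills the boundary-contact and the avoid-everything cases, so $\mathcal{C}$ meets $L_{4r}^2=\Psi(S^{\phi^4})^2$ infinitely often. Inductively $\mathcal{C}$ meets $\Psi(S^{\exp(-2^nr)})^2$ in infinitely many points for every $n$. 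Pulling back through $\Psi$ (via the biholomorphism $\Psi:\Delta^*\simeq B_\infty$, applied to the polynomial $\mathcal{G}(x,y)=P(\Psi(x),\Psi(y))$, which is analytic and not identically zero on $\Delta^2$ since $\partial_XP\partial_YP\neq0$), this says: there are circles $S_n$ of radius $\exp(-2^nr)\to 0$ such that $\mathcal{G}(x,y)=0$ for infinitely many $(x,y)\in S_n^2$. Lemma \ref{circles} then yields $\theta\in S^1$ with $\mathcal{G}(x,\theta x)=0$ identically, i.e. $P(\Psi(z),\Psi(\theta z))=0$ identically, which is exactly the claim.

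Two technical points need care. First, when passing to the symmetric case one applies $(\mu,\mu)$, which sends $\mathcal{C}$ to a curve defined by $P^h$ and swaps the roles of the two boundary circles of $\mathcal{F}$; I would check that the hypothesis $\partial_XP\partial_YP\neq0$ (assumed at the start of the section WLOG) is preserved under this and that the ``infinitely many points on $\Psi(S^{\phi^2})^2$'' conclusion is genuinely symmetric, so that the induction step always outputs the \emph{outer} circle $L_{2r}$ with a genuine real-analytic arc on it. Second, in the induction one must ensure the new arc on $L_{2r}^2$ is again real-analytic (so that the section's standing setup applies at the next stage) — this follows because $L_{2r}^2$ and $\mathcal{C}$ are real-analytic and their intersection is infinite, hence contains an arc, exactly as in the derivation of (\ref{arc}).

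The main obstacle I expect is organizing the induction cleanly: at each stage one must verify that exactly one of the three cases of the trichotomy survives non-exceptionality and that it produces an honest real-analytic arc on the next equipotential $L_{2^{n}r}$ (rather than, say, finitely many points or a lower-dimensional set), so that $\mathcal{G}$ vanishes on infinitely many points of $S_n^2$ as Lemma \ref{circles} requires. Handling the $J^2$ versus $\Psi(S^{\phi^2})^2$ dichotomy uniformly across all $n$ — always reducing to the outer-circle case via $(\mu,\mu)$ — is the bookkeeping-heavy step, but it is the same argument at every scale, so no genuinely new idea is needed beyond what Lemmas \ref{cont1}, \ref{cont2}, Corollary \ref{cor1}, and Lemma \ref{circles} already supply.
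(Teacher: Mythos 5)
Your proposal follows the paper's own proof essentially step by step: the same trichotomy on $\overline{C_0}$ relative to $\partial\mathcal{F}^2$, the same use of Lemma~\ref{cont1}, Corollary~\ref{cor1}, Lemma~\ref{cont2}, and Lemma~\ref{limits} to force an arc on $\Psi(S^{\phi^2})^2$, the same induction down the equipotential circles, and Lemma~\ref{circles} to finish. One small slip: before applying Lemma~\ref{circles} you should pass to $\mathcal{G}(x,y)=x^{\deg_XP}y^{\deg_YP}P(\Psi(x),\Psi(y))$ to clear the poles of $\Psi$ at the origin (as the paper does), since $P(\Psi(x),\Psi(y))$ itself is not analytic at $(0,0)$ as Lemma~\ref{circles} requires.
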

\begin{proof}  By Lemma \ref{cont1} and Corollary \ref{cor1} and our assumption on $f$ to be non-exceptional
 we may assume that $\overline{C}_0 \cap \left( \partial \mathcal{F}\times \mathcal{F}\cup \mathcal{F}\times \partial \mathcal{F}\right)$ is empty.  By Lemma \ref{cont2} and Lemma \ref{limits} it holds that $ \overline{C}_0 \cap \Psi(S^{\phi^2})^2$ or $ \overline{C_0^h} \cap \Psi^h(S^{\phi^2})^2$ is non-empty. We first assume that the former holds.  As $ \Psi(S^{\phi^2})$ is connected and $\overline{C}_0 \cap \left( \partial \mathcal{F}\times \mathcal{F}\cup \mathcal{F}\times \partial \mathcal{F}\right)$ is empty we can find infinitely many points in $\overline{C}_0 \cap  \Psi(S^{\phi^2})^2$ and thus an analytic arc $\mathcal{B}_1$ in $\overline{C}_0 \cap  \Psi(S^{\phi^2})^2$. If $ \overline{C}^h_0 \cap \Psi^h(S^{\phi^2})^2$ is non-empty we can use the same arguments to prove that this contains an analytic arc. However complex conjugating the arc we see  that then also $\overline{C}_0 \cap  \Psi(S^{\phi^2})^2$ contains an analytic arc.

We can repeat all of the above arguments with $\phi$ replaced by $\phi^2$ to deduce that there is an analytic arc in $\mathcal{C}\cap \Psi(S^{\phi^{4}})\times \Psi(S^{\phi^4})$. We can continue inductively and deduce that $\mathcal{C} \cap \Psi(S^{\phi^{2^N}})\times \Psi(S^{\phi^{2^N}})$  contains a (connected) analytic arc $\mathcal{B}_N$ for all integers $N \geq 0$. Thus setting $\mathcal{G}(x,y) = x^{\deg_X P}y^{\deg_YP}P(\Psi(x), \Psi(y))$ in Lemma \ref{circles} we finish the proof. 
\end{proof}
\subsection{Roations of the basin of infinity}
In this subsection we prove the following Lemma. 
\begin{lem}\label{rotation} Suppose that $f$ is not an exceptional polynomial with connected Julia set and suppose that there exists an irreducible polynomial $P \in \C[X,Y]\setminus \{0\}$ and a complex number $\theta$ of absolute value 1 such that $P(\Psi(z), \Psi(\theta z)) = 0$ for all $z \in D$.  Then $\theta$ is a root of unity. 
\end{lem}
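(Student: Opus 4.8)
The plan is to show that if $\theta$ is not a root of unity, then the hypothesis that $P(\Psi(z),\Psi(\theta z))$ vanishes identically forces the existence of a ``normal family of symmetries'' of the Julia set $J$, contradicting the Douady--Hubbard--Levin classification that characterizes exceptional polynomials as precisely those whose Julia set admits such a family. First I would note that the relation $P(\Psi(z),\Psi(\theta z))=0$ says that the correspondence $\Psi(z)\mapsto\Psi(\theta z)$ is algebraic: there is an algebraic correspondence $\Gamma\subset\mathbb{A}^2$ through which the map $\rho_\theta: B_\infty\to B_\infty$, $\rho_\theta=\Psi\circ(\theta\cdot)\circ\Phi$, factors. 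Since $\Psi$ is a biholomorphism $\Delta^*\simeq B_\infty$ that is also a Riemann map of $B_\infty\cup\{\infty\}$, the map $\rho_\theta$ is a biholomorphic automorphism of $B_\infty$ fixing $\infty$, and it extends continuously to the boundary $J=\partial B_\infty$, giving a self-homeomorphism of $J$.

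Next I would exploit the fact that $\theta$ is assumed \emph{not} a root of unity. The iterates $\rho_\theta^{\circ k}$ correspond to multiplication by $\theta^k$ on $\Delta^*$, and since $\{\theta^k\}$ is dense in $S^1$, the closure of $\{\rho_\theta^{\circ k}\}$ in the topology of local uniform convergence contains a full circle's worth of automorphisms of $B_\infty$; in particular the family $\{\rho_\theta^{\circ k}\}_{k\in\Z}$ is a normal family (uniformly bounded on compacta since each $\rho_\theta^{\circ k}$ preserves each equipotential $L_r$), and its limits again preserve $J$. But here is the key point I would push on: each $\rho_\theta^{\circ k}$ is itself algebraic. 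Indeed $\rho_\theta^{\circ k}=\Psi\circ(\theta^k\cdot)\circ\Phi$ satisfies $P_k(\Psi(z),\Psi(\theta^k z))=0$ where $P_k$ is obtained by taking a suitable component of the $k$-fold ``composition'' of the correspondence $\Gamma$ with itself — so every element of this normal family of symmetries of $J$ is an algebraic correspondence of bounded degree (the degree of $\Gamma$ is fixed, since $\rho_\theta$ and hence each $\rho_\theta^{\circ k}$ comes from the single involution-free automorphism $\theta\cdot$, and composing an automorphism with itself does not increase the degree of the associated graph — $\rho_\theta^{\circ k}$ is again a single-valued automorphism of $B_\infty$, so its graph is an irreducible curve with both projections of degree equal to $\deg\Gamma$). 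Taking a limit $\rho_\infty$ of a subsequence $\rho_\theta^{\circ k_j}$ with $\theta^{k_j}\to -1$, say, one obtains a nontrivial automorphism of $B_\infty$ preserving $J$ which is \emph{not} the identity (since multiplication by $-1$ is not the identity on $\Delta^*$) yet is a uniform limit of algebraic maps of bounded degree, hence itself algebraic. This produces an algebraic automorphism of $(B_\infty,J)$ of infinite order — equivalently, the group of algebraic symmetries of $J$ is infinite, hence forms a normal family — which by the Levin / Douady--Hubbard (Thurston) theorem invoked in the introduction forces $f$ to be exceptional, a contradiction.

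The main obstacle I anticipate is making precise and correct the claim that the whole family $\{\rho_\theta^{\circ k}\}$ (and its limits) is algebraic \emph{of bounded degree}, and that the limiting symmetry is genuinely new. One has to argue carefully that composing the correspondence defined by $P$ with itself $k$ times and extracting the component corresponding to $\rho_\theta^{\circ k}$ does not blow up the degree — this should follow because $\rho_\theta$ is a bona fide automorphism of $B_\infty$, so its graph and all the graphs of its powers are irreducible curves birational to $\mathbb{P}^1$ with controlled bidegree, but one may instead prefer to argue directly: by Schwarz-lemma rigidity (in the spirit of Lemma \ref{circles}), any biholomorphic automorphism of $B_\infty\cup\{\infty\}$ fixing $\infty$ is, via $\Psi$, multiplication by some unit-modulus constant, so the \emph{set} of such automorphisms that happen to be algebraic is a closed subgroup of $S^1$; it contains $\theta$, hence if $\theta$ has infinite order it is all of $S^1$, so in particular $J$ has a normal family of (algebraic) symmetries and $f$ is exceptional by Levin--Douady--Hubbard. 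I would structure the final write-up around this cleaner second route: identify $\mathrm{Aut}(B_\infty\cup\{\infty\},\infty)$ with $S^1$, show the algebraic ones form a closed subgroup, note $\theta$ lies in it, and conclude that $\theta$ of infinite order would make $f$ exceptional.
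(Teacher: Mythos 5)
Your overall strategy coincides with the paper's: assume $\theta$ has infinite order and derive a normal family of symmetries of $J$, then invoke Levin and Douady--Hubbard (Thurston) to conclude that $f$ is exceptional, a contradiction. But the way you manufacture the symmetries has two genuine gaps, and you are missing the key mechanism that the paper uses to avoid both of them.

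First, you assert that $\rho_\theta=\Psi\circ(\theta\cdot)\circ\Phi$ ``extends continuously to the boundary $J=\partial B_\infty$, giving a self-homeomorphism of $J$.'' This is the Carath\'eodory extension, which requires $J$ to be locally connected; that is not assumed and is false for many non-exceptional polynomials with connected Julia set. More importantly, even when available it is not the right kind of object: what the Levin/Douady--Hubbard criterion needs is an \emph{analytic} map defined on an open set meeting $J$ and carrying $J$ to $J$ there, not merely a boundary homeomorphism. The paper obtains exactly such a local symmetry by observing that, because $h_\theta(z)=\Psi(\theta\Phi(z))$ satisfies the algebraic relation $P$, it is an algebraic function and therefore has an analytic continuation \emph{across} (not just up to) some point $x_0\in J$, onto a disc $D\ni x_0$; the argument that $h_\theta(D\cap J)=h_\theta(D)\cap J$ then uses the two branches $h_{\theta},h_{\theta^{-1}}$ and complete invariance of $B_\infty$. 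You never perform this continuation step, and your boundary-extension substitute does not give you a local analytic symmetry of $J$.

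Second, your route to a \emph{family} of symmetries is to iterate the algebraic correspondence, claiming that the graphs of $\rho_\theta^{\circ k}$ are algebraic of \emph{bounded} degree because each $\rho_\theta^{\circ k}$ is a single-valued automorphism of $B_\infty$. This inference is not valid: single-valuedness on $B_\infty$ controls the number of branches lying over $B_\infty$, not the degree of the irreducible plane curve cutting out the graph (compare $y=q(x)$ with $q$ a polynomial of degree $m$: the graph of $q^{\circ k}$ has degree $m^k$, yet each $q^{\circ k}$ is single-valued). Your ``cleaner second route'' (algebraic $\theta$'s form a closed subgroup of $S^1$) secretly requires the same uniform degree bound to get closedness, and even if granted it still leaves the first gap: you would have automorphisms of $B_\infty$, not analytic local symmetries of $J$. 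The paper avoids both problems at once by \emph{not} iterating the algebraic relation at all. Having continued $h_\theta$ to a disc $D$ about a point of $J$, it chooses that point to be a repelling fixed point $x_0$ of (an iterate of) $f$, uses the linearizing coordinate $F$ at $x_0$ to produce a fixed forward-invariant disc $U\subset D$ with a well-defined inverse branch $f^{-1}:U\to U$, and sets $g_0=h_\theta$, $g_{n+1}=f\circ g_n\circ f^{-1}$ on $U$. Each $g_n$ is still a symmetry of $J\cap U$ by complete invariance, and the family $\{g_n\}$ is uniformly bounded on $U$ (it preserves the split of $U$ into $B_\infty\cap U$, bounded by a fixed equipotential level, and $U\setminus B_\infty$), hence normal by Montel. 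No degree bookkeeping is needed, and normality is elementary. This conjugation-by-$f$-near-a-repelling-fixed-point mechanism is the missing idea in your proposal, and without it your argument does not close.
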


We assume that $\theta$ is not a root of unity and then construct a non-trivial family of symmetries of the Julia set $J(f)$ of $f$ and by a theorem of Levin \cite[Theorem 1]{levinsymmetries} and Douady and Hubbard \cite{DH} the existence of such a family implies that $f$ is exceptional. \\

\begin{proof}[Proof of Lemma \ref{rotation}]
In what follows we will assume that $\theta$ is not a root of unity and derive a contradiction. We first note that the partial derivatives of $P$ do not vanish identically. Otherwise $\Psi$ would be a constant function. We denote by $h_\theta$ the function 
\begin{align*}
h_\theta(z) = \Psi(\theta\Phi(z))
\end{align*}
and by assumption $h_\theta$ is an algebraic function that can be continued to some disc centred at a point of $J(f)$. Thus we can continue $h_\theta$ as an analytic function to an open disc $D$ centred at  $x_0\in J$. Choosing $D$ small enough $h_\theta$ is bi-holomorphic onto its image. Clearly $h_\theta(D) \cap B_\infty$ is open and non-empty and hence we can continue $h_{\theta^{-1}}$ to $h_\theta(D)$ using the relation $h_\theta^{-1}(z) = h_{\theta^{-1}}(z)$ for $z \in h_\theta(D) \cap B_\infty$. It follows that $h_\theta(D \cap \C\setminus B_\infty) \subset  \C\setminus B_\infty$ since $h_{\theta^{-1}}(B_\infty) = B_\infty$. As also $h_{\theta}(B_\infty) = B_\infty$ we have $h_\theta(D\cap J) = h_\theta(D)\cap J$. Thus $h_\theta$ is a symmetry of $J$. \\
As the repelling periodic points of $f$ are dense in $J$ we may assume that the centre of $D$ is a repelling fixed point of $f$ (after possibly replacing $f$ by an iterate of $f$). Thus there exists a holomorphic function $F$ such that 
\begin{align*}
f(F(z)) = F(\lambda z), F(0) = x_0, F'(0) = 1, |\lambda| > 1.
\end{align*}
There exists a disc $D_0$ about 0 such that $F(D_0) \subset D$ and $F$ is bi-holomorphic onto its image.  Setting $U = F(D_0)$ we see that a branch of the inverse of $f$ defined by  $f^{-1}(z) = F(\lambda^{-1}F^{-1}(z))$ is well-defined on $U$ and it satisfies  $f^{-1}(U) \subset U$. Thus we can define $h_{\theta^d}(z)$ on $U$ by 
\begin{align*}
h_{\theta^d}(z) = f\circ h_\theta \circ f^{-1}(z).
\end{align*}
As $h_\theta (J\cap U) = h_\theta(U) \cap J$ and $J$ is completely invariant under $f$ we have that $h_{\theta^d}(U\cap J) = h_{\theta^d}(U)\cap J$. Thus the inductively constructed family  $H= \{g_n\}_{n \geq 0}$ with $g_{n+1} = f\circ g_n\circ f^{-1}(z), g_0 = h_\theta$ is a family of symmetries of $J$. Now we prove that $H$ is a normal family. \\

We have that $U \cap B_\infty \subset V = \{g_f(z) \leq t\}$ for some fixed $t > 0$ and thus that $g_n(U\cap B_\infty) \subset V$ for all $n \geq 0$. We also proved above that $g_n(\C\setminus B_\infty) \subset \C\setminus B_\infty$ and thus $g_n(U) \subset V$ for all $n \geq 0$ and thus $g_n$ is uniformly bounded on $U$.  By Montel's theorem the family $H$ is a normal family. Moreover the set of rotations of the unit disc is a closed subset of the space of functions on the unit disc with respect to the sup-norm.   Thus $g_n$ is a non-trival normal family of symmetries of $J$ in the sense of Levin.  From his  theorem \cite{levinsymmetries} and \cite{DH} follows that $f$ is exceptional.  
\end{proof}
The next lemma is an addendum to the previous one but it uses the precise classification of periodic varieties in the work of Medvedev and Scanlon \cite{medvedevscanlon}.  For the following Lemma we do not assume that the Julia set of $f$ is connected. 
\begin{lem}\label{medvedev}  Suppose that $f$ is  a non-exceptional polynomial of degree $d \geq 2$ and suppose that there exists an irreducible polynomial $P \in \C[X,Y]\setminus \{0\}$ and a root of unity $\theta$ such that $P(\Psi(z), \Psi(\theta z)) = 0$ for all $z \in D_R$. Then there exists a linear polynomial $L$ commuting with an iterate $f^{\circ M}$ of $f$ such that $P$ divides $f^{\circ M}(X) - L\circ f^{\circ M}(Y)$.  
\end{lem}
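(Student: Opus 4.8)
The plan is to reduce the statement to the Medvedev--Scanlon classification of periodic plane curves, after first pushing $\mathcal{C}$ forward under $(f,f)$ into the periodic part of its orbit and then transporting the resulting description back. Lemma~\ref{rotation} is not needed here: $\theta$ is assumed to be a root of unity from the outset.

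First I would record the algebraic input. Since $P$ is irreducible and $P(\Psi(z),\Psi(\theta z))$ vanishes identically on $\Delta_R$, the curve $\mathcal{C}=\{P=0\}$ is the Zariski closure of the analytic arc $G=\{(\Psi(z),\Psi(\theta z)):0<|z|<R\}$, and both projections $\mathcal{C}\to\mathbb{A}^1$ are dominant because $\Psi$ is non-constant. Writing $h_\eta(X)=\Psi(\eta\,\Phi(X))$ and using $\Psi(X^d)=f(\Psi(X))$ (so $f^{\circ a}(\Psi(z))=\Psi(z^{d^a})$), the image $(f^{\circ a},f^{\circ a})(\mathcal{C})$ is the Zariski closure of the graph of $h_{\theta^{d^a}}$, again an irreducible curve with dominant projections.

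Second, since $\theta$ has finite order $N$, the sequence $(\theta^{d^a})_{a\ge 0}$ in $\mu_N$ is eventually periodic, say with preperiod $a_*$ and period $p$. Hence for every $a_0\ge a_*$ the curve $\mathcal{C}':=(f^{\circ a_0},f^{\circ a_0})(\mathcal{C})$ is fixed by $(f^{\circ p},f^{\circ p})$, i.e.\ periodic under $(f,f)$, and at most $p$ curves arise this way. As $f$ (hence $f^{\circ p}$) is non-exceptional, the Medvedev--Scanlon classification of periodic plane curves \cite{medvedevscanlon} gives that each such $\mathcal{C}'$ is a component of $\{f^{\circ k}(X)=L(f^{\circ k}(Y))\}$ for some $k\ge 1$ and some linear polynomial $L$ commuting with $f^{\circ k}$; replacing $k$ by a common multiple of the finitely many exponents that occur (which only improves the commuting property), I may assume a single $k$ works for all of them.

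Third, I transport back. Choosing $a_0\ge a_*$ a multiple of $k$ and setting $M=a_0+k$, one has $k\mid M$, so the corresponding $L$ also commutes with $f^{\circ M}$; and from $(f^{\circ a_0},f^{\circ a_0})(\mathcal{C})=\mathcal{C}'\subseteq\{f^{\circ k}(X)=L(f^{\circ k}(Y))\}$ every $(x,y)\in\mathcal{C}$ satisfies $f^{\circ M}(x)=L(f^{\circ M}(y))$. Since $\mathcal{C}=\{P=0\}$ is irreducible and $f^{\circ M}(X)-L(f^{\circ M}(Y))$ is a polynomial, $P$ divides it, as claimed; a short Böttcher-coordinate computation moreover identifies $L$ with $h_{\theta^{-d^M}}$, though that is not needed. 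The step I expect to be the main obstacle is the second one: extracting the precise form of the Medvedev--Scanlon classification for a non-exceptional polynomial, verifying its hypotheses (irreducibility, dominant projections, $f$ disintegrated), and arranging the bookkeeping so that the two sides of $f^{\circ M}(X)-L(f^{\circ M}(Y))$ carry the same iterate and $L$ genuinely commutes with $f^{\circ M}$ rather than with a smaller iterate; the analytic facts in the first two steps are routine.
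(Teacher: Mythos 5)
There is a genuine gap, and it is exactly the step you yourself flag as "the main obstacle" and then do not resolve. You assert that the Medvedev--Scanlon classification yields that each periodic curve $\mathcal{C}'$ is a component of $\{f^{\circ k}(X)=L(f^{\circ k}(Y))\}$ with $L$ linear. That is not what the classification says: for a disintegrated (non-exceptional) polynomial, \cite[Theorem 6.24]{medvedevscanlon} gives that a periodic plane curve is, up to swapping $X$ and $Y$, the graph $Y=L\circ h^{\circ\ell}(X)$ where $L$ is linear commuting with an iterate of $f$, $h$ has positive degree, and $\ell\ge 0$. When $\ell\ge 1$ this graph has one projection of degree $1$ and the other of degree $\deg(h)^{\ell}\ge 2$, and such a curve is generally \emph{not} a component of any $\{f^{\circ k}(X)=L(f^{\circ k}(Y))\}$. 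A concrete example: for $f=X^2+c$ the graph $Y=f(X)$ is $(f,f)$-invariant, but $f^{\circ k}(X)=L(f^{\circ k+1}(X))$ would force two iterates of different degrees to agree up to a linear map, which is impossible.

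Thus the crux of the lemma is precisely to rule out $\ell\ge1$, and your argument contains no mechanism for doing so. The paper's proof supplies exactly this: writing the graph as $L\circ h^{\circ\ell}(X)=\Psi(\tilde\theta\,\Phi(X))$, it observes that a polynomial of degree $\ge 2$ has a superattracting fixed point at infinity of degree $\ge 2$, whereas $\Psi$ is univalent near $0$ (so $\Psi(\tilde\theta\Phi(\cdot))$ is one-to-one near infinity); this forces $\ell=0$, i.e.\ $\tilde P=Y-L(X)$, after which the claimed divisibility follows. Your "third step" bookkeeping (passing from a common $k$ to $M$, and commuting with $f^{\circ M}$ rather than a smaller iterate) would then be fine, and your observation that one can first push forward into the periodic part of the orbit instead of separately treating the $\theta\in\mu_{d^\infty}$ case is a legitimate minor streamlining of the paper's first reduction. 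But without the degree/Böttcher argument excluding $\ell\ge1$, the proof does not go through.
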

\begin{proof} We first note that if $\theta \in \mu_{d^{\infty}}$ we can set $L(X) = X$ and we are finished. We now assume that $\theta \notin \mu_{d^{\infty}}$. As $\Psi(z), \Psi(\theta z)$ are algebraically dependent so are $\Psi(z^{d^n}) = f^{\circ n}(\Psi(z)), \Psi(\theta^{d^n} z^{d^n}) = f^{\circ n}(\Psi(\theta z))$ for every $n \in \Z_{\geq 0}$. Thus picking $n$ large enough we may assume that the order of $ \tilde{\theta} = \theta^{d^n}$ is co-prime to $d$ (but larger than 1) and there exists an irreducible  non-zero polynomial $\tilde{P}$ such that   $\tilde{P}(f^{\circ n}(\Psi(z)), f^{\circ n}(\Psi(\theta z)) = 0$. As $\tilde{\theta}$ has order co-prime to $d$ there exists $N \geq 1$ such that $\tilde{\theta}^{d^N} = \tilde{\theta}$. We deduce that the curve defined by $\tilde{P}$ is invariant under the coordinate action of $f^{\circ N}$. By \cite[Theorem 6.24]{medvedevscanlon} there exists $\ell \in \Z_{\geq 0}$, a polynomial $h$ of positive degree and a linear polynomial $L$ commuting with $f^{\circ N}$ such that up to a constant $\tilde{P} = Y -L\circ h^{\circ \ell}(X)$ or $\tilde{P} = X -L\circ h^{\circ \ell}(Y)$. We may assume the former and if $\ell \geq 1$ it follows that the function 
\begin{align*}
L\circ h^{\circ \ell}(X) = \Psi(\tilde{\theta} \Phi(X))
\end{align*}
has meromorphic continuation to $\infty$ where it has a superattracting fixed point of degree at least 2. Since  $\Psi$ is 1-1 at infinity we get a contradiction and so $\ell = 0$ and we obtain that $\tilde{P} = y - L(X)$ for $L$ a linear polynomial satisfying $L\circ f^{\circ N} = f^{\circ N}\circ L$. Setting $M = N +n$ we conclude the proof. 
\end{proof}
\subsection{Proof of Theorem \ref{equipotential} for connected Julia sets}
We recall that we may assume that $\mathcal{C}$ is defined by a polynomial $P$ that satisfies $\partial_XP\partial_YP \neq 0$. If $\mathcal{C}$ intersects $L_r^2$ in infinitely many points we can find $\mathcal{B}$ as in (\ref{arc2}). From Proposition \ref{proposition} and Lemma \ref{rotation} we deduce that $P(\Psi(z), \Psi(\theta z)) = 0$ identically on $D$ for some $\theta \in \mu_{\infty}$. Finally  Lemma \ref{medvedev} finishes the proof. 

\section{Disconnected Julia sets}\label{disconnectedjulia}
In this section we consider as in the last section a monic complex polynomial $f\in \C[X]$ of degree $d \geq 2$. However in this section we suppose that  its Julia set is  disconnected. (In particular $f$ is non-exceptional.) \\

We pause quickly to discuss the proof strategy. We use the same strategy as in the last section but instead of using the rigidity theorems of Fatou and Levin we rely on investigations of the mondromy action on $\Psi$ as discussed below. In this sense our proof is independent of previous work on the geometry of Julia sets and introduces a new perspective. We can make the proof also independent of Fatou's theorem although we do not explicitly point it out below.  This shows that monodromy contains ``enough information" about the geometric structure of the dynamical system.   \\

If the Julia set of $f$ is disconnected then the basin of infinity $B_\infty$ contains a critical point of $f$. Thus it is useful to define
\begin{align*}
C_{\text{crit}} = \{c \in B_\infty; f'(c) = 0\}. 
\end{align*}
and the inverse images
\begin{align*}
C_n = f^{-\circ n}(C_{\text{crit}}), C_\infty = \cup_{n \geq 0}C_n.
\end{align*}
We can compute $R$ using $C_{\text{crit}}$. Namely
\begin{align*}
R = \min \{\exp(-g_f(c)); c \in C_{\text{crit}} \} 
\end{align*}
and it is immediate that we can continue $\Psi$ analytically to 
$$D_{R} = \{z \in \mathbb{C}; |z| < R\}$$ 
using the relation
\begin{align*}
\Psi(z^d) = f(\Psi(z))
\end{align*}
but not to a larger disc. But we can  continue $\Psi$ along certain paths beyond $D_R$ but not as a global analytic function. In order to describe the situation more precisely we introduce some terminology. We denote by $D_R^* = D_R\setminus \{0\}$ and by $D_n^* = D^*_{R^{1/d^n}}$. We define an auxilliary set 
\begin{align*}
E_n = \{z \in \C; \Psi(z^{d^n}) = f^{\circ n_c}(c) \text{ for some } c \in C_{\text{crit}}\}
\end{align*}
where $n_c \in \mathbb{Z}_{\geq 1}$ is minimal among the integers $n$ such that $f^{\circ n}(c) \in \Psi(D_R^*)$. For a non-negative integer $n$ we define the set $S_n$ to be the local analytic functions $\tilde{\Psi}$ on $D_n = D^*_n \setminus U_n$, where 
$U_n = \{\zeta z; z\in \cup_{m = 1}^nE_m, \zeta \in \mu_{d^n}\}$,  that satisfy 
\begin{align}\label{n}
\Psi(z^{d^n}) = f^{\circ n}(\widetilde{\Psi}(z)).
\end{align}
Note that the left hand side of (\ref{n}) is a well-defined analytic function. We can interpret $\widetilde{\Psi}$ as the algebraic function $f^{-\circ n}(Y)$ in the variable $Y = \Psi(z^{d^n})$. Note that we can restrict the solutions to $D^*_0 = D^*_R$ where they are analytic and we see that a complete set of solutions of (\ref{n}) is given by the set of functions $\{\Psi(\zeta z); \zeta \in \mu_{d^n}\}$. \\

From the interpretation of  $\tilde{\Psi}$ as an algebraic function in $\Psi(z)$ we see that each function element on $D_n$ is given by a triple $(\zeta, D, \gamma) $ where $D$ is an open disc in $D_n$, $\gamma$ a path from $D_0$ to $D$ and the function $\tilde{\Psi}$ on $D$ is the continuation of $\Psi(\zeta z)$ from $_0$ to $D$ along $\gamma$. Moreover at each point $z_0 \in E_n$ restricting a function element $\widetilde{\Psi}$ to a disc $D$ centred at $z_0$ we obtain a Puiseux expansion of $\widetilde{\Psi}$ converging on  $D$ and a well defined value $\widetilde{\Psi}(z_0)$. As each function element $\widetilde{\Psi}$ is a continuation of $\Psi(\zeta z)$ for some $\zeta \in \mu_{d^n}$ it follows that continuing $\widetilde{\Psi}$ along a simple closed loop around $z_0$ sends $\widetilde{\Psi}(z)$ to $\widetilde{\Psi}(\zeta z)$ for some $\zeta \in \mu_{d^n}$ where $\widetilde{\Psi}(\zeta z)$ is given by the triple $(\zeta \zeta_1, D, \gamma)$ when $\widetilde{\Psi}$ is given by $(\zeta_1, D, \gamma)$. Now if $\widetilde{\Psi}(z_0) \in C_n$ then the monodromy action given by loops in $D$ around $z_0$ we just described is non-trivial. We can say something a bit more precise for continuations of $\Psi$ to $D_n$. For this we define the set $S_{n,\Psi}$ to be the set of locally analytic functions on $D_n$ that are continuations of $\Psi$. Clearly $S_{n, \Psi} \subset S_n$ and we will write $\Psi$ for a continuation of $\Psi$ to $D_n$. We have the following  Lemma. 
\begin{lem}\label{monodromy} Let $\Psi$ be a continuation of $\Psi$ (on $D_0)$ along a path in $D_n$ to a small disc $\tilde{D}$ centred at $z_0 \in E_n$ (as an algebraic function of $\Psi(z^{d^n})$) and suppose that $\Psi(z_0) \in C_n$. Continuing $\Psi$ restricted to a disc $D \subset \tilde{D}\setminus \{z_0\}$  along a small loop around $z_0$ sends $\Psi(z)$ to $\Psi(\zeta z)$ for $\zeta \in \mu_{d^{\infty}}$ of order at least $2^n$.   
\end{lem}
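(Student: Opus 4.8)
The plan is to prove the statement by induction on $n$, using the semiconjugacy $\Psi(z^{d^n})=f^{\circ n}(\Psi(z))$ to descend from level $n$ to level $n-1$. I write $\Psi_n$ for the given continuation of $\Psi$ to a disc around $z_0$ along a path $\gamma\subset D_n$. Two preliminary remarks: since $\Psi$ has a pole at $0$ one has $z_0\neq 0$, and since $z_0\in E_n\subset\Delta_n^*$ the point $z_0^{d^n}$ lies in $\Delta_R^*$, where $\Psi$ is a biholomorphism onto its image; hence, in the local coordinate $Y=\Psi(z^{d^n})$ at $z_0$, the germ $\Psi_n$ is a branch of the algebraic function $(f^{\circ n})^{-1}(Y)$, so its ramification, i.e.\ $\ord(\zeta)$, is the local degree of $f^{\circ n}$ at $\Psi_n(z_0)$. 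Finally, $\Psi_n(z_0)\in C_n$ is equivalent to $f^{\circ n}(\Psi_n(z_0))=\Psi(z_0^{d^n})\in C_{\text{crit}}$.

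For the base case $n=1$ I would simply observe that $\zeta\in\mu_d$ and that the monodromy around $z_0$ is non-trivial — this is exactly the remark recorded just above the statement, applied with $\Psi_1(z_0)\in C_1$ — so that $\ord(\zeta)\geq 2=2^1$. For the inductive step, fix $n\geq 2$, take $z_0\in E_n$ with $\Psi_n(z_0)\in C_n$, and put $z_1=z_0^{d}$. First, $z_1\in E_{n-1}$, because $\Psi(z_1^{d^{n-1}})=\Psi(z_0^{d^n})=f^{\circ n_c}(c)$ for the same critical point $c$. Composing the functional equations at levels $n$ and $n-1$ and invoking uniqueness of analytic continuation then yields a continuation $\Psi_{n-1}$ of $\Psi$ to a punctured disc about $z_1$ with $f(\Psi_n(z))=\Psi_{n-1}(z^d)$ near $z_0$; here one must check that the image of $\gamma$ under $z\mapsto z^d$ stays in $D_{n-1}$, which follows from the inclusion $(z\mapsto z^d)^{-1}(U_{n-1})\subset U_n$. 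Evaluating at $z_0$ gives $f^{\circ(n-1)}(\Psi_{n-1}(z_1))=f^{\circ n}(\Psi_n(z_0))\in C_{\text{crit}}$, i.e.\ $\Psi_{n-1}(z_1)\in C_{n-1}$, so the inductive hypothesis applies to $z_1$ and produces a monodromy element $\zeta_1$ at $z_1$ with $\ord(\zeta_1)\geq 2^{n-1}$.

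It remains to relate $\zeta$ and $\zeta_1$. Over a base point in $\Delta_0$ the branches of $\Psi_n$ are indexed by $\mu_{d^n}$ through $\eta\mapsto\Psi(\eta z)$ and those of $\Psi_{n-1}$ by $\mu_{d^{n-1}}$ through $\epsilon\mapsto\Psi(\epsilon w)$, and monodromy around $z_0$, respectively $z_1$, acts on these fibres by multiplication by $\zeta$, respectively $\zeta_1$. Because $f(\Psi(\eta z))=\Psi(\eta^d z^d)$ and the small loop around $z_0$ maps under $z\mapsto z^d$ to the small loop around $z_1$ with winding number one, the surjection $\eta\mapsto\eta^d$ of $\mu_{d^n}$ onto $\mu_{d^{n-1}}$ intertwines the two monodromy actions, which forces $\zeta^d=\zeta_1$. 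Hence $\ord(\zeta^d)=\ord(\zeta_1)\geq 2^{n-1}$, and in particular $\ord(\zeta)\geq\ord(\zeta^d)\geq 2$; since moreover $\ord(\zeta)\mid d^n$ (the monodromy element lies in $\mu_{d^n}$), every prime dividing $\ord(\zeta)$ divides $d$, so $\gcd(\ord(\zeta),d)\geq 2$. Therefore
\begin{align*}
\ord(\zeta)=\gcd\bigl(\ord(\zeta),d\bigr)\cdot\ord(\zeta^d)\geq 2\cdot 2^{n-1}=2^n,
\end{align*}
and the induction is complete.

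The step I expect to be the main obstacle is the monodromy bookkeeping in the last two paragraphs: one has to be careful that the branch of $\Psi$ obtained at level $n-1$ is genuinely reached from $\Delta_0$ along an admissible path in $D_{n-1}$, and that transporting the identity $f\circ\Psi_n=\Psi_{n-1}\circ(\cdot)^d$ once around $z_0$ really gives $\zeta_1=\zeta^d$ rather than some other element of $\mu_{d^{n-1}}$ — this requires keeping track of base points and of which loops are being composed. Everything after that reduces to the elementary arithmetic of orders of roots of unity.
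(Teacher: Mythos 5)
Your proposal is correct and follows the same inductive strategy as the paper: descend from level $n$ to level $n-1$ via the substitution $w=z^d$, note $z_0^d\in E_{n-1}$ with $\Psi(z_0^d)\in C_{n-1}$, apply the inductive hypothesis, and deduce $\zeta^d=\zeta_1$ by transporting the functional equation around the loop. Your final paragraph (using $\ord(\zeta)=\gcd(\ord(\zeta),d)\cdot\ord(\zeta^d)$ and the fact that every prime dividing $\ord(\zeta)$ divides $d$) is a welcome explicit justification of the step from $\zeta_2^d=\zeta_1$ to $\ord(\zeta_2)\geq 2^n$, which the paper states without spelling out the arithmetic.
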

\begin{proof} We prove this by induction on $n$. First consider 
\begin{align*}
\Psi(z^d) = f(\Psi(z^d))
\end{align*}
on $D_1$ and recall that $S_{1,\Psi} \subset S_1$. Now if $\Psi(z_0) \in C_1$ then $f'(\Psi(z_0)) = 0$ and thus the monodromy action on $\Psi$ is non-trivial. It follows that continuing $\Psi$ along a simple closed loop around $z_0$ sends $\Psi(z)$ to $\Psi(\zeta z)$ where $\zeta \in \mu_d$ is of order at least 2. \\
Now consider 
\begin{align*}
\Psi(z^{d^n}) = f^{\circ n}(\Psi(z))
\end{align*} 
on $D_n$ for $n \geq 2$ and $z_0 \in E_n$ such that $\Psi(z_0) \in C_n$ (for the continuation that we consider). Making the substitution $w = z^d$ it follows from $\Psi(z^d) = f(\Psi(z))$ that $\Psi(z^{d^n}) = f^{\circ n-1}(\Psi(z^d))$. Noting that $z_0^{d} \in E_{n-1}, \Psi(z_0^d) \in C_{n-1}$ we deduce that continuing  $\Psi(w)$ around $z_0$ sends $\Psi(w)$ to $\zeta_1 w$ with $\zeta_1 \in \mu_{d^{n-1}} $ of order at least $2^{n-1}$. Moreover continuing $\Psi$ around $z_0$ sends $\Psi(z)$  to $\Psi(\zeta_2 z)$ with $\zeta_2 \in \mu_{d^n}$. From $\Psi(z^d)= f(\Psi(z))$ follows that $\Psi(\zeta_1 z^d) = \Psi(\zeta_2^dz^d)$ and thus $\zeta_2^d = \zeta_1$ and $\zeta_2$ has order at least $2^n$. 
\end{proof}
\subsection{Puiseux expansions}

As we will be using Puiseux's theorem in a lot of the proofs we give the version we are going to use as a Lemma. 
\begin{lem}\label{puiseaux} (Puiseux) Let $\mathcal{G}$ be an analytic function on $D^2$ that satisfies $\mathcal{G}(0,0) = 0$.  There exists an integer $N \geq 1$ and $\epsilon > 0$ such that for any integer $M$ divisible by $N$ and any analytic function $w:D \rightarrow D_\epsilon $ where $D$ is a disc about a point $x_0 \in \C$, that satisfies $w(x_0) = 0$ and whose first $M-1$ derivatives vanish at $x_0$ there exists an analytic function $h_w$ on $D$ such that $\mathcal G(w(x), h_w(x))= 0$ for all $x \in D$.  
\end{lem}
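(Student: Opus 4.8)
The plan is to deduce the statement from the classical Newton--Puiseaux theorem combined with the Weierstrass preparation theorem; the only real content is that the ramification integer $N$ and the radius $\epsilon$ can be extracted from $\mathcal{G}$ alone, which is automatic since both inputs concern only the germ of $\mathcal{G}$ at the origin. First I would dispose of degenerate cases. If $\mathcal{G}\equiv 0$, take $N=1$ and $h_w\equiv 0$. If $\mathcal{G}(0,y)\equiv 0$, write $\mathcal{G}(x,y)=x^{m}\mathcal{G}_1(x,y)$ with $\mathcal{G}_1(0,y)\not\equiv 0$; when $w\not\equiv 0$ the factor $w(x)^m$ is not identically $0$, so the equation $\mathcal{G}(w(x),h_w(x))=0$ reduces to $\mathcal{G}_1(w(x),h_w(x))=0$, and when $w\equiv 0$ one simply takes $h_w\equiv 0$ (possible since $\mathcal{G}(0,0)=0$). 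Hence I may assume $\mathcal{G}(0,y)\not\equiv 0$, and Weierstrass preparation gives, on a small bidisc about the origin, a factorization $\mathcal{G}=U\cdot P$ with $U$ a nowhere-vanishing analytic unit and $P(x,y)=y^{k}+a_{k-1}(x)y^{k-1}+\dots+a_0(x)$ a Weierstrass polynomial with $a_i(0)=0$; the zero loci of $\mathcal{G}$ and of $P$ agree there, so it suffices to parametrize that of $P$.

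Next I would invoke the classical Puiseaux theorem for $P$: there is $\rho>0$ so that for $0<|x|<\rho$ the $k$ roots of $P(x,\cdot)$ are given by finitely many convergent Puiseaux series, and, grouping Galois-conjugate roots into branches $j=1,\dots,b$ with ramification indices $e_j$ and convergent power series $\beta_j$ (each $\beta_j(0)=0$, since $a_i(0)=0$ forces every branch through the origin), one has that $\{(x,y):0<|x|<\rho,\ P(x,y)=0\}$ is the union of the images of $t\mapsto(t^{e_j},\beta_j(t))$. I then set $N=\operatorname{lcm}(e_1,\dots,e_b)$ — depending only on $\mathcal{G}$ — and put $\gamma_j(s)=\beta_j(s^{N/e_j})$, a convergent power series, so that $s\mapsto(s^{N},\gamma_j(s))$ parametrizes branch $j$. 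Finally I shrink to $\epsilon\in(0,\rho]$ so small that each $\gamma_j$ converges on $|s|\le\epsilon^{1/N}$ and the points $(s^{N},\gamma_j(s))$ stay inside the bidisc on which $\mathcal{G}=U\cdot P$; again $\epsilon$ depends only on $\mathcal{G}$. This records that inside $\Delta_\epsilon\times\Delta$ the zero locus of $\mathcal{G}$, minus the origin, is exactly $\bigcup_j\{(s^{N},\gamma_j(s))\}$.

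With $N,\epsilon$ fixed, take $M$ a multiple of $N$, a disc $D$ about $x_0$, and analytic $w\colon D\to\Delta_\epsilon$ vanishing at $x_0$ to order $M$. Shrinking $D$ so that $x_0$ is the only zero of $w$ in $D$, write $w(x)=(x-x_0)^{M}u(x)$ with $u$ analytic and nowhere vanishing on $D$; since $D$ is simply connected, $u$ admits an analytic $N$-th root, so $\widetilde w(x):=(x-x_0)^{M/N}u(x)^{1/N}$ is analytic on $D$, satisfies $\widetilde w^{\,N}=w$, and (after a further harmless shrinking) takes values in $\{|s|<\epsilon^{1/N}\}$. Setting $h_w:=\gamma_1\circ\widetilde w$ (any branch will do) yields an analytic function on $D$ with $P(w(x),h_w(x))=P\!\left(\widetilde w(x)^{N},\gamma_1(\widetilde w(x))\right)=0$ whenever $\widetilde w(x)\neq0$, hence for all $x\in D$ by the identity theorem, and therefore $\mathcal{G}(w(x),h_w(x))=0$ on $D$. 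The two points that need care are the uniformity of $N$ and $\epsilon$ in $\mathcal{G}$ — which, as noted, follows because Weierstrass preparation and Puiseaux are germ-level statements — and the existence of the analytic $N$-th root $\widetilde w$ on $D$; the latter is precisely what the hypothesis that $M$ be divisible by $N$ and that the first $M-1$ derivatives of $w$ vanish at $x_0$ is designed to provide.
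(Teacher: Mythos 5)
Your proof is correct and amounts to a careful elaboration of the paper's one-sentence argument, which simply cites the Puiseaux theorem of \cite{monodromygroup} and notes that $w$ sends simple loops about $x_0$ to loops winding $M$ times about $0$; your explicit construction of the analytic $N$-th root $\widetilde w=(x-x_0)^{M/N}u^{1/N}$ makes precisely that lifting observation concrete, after Weierstrass preparation has reduced matters to a Weierstrass polynomial whose branches $s\mapsto (s^N,\gamma_j(s))$ can be composed with $\widetilde w$. One point that both your proof and the paper's tacitly assume, and which the lemma's wording does not quite force, is that $w$ vanishes to order \emph{exactly} $M$ at $x_0$: the hypothesis that the first $M-1$ derivatives vanish only gives order $\geq M$, and if the true order were a non-multiple of $N$ then the factorization $w=(x-x_0)^Mu$ with $u$ nonvanishing (equivalently, the winding-number statement) would break down and the conclusion can genuinely fail, e.g.\ for $\mathcal{G}=y^2-x$, $N=M=2$, $w(x)=x^3$. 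In the paper's two applications (Lemma~\ref{cont} and Lemma~\ref{circles}) $w$ is always $(x-x_0)^M$ or $x^N$ on the nose, so this is harmless, but it is worth recording as a small imprecision in the lemma itself rather than in your argument; a similar remark applies to your reduction when $\mathcal{G}(0,y)\equiv 0$, where one should also observe that $\mathcal{G}_1(0,0)=0$ is still needed downstream (and when it fails, say for $\mathcal{G}$ a unit times $x$, the lemma is false for $w\not\equiv 0$).
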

\begin{proof} This follows from \cite[The Puiseax Theorem p.28]{monodromygroup}. We only need to note that $w$ sends simple loops about $x_0$ to loops winding $M$ times about 0. 
\end{proof}
We will describe the Puiseux expansions of $\Psi$ a bit more concretely in what follows. 
\begin{lem} \label{cont} At each point $z_0 \in D_n^*$ and each $w_0 \in \{f^{-\circ n}(\Psi(z_0^{d^n}))\}$ there exists a disc $D$ about $z_0$, a positive  integer $N$, and an analytic function $\Psi_N$ on $D$ such that $S= \{z \in D; (z_0 +(z-z_0)^N, \Psi_N(z))\}$  is the set of $(z,w) \in \C^2$ such that
\begin{align*}
\Psi(z^{d^n}) - f^{\circ n}(w)= 0, (z_0,w_0) \in S. 
\end{align*}

\end{lem}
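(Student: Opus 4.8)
The plan is to reduce the statement to two local changes of coordinates followed by Puiseux's theorem (Lemma~\ref{puiseaux}). First I would record the regularity of the left-hand side. Since $z_0 \in \Delta_n^* = \Delta^*_{R^{1/d^n}}$ we have $z_0 \neq 0$ and $|z_0^{d^n}| < R$, so $z_0^{d^n} \in \Delta_R^*$; as recalled above $\Psi$ admits an inverse $\Phi$ on $\Psi(\Delta_R^*)$, hence $\Psi$ is injective on $\Delta_R^*$, in particular locally biholomorphic there, so $\Psi'(z_0^{d^n}) \neq 0$. Because $z_0 \neq 0$ the map $z \mapsto z^{d^n}$ is also locally biholomorphic near $z_0$, and therefore the germ $g(z) := \Psi(z^{d^n})$ is biholomorphic near $z_0$; put $c := g(z_0) = \Psi(z_0^{d^n})$, a finite complex number.

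Next I would treat the fibre of $f^{\circ n}$ over $c$. Fix $w_0$ with $f^{\circ n}(w_0) = c$, and let $N \geq 1$ be the order of vanishing at $w_0$ of the non-constant polynomial $f^{\circ n}(w) - c$. On a small disc about $w_0$ there is then a biholomorphism $\eta$ onto a disc about $0$ with $\eta(w_0) = 0$ and $f^{\circ n}(w) - c = \eta(w)^N$ (extract an analytic $N$-th root of the nonvanishing germ $(f^{\circ n}(w)-c)/(w-w_0)^N$). Hence near $(z_0,w_0)$ the analytic set $\{\Psi(z^{d^n}) - f^{\circ n}(w) = 0\}$ is cut out by $g(z) - c = \eta(w)^N$; in the analytic coordinates $v := g(z) - c$ (centred at $z_0$, with $v'(z_0) \neq 0$) and $s := \eta(w)$ (centred at $w_0$) this germ is the graph $\{v = s^N\}$ — smooth and irreducible, which is exactly what ensures that a single Puiseux branch exhausts the entire local zero set through $(z_0,w_0)$.

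It then remains to put the branch in the stated form. Writing $z - z_0 = v\,\mu(v)$ with $\mu$ analytic and $\mu(0) = 1/g'(z_0) \neq 0$, and substituting $v = s^N$, gives $z - z_0 = s^N \mu(s^N) = (s\,\rho(s))^N$, where $\rho$ is an analytic germ with $\rho(0) \neq 0$, namely a branch of $\mu(s^N)^{1/N}$ near $s = 0$. The map $s \mapsto s\,\rho(s)$ is a local biholomorphism fixing $0$; let $\sigma$ denote its analytic inverse, so $\sigma(0) = 0$. Choosing $D$ a small enough disc about $z_0$ and setting $\Psi_N(z) := \eta^{-1}(\sigma(z - z_0))$, which is analytic on $D$ with $\Psi_N(z_0) = \eta^{-1}(0) = w_0$, one obtains that $S = \{(z_0 + (z - z_0)^N, \Psi_N(z)) : z \in D\}$ is precisely the germ at $(z_0,w_0)$ of $\{\Psi(z^{d^n}) = f^{\circ n}(w)\}$, and $(z_0,w_0) \in S$ is the value at the centre $z = z_0$. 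This is the asserted parametrization.

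The only ingredient that is more than bookkeeping is the local biholomorphicity of $\Psi$ on all of $\Delta_R^*$ (equivalently $\Psi' \neq 0$ there): it is what makes $g$ biholomorphic near every $z_0 \in \Delta_n^*$, hence the local zero set a single smooth branch rather than a union of branches through $(z_0,w_0)$. As noted, it follows from the existence of the inverse $\Phi$ of $\Psi$ on $\Psi(\Delta_R^*)$ recalled above (cf.\ also \cite{milnor}, \cite{demarcoetal}). The two coordinate changes and the $N$-th root extraction are routine; alternatively one can obtain the branch directly from Lemma~\ref{puiseaux} applied to the recentred analytic function $(z,w) \mapsto \Psi((z_0 + z)^{d^n}) - f^{\circ n}(w_0 + w)$.
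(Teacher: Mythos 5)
Your proof is correct and follows essentially the same route as the paper's: observe that $\partial_z\bigl(\Psi(z^{d^n}) - f^{\circ n}(w)\bigr)$ does not vanish at $(z_0,w_0)$ (since $z_0 \neq 0$ and $\Psi$ is biholomorphic on $\Delta_R^*$), so the local zero set is a smooth, irreducible analytic germ, and then extract the single Puiseux branch via Lemma~\ref{puiseaux}. The paper's proof states this in one line, asserting the smoothness and irreducibility without detail; your coordinate changes and the explicit construction of $\Psi_N$ merely flesh out that assertion, so the underlying argument is the same.
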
 
\begin{proof} 
Setting $\mathcal{G} = \Psi(z^{d^n}) - f^{\circ n}(w)$  in Lemma \ref{puiseaux} we obtain the claim after noting that the zero-set of $\mathcal{G}$ is smooth and irreducible in a neighbourhood of each point $(z_0,w_0)\in (D_n^*)^2$. 

\end{proof}
Similarly as in the previous section we define
\begin{align*}
\mathcal{F} = \bigcup_{n \geq 0}f^{-\circ n}(\Psi(A(\phi^2, \phi)).
\end{align*}
Recall that $\phi \in D_R\cap \R$. For any $a < b$ we define $A(a,b) = \{z \in \C; a<|z|<b\}$. There exists $\epsilon > 0$ such that the function $\mu$ defined by 
\begin{align*}
\mu(X) = \Psi^h(\phi^2/\Phi(X))
\end{align*}
 is well-defined on $\Psi(A(\phi-\epsilon, \phi + \epsilon))$. We start this section by investigating various properties of this function. From the discussion of the last section already follows that it can not be defined globally on $B_\infty$ but we can make sense of it as a locally defined function on paths. 
We have the following Lemma. 
\begin{lem}\label{contmu} Let $\gamma:[0,1)\rightarrow \mathcal{F}$ be a path with $\gamma(0) \in \Psi(S^\phi)$. We can continue $\mu$ along this path and $\mu_{|\gamma|}$ is continuous and has a Puiseax expanison at each point of $|\gamma|$. Moreover we can continue $\mu^h$ along $\mu\circ \gamma$ such  that it holds that $\mu^h\circ \mu(z) = z$ on $|\gamma|$. 
\end{lem}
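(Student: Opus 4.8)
The plan is to continue $\mu$ along $\gamma$ by continuing, one factor at a time, each ingredient of the defining expression $\mu(X)=\Psi^h(\phi^2/\Phi(X))$, feeding the (discretely many) ramified parameters into Puiseaux's theorem (Lemma \ref{puiseaux}, in the concrete form of Lemma \ref{cont} and the monodromy description of Lemma \ref{monodromy}). The starting germ near $\gamma(0)\in\Psi(S^\phi)$ is the one recorded just before the Lemma: it is genuinely analytic there because $\Phi$ is single-valued and analytic on $\Psi(\Delta_R^*)$ and $|\phi^2/\Phi(\gamma(0))|=\phi<R$, so $\Psi^h$ is analytic near $\phi^2/\Phi(\gamma(0))$ as well.

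For a general parameter $t_0$ set $X_0=\gamma(t_0)$ and let $n$ be the unique level with $f^{\circ n}(X_0)=\Psi(u_0)$, $\phi^2<|u_0|<\phi<R$. Near $f^{\circ n}(X_0)$ the branch $v$ of $\Phi$ is analytic with $v(f^{\circ n}(X_0))=u_0\neq 0$; a local branch of $\Phi$ near $X_0$ is obtained by pulling $v$ back through $f^{\circ n}$ and extracting a $d^n$-th root, which is analytic when $f^{\circ n}$ is locally injective at $X_0$ — equivalently, since every $f^{\circ j}(X_0)$ with $j\ge n$ lies in $\Psi(\Delta_R^*)$ and thus is never critical, when $X_0\notin C_\infty$ — and is given by a Puiseaux expansion (governed by Lemma \ref{cont} applied to $\Psi(z^{d^n})-f^{\circ n}(w)$) otherwise. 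The Möbius factor $w\mapsto\phi^2/w$ contributes nothing since $\Phi(X_0)\neq0$. Finally $\Psi^h$ is continued along the path $t\mapsto\phi^2/\Phi(\gamma(t))$: this path stays inside $\{z\in\C;0<|z|<1\}$ because $|\phi^2/\Phi(\gamma(t))|=\phi^2|u|^{-1/d^n}$ lies in $(\phi,1)$ at level $0$ and is at most $\phi^{2-2/d}\le\phi<R$ at every level $n\ge1$, and $\Psi^h$ continues along any such path in the sense of this section, ramifying only over the discrete set $\bigcup_{m\ge1}E_m^h$ and there admitting Puiseaux expansions. Concatenating these three continuations over a cover of $[0,1)$ by subintervals of constant level (possible since $\gamma$ is continuous and each $f^{-\circ n}(\Psi(A(\phi^2,\phi)))$ is open) produces $\mu_{|\gamma|}$. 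By construction it is analytic off the discrete set of parameters lying over $C_\infty$ or over $\bigcup_{m\ge1}E_m^h$, and at every point of $|\gamma|$ it is a composition of Puiseaux-analytic germs, hence has a Puiseaux expansion there; in particular it is continuous.

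For the involution identity, note that $\mu(\gamma(0))\in\Psi^h(S^\phi)$, that $\mu^h$ is well defined and analytic on a neighbourhood of $\Psi^h(S^\phi)$ (it is the complex conjugate of the germ defining $\mu$ near $\Psi(S^\phi)$), and that $\mu^h\circ\mu=\mathrm{id}$ there by direct computation from the formulas, using that $\Phi$ and $\Psi$ (resp.\ $\Phi^h$ and $\Psi^h$) are mutually inverse near $S^\phi$. Continuing $\mu^h$ along $\mu\circ\gamma$ exactly as above, the germ $\mu^h\circ\mu$ agrees with the identity on an initial subarc of $|\gamma|$; since both sides are Puiseaux-analytic along $\gamma$, the set of $t\in[0,1)$ at which $\mu^h\circ\mu(z)=z$ holds is open (identity theorem for analytic, resp.\ Puiseaux-analytic, continuation) and closed (continuity), hence equals $[0,1)$. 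This is the asserted permanence of the relation along the path.

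The main obstacle is the coherent bookkeeping of branch choices: one must check that the $d^n$-th root defining $\Phi$ and the continuation of $\Psi^h$ transform correctly when the level of $\gamma(t)$ jumps between $n$ and $n+1$, and that $\phi^2/\Phi(\gamma(t))$ never escapes the locus to which $\Psi^h$ can be continued along $\mu\circ\gamma$. Both are controlled by the functional equations $\Psi(z^d)=f(\Psi(z))$ and $\Phi(f(X))=\Phi(X)^d$, which force the level-$n$ and level-$(n+1)$ descriptions of $\mu$ to coincide on their common domain; everything else is the routine machinery of analytic continuation along paths together with Puiseaux's theorem.
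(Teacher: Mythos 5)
Your proof is essentially correct but takes a genuinely different route from the paper's. Where you decompose $\mu=\Psi^h\circ(\phi^2/\cdot)\circ\Phi$ and continue each factor separately along $\gamma$ (respectively along the image path $\phi^2/\Phi(\gamma(t))$), gluing over a cover of $[0,1)$ by level-constant pieces and invoking the functional equations to reconcile adjacent levels, the paper instead runs a compactness (maximal-interval) argument directly on $\mu$ as an algebraic function: take $T$ maximal with the claim holding on $[0,T)$; if $T<1$, then $x_0=\gamma(T)$ still lies in $\mathcal{F}$, so by Lemma \ref{cont} the algebraic relation $\Psi(z^{d^n})=f^{\circ n}(w)$ furnishes a Puiseaux expansion of $\mu$ at $x_0$ and of $\mu^h$ at $\mu(x_0)$; shrinking a disc $D$ about $x_0$ and continuing through $D\setminus\{x_0\}$ then pushes the continuation past $T$, a contradiction, and the relation $\mu^h\circ\mu=\mathrm{id}$ propagates by continuity from the component $D_1$ of $D\cap|\gamma|$. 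The paper's version buys you exactly the thing you single out as the main obstacle: by never splitting $\mu$ into its three factors, it avoids the branch bookkeeping at level transitions and the separate analysis of where $\phi^2/\Phi(\gamma(t))$ lands (your level-$0$ case can exit $\Delta_R^*$ and force a genuine multivalued continuation of $\Psi^h$), all of which is subsumed into a single local application of Lemma \ref{cont} to the defining algebraic equation. Your decomposition approach is more explicit and does track where each factor is analytic versus merely Puiseaux, and your open-and-closed argument for $\mu^h\circ\mu=\mathrm{id}$ is a slightly more formal rendering of the paper's continuity step; but the coherence of branch choices across level jumps, which you flag and then delegate to the functional equations without carrying it out, is precisely the point the paper's maximality argument is engineered to bypass.
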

\begin{proof}  It is clear that the claim is true for $\gamma$ restricted to $[0,T)$ for some $T>0$. We pick $T$ maximal such that the claim is true and are going to show that $T = 1$. We prove this by contradiction. Suppose that $T <1$. Then $\gamma(T) = x_0 \in \mathcal{F}$ and $\mu$ has a Puiseux expansion at $x_0$. Let $D$ be the disc at which the function $\Psi_N$ in Lemma \ref{cont} is analytic. Moreover $\mu^h$ has a Puiseux expansion at $\mu(x_0)$ where the value $\mu(x_0)$ is well-defined by the definition of the Puiseux expansion. We can shrink $D$ such that $\mu(D)$ is contained in a disc on which the Puiseux expansion of $\mu^h$ at $\mu(x_0)$ converges. Let $T' < T$ be such that $\gamma(T') \in \partial D$ and such that $\gamma(t) \notin\partial D$ for any $T'<t<T$. Then we can continue $\mu$ along any path that consists of $\gamma$ restricted to $[0,T']$  composed with any path in $D\setminus \{x_0\}$.  Along any such path it holds that $\mu^h\circ \mu(z) = z$. For $D$ sufficiently small $D \cap |\gamma|$ consists of two connected open components $D_1,D_2$ and $\mu$ is analytic on $D_1$ and $\mu^h$ is analytic on $\mu(D_1)$. On $D_1$ it holds that $\mu^h\circ \mu(z) = z$ and then by continuity also on $|\gamma|$ if we continue $\mu$ to $|\gamma|$ from $D_1$. Thus we can continue $\mu$ with the properties demanded of the Lemma to $\gamma$ restricted to $[0,T^{''})$ with $T'' > T$. A contradiction. 
\end{proof}

\begin{lem} \label{continuation3} Let  $P_1,P_2 \in \C[X,Y]$ and suppose that there exists a non-constant analytic function $h$ in a neighbourhood of $x_0 \in \Psi(S^{\phi})$ such that     
\begin{align}\label{rel1}
P_1(x,h(x)) = P_2(\mu(x), \mu(h(x))) = 0
\end{align}
 and $h(x_0) = y_0 \in \Psi(S^{\phi})$. Suppose there is a path $\gamma: [0,1) \rightarrow \mathcal{F} \times \mathcal{F}$ with $\gamma(0) = (x_0,y_0) \in \Psi(S^{\phi})^2$ and such that $P_1\circ \gamma = 0$. Then for any continuation of $(\mu, \mu)$ along $ \gamma $  it holds that $P_2(\mu(x),\mu(y)) = 0$ for all $(x,y) \in |\gamma|$. 
\end{lem}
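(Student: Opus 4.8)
The plan is to propagate the identity $P_2(\mu(x),\mu(y)) = 0$ along $\gamma$ by analytic continuation, the engine being Lemma \ref{contmu} (which lets us continue $\mu$ coordinatewise along the path) together with the one-variable identity theorem.

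Write $\gamma = (\gamma_1,\gamma_2)$ with $\gamma_i : [0,1) \to \mathcal{F}$ and $\gamma_i(0) \in \Psi(S^\phi)$. By Lemma \ref{contmu} fix continuations of $\mu$ along $\gamma_1$ and along $\gamma_2$; each is continuous and has a Puiseaux expansion at every point of the corresponding support, so
\[
W(t) := P_2\big(\mu(\gamma_1(t)),\mu(\gamma_2(t))\big)
\]
is a well-defined continuous function on $[0,1)$. Near $t=0$ the path $\gamma$ runs along the graph of $h$ (the local branch of $\{P_1=0\}$ through $(x_0,y_0)$ used in the applications of this lemma), so by the relations (\ref{rel1}) we get $W \equiv 0$ on a neighbourhood of $0$. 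Let $T = \sup\{t \in [0,1) : W|_{[0,t]} \equiv 0\}$; by continuity $W \equiv 0$ on $[0,T]\cap[0,1)$, and it suffices to rule out $T < 1$.

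Assume $T<1$. After absorbing into $[0,T]$ any terminal subinterval on which $\gamma$ is constant — there $W$ is constant, hence $0$ — we may assume $\gamma$ is not locally constant at $T$. Put $(x_T,y_T) = \gamma(T)$, so $P_1(x_T,y_T)=0$. By Puiseaux's theorem (Lemma \ref{puiseaux}), or by normalising $\{P_1=0\}$, choose an analytic parametrisation $z \mapsto (X(z),Y(z))$ on a disc $D_z$ about $0$ of the branch of $\{P_1=0\}$ through $(x_T,y_T)$ that contains $|\gamma|$ near $T$, with $(X(0),Y(0))=(x_T,y_T)$, and lift $\gamma$ near $t=T$ to a non-constant path $z(t)$ in $D_z$ with $z(T)=0$. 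Composing the Puiseaux expansions of the fixed continuations of $\mu$ at $x_T$ and at $y_T$ with $X$ and $Y$ and shrinking $D_z$, the function $F(z) = P_2(\mu(X(z)),\mu(Y(z)))$ is given by a convergent Puiseaux series in $z$, with the branch pinned down by continuity along $\gamma$, and $F(z(t)) = W(t)$ for $t$ near $T$. Since $W \equiv 0$ on $[0,T]$, $F$ vanishes on the infinite compact set $z([T-\delta,T])\subset D_z$, which therefore has a limit point in $D_z$; applying the identity theorem to $F$ viewed as an analytic function of a local uniformiser $z^{1/M}$ gives $F \equiv 0$. Hence $W(t) = F(z(t)) = 0$ for $t$ slightly larger than $T$ as well, contradicting the definition of $T$. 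Therefore $T = 1$, i.e. $W \equiv 0$ on $[0,1)$, which is exactly $P_2(\mu(x),\mu(y)) = 0$ for all $(x,y)\in|\gamma|$.

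The main obstacle is the local picture at $\gamma(T)$: there $\{P_1=0\}$ may be singular or have a vertical tangent, and simultaneously $\mu$ may be branched at a point of some $E_n$, so $F$ is only a Puiseaux series rather than genuinely analytic; one must check that the several branch choices — the branch of $\{P_1=0\}$ traced out by $\gamma$ and the branches of $\mu$ continued along $\gamma_1$ and along $\gamma_2$ — are matched consistently. This is exactly what uniqueness of analytic continuation along $\gamma$ provides, and once this bookkeeping is in place the identity theorem does the rest.
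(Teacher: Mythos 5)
Your proof is essentially the same as the paper's. Both arguments set $T$ to be the supremum of the set of times up to which the identity $P_2(\mu(x),\mu(y))=0$ is known, assume $T<1$, and push past $T$ by composing a Puiseaux parametrisation of the branch of $\{P_1=0\}$ through $\gamma(T)$ with the (possibly branched) local continuations of $\mu$ supplied by Lemma \ref{contmu}, then invoke the identity theorem in the Puiseaux uniformiser to propagate the vanishing to a full neighbourhood of $T$. The only cosmetic difference is that you organise the argument around the scalar function $W(t)=P_2(\mu(\gamma_1(t)),\mu(\gamma_2(t)))$, which makes the base case (near $t=0$, $\gamma$ follows the graph of $h$, so (\ref{rel1}) gives $W\equiv 0$ there) and the matching of branches along the path somewhat more explicit than in the paper's version. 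Your flag about the implicit assumption that $\gamma$ traces the branch of $\{P_1=0\}$ parametrised by $h$ near $t=0$ is a fair observation; the paper leans on this tacitly (it is what makes the claim hold for $T>0$), and in the applications $(x_0,y_0)$ can be taken to be a smooth point of $\{P_1=0\}$ so the branch is unique.
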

\begin{proof} 
Let $T$ be maximal such that the claim is true for $\gamma$ restricted to $[0,T)$. By assumption $T > 0$ and we are going to show that $T = 1$. If $T < 1$ then $\gamma(T) \in \mathcal{F}$ and we can set $x_0 = \gamma(T)$ for $x_0$ as in Lemma \ref{contmu}. By Lemma \ref{puiseaux} and Lemma \ref{contmu} there exists a positive integer $N$, analytic functions $h_{N}, \mu_{N_1}$ in a neighbourhood of $x_0$ and an analytic function $\mu_{N_2}$ in a neighbourhood of $h(x_0)$ as well as $\epsilon > 0$ such that 
\begin{align*}
\mu_{N_1}(y) & = \mu(x_0 + (y-x_0)^{N}),\\ h_{N}(y)  & = h(x_0 + (y-x_0)^{N}), \\
\mu_{N_2}(h_N(y)) & = \mu(h(y_0 + (x-y_0)^N))
\end{align*}    
for all $y $ satisfying $x_0 + (y-y_0)^N \in \gamma(T-\epsilon, T)$. Thus we can continue (\ref{rel1}) by     
\begin{align*}
P_1(y_0 + (y-y_0)^{N},h_N(y)) = P_2(\tilde{\mu}_N(y), \tilde{\mu}_N(h_N(y))) = 0
\end{align*}
to a neighbourhood of $x_0$. This is a contradiction. 
\end{proof}
Now we record an analogue of Lemma \ref{limits}. 
\begin{lem} \label{limits2} Let $\gamma:[0,1] \rightarrow \overline{\mathcal{F}}$ be a path such that $\gamma([0,1)) \subset \mathcal{F}$ and $\gamma(0) \in \Psi(S^\phi), \gamma(1) \in \Psi(S^{\phi^2})$. Then for any limit point $c$ of $\mu\circ \gamma(t)$ as  $t \rightarrow 1$ it holds that $c \in J^h$. Moreover for any path $\gamma:[0,1] \rightarrow \overline{\mathcal{F}}$ with $\gamma([0,1))\subset \mathcal{F} $ and $\gamma(0) \in \Psi(S^\phi), \gamma(1) \in J$ we have that for any limit point $c$ of $\mu\circ\gamma$ it holds that $c   \in \Psi^h(S^{\phi^2})$.
\end{lem}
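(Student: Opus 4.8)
The plan is to reduce the whole statement to the single identity
\[
g_{f^h}\bigl(\mu(X)\bigr) \;=\; 2\log(1/\phi) - g_f(X),
\]
valid at every point to which $\mu$ has been continued along a path in $\mathcal{F}$. On the initial domain $\Psi(A(\phi-\epsilon,\phi+\epsilon))$ this is immediate: there $\mu(X)=\Psi^h(\phi^2/\Phi(X))$ with $\Phi(X)\in\Delta_R^*$, and since $g_f(\Psi(w))=-\log|w|$ on $\Delta_R^*$, together with the analogous formula for $f^h$ (whose Böttcher radius is again $R$, because $g_{f^h}(z)=g_f(\bar z)$ and the critical set of $f^h$ in its basin is the complex conjugate of that of $f$), one gets $g_{f^h}(\mu(X))=-\log|\phi^2/\Phi(X)| = 2\log(1/\phi)+\log|\Phi(X)| = 2\log(1/\phi)-g_f(X)$. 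Note also for later that $\phi^2<\phi<R$, so $2\log(1/\phi)>\log(1/R)$, and recall $\mathcal{F}\subset\{0<g_f<2\log(1/\phi)\}$.

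Next I would propagate the identity along $\gamma$. By Lemma \ref{contmu} the function $\mu$ continues along $\gamma|_{[0,1)}$. Fix $T<1$; the compact arc $\gamma([0,T])$ is covered by a finite chain of discs $D_0,\dots,D_k$, with $D_0$ inside the initial domain of $\mu$, with $D_i\cap D_{i-1}\neq\emptyset$, each contained in $\mathcal{F}\subset\{0<g_f<2\log(1/\phi)\}\subset B_\infty(f)$, and on each of which $\mu$ is analytic away from finitely many Puiseaux branch points. Arguing inductively on $i$: assuming $\mu(D_{i-1})\subset B_\infty(f^h)$ and that the identity holds on $D_{i-1}$, it holds on $D_i\cap D_{i-1}$, so by continuity $g_{f^h}\circ\mu=2\log(1/\phi)-g_f>0$ on a smaller copy of $D_i$, i.e.\ $\mu(D_i)\subset B_\infty(f^h)$; then $g_{f^h}\circ\mu+g_f$ is harmonic on $D_i$ (the isolated branch points of $\mu$ are removable singularities of the bounded, continuous, elsewhere-harmonic function $g_{f^h}\circ\mu$) and equals the constant $2\log(1/\phi)$ on the nonempty open set $D_i\cap D_{i-1}$, hence on all of the connected set $D_i$. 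Letting $T\to1$, the identity and the inclusion $\mu(\gamma(t))\in B_\infty(f^h)$ hold for every $t\in[0,1)$.

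Finally I would let $t\to1$ and use continuity of the Green functions. If $\gamma(1)\in\Psi(S^{\phi^2})$ then $g_f(\gamma(1))=2\log(1/\phi)$, so $g_{f^h}(\mu(\gamma(t)))\to0$; hence any limit point $c$ of $\mu(\gamma(t))$ satisfies $g_{f^h}(c)=0$, and since each $\mu(\gamma(t))$ lies in $B_\infty(f^h)$ we get $c\in\overline{B_\infty(f^h)}\setminus B_\infty(f^h)=\partial B_\infty(f^h)=J^h$. If instead $\gamma(1)\in J$ then $g_f(\gamma(1))=0$, so $g_{f^h}(\mu(\gamma(t)))\to2\log(1/\phi)$ and any limit point $c$ satisfies $g_{f^h}(c)=2\log(1/\phi)>\log(1/R)$; the level set $\{g_{f^h}=2\log(1/\phi)\}$ lies above every critical value of $g_{f^h}$, hence is a single Jordan curve parametrized by $\Psi^h$, namely $\Psi^h(S^{\phi^2})$, so $c\in\Psi^h(S^{\phi^2})$. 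Both assertions of the lemma follow.

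I expect the propagation step to be the main obstacle: one has to make sure the continuation of $\mu$ supplied by Lemma \ref{contmu} never pushes $\mu(\gamma(t))$ out of $B_\infty(f^h)$, and that the Puiseaux branch points of $\mu$ do not interfere with the harmonic identity theorem. The self-consistency observation above — the identity itself pins $g_{f^h}\circ\mu$ strictly between $0$ and $2\log(1/\phi)$ on $\mathcal{F}$ — is what makes the first worry disappear; once the identity is established, the limiting arguments are routine consequences of $\partial B_\infty=J$ and of Böttcher's theorem above the critical values.
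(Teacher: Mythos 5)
Your proof is correct, and it is the natural fleshing-out of the paper's terse proof (which simply invokes Lemma \ref{contmu}); the underlying idea in both is that $\mu=\Psi^h(\phi^2/\Phi)$ forces the Green-function identity $g_{f^h}\circ\mu = 2\log(1/\phi)-g_f$ along every continuation, after which $\partial B_\infty(f^h)=J^h$ and the fact that $\{g_{f^h}=2\log(1/\phi)\}$ sits above the critical levels (since $\phi^2<\phi<R\le 1$) give the two conclusions. One small point of hygiene in the propagation step: the inductive disc-chain as you wrote it only certifies $\mu(D_i')\subset B_\infty(f^h)$ on a shrunken $D_i'$, not on $D_i$ itself, so one should instead run an open-and-closed argument on $\{t:\text{identity holds near }\gamma(t)\}\subset[0,T]$ (continuity of $g_{f^h}\circ\mu$ and $g_f$ gives closedness, and the harmonic identity theorem on a small disc around $\gamma(t)$, once $\mu(\gamma(t))\in B_\infty(f^h)$ is known, gives openness); this removes the worry you flagged and is a cosmetic change, not a gap in the idea.
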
 
\begin{proof} This follows from Lemma \ref{contmu}.
\end{proof}

\subsection{Analytic continuation.}
We assume that for $\exp(-r) = \phi$, $\mathcal{C}\cap L_r^2$ contains an analytic arc $\mathcal{A}$. We let $C_0$ be the connected component of $\mathcal{C}\cap \mathcal{F}^2$ containing $\mathcal{A}$. We recall that relation (\ref{arc2}) holds on $\mathcal{A}$. 
Now we prove an analogue of Lemma \ref{cont1} for disconnected Julia sets. 
\begin{lem} \label{empty2} Suppose that $\overline{C_0}\cap \partial \mathcal{F}\times \mathcal{F}\cup \mathcal{F}\times \partial \mathcal{F}$ is not empty. Then there exists $x_0 \in \Psi(S^{\phi^2})$ such that we can continue $\mu$ analytically along a path $\gamma:[0,1] \rightarrow \overline{\mathcal{F}}$ with $\gamma([0,1)) \subset \mathcal{F}, \gamma(0) \in \Psi(S^\phi), \gamma(1) = x_0$.  
\end{lem}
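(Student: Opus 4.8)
The plan is to run the proof of Lemma~\ref{cont1} again, but along paths rather than on discs: in the disconnected setting $\mu$ exists only as a germ that is continued along paths (Lemma~\ref{contmu}). Fix a point $(p,q)\in\overline{C_0}\cap\bigl(\partial\mathcal{F}\times\mathcal{F}\cup\mathcal{F}\times\partial\mathcal{F}\bigr)$; after possibly reflecting $\mathcal{C},\mathcal{A},P$ across the diagonal — which preserves all hypotheses and the relation~(\ref{arc2}) — we may assume $p\in\partial\mathcal{F}$ and $q\in\mathcal{F}$. Since $C_0$ is a connected open subset of the curve $\mathcal{C}$, hence path-connected, choose a path $\delta\colon[0,1]\to\overline{C_0}$ with $\delta(0)\in\mathcal{A}\subset\Psi(S^{\phi})^2$, $\delta([0,1))\subset C_0$, $\delta(1)=(p,q)$, and set $\gamma=\pi_1\circ\delta$; then $\gamma(0)\in\Psi(S^{\phi})$, $\gamma([0,1))\subset\mathcal{F}$ and $\gamma(1)=p$. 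By Lemma~\ref{contmu} and Lemma~\ref{continuation3} (applied with $P_1=P$, $P_2=P^h$ and the path $\delta$, starting from the arc where (\ref{arc2}) holds), $(\mu,\mu)$ continues analytically along $\delta|_{[0,1)}$ and the relation $P^h(\mu(x),\mu(y))=0$ persists there.

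The second step is to arrange $p\in\Psi(S^{\phi^2})$. As $q\in\mathcal{F}$, the germ $\mu$ is genuinely analytic near the endpoint of $\pi_2\circ\delta$, so $\mu(\pi_2\circ\delta(t))\to\mu(q)\in\mathcal{F}^h$. If $p$ lies on the outer circle $\Psi(S^{\phi^2})$ of $\partial\mathcal{F}$, we are done with this step. Otherwise $p$ lies on the part of $\partial\mathcal{F}$ accumulating on the Julia set, and Lemma~\ref{limits2} shows that every limit point of $\mu(\gamma(t))$ as $t\to1$ lies on $\Psi^h(S^{\phi^2})$; together with the previous sentence and the propagated relation, this says that $C_0^h$, the component of $\mathcal{C}^h\cap(\mathcal{F}^h)^2$ containing the conjugate arc $\mathcal{A}^h$ (note $\mu|_{\Psi(S^{\phi})}$ is complex conjugation, since $\phi^2/z=\bar z$ when $|z|=\phi$), has closure meeting $\Psi^h(S^{\phi^2})\times\mathcal{F}^h$. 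Replacing $(f,\mathcal{C},P,C_0)$ by $(f^h,\mathcal{C}^h,P^h,C_0^h)$ — harmless, since the conclusion of the lemma is stable under complex conjugation — puts us in the first case.

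So suppose $p\in\Psi(S^{\phi^2})$. Since $\Psi(S^{\phi^2})$ is a real-analytic arc without isolated points, the Puiseux/implicit-function argument in the last part of Lemma~\ref{continuation} produces infinitely many points of $\overline{C_0}$ in $\Psi(S^{\phi^2})\times\mathcal{F}$; hence $p=:x_0$ may be taken outside the countable branch locus of the algebraic function $x\mapsto\mu(x)$ on $C_0$, outside the set $C_\infty$ of preimages of critical points, and away from the singular locus of $\mathcal{C}$. For such $x_0$ the continuation of $\mu$ along $\gamma$ extends analytically across $\gamma(1)=x_0\in\Psi(S^{\phi^2})$ (using the local description of Lemma~\ref{cont}), and $\gamma\colon[0,1]\to\overline{\mathcal{F}}$ has exactly the required properties. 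The main obstacle is this last step: one must control the monodromy of the continuation of $\mu$ — equivalently, of $\Psi^h$ continued out towards $|z|=1$ through the ramified sets $U_n$ of Lemma~\ref{monodromy} — so that after the generic choice of endpoint the continued germ is genuinely analytic at $x_0$, rather than only a Puiseux branch; this is exactly what the bookkeeping of $S_n$, $S_{n,\Psi}$ and Lemmas~\ref{monodromy} and~\ref{cont} is designed to handle.
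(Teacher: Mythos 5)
Your setup tracks the paper's proof faithfully: you lift a path $\delta$ in $\overline{C_0}$ from the arc $\mathcal{A}$ to the boundary point $(p,q)$, propagate the reflected relation $P^h(\mu(x),\mu(y))=0$ along $\delta$ using Lemmas~\ref{contmu} and~\ref{continuation3}, and then use Lemma~\ref{limits2} together with the complex-conjugation symmetry to reduce to the case $p\in\Psi(S^{\phi^2})$, $q\in\mathcal{F}$. Up to that point you are on the same road as the paper.

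The final step, however, has a genuine gap, and it is the whole content of the lemma. You claim that, for a generically chosen $x_0\in\Psi(S^{\phi^2})$, the continuation of $\mu$ along $\gamma$ ``extends analytically across $\gamma(1)=x_0$ (using the local description of Lemma~\ref{cont}),'' and you frame the remaining difficulty as a monodromy-control problem for $\Psi^h$ through the ramified sets $U_n$. This is the wrong diagnosis. By definition $\mu=\Psi^h\circ(\phi^2/\Phi)$, and at $x_0\in\Psi(S^{\phi^2})$ the argument $\phi^2/\Phi(x_0)$ lands on the unit circle $S^1$. But $S^1$ is not merely a ramified locus that a generic path can dodge; it is the \emph{natural boundary} of $\Psi^h$ — radial limits of $\Psi^h$ approach the Julia set $J^h$, which in the disconnected case is a Cantor set, not an analytic arc across which a conformal map could be continued. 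Lemma~\ref{cont} only describes Puiseaux branches of $\Psi$ \emph{inside} each $\Delta_n^*$; it gives no continuation across $S^1$, and no genericity assumption on $x_0$ (avoiding $C_\infty$, the singular locus, etc.) can change this. So the continuation you assert simply does not exist by your method.

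The paper circumvents this by never attempting to continue $\Psi^h$ past $S^1$. Instead it routes the continuation through the \emph{curve} $\mathcal{C}$ itself: choose $(x_0,y_0)\in\overline{C_0}\cap\Psi(S^{\phi^2})\times\mathcal{F}$ so that $\partial_YP(x_0,y_0)\neq 0$ and $\partial_XP^h(x,\mu(y_0))\neq 0$ for all $x$ with $P^h(x,\mu(y_0))=0$; let $g$ be the IFT-branch of $\mathcal{C}$ near $(x_0,y_0)$ (analytic on a disc $D$, $P(x,g(x))=0$). Because $y_0=g(x_0)$ lies in the \emph{open} set $\mathcal{F}$, where $\mu$ is already analytic, $\mu\circ g$ is analytic on all of $D$ after shrinking. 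Then take the IFT-branch $\tilde g$ of the conjugate curve with $P^h(\tilde g(z),z)=0$ and $\tilde g(\mu(y_0))=\mu_0$ a limit value of $\mu$ from $D_1=D\cap\mathcal{F}$. The composition $\tilde g\circ\mu\circ g$ is manifestly analytic on $D$, and on $D_1$ it must coincide with $\mu$ because both satisfy $P^h(\cdot,\mu\circ g(x))=0$ and are selected by the same limit value. That composition \emph{is} the asserted analytic continuation — no continuation of $\Psi^h$ across $S^1$ is ever performed. This implicit-function ``bridge'' across $\Psi(S^{\phi^2})$ is the key idea you need and which your argument is missing.
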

\begin{proof} We frist note that we can continue $\mu$ to a point on $\Psi(S^{\phi^2})$ if and only if we can continue $\mu^h$ to a point on $\Psi^h(S^{\phi^2})$. \\From the relation (\ref{arc2}) and Lemma \ref{continuation3} follows that there exists a path $\tilde{\gamma}:[0,1]\rightarrow \overline{\mathcal{F}}^2, \tilde{\gamma}(0) \in \Psi(S^{\phi})^2, \gamma(1) \in \partial \mathcal{F}\times \partial \mathcal{F}$ such that 
\begin{align*}
P(x,y) = P(\mu\circ x, \mu\circ y) = 0 
\end{align*}
for all $(x,y) \in |\tilde{\gamma}|$ and by Lemma \ref{contmu} it follows that $P(\mu^h\circ x, \mu^h \circ y) = P^h(x,y) = 0$ for all $(x,y) \in \mu\circ \tilde{\gamma}$. (Here we have taken the appropriate continuations of $\mu^h$ as described in Lemma \ref{contmu}). Thus up to replacing $P$ by $P^h$ and permuting $(x,y)$ we may assume that there exists $(x_0,y_0) \in \overline{C}_0\cap \Psi(S^{\phi^2})\times \mathcal{F}$ and as $\Psi(S^{\phi^2})$ does not contain isolated points and $\mathcal{F}$ is open we may even assume that $\partial_YP(x_0,y_0)\partial_X P(x,\mu(y_0)) \neq 0$ for all $x$ such that $P(x,\mu(y_0)) = 0$.  Thus there exists an analytic function $g$ on a disc $D$ centred at $x_0$ such that 
\begin{align*}
P(x,g(x)) = P^h(\mu(x), \mu\circ g(x)) = 0
\end{align*} 
for all $x \in D_1 \subset \mathcal{F}$, where $D_1$ is such that $D\setminus \Psi(S^{\phi^2}) = D_1\cup D_2$ with $D_2 \subset B_\infty \setminus \mathcal{F}$. Let $\mu_0$ be some limit point of  $\mu(x)$ for $x \rightarrow x_0, x \in D_1$. There exists an analytic function $\tilde{g}$ on a disc $\tilde{D} $ centred at $\mu\circ g(x_0)$ such that $\tilde{g}(\mu\circ g(x_0)) = \mu_0$ and $P^h(\tilde{g}(z),z ) = 0$ for all $z \in \tilde{D}$. We can shrink $D$ sufficiently such that $\mu\circ g(D)\subset \tilde{D}$ and we see that  $\tilde{g}\circ \mu\circ g$ is a continuation of $\mu$ to $D$. 
\end{proof}
We get as a direct consequence of the previous Lemma the analogue of Corollary \ref{cor1}. 
\begin{corollary}\label{cor2} The set $\overline{C_0}\cap \left(\partial \mathcal{F}\times \mathcal{F}\cup \mathcal{F}\times \partial \mathcal{F}\right)$ is  empty. 
\end{corollary}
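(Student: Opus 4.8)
This is a direct consequence of Lemma \ref{empty2} together with the argument already used in the connected case (Corollary \ref{cor1}), so the plan is short. I would argue by contradiction: assume that $\overline{C_0}\cap\left(\partial\mathcal{F}\times\mathcal{F}\cup\mathcal{F}\times\partial\mathcal{F}\right)$ is non-empty. By Lemma \ref{empty2} there is then a point $x_0\in\Psi(S^{\phi^2})$ and a path $\gamma\colon[0,1]\to\overline{\mathcal{F}}$ with $\gamma([0,1))\subset\mathcal{F}$, $\gamma(0)\in\Psi(S^\phi)$ and $\gamma(1)=x_0$ along which $\mu$ continues analytically; by Lemma \ref{contmu} this continuation is continuous up to $x_0$, admits a Puiseaux expansion there, and satisfies $\mu^h\circ\mu=\mathrm{id}$ along $|\gamma|$.

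Next I would upgrade this to an honest analytic continuation of $\mu$ on a two-dimensional disc straddling $\Psi(S^{\phi^2})$. Since $\Psi(S^{\phi^2})$ is the image of the circle $S^{\phi^2}$ under the local biholomorphism $\Psi$, it is a real-analytic arc with no isolated points, and the points along it at which the Puiseaux exponent of $\mu$ is at least $2$ form a discrete set; after sliding $x_0$ slightly along $\Psi(S^{\phi^2})$, staying reachable from $\gamma$ by a short detour inside $\mathcal{F}$ (via Lemma \ref{continuation}), I may assume that $\mu$ continues to a small disc $D$ centred at $x_0$ with $D\setminus\Psi(S^{\phi^2})=D_1\sqcup D_2$, $D_1\subset\mathcal{F}$ and $D_2\subset\Psi(A(0,\phi^2))$. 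Because $\mu$ possesses the global pseudo-inverse $\mu^h$, it is injective on $D$ after shrinking, and by the limit behaviour recorded in Lemma \ref{limits2} it maps $D_1$ into $\mathcal{F}^h\subset B_\infty^h$, maps $D_2$ into the complement of $B_\infty^h$, and therefore maps the separating arc $D\cap\Psi(S^{\phi^2})$ onto a relatively open, hence smooth, sub-arc of $J^h=\partial B_\infty^h$. By Fatou's theorem (Theorem \ref{fatou}) the polynomial $f^h$, and hence $f$, is then conjugate to an exceptional polynomial. But the Julia set of an exceptional polynomial, namely a power map or a Chebyshev polynomial, is connected, contradicting the standing assumption of this section that the Julia set of $f$ is disconnected. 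Hence the set in question is empty.

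The only point requiring care is the middle step: the passage from the path-continuation of $\mu$ furnished by Lemma \ref{empty2} to a genuine analytic continuation on a disc on which $\mu$ is a biholomorphism. One must invoke the Puiseaux analysis of Lemmas \ref{contmu} and \ref{cont} to rule out a branch point of $\mu$ sitting at every point of $\Psi(S^{\phi^2})$, and use the relation $\mu^h\circ\mu=\mathrm{id}$ for local injectivity, so that the $\mu$-image of the boundary arc is again a bona fide smooth arc inside $J^h$ and Fatou's theorem genuinely applies.
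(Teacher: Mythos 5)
Your proposal takes a genuinely different route from the paper, and it contains a gap at the central step.

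First, the difference in approach. The paper's proof of Corollary~\ref{cor2} does \emph{not} reduce to Fatou's theorem at all. After invoking Lemma~\ref{empty2} to produce a disc $D$ centred at a point of $\Psi(S^{\phi^2})$ on which $\mu$ continues analytically, the paper argues directly that $\mu(D)$ must meet $C_\infty$ (a backward orbit of a critical point of $f^h$ in $B_\infty^h$): since $\Phi(D)$ engulfs an arc of the circle $S^{\phi^2}$, the sets $\{z^{d^n};\, z \in \phi^2/\Phi(D)\}$ eventually cover a full annulus around $|z|=R$, so some $z_0$ hits $\Phi^h$ of a critical value. But $\mu=\Psi^h(\phi^2/\Phi)$ intertwines $f^{\circ n}$ with $d^n$-th powers, hence gives an analytic local section of $f^{\circ n}$ through $\mu(z_0)$, which is impossible when $(f^{\circ n})'$ vanishes there. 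That contradiction is local, algebraic, and never needs exceptionality. You instead import the argument of Corollary~\ref{cor1}: use the continuation to exhibit a smooth, relatively open arc of $J^h$, invoke Fatou's theorem, conclude $f$ is exceptional, and contradict disconnectedness. The final punch line (exceptional polynomials have connected Julia sets) is valid, so if the intermediate steps held this would give a correct alternative proof.

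The gap is in the claim that after continuation $\mu$ maps $D_2$ into $\C\setminus B_\infty^h$, so that $\mu\bigl(D\cap\Psi(S^{\phi^2})\bigr)$ is a \emph{relatively open} sub-arc of $J^h$. You justify this by citing Lemma~\ref{limits2}, but that lemma only controls limit points of $\mu\circ\gamma$ along paths $\gamma$ contained in $\mathcal{F}$; it says nothing about where the analytic continuation of $\mu$ sends points of $D_2\subset\Psi(A(0,\phi^2))$, which is \emph{not} in $\mathcal{F}$. In the connected case this step was legitimate precisely because $\mathcal{F}=\Psi(A(\phi^2,1))$ is a single annulus, $\mu:\mathcal{F}\to\mathcal{F}^h$ is a single-valued biholomorphism with $\mu(\mathcal{F})=\mathcal{F}^h$, and injectivity then pushes $\mu(D_2)$ outside $B_\infty^h$. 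In the disconnected case $\mathcal{F}=\bigcup_{n\ge 0}f^{-\circ n}\bigl(\Psi(A(\phi^2,\phi))\bigr)$ is a stack of shells, $\mu$ is only defined along paths and carries genuine monodromy (Lemma~\ref{monodromy}), and the paper never asserts $\mu(\mathcal{F})=\mathcal{F}^h$; indeed the modulus relation $g_{f^h}(\mu(X))=2r-g_f(X)$ shows $\mu$ already leaves the shell structure. So the ``relatively open arc of $J^h$'' needed for Theorem~\ref{fatou} is not established by what you cite; to salvage it you would need a separate argument (for instance, exploiting $\mu^h\circ\mu=\mathrm{id}$ together with the relation $|\Phi^h(\mu(X))|=\phi^2/|\Phi(X)|$ and the bound $|\Phi^h|<1$ on $B_\infty^h$), and that argument is precisely what you flag as ``the only point requiring care'' but never supply.

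One further small inaccuracy: the discreteness of the branch locus of $\mu$ along $\Psi(S^{\phi^2})$, used to ``slide $x_0$'', is plausible but also requires justification via the finiteness of $C_\infty$ inside a fixed compact annulus; the paper sidesteps this entirely because its contradiction is produced from a single well-chosen point rather than from a disc on which $\mu$ is a biholomorphism.
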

\begin{proof}  
Suppose for contradiction that $\overline{C}_0\cap \left(\partial \mathcal{F}\times \mathcal{F}\cup \mathcal{F}\times \partial \mathcal{F}\right)$ is not empty. 
By Lemma \ref{empty2} we can continue $\mu$ analytically to a disc $D$ centred at a point of $\Psi(S^{\phi^2})$.  \\
We first note that it is sufficient to show that there exists $x_0 \in D$ such that $\mu(x_0) \in C_{\infty}$. For if that is true then there exists $n$ such that $f^{\circ n'}(x_0) = 0$ and  as $f^{\circ n}(\Psi(\phi^2/\Phi(X))) = \Psi(\phi^{2d}/\Phi^d(X))$ we obtain that the equation $f^{\circ n}(z) = w$ as an analytic solution $z = g(w)$ satisfying $g(f^{\circ n}(x_0)) = x_0$. This contradicts $f^{\circ n'}(x_0) = 0$. \\

Now $\Phi(D)$ contains an open disc $D'$ centred at a point of $S^{\phi^2}$ and $\phi^2/\Phi(D')$ contains an open set $U$ of the form $ U = \{r\exp(2\pi i \epsilon); r_1 < r<1, \epsilon \in I \}$ for some open interval $I \subset (0,1)$. It is clear that for $n$ large enough the set 
$\{z^{d^n}; z \in U\}$ contains an annulus $A_0$ of the form $A_0 = \{z \in \C; r_1' <|z|< r_2'\}$ for some $r_1'<R< r_2'$ and then  we can deduce that there exists $z_0 \in U$ such that $f^{\circ n}(\Psi(z_0)) = \Psi(z_0^{d^n}) = c_0 $ with $f'(c_0) = 0$. This implies that $ \Psi(z_0) \in C_{\infty}$ and we are finished.   
%We can continue $\mu^h$ to $\mu(D)$ by setting $\mu^{-1} = \mu^h$ as in the proof of Corollary \ref{cor1}. We deduce as in the proof of Corollary \ref{cor1} that $\mu(D\cap \Psi(S^{\phi^2})) = \mu(D) \cap J^h$ and thus a relatively open subset of $J^h$ is the support of a smooth curve. We deduce from Fatou's theorem that $f^h$ and thus also $f$ is exceptional. This contradicts our assumption on $J$ to be disconnected.     
\end{proof}
Now we need an analogue of Lemma \ref{cont2}. 
\begin{lem}\label{noidea} Suppose that $\overline{C}_0 \cap \left( \partial \mathcal{F}\times \mathcal{F}\cup \mathcal{F}\times \partial \mathcal{F}\right)$ is empty. Then $\overline{C}_0 \cap (\Psi(S^{\phi^2})\times \Psi(S^{\phi^2}) \cup J^2) $ is non-empty. 
\end{lem}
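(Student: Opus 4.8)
The plan is to reproduce the argument of Lemma \ref{cont2} in this more delicate setting. Since $J$ is disconnected, $f$ is non-exceptional, so Fatou's theorem (Theorem \ref{fatou}) forbids any relatively open subset of $J$ from being the support of a smooth curve; the aim is to show that failure of the assertion would produce precisely such a subset. I begin with the topology. $C_0$ is a one-dimensional connected analytic subset of the affine algebraic curve $\mathcal{C}$ lying inside the bounded open set $\mathcal{F}^2$, and it is closed in the locally connected space $\mathcal{C}\cap\mathcal{F}^2$; since there is no compact one-dimensional analytic subset of $\C^2$, $C_0$ is not closed in $\C^2$, whence $\overline{C_0}\setminus C_0$ is a non-empty subset of $\partial(\mathcal{F}^2)=\partial\mathcal{F}\times\overline{\mathcal{F}}\cup\overline{\mathcal{F}}\times\partial\mathcal{F}$. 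By the hypothesis of the Lemma it meets $\partial\mathcal{F}\times\mathcal{F}$ and $\mathcal{F}\times\partial\mathcal{F}$ trivially, so
\[
\overline{C_0}\subseteq\mathcal{F}^2\cup(\partial\mathcal{F}\times\partial\mathcal{F}),\qquad\overline{C_0}\cap(\partial\mathcal{F}\times\partial\mathcal{F})\neq\emptyset .
\]
Applying Lemma \ref{continuation} to $C_0$ with $U_1=U_2=\mathcal{F}$, whose boundary has no isolated points, I record that $\pi_1,\pi_2\colon\overline{C_0}\cap\overline{\mathcal{F}}^2\to\overline{\mathcal{F}}$ are surjective, that $\pi_1^{-1}(\partial\mathcal{F})=\pi_2^{-1}(\partial\mathcal{F})\subseteq\partial\mathcal{F}\times\partial\mathcal{F}$, and that paths in $\mathcal{F}$ lift along either projection.

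Suppose for contradiction that $\overline{C_0}\cap(\Psi(S^{\phi^2})^2\cup J^2)=\emptyset$. Now $\partial\mathcal{F}$ is the union of $J$, the outer equipotential $\Psi(S^{\phi^2})=\{g_f=2r\}$, and finitely many ``intermediate'' pieces, each lying in an equipotential $\{g_f=c\}$ with $0<c<2r$ (the level sets $\{g_f=2r/d^n\}$ and $\{g_f=r/d^n\}$ for $n\geq 1$, and their components). The crux is to show that for $\overline{C_0}$ the effective boundary is only $\Psi(S^{\phi^2})\sqcup J$. Away from the ramification locus $C_\infty$ each intermediate component is a smooth arc, and either $\mathcal{F}$ lies on both of its sides or $\Psi$ continues analytically across it; so, arguing exactly as in the proof of Lemma \ref{empty2} --- representing $C_0$ near a boundary point by a Puiseaux branch using Lemma \ref{cont} and Lemma \ref{puiseaux}, and transporting the relation (\ref{arc2}) with Lemmas \ref{continuation3}, \ref{contmu} and \ref{limits2} --- a point of $\overline{C_0}\cap(\partial\mathcal{F}\times\partial\mathcal{F})$ whose two coordinates lie on intermediate components and avoid $C_\infty$ would actually be interior to $C_0\subseteq\mathcal{F}^2$, which is impossible. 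Choosing the boundary point generically, which is allowed since $\partial\mathcal{F}$ has no isolated points and $C_\infty$ is discrete, a boundary point with a coordinate in $C_\infty$ is handled by the monodromy description of Lemma \ref{monodromy}: after passing to the appropriate Puiseaux branch the relevant branch of $\Psi$ continues past that point into $\mathcal{F}$, so $C_0$ does not end there either.

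Granting this reduction, the proof concludes as for Lemma \ref{cont2}. By surjectivity $\pi_1$ covers the circle $\Psi(S^{\phi^2})\subseteq\partial\mathcal{F}$ and $\pi_2$ covers $J\subseteq\partial\mathcal{F}$; using $\pi_1^{-1}(\partial\mathcal{F})=\pi_2^{-1}(\partial\mathcal{F})$, the contradiction hypothesis (no point of $\overline{C_0}$ has both coordinates in $\Psi(S^{\phi^2})$ or both in $J$) and the effective-boundary reduction, one gets $\pi_2^{-1}(J)=\pi_1^{-1}(\Psi(S^{\phi^2}))$ and hence $J=\pi_2(\pi_1^{-1}(\Psi(S^{\phi^2})))$. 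Since $\overline{C_0}$ is a piece of an algebraic curve and $\Psi(S^{\phi^2})$ a real-analytic circle, this realizes $J$ as a finite union of points and real-analytic arcs, and then --- by the implicit function theorem, or by Lemma \ref{puiseaux} at the finitely many exceptional points --- one extracts a relatively open subset of $J$ that is the support of a smooth arc. By Theorem \ref{fatou} this makes $f$ exceptional, contradicting the disconnectedness of $J$. Hence $\overline{C_0}\cap(\Psi(S^{\phi^2})^2\cup J^2)\neq\emptyset$.

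I expect the real obstacle to be the second paragraph --- the reduction of the effective boundary of $\overline{C_0}$ to $\Psi(S^{\phi^2})\sqcup J$ --- which must cope with the critical equipotentials occurring in $\partial\mathcal{F}$ and with the punctures of $\mathcal{F}$ along $C_\infty$. This is exactly what the Puiseaux-expansion and monodromy apparatus of this section (Lemmas \ref{puiseaux}, \ref{cont}, \ref{monodromy}) is built for, and the point requiring care is that whenever $C_0$ accumulates at such a point its Puiseaux continuation must return into $\mathcal{F}^2$.
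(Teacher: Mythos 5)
Your proof reaches a correct conclusion, but the endgame is genuinely different from the paper's and considerably heavier, and there is one factual error along the way.

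The paper's own proof is essentially two lines: from Lemma \ref{continuation} one gets, as in Lemma \ref{cont2}, that $J = \pi_2\bigl(\pi_1^{-1}(\Psi(S^{\phi^2}))\cap \overline{C}_0\bigr)$, and since the right-hand side has only finitely many connected components (it is the $\pi_2$-image of a compact piece of the real-analytic curve $\mathcal{C}\cap(\Psi(S^{\phi^2})\times\C)$), this is already a contradiction, because a polynomial with disconnected Julia set has \emph{uncountably} many Julia components. No invocation of Fatou's theorem is needed: the disconnectedness hypothesis of this section is exploited directly, as a cardinality fact, not through exceptionality. Your third paragraph instead extracts a smooth relatively open subarc of $J$ and cites Theorem \ref{fatou} to deduce that $f$ is exceptional, then contradicts disconnectedness. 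This route is logically sound (Lemma \ref{cont2} is the model for it), but it is doing more work than necessary — once you know $J$ has finitely many components you are already done — and the extraction of a smooth arc from a ``finite union of points and real-analytic arcs'' requires the same care that Lemma \ref{cont2} spent a paragraph on (removing the finitely many exceptional crossing points before invoking Fatou), which you compress into a single sentence.

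One claim in your second paragraph is wrong as stated: you assert that $\partial\mathcal{F}$ consists of $J$, $\Psi(S^{\phi^2})$, and ``finitely many intermediate pieces''. In fact $\mathcal{F}=\bigcup_{n\ge 0}\{r/d^{n}<g_f<2r/d^{n}\}$, so its boundary contains the level sets $\{g_f=r/d^{n}\}$ and $\{g_f=2r/d^{n}\}$ for \emph{every} $n\ge1$ — infinitely many intermediate equipotentials accumulating on $J$. This does not automatically sink your effective-boundary reduction, since that reduction is carried out locally around a single boundary point, but the finiteness you invoke is false and the phrasing ``would actually be interior to $C_0\subseteq\mathcal{F}^2$'' cannot be literally correct either, because a point with a coordinate on an intermediate level is by construction in $\partial\mathcal{F}$, not in $\mathcal{F}$; what you want to say is that the Puiseaux branch of $\mathcal{C}$ through such a point has points with one coordinate in $\mathcal{F}$ and the other in $\partial\mathcal{F}$, contradicting the hypothesis. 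You correctly flag this reduction as the crux; the paper itself is terse about exactly this point — it passes directly from Lemma \ref{continuation} to the equality $J = \pi_2(\pi_1^{-1}(\Psi(S^{\phi^2}))\cap \overline{C}_0)$ without visibly addressing the intermediate levels — so on that score you are not missing anything the paper supplies, but you also have not supplied it rigorously yourself.
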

\begin{proof} Suppose for contradiction that $\overline{C}_0 \cap (\Psi(S^{\phi^2})\times \Psi(S^{\phi^2}) \cup J^2) $ is empty. As  in the proof of Lemma \ref{cont2} we deduce from Lemma \ref{continuation} that $J = \pi_2(\pi_1^{-1}(\Psi(S^{\phi^2}))\cap \overline{C}_0)$ and in particular that $J$ has finitely many connected components. This contradicts the fact that $J$ has infinitely (in fact uncountably) many connected components. 
\end{proof}
Now we prove an analogue of proposition \ref{proposition} and the proof is in fact quite similar. 
\begin{prop} \label{proposition2} Let $f$ be, as above, a polynomial with disconnected Julia set and $P$ be such that there exists $\mathcal{B}$ as in (\ref{arc2}). Then there exists $\theta \in S^1$ such that $P(\Psi(z), \Psi(\theta z)) = 0$ for $z \in D_R$. 
\end{prop}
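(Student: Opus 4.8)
The plan is to reproduce the proof of Proposition~\ref{proposition} as closely as possible, substituting the disconnected–Julia–set analogues developed in this section for the tools used there. Let $C_0$ be the component of $\mathcal{C}\cap\mathcal{F}^2$ containing $\mathcal{A}$, so that relation~(\ref{arc2}) holds along $\mathcal{A}$. By Corollary~\ref{cor2} the set $\overline{C_0}\cap(\partial\mathcal{F}\times\mathcal{F}\cup\mathcal{F}\times\partial\mathcal{F})$ is empty, and then by Lemma~\ref{noidea} the set $\overline{C_0}\cap(\Psi(S^{\phi^2})^2\cup J^2)$ is non-empty. The first step is to upgrade this to the statement that $\overline{C_0}\cap\Psi(S^{\phi^2})^2$ or $\overline{C_0^h}\cap\Psi^h(S^{\phi^2})^2$ is non-empty, which is the exact analogue of the dichotomy reached in Proposition~\ref{proposition}, the only difference being that $\mu$ is now defined only locally along paths.

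To get the dichotomy I would dispose of the possibility $\overline{C_0}\cap J^2\neq\emptyset$. Take $(x_0,y_0)$ in this set. Since $\overline{C_0}$ is connected, $C_0\subset\mathcal{F}^2$ is open, and $\mathcal{C}$ is not contained in the real surface $\Psi(S^\phi)^2$, one can join a point of $\mathcal{A}$ to $(x_0,y_0)$ by a path $\gamma\colon[0,1)\to\mathcal{F}^2$ with $\gamma(0)\in\Psi(S^\phi)^2$, $P\circ\gamma\equiv 0$ and $\gamma(t)\to(x_0,y_0)\in J^2$; here one uses Corollary~\ref{cor2} to see that points of $\mathcal{C}$ near $(x_0,y_0)$ with second coordinate in $\mathcal{F}$ automatically have first coordinate in $\mathcal{F}$. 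Continuing $(\mu,\mu)$ along $\gamma$ by Lemma~\ref{contmu} and applying Lemma~\ref{continuation3} with $(P_1,P_2)=(P,P^h)$ (valid by~(\ref{arc2})) gives $P^h(\mu(x),\mu(y))=0$ on $|\gamma|$; the second part of Lemma~\ref{limits2} then shows that any limit of $\mu$ along either coordinate of $\gamma$ as $t\to 1$ lies in $\Psi^h(S^{\phi^2})$. As $(\mu,\mu)\circ\gamma$ is a path in $\mathcal{C}^h\cap(\mathcal{F}^h)^2$ starting on $(\mu,\mu)(\mathcal{A})$, which lies on the analogous component $C_0^h$ for $P^h$, passing to the limit produces a point of $\overline{C_0^h}\cap\Psi^h(S^{\phi^2})^2$. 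Thus, possibly after replacing $P$ by $P^h$ (harmless, by complex conjugation), we may assume $\overline{C_0}\cap\Psi(S^{\phi^2})^2\neq\emptyset$.

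With this in hand I argue exactly as in Proposition~\ref{proposition}. Since $\Psi$ is injective on $\Delta_R^*$, the set $\Psi(S^{\phi^2})$ is a topological circle, and one checks it is a whole connected component of $\partial\mathcal{F}$ (it is an equipotential disjoint from $\Psi(S^\phi)$, from $J$, and from the remaining boundary pieces of $\mathcal{F}$). Hence, starting from a point of $\overline{C_0}\cap\Psi(S^{\phi^2})^2$ and lifting by $\pi_1$ a loop tracing out $\Psi(S^{\phi^2})$ as in Lemma~\ref{continuation} — the emptiness coming from Corollary~\ref{cor2} confines the lift to $\Psi(S^{\phi^2})\times\Psi(S^{\phi^2})$ — the intersection $\overline{C_0}\cap\Psi(S^{\phi^2})^2$ is infinite and therefore contains a connected analytic arc $\mathcal{B}_1$. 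Along $\mathcal{B}_1$ the identity~(\ref{arc2}) holds with $\phi$ replaced by $\phi^2$, because $\overline{\Psi(w)}=\Psi^h(\phi^4/w)$ when $|w|=\phi^2$; so the argument iterates and by induction $\mathcal{C}\cap\Psi(S^{\phi^{2^N}})^2$ contains a connected analytic arc $\mathcal{B}_N$ for every $N\ge 0$.

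Finally I would set $\mathcal{G}(x,y)=x^{\deg_X P}y^{\deg_Y P}P(\Psi(x),\Psi(y))$, which is analytic near $(0,0)$ (the monomial factors cancel the pole of $\Psi$ at $0$) and not identically zero, since $\mathcal{C}$ is a curve while $(\Psi,\Psi)$ has open image. The circles $S^{\phi^{2^N}}$ have radii $\phi^{2^N}\to 0$, and $\mathcal{G}$ vanishes on the infinitely many points of $\mathcal{B}_N$ lying in $S^{\phi^{2^N}}\times S^{\phi^{2^N}}$, so Lemma~\ref{circles} yields $\theta\in S^1$ with $\mathcal{G}(x,\theta x)\equiv 0$, that is $P(\Psi(z),\Psi(\theta z))=0$ on $\Delta_R$, which is the claim. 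I expect the main obstacle to be the $J^2$ case: carrying the continuation of the merely locally defined pseudo-involution $\mu$ along the right path and checking that the limiting behaviour of Lemma~\ref{limits2} simultaneously controls both coordinates, together with the bookkeeping needed to certify that $\Psi(S^{\phi^2})$ is an entire connected component of $\partial\mathcal{F}$, which is precisely what makes the path–lifting, and hence the induction, go through.
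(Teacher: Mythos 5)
Your argument follows the same structure as the paper's proof: Corollary~\ref{cor2} to empty the mixed boundary set, Lemma~\ref{noidea} to place a limit point on $\Psi(S^{\phi^2})^2\cup J^2$, the limits lemma (in your case, correctly, the path version Lemma~\ref{limits2} rather than the misprinted reference to Lemma~\ref{limits} in the paper) combined with continuation of $(\mu,\mu)$ along a path to dispose of the $J^2$ alternative, and then the connectedness of $\Psi(S^{\phi^2})$, iteration on shrinking equipotentials, and Lemma~\ref{circles} to produce the rotation. The only difference is that you spell out the path-lifting and the $J^2$-case continuation in more detail than the paper, which simply points the reader back to Proposition~\ref{proposition}; the substance is identical.
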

\begin{proof}
We first recall that by Corollary \ref{cor2}  $\overline{C}_0 \cap\left(\partial \mathcal{F} \times \mathcal{F}\cup \mathcal{F}\times \partial \mathcal{F}\right)$ is empty and thus also $\overline{C}^h_0 \cap\left(\partial \mathcal{F}^h \times \mathcal{F}^h\cup \mathcal{F}^h\times \partial \mathcal{F}^h\right)$ is empty. From Lemma \ref{noidea} follows that $\overline{C}_0 \cap (\Psi(S^{\phi^2})\times \Psi(S^{\phi^2}) \cup J^2)$ is non-empty. We now prove that $\mathcal{C} \cap \Psi(S^{\phi^2})^2$ contains an analytic arc. From Lemma \ref{limits} we deduce that $\overline{C}_0 \cap\Psi(S^{\phi^2})^2 $ or $\overline{C}^h_0\cap\Psi^h(S^{\phi^2})^2 $ is non-empty. However those two conditions are equivalent which can be seen by applying complex conjugation  and as $\Psi(S^{\phi^2})$ is connected we deduce that $\mathcal{C} \cap\Psi(S^{\phi^2})^2 $ contains infinitely many points and thus an analytic arc. \\

We may finish the proof as in the proof of Proposition \ref{proposition}. 
\end{proof}

Now we prove the analogue of Lemma \ref{rotation} for $f$ with disconnected Julia set. 
\begin{lem}\label{rotation2} Let $f$ be a polynomial with disconnected Julia set and suppose there exists $\theta \in S^1$ and an irreducible polynomial $P \in \C[X,Y]\setminus \{0\}$such that $P(\Psi(z), \Psi(\theta z)) = 0$ for $z \in D_R$. Then $\theta \in \mu_\infty$. 
\end{lem}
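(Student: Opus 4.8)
The plan is to argue by contradiction exactly in the way Lemma \ref{rotation} is proved, the only genuinely new ingredient being that here $\Psi$ is single valued only on $\Delta_R$ with $R<1$, so the ``rotation'' attached to $P$ is a priori only a multivalued object and has to be transported toward $J$ by hand.

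Assume that $\theta$ is not a root of unity. Since $P(\Psi(z),\Psi(\theta z))=0$ on $\Delta_R$ and $\Psi$ is a biholomorphism of $\Delta_R^*$ onto $\Psi(\Delta_R^*)\subset B_\infty$ with inverse $\Phi$, the branch $h_\theta$ of the algebraic function cut out by $P$ through the point $(\Psi(z_0),\Psi(\theta z_0))$ satisfies $h_\theta(\Psi(z))=\Psi(\theta z)$, equivalently $h_\theta(w)=\Psi(\theta\Phi(w))$, on $\Psi(\Delta_R^*)$. The partial derivatives of $P$ cannot vanish identically (otherwise $\Psi$ would be constant), so $h_\theta$ is non-constant; being algebraic it continues along every path avoiding its finite branch locus, and on $\Psi(\Delta_R^*)$ it preserves the Green function, $g_f(h_\theta(w))=-\log|\theta\Phi(w)|=-\log|\Phi(w)|=g_f(w)$.

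Next I would continue $h_\theta$ from $\Psi(\Delta_R^*)$ along a path to a small disc $D$ about a repelling periodic point $x_0\in J$ chosen off the branch locus of $h_\theta$ (repelling periodic points being dense in $J$). Keeping track of the B\"ottcher coordinate along this path by means of the monodromy analysis of this section (Lemma \ref{monodromy} and the Puiseux description of $\Psi$ around the sets $E_n$, $C_n$) shows that the continuation of $h_\theta$ still has the shape ``$\Psi(\zeta\theta\,\Phi(\cdot))$'' for a suitable locally defined branch of $\Phi$ and some $\zeta\in\mu_{d^{\infty}}$, hence still a rotation by a non-root-of-unity. Since $g_f$ is globally defined and single valued on $B_\infty$ and $|\zeta\theta|=1$, the identity $g_f\circ h_\theta=g_f$ persists, by unique continuation of the harmonic function $g_f$ along $B_\infty$, wherever $h_\theta$ takes values in $B_\infty$; applying the same to the inverse branch $h_{\theta^{-1}}=h_\theta^{-1}$ gives $h_\theta(D\cap(\C\setminus B_\infty))\subset\C\setminus B_\infty$ and $h_\theta(D\cap B_\infty)\subset B_\infty$, whence $h_\theta(D\cap J)\subset J$ and, after shrinking $D$, $h_\theta$ is a local conformal symmetry of $J$ near $x_0$. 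I expect this step to be the main obstacle: making the rotation picture and the invariance of $B_\infty$ rigorous when $\Psi$ is only locally defined beyond $\Delta_R$ is precisely what the monodromy and Puiseux machinery of the section is for.

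From here the argument is identical to that of Lemma \ref{rotation}. After replacing $f$ by an iterate (which changes neither $J$, $B_\infty$, $g_f$ nor $\Psi$, nor the relation $P(\Psi(z),\Psi(\theta z))=0$) we may take $x_0$ to be a repelling fixed point, choose the linearizing coordinate $F$ with $f\circ F(z)=F(\lambda z)$, $F(0)=x_0$, $F'(0)=1$, $|\lambda|>1$, and the contracting inverse branch $f^{-1}$ on the forward invariant neighbourhood $U=F(D_0)$. Setting $g_0=h_\theta$ and $g_{n+1}=f\circ g_n\circ f^{-1}$ on $U$, each $g_n$ is a local symmetry of $J$ (using complete invariance of $J$ under $f$) and satisfies $g_f\circ g_n=g_f$ (using $g_f\circ f=d\,g_f$); hence $g_n(U\cap B_\infty)\subset\{g_f\le t\}$ with $t=\sup_{U\cap B_\infty}g_f<\infty$, while $g_n(U\setminus B_\infty)$ lies in the bounded filled Julia set, so $\{g_n\}$ is uniformly bounded on $U$ and therefore normal by Montel's theorem. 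Its limits are injective (by Hurwitz, since they cannot collapse the perfect set $J\cap U$), so $J$ carries a non-trivial normal family of symmetries, and the theorem of Levin \cite{levinsymmetries} together with \cite{DH} forces $f$ to be exceptional. This contradicts the hypothesis that $J$ is disconnected, and therefore $\theta\in\mu_{\infty}$.
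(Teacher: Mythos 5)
Your proposal and the paper's proof go in genuinely different directions, and in the case of a disconnected Julia set the route you chose does not close. You try to transplant the argument of Lemma~\ref{rotation}: continue $h_\theta$ to a neighbourhood of a repelling periodic point, show $h_\theta$ is a local symmetry of $J$, iterate under $f$ to get a normal family, and invoke Levin and Douady--Hubbard. The step you yourself flag as ``the main obstacle'' is in fact a real gap, not just a technicality. In the connected case the paper gets the key inclusions $h_\theta(D\cap(\C\setminus B_\infty))\subset\C\setminus B_\infty$ and $h_\theta(D\cap B_\infty)\subset B_\infty$ for free from the \emph{global} identities $h_\theta(B_\infty)=B_\infty$, $h_{\theta^{-1}}(B_\infty)=B_\infty$, which are available because $\Phi$ is a single-valued biholomorphism $B_\infty\to\Delta^*$. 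When $J$ is disconnected, $\Phi$ has monodromy inside $B_\infty$ and $h_\theta$ is only a branch of an algebraic function with branch points in $B_\infty$; there is no global $h_\theta$ on $B_\infty$, so ``$h_\theta(B_\infty)=B_\infty$'' is meaningless. Your Green-function substitute $g_f\circ h_\theta=g_f$ is a statement you can propagate along a single path in $B_\infty$ by harmonic continuation, which gives the inclusion on \emph{one} component of $D\cap B_\infty$ that the path reaches. But $D\cap B_\infty$ may have several components, and the restriction of the single-valued branch $h_\theta|_D$ to another component need not agree with any continuation of the original germ along a path inside $B_\infty$ (the path through $D$ across $J$ and the path inside $B_\infty$ need not be homotopic in $\C$ minus the branch locus). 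So you cannot conclude $g_f\circ h_\theta=g_f$ on all of $D\cap B_\infty$, hence neither $h_\theta(D\cap B_\infty)\subset B_\infty$ nor that $h_\theta$ is a local symmetry of $J$. The assertion that the continuation ``still has the shape $\Psi(\zeta\theta\,\Phi(\cdot))$'' is exactly what needs a proof and is nowhere argued.

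The paper's proof avoids all of this. It does not continue $h_\theta$ toward $J$, does not invoke Levin, and does not touch normal families. Instead it turns the disconnectedness --- i.e.\ the very presence of branch points of $\Psi$ --- into the engine of the argument: if $\theta\notin\mu_\infty$ then for all large $n$ no point of $E_n$ is carried to $E_n$ by multiplication by $\theta$. Choosing $n$ with $2^n>\deg P$, one continues $P(\Psi(z),\Psi(\theta z))=0$ to a point $z_0\in E_n$ with $\Psi(z_0)\in C_n$, loops around $z_0$ to pick up the monodromy $\Psi(z)\mapsto\Psi(\zeta z)$ with $\zeta\in\mu_{d^n}$ of order $\geq 2^n$ (Lemma~\ref{monodromy}), while the second factor $\Psi(\theta z)$ is unaffected since $\theta z_0\notin E_n$. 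Iterating (around $\zeta^{-1}z_0$, $\zeta^{-2}z_0,\dots$) produces the relations $P(\Psi(\zeta^i z),\Psi(\theta z))=0$ for $i=0,\dots,2^n-1$; since $2^n>\deg P$ this forces $\partial_X P\equiv 0$, which is absurd because $\Psi$ is non-constant. This is shorter, entirely self-contained, and uses precisely the branching that obstructs your approach. So: your proposal correctly identifies the obstacle but does not overcome it, and the intended proof is structurally different.
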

\begin{proof} We first  prove that we may assume that there exists $N \in \Z_{\geq 0}$ such that for all $n \geq N$ it holds that if  $z \in E_n$ then $\theta z \notin E_n$.  Suppose to the contrary that there exist infinitely many $n \in \Z_{\geq 1}$ such that there exists $z \in E_n$ with $\theta z \in E_n$. It follows that $z^{d^n}, (\theta z)^{d^n} \in \{\Phi(f^{\circ {n_c}}(c)); c \in C_{\text{crit}}\}$ and so $\theta^{d^n}$ is contained in a finite set independent  of $n$ for infinitely many $n$. It follows that $\theta \in \mu_{\infty}$.  

We assume now that  $N$ as described above exists. We can then pick $n$ such that $2^n > \deg(P)$ and continue the relation $P(\Psi(z), \Psi(\theta z)) = 0$ to some $z_0 \in E_n$ such that for the continuation of $\Psi(z)$ holds $\Psi(z_0) \in C_n$. By Lemma \ref{monodromy} continuing $\Psi(z)$ along a small simple loop around $z_0$ sends $\Psi(z)$ to $\Psi(\zeta z)$ with $\zeta \in \mu_{d^n}$ of order at least $2^n$. Thus we obtain $P(\Psi(\zeta z), \Psi(\theta z)) = 0$. Now we can continue this relation to $\zeta^{-1}z_0$ and continue it along a small loop around $\zeta^{-1}z_0$. As $\zeta^{-1}z_0 \in E_n$, $\theta \zeta^{-1}z_0 \notin E_n$ and thus we obtain $P(\Psi(\zeta^2 z), \Psi(\theta z))= 0$. We can continue this procedure with $\zeta$ replaced by $\zeta^2$ and then continue inductively to obtain $P(\Psi(\zeta^i z), \Psi(z)) = 0, i = 0,1,\dots, 2^{n}-1$. As $2^n > \deg(\mathcal{C})$ we deduce that $\partial_X P = 0$ which contradicts the fact that $\Psi$ is non-constant.

\end{proof}

\subsection{Proof of Theorem \ref{equipotential} for polynomials with disconnected Julia sets}
\begin{proof} We deduce Theorem \ref{equipotential} from Proposition \ref{proposition2} and Lemma \ref{rotation} as well as Lemma \ref{medvedev}. 
\end{proof}
\subsection{Proof of Theorem \ref{transc}}
If $L_r$ contains a semi-algebraic curve then so does $f^{\circ n}(L_r)$ and thus we may assume that $L_r \subset D_R$ and that $L_r = \Psi(S^{\phi})$. Applying complex conjugation and arguing similarly as in the deduction of the relation (\ref{arc2}) we deduce that there exists a polynomial $P \in \C[X,Y]\setminus \{0\}$ such that $P(X,\mu(X)) = 0$ identically. Thus we can continue $\mu$ analytically to a point on $\Psi(S^{\phi^2})$ and deduce from Corollary \ref{cor1} or the proof of Corollary \ref{cor2} that $f$ is exceptional.    
\section{Proof of Theorem \ref{thm}}\label{proofofthm1}
 
%\begin{proof}[Proof of Theorem \ref{thm}] 
We already noted that there exists a place  $v$ of $K$ such that $|f^{\circ n}(\alpha)|_v\rightarrow \infty$ as $n\rightarrow \infty$. We also remarked that the theorem for $\alpha$ follows from the theorem for $f^{\circ n}(\alpha)$ thus we may assume that $\alpha \in \Psi(D_R)$ where $R$ is the convergence radius of $z\Psi(z)$ at the place $v$. If $v$ is finite then Theorem \ref{thm} follows directly from Proposition \ref{prop1}. If however the place $v$ is infinite then assuming that the curve $\mathcal{C}$ has infinite intersection with $\mathcal{S}_\alpha^2$, it also has infinite intersection with $L_r^2$ for some $r>0$ such that $L_r\subset \Psi(D_R)$. From Theorem \ref{equipotential} follows that $\mathcal{C}$ is pre-periodic by $(f,f)$ and Theorem \ref{thm} follows from Lemma \ref{reduction}.

\section{Proof of Theorem \ref{grandorbit}}\label{proofofthm2}
In this section we assume that $\mathcal{C}$ is not pre-periodic and that it is not fibral and will show that $\mathcal{C} \cap \mathcal{G}_\alpha$ is finite. With Lemma \ref{reductionprep} that suffices to conclude Theorem \ref{grandorbit}. Suppose that $\alpha \in \Psi(D_{v}^*)$ for some finite place $v$ of $K$ of good reduction that is co-prime to $d$.  
For an element $(\beta_1,\beta_2) \in \mathcal{G}_\alpha^2 \cap \mathcal{C}(\overline{\Q})$ we set $n_1, m_1, n_2,m_2$ to be the minimal positive integers such that 
\begin{align}
f^{\circ n_1}(\beta_1) & = f^{\circ m_1}(\alpha),\label{rel1}\\
f^{\circ n_2}(\beta_2) & = f^{\circ m_2}(\alpha) \label{rel2}.
\end{align}
The proof goes by bounding $n_1,n_2,m_1,m_2$ in several steps. In what follows we set 
\begin{align*}
\ell_1 = m_1 - n_1 , \ell_2 = m_2 - n_2.
\end{align*} 
We start by collecting some facts about Kummer extensions. 
\subsection{Kummer extensions}
We first note that as $|d|_v = 1$ the extension $K_v(\mu_{d^\infty})/K_v$ is an unramified extension \cite[Satz 7.13]{neukirch}. In particular the value group of $K_v(\mu_{d^\infty})$ is the same as the value group of $K_v$. It is straightforward to check (for example by looking at ramification) that for $a \in K_v$ it holds that 
 
\begin{align}\label{kummer}
[K_v(a^{1/d^n}, \mu_{d^\infty}): K_v(\mu_{d^\infty})] \geq |(K_v(a^{1/d^n}, \mu_{d^\infty}))^*/(K_v(\mu_{d^\infty}))^*|
\end{align}
where the right hand side is the cardinality of the index as multiplicative groups.  We get the following immediately.  
\begin{lem} Let $K, \alpha, \phi$ be as in the previous sections and $d$ as above. There exists a constant $C_{\text{Kum}} > 0$ depending only on $\alpha, K$ such that 
\begin{align*}
[K_v(\mu_{d^\infty}, \Phi(\alpha)^{1/M}): K_v(\mu_{d^\infty})] \geq C_{\text{Kum}} M
\end{align*}
for any integer $M$ dividing a power of $d$. 
\end{lem}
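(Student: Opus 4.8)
The plan is to reduce the bound to an elementary fact about value groups, exploiting that $v$ lies over a rational prime not dividing $d$. First I would record that $\Phi(\alpha)$ is a genuine element of $K_v^*$: since (after replacing $\alpha$ by an iterate, as in Section~\ref{reduction}) we have $\alpha\in\Psi(\Delta_{R_v}^*)$, evaluating the power series $\Phi\in\frac1X K[[\frac1X]]$ at $\alpha\in K\subset K_v$ gives an element of $K_v$, and (\ref{condalpha}) together with $R_v\le 1$ (which holds because $f$ is non-exceptional) forces $|\Phi(\alpha)|_v<1$; in particular $\Phi(\alpha)\neq 0$. (Alternatively $|\Phi(\alpha)|_v<1$ is forced by $|f^{\circ n}(\alpha)|_v=|\Phi(\alpha)|_v^{-d^n}\to\infty$.) Normalising $v$ so that $v(K_v^*)=\Z$, set $s:=v(\Phi(\alpha))\in\Z_{\ge 1}$; this is a fixed positive integer depending only on $\alpha$ (and on the ambient data $K,f,v$ fixed throughout this section). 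Next I would use $|d|_v=1$ to see that $L:=K_v(\mu_{d^\infty})$ is an unramified extension of $K_v$, so that $v$ extends to $L$ with the same value group $\Z$ (only the residue field grows).

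Now fix $M$ dividing a power of $d$ and put $L_M:=L(\Phi(\alpha)^{1/M})$, a finite extension of $L$ of degree dividing $M$, with ramification index $e=e(L_M/L)$, so that $v(L_M^*)=\frac1e\Z$. Any $M$-th root $\Phi(\alpha)^{1/M}\in L_M$ has valuation $s/M$, and writing this in lowest terms the denominator is $M/\gcd(s,M)$; hence $M/\gcd(s,M)$ divides $e$, and in particular
\[
[\,K_v(\mu_{d^\infty},\Phi(\alpha)^{1/M}):K_v(\mu_{d^\infty})\,]=[L_M:L]\ \ge\ e\ \ge\ \frac{M}{\gcd(s,M)}.
\]
Since $M$ divides a power of $d$, every prime dividing $\gcd(s,M)$ divides $d$, so $\gcd(s,M)$ divides the ``$d$-part'' $s_0:=\prod_{p\mid d}p^{v_p(s)}$ of the fixed integer $s$. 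Thus $[L_M:L]\ge M/s_0$ for every such $M$, and the Lemma follows with $C_{\text{Kum}}:=1/s_0>0$, which depends only on $\alpha$ and $K$.

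I do not expect a serious obstacle here: the argument is essentially bookkeeping with normalisations, and the only point that needs care is the uniformity in $M$, which rests precisely on $\gcd(v(\Phi(\alpha)),M)$ staying bounded as $M$ runs over divisors of powers of $d$ --- a consequence of $v(\Phi(\alpha))$ being a fixed integer, whose $d$-part $s_0$ is then an absolute bound. One can equally run the computation through the Kummer identity (\ref{kummer}): $\Phi(\alpha)$ is a $d^j$-th power in $L$ only if $d^j\mid s$, so $d^j\mid s_0$ and $j$ is bounded, whence $[L(\Phi(\alpha)^{1/d^n}):L]=d^{\,n-j}\ge d^n/s_0$, and similarly for general $M$. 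Either way the mechanism is identical: the strictly positive $v$-adic valuation of $\Phi(\alpha)$ supplies the Kummer obstruction, while $|d|_v=1$ ensures that adjoining $\mu_{d^\infty}$ leaves the value group untouched.
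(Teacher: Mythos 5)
Your proof is correct and follows the same approach as the paper's one-line argument: since $|d|_v=1$ the cyclotomic extension $K_v(\mu_{d^\infty})/K_v$ is unramified, so $\Phi(\alpha)\in K_v^*$ retains its (positive integer) valuation $s$ in $K_v(\mu_{d^\infty})$, and adjoining an $M$-th root forces ramification of index at least $M/\gcd(s,M)\geq M/s_0$ with $s_0$ the $d$-part of $s$. The one small improvement in your write-up is that you only invoke the automatic inequality $[L_M:L]\geq e(L_M/L)$ rather than the equality in (\ref{kummer}), which asserts total ramification and would require checking that the residue field of $K_v(\mu_{d^\infty})$ is closed under extraction of $d$-th roots; that shortcut is harmless and makes the argument slightly more robust.
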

\begin{proof}  As $\Phi(\alpha) \in K_v$ and the value group of $K_v(\mu_{d^\infty})$ is the same as the one of $K_v$  the Lemma follows from (\ref{kummer}).  
\end{proof}

As a consequence of these observations we obtain the following Lemma. 
\begin{lem}\label{min} There exists a constant  $C_{\text{min}} > 0$ such that $|\min\{\ell_1,0\} - \min\{\ell_2,0\} | \leq C_{\text{min}} $. 
\end{lem}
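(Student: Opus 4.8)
The plan is to translate the relations $(\ref{rel1})$ and $(\ref{rel2})$ through the Böttcher coordinate and then play the Kummer lower bound just established against the triviality of $\mathrm{Gal}(\overline{K}_v/K_v(\mu_{d^{\infty}}))$ on $\Phi(\alpha)\in K_v$. Write $a=\Phi(\alpha)\in K_v$ and $e_i=-\min\{\ell_i,0\}=\max\{0,n_i-m_i\}\ge 0$, so that the quantity to be bounded is $|\min\{\ell_1,0\}-\min\{\ell_2,0\}|=|e_1-e_2|$; by symmetry we may assume $e_1\ge e_2$. First I would apply $\Phi$ to $f^{\circ n_i}(\beta_i)=f^{\circ m_i}(\alpha)$, using $\Phi\circ f=\Phi^d$, to get $\Phi(\beta_i)^{d^{n_i}}=a^{d^{m_i}}$. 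When $\ell_i\ge 0$ this forces $\Phi(\beta_i)=\zeta_i a^{d^{\ell_i}}$ for a root of unity $\zeta_i$ of order dividing $d^{n_i}$, so $\Phi(\beta_i)\in K_v(\mu_{d^{\infty}})$; when $\ell_i<0$ it forces $\Phi(\beta_i)^{d^{e_i}}=\zeta_i a$ for a root of unity $\zeta_i$ of order dividing $d^{m_i}$, so $\Phi(\beta_i)$ is a $d^{e_i}$-th root of $a$ up to an element of $\mu_{d^{\infty}}$. Since $\Psi$ and $\Phi$ are power series with coefficients in $K_v$ converging at $\Phi(\beta_i)$ respectively $\beta_i$ (note $|\beta_i|_v>R_v^{-1}$ because the complement of $\Psi(\Delta_{R_v}^{*})$ is forward invariant and $|f^{\circ m_i}(\alpha)|_v>R_v^{-1}$), we have $K_v(\beta_i)=K_v(\Phi(\beta_i))$, and hence in all cases
\[
K_v(\mu_{d^{\infty}},\beta_i)=K_v(\mu_{d^{\infty}},\Phi(\beta_i))=K_v\bigl(\mu_{d^{\infty}},a^{1/d^{e_i}}\bigr),
\]
the last field being independent of the chosen $d^{e_i}$-th root since $\mu_{d^{e_i}}\subset\mu_{d^{\infty}}$.

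Next I would run the counting argument. Applying the Kummer Lemma with $M=d^{e_1}$ gives $[K_v(\mu_{d^{\infty}},\beta_1):K_v(\mu_{d^{\infty}})]\ge C_{\mathrm{Kum}}\,d^{e_1}$. Because $e_1\ge e_2$, the element $a^{1/d^{e_2}}$ is a power of $a^{1/d^{e_1}}$ up to $\mu_{d^{\infty}}$, so $K_v(\mu_{d^{\infty}},\beta_1,\beta_2)=K_v(\mu_{d^{\infty}},\beta_1)$ and the pair $(\beta_1,\beta_2)$ has at least $C_{\mathrm{Kum}}\,d^{e_1}$ conjugates over $K_v(\mu_{d^{\infty}})$; since $\mathcal{C}$ is defined over $K\subseteq K_v(\mu_{d^{\infty}})$ all of these lie on $\mathcal{C}(\overline{K}_v)$. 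On the other hand $\beta_2$ has at most $[K_v(\mu_{d^{\infty}},\beta_2):K_v(\mu_{d^{\infty}})]\le d^{e_2}$ conjugates over $K_v(\mu_{d^{\infty}})$, so the second coordinates of our conjugate points take at most $d^{e_2}$ values. Assuming $\mathcal{C}$ is not a coordinate line (see below), $\mathcal{C}$ meets any line $\mathbb{A}^1\times\{c\}$ in at most $\deg\mathcal{C}$ points by B\'ezout, so by pigeonhole $C_{\mathrm{Kum}}\,d^{e_1}/d^{e_2}\le\deg\mathcal{C}$, i.e. $e_1-e_2\le\log_d(\deg\mathcal{C}/C_{\mathrm{Kum}})$. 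This yields the lemma with $C_{\mathrm{min}}=\max\{0,\log_d(\deg\mathcal{C}/C_{\mathrm{Kum}})\}$.

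Two points need care. First, one genuinely must exclude the case that $\mathcal{C}$ is a vertical or horizontal line $\{c\}\times\mathbb{A}^1$ or $\mathbb{A}^1\times\{c\}$: on such a curve one coordinate is constant while $\ell$ for the other coordinate ranges without bound, so the statement of the lemma is false there; but these curves are exactly the ones appearing in the "$\xi=\beta$ resp.\ $\eta=\beta$" clause of Theorem \ref{grandorbit}, so they are disposed of before Lemma \ref{min} is invoked, and we may legitimately assume $\mathcal{C}$ is not of this form. Second, the main obstacle is really the bookkeeping in the first paragraph: one must verify that every root of unity produced by extracting $d^{n_i}$-th roots of $a^{d^{m_i}}$ has order dividing a power of $d$ (so that it is absorbed into $\mu_{d^{\infty}}$) and that the exponent in the resulting radical is precisely $d^{e_i}$ — here the definition $e_i=\max\{0,n_i-m_i\}$ and the minimality of $n_i,m_i$ make the exponent come out right. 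Everything after that is the elementary degree-versus-Bézout count, with the Kummer bound used as a black box.
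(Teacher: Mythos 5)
Your proof is correct and takes essentially the same route as the paper. The paper's proof (one sentence: "from the above follows that $[K_v(\beta_1,\mu_{d^\infty}):K_v(\beta_2,\mu_{d^\infty})]\geq C_P d^{|\min\{\ell_1,0\}-\min\{\ell_2,0\}|}$ and as $\mathcal{C}$ is not special the claim follows") is exactly the Kummer lower bound you invoke, combined with the observation that $\beta_1$ satisfies an algebraic relation of degree $\leq\deg\mathcal{C}$ over $K_v(\beta_2,\mu_{d^\infty})$ provided $\mathcal{C}$ is not a coordinate line; you have merely phrased the degree upper bound as a pigeonhole-plus-B\'ezout count on fibres, which is equivalent. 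Your side remark about the gap between the paper's "special" (defined via $\mathcal{S}_\alpha$) and the vertical/horizontal lines through points of $\mathcal{G}_\alpha$ that could actually arise is a legitimate observation, and your resolution — that such lines are disposed of trivially before the lemma is applied — is the right one.
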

\begin{proof} We may assume that $\ell_1 < \ell_2$  and note that from the above follows that there is a constant $C_{P} > 0$ such that $[K_v(\beta_1, \mu_{d^\infty}):K_v(\beta_2, \mu_{d^\infty})] \geq C_Pd^{|\min\{\ell_1,0\} - \min\{\ell_2,0\}|}$ and as $\mathcal{C}$ is not special the claim follows. 
\end{proof}

\subsection{Heights}
For the canonical height $\hat{h}_f$ associated to $f$ holds that 
\begin{align} \label{1}
\hat{h}_f(\beta_1) & = d^{\ell_1}\hat{h}_f(\alpha)\\
\hat{h}_f(\beta_2) &  = d^{\ell_2}\hat{h}_f(\alpha).\label{2}
\end{align}

We denote by $d_1,d_2$ the degree of the coordinate functions of $\mathcal{C}$ respectively. 
By a theorem of Néron the following height inequality holds between the coordinates of a curve. 
\begin{align} \label{height}
|d_1h(x) - d_2h(y)| \leq c\sqrt{1 + \min\{h(x),h(y)\}}
\end{align} 
where $(x,y) \in \mathcal{C}(\overline{\Q})$ and $c = c(\mathcal{C})$ depends only on $\mathcal{C}$. 

We can deduce the following Lemma quite quickly from the above facts. 
\begin{lem} \label{max}There exists a constant $C_m$ depending only on $\mathcal{C}, \alpha$ such that $|\max\{\ell_1, 0\} - \max\{\ell_2, 0\}| \leq C_m$. 
\end{lem}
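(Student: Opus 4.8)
The plan is to play the exponential growth of the canonical height of $\beta_1,\beta_2$ against the sub-exponential error term in N\'eron's inequality (\ref{height}). Write $H=\hat{h}_f(\alpha)$; since $\alpha$ is not pre-periodic, $H>0$. It is standard that the canonical height differs from the absolute logarithmic Weil height by a bounded amount, so there is a constant $C_0=C_0(f)$ with $|\hat{h}_f(x)-h(x)|\le C_0$ for all $x\in\overline{\Q}$. Combining this with the identities (\ref{1}) and (\ref{2}) (and recalling $\ell_i=m_i-n_i$) gives
\begin{align*}
h(\beta_i)=d^{\ell_i}H+e_i,\qquad |e_i|\le C_0,\qquad i=1,2.
\end{align*}

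Next I would extract from (\ref{height}) a bound for the larger of the two heights in terms of the smaller. Put $m=\min\{h(\beta_1),h(\beta_2)\}$, $M=\max\{h(\beta_1),h(\beta_2)\}$, and $d_{\max}=\max\{d_1,d_2\}$, $d_{\min}=\min\{d_1,d_2\}$. Whichever of $\beta_1,\beta_2$ realises $M$, inequality (\ref{height}) gives $d_{\min}M\le d_{\max}m+c\sqrt{1+m}$, hence
\begin{align*}
M\le A\,m+B\sqrt{1+m},\qquad A=d_{\max}/d_{\min}\ge 1,\quad B=c/d_{\min},
\end{align*}
with $A,B$ depending only on $\mathcal{C}$. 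Writing $\mu=\min\{\ell_1,\ell_2\}$ and $\nu=\max\{\ell_1,\ell_2\}$, the formula for $h(\beta_i)$ yields $m\le d^{\mu}H+C_0$ and $M\ge d^{\nu}H-C_0$, so
\begin{align*}
d^{\nu}H-C_0\le A\bigl(d^{\mu}H+C_0\bigr)+B\sqrt{1+d^{\mu}H+C_0}.
\end{align*}

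It then remains to distinguish two cases according to the sign of $\mu$. If $\mu\le 0$ then $d^{\mu}H\le H$, so the right-hand side is at most a constant $C_6=C_6(\mathcal{C},\alpha)$, forcing $d^{\nu}H\le C_6+C_0$; this bounds $\nu$ from above, and since $0\le\max\{\ell_i,0\}\le\max\{\nu,0\}$ for $i=1,2$ both quantities $\max\{\ell_i,0\}$ are bounded. If $\mu\ge 1$, divide the last display by $d^{\mu}H\ge dH$: the two error terms $(A+1)C_0/(d^{\mu}H)$ and $B\sqrt{1+d^{\mu}H+C_0}/(d^{\mu}H)$ are then bounded by a constant $C_7=C_7(\mathcal{C},\alpha)$, so $d^{\nu-\mu}\le A+C_7$ and $\nu-\mu=|\ell_1-\ell_2|$ is bounded. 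As $t\mapsto\max\{t,0\}$ is $1$-Lipschitz we have $|\max\{\ell_1,0\}-\max\{\ell_2,0\}|\le|\ell_1-\ell_2|$, so in both cases we obtain the asserted constant $C_m=C_m(\mathcal{C},\alpha)$. The only delicate point is the bookkeeping of the error constant $C_0$ against $d^{\ell_i}$ when some $\ell_i\le 0$, where $d^{\ell_i}$ no longer dominates — this is precisely why the case $\mu\le 0$ is handled separately; the rest is routine.
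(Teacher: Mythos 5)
Your proposal is correct and follows essentially the same route as the paper: the paper's proof is the one-line sketch "combine the identities (\ref{1}), (\ref{2}) with (\ref{height}) and the comparison $|h-\hat{h}_f|\le c_f$," and your argument is a careful implementation of exactly that, with the necessary case split on the sign of $\mu=\min\{\ell_1,\ell_2\}$ to handle the bounded error term correctly.
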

\begin{proof} There exists a constant $c_f$ depending only of $f$ such that $|h(z) - \hat{h}_f(z)| \leq c_f$ for all $z \in \overline{\Q}^*$. Here $h$ is the usual logarithmic Weil height. Now combine the equalities (\ref{1}), (\ref{2}) with (\ref{height}). 
\end{proof}

In the proof of the next Lemma we make use of \cite[Theorem 2.1]{dynbogomolovcurves}.
\begin{lem}\label{bogomolov}
There exists a positive constant $C_B$ depending only on $\mathcal{C}, \alpha$ such that  $\max\{\ell_1, \ell_2\} \geq -C_{B}$. 
\end{lem}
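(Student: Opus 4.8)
The plan is to read a very negative value of $\max\{\ell_1,\ell_2\}=\max\{m_1-n_1,m_2-n_2\}$ as the assertion that the point $(\beta_1,\beta_2)$ has very small canonical height on $\mathcal C$, and then feed this into the dynamical Bogomolov conjecture for curves. First I would dispose of the pre-periodic case, which is in any event the only case where the stated inequality genuinely needs the standing hypotheses of this section: if $\mathcal C$ is pre-periodic under $(f,f)$, then since $\mathcal C\cap\mathcal G_\alpha^2\neq\emptyset$, Lemma \ref{preperiodic} shows $\mathcal C$ is a component of $f^{\circ n}(X)-f^{\circ m}(Y)=0$ for some $n,m\in\Z_{\geq 0}$, which is already the first alternative in Theorem \ref{grandorbit}; so no lower bound on $\max\{\ell_1,\ell_2\}$ is needed there, and we may and do assume henceforth that $\mathcal C$ is not pre-periodic under $(f,f)$.

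Next I would record the height bookkeeping. Since $\alpha$ is not pre-periodic, $\hat h_f(\alpha)>0$, and the identities (\ref{1}), (\ref{2}) give $\hat h_f(\beta_i)=d^{\ell_i}\hat h_f(\alpha)$ for $i=1,2$ (in particular $\hat h_f(\beta_i)>0$, so $\ell_i$ is uniquely determined by $\beta_i$ and $\alpha$). Hence
\begin{align*}
\hat h_f(\beta_1)+\hat h_f(\beta_2)=(d^{\ell_1}+d^{\ell_2})\,\hat h_f(\alpha)\leq 2\,d^{\max\{\ell_1,\ell_2\}}\,\hat h_f(\alpha).
\end{align*}
Now I would invoke \cite[Theorem 2.1]{dynbogomolovcurves}: since $\mathcal C$ is not special and not pre-periodic under $(f,f)$, and since the exceptional "vertical/horizontal line through a pre-periodic point" of that theorem cannot occur here — a line $\{x_0\}\times\A^1$ or $\A^1\times\{x_0\}$ with $x_0$ pre-periodic meets $\mathcal G_\alpha^2$ only if $x_0\in\mathcal G_\alpha$, and $\mathcal G_\alpha$ contains no pre-periodic point because $\alpha$ is not pre-periodic — there is $\epsilon_0=\epsilon_0(\mathcal C)>0$ such that the set
\begin{align*}
\{(x,y)\in\mathcal C(\overline{\mathbb Q}):\ \hat h_f(x)+\hat h_f(y)\leq\epsilon_0\}
\end{align*}
is finite.

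Finally I would split into two cases for the given point $(\beta_1,\beta_2)$. If it lies in the above finite exceptional set, then $(\ell_1,\ell_2)$ ranges over a finite set (each $\ell_i$ being determined by $\beta_i$ and $\alpha$), so $\max\{\ell_1,\ell_2\}$ is bounded below by a constant depending only on $\mathcal C$ and $\alpha$. Otherwise $\hat h_f(\beta_1)+\hat h_f(\beta_2)>\epsilon_0$, and combining with the displayed inequality yields $2\,d^{\max\{\ell_1,\ell_2\}}\hat h_f(\alpha)>\epsilon_0$, i.e. a lower bound for $\max\{\ell_1,\ell_2\}$ depending only on $\epsilon_0$ and $\hat h_f(\alpha)$. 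Taking $C_B$ to be the (negative of the) smaller of the two resulting bounds finishes the proof. The step I expect to be the main obstacle is the bookkeeping needed to check that the hypotheses "$\mathcal C$ not special" and "$\mathcal C$ not pre-periodic" exactly match the exclusion list of \cite[Theorem 2.1]{dynbogomolovcurves} (pre-periodic curves and the special lines through pre-periodic points); the rest is elementary manipulation of canonical heights.
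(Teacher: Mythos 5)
Your proof is correct and takes essentially the same route as the paper: invoke dynamical Bogomolov to get a positive lower bound on $\hat h_f(\beta_1)+\hat h_f(\beta_2)$ for points of $\mathcal C\cap\mathcal G_\alpha^2$, then use $\hat h_f(\beta_i)=d^{\ell_i}\hat h_f(\alpha)$ to turn that into a lower bound on $\max\{\ell_1,\ell_2\}$. Your preliminary reduction to the case $\mathcal C$ not pre-periodic (via Lemma \ref{preperiodic}) and your split into the finite small-height exceptional set versus its complement are both sound and in fact slightly more careful than the paper's two-line proof, which simply writes \emph{unless $\mathcal C$ is pre-periodic} without disposing of that case and states the height lower bound without acknowledging the finite exceptional set that Bogomolov allows.
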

\begin{proof}
From \cite[Theorem 1.5]{dynbogomolovcurves} and (\ref{1}), (\ref{2}) follows that unless $\mathcal{C}$ is  pre-periodic under $(f,f)$, there exists a constant $c >0$ depending only on $\mathcal{C}$ such that $\hat{h}_f(\beta_1) + \hat{h}_f(\beta_2) \geq c$. From (\ref{1}), (\ref{2}) follows immediately that  $\max\{m_1 - n_1, m_2-n_2\} \geq -C_{B}$ for some positive constant $C_B$. 
\end{proof}
As a corollary of Lemma \ref{min}, \ref{max}, \ref{bogomolov} we obtain the following. 
\begin{corollary} \label{diff}
There exists a constant $C_{\text{diff}} > 0$ depending only on $\alpha$ and $\mathcal{C}$ such that for $\ell_1,\ell_2$ as above holds that 
\begin{align*} 
|\ell_1 - \ell_2| \leq C_{\text{diff}}, ~ \min\{l_1,l_2\} \geq - C_{\text{diff}}.
\end{align*} 
\end{corollary}
In the following Lemma we make use of the uniformity of Proposition \ref{prop1}.
\begin{lem}\label{n1n2} For $n_1,n_2,m_1, m_2$ as in (\ref{rel1}), (\ref{rel2}) holds that 
\begin{align*} 
\max\{n_1,n_2\} \leq C_\text{inv}
\end{align*}
for a constant $C_{\text{inv}}$ depending only on $\alpha$ and $\mathcal{C}$. 
\end{lem}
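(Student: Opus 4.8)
The plan is to reduce to a situation covered by Proposition \ref{prop1} by passing from $\mathcal{C}$ to an auxiliary curve on which the two ``levels'' $\ell_1=m_1-n_1$ and $\ell_2=m_2-n_2$ have been synchronised, and then to translate the resulting bound on the depth of a solution back into a bound on $n_1$ and $n_2$. First I would collect what the preceding lemmas give: Corollary \ref{diff} yields $|\ell_1-\ell_2|\le C_{\text{diff}}$ and Lemma \ref{bogomolov} yields $\max\{\ell_1,\ell_2\}\ge -C_B$, where $C_{\text{diff}}$ and $C_B$ depend only on $\alpha$ and $\mathcal{C}$. Put $L=\max\{\ell_1,\ell_2,0\}$ and $s_i=L-\ell_i$; then $s_1,s_2\ge 0$, and combining the two inequalities gives $0\le s_1,s_2\le C_B+C_{\text{diff}}$. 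Now set $\beta=f^{\circ L}(\alpha)\in\mathcal{O}_\alpha$, $\gamma_i=f^{\circ s_i}(\beta_i)$, and let $\mathcal{C}'$ be the Zariski closure of $\{(f^{\circ s_1}(x),f^{\circ s_2}(y)):(x,y)\in\mathcal{C}\}$; this is again a geometrically irreducible plane curve which, as $(\ell_1,\ell_2)$ varies, ranges over a finite set of curves determined by $\alpha$ and $\mathcal{C}$.

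The next step is to check that $(\gamma_1,\gamma_2)\in\mathcal{C}'(\overline{\Q})\cap\mathcal{S}_\beta^2$ with $\beta\in\mathcal{O}_\alpha$, so that Proposition \ref{prop1} is applicable. Membership in $\mathcal{C}'(\overline{\Q})$ is immediate. For the second membership I would iterate the relation $f^{\circ n_i}(\beta_i)=f^{\circ m_i}(\alpha)$ by a large integer $t$ with $n_i+t\ge s_i$; using $m_i-L=m_i-\ell_i-s_i=n_i-s_i$, this gives $f^{\circ(n_i+t-s_i)}(\gamma_i)=f^{\circ(m_i+t)}(\alpha)=f^{\circ(n_i+t-s_i)}(\beta)$, so $\gamma_i\in\mathcal{S}_\beta$. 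Then comes a dichotomy. If $\mathcal{C}'$ is special, unwinding its definition shows that $\mathcal{C}$ is either a component of a curve $f^{\circ a}(X)=f^{\circ b}(Y)$ — hence pre-periodic under $(f,f)$, so Lemma \ref{preperiodic} already gives the conclusion of Theorem \ref{grandorbit} for $\mathcal{C}$ — or a line $\{\delta\}\times\mathbb{A}^1$ or $\mathbb{A}^1\times\{\delta\}$ with $\delta\in\mathcal{G}_\alpha$; in both cases $\mathcal{C}$ is of the excluded form and there is nothing to prove. So we may assume $\mathcal{C}'$ is not special.

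Applying Proposition \ref{prop1} to $\mathcal{C}'$ and $\beta$ then produces a constant $C(\alpha,\mathcal{C}')$ with $f^{\circ n}(\gamma_i)\in\mathcal{O}_\alpha$ for some $n\le C(\alpha,\mathcal{C}')$; in particular the minimal depth of $\gamma_i$ is at most $C(\alpha,\mathcal{C}')$, and since $\gamma_i=f^{\circ s_i}(\beta_i)$ this yields $n_i\le C(\alpha,\mathcal{C}')+s_i\le C(\alpha,\mathcal{C}')+C_B+C_{\text{diff}}$. Taking $C_{\text{inv}}$ to be the maximum of these bounds over the finitely many possible auxiliary curves $\mathcal{C}'$ finishes the argument.

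I expect the main obstacle to be the unwinding step in the dichotomy: one has to verify that every way in which the synchronised curve $\mathcal{C}'$ can be special forces $\mathcal{C}$ itself to be pre-periodic under $(f,f)$ or one of the excluded coordinate lines through a point of $\mathcal{G}_\alpha$ — i.e.\ that synchronising the levels does not manufacture a special curve out of a non-special one. Everything else is routine bookkeeping with iterates of $f$ and the bounds already established in this section.
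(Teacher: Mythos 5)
Your proof is correct and follows the same approach as the paper's: bound the synchronisation shifts $s_i=L-\ell_i$ by $C_B+C_{\text{diff}}$ using Lemma \ref{bogomolov} and Corollary \ref{diff}, push $\mathcal{C}$ forward along the finitely many maps $(f^{\circ s_1},f^{\circ s_2})$, and apply Proposition \ref{prop1} with $\beta=f^{\circ L}(\alpha)\in\mathcal{O}_\alpha$ (the paper packages the same idea a bit more loosely via a uniform shift $n_v$). The unwinding step you flagged indeed goes through: if $\mathcal{C}'$ is a component of $f^{\circ n}(X)-f^{\circ n}(Y)=0$ then $f^{\circ(n+s_1)}(\xi)=f^{\circ(n+s_2)}(\eta)$, and if $\mathcal{C}'$ is a coordinate line through a point of $\mathcal{S}_\alpha$ then $\mathcal{C}$ is a coordinate line through a point of $\mathcal{G}_\alpha$, so in either case $\mathcal{C}$ already satisfies the conclusion of Theorem \ref{grandorbit} and is excluded from consideration in this section.
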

\begin{proof} 
Let $M$ be an integer, larger than $C_{\text{diff}}$ and consider the finite union of curves 
\begin{align*}
S_{\mathcal{C}} = \{(f^{\circ M + k}, f^{\circ M})(\mathcal{C}), 0\leq k \leq C_{\text{diff}}\}\cup\{f^{\circ M}, f^{\circ M + k})(\mathcal{C}), 0\leq k\leq C_{\text{diff}}\}.
\end{align*}
By Proposition \ref{prop1} there exists a constant $N_{\mathcal{C}}$ such that if $(\gamma_1, \gamma_2) \in \tilde{\mathcal{C}}\cap \mathcal{S}_{\beta}^2$ for some $\beta \in \mathcal{O}_\alpha$, then $f^{N_{\mathcal{C}}}(\gamma_1) = f^{N_{\mathcal{C}}}(\gamma_2) = f^{N_{\mathcal{C}}}(\alpha)$. However, by Corollary \ref{diff} it holds that if $(\beta_1,\beta_2) \in \mathcal{C}\cap \mathcal{G}_\alpha^2$ then there exists $\tilde{\mathcal{C}} \in S_{\mathcal{C}}$ such that $(f^{k_1}(\beta_1), f^{k_2}(\beta_2)) \in \tilde{\mathcal{C}}\cap \mathcal{S}_\beta^2$ for some $\beta \in \mathcal{O}_\alpha$ and $0\leq k_1,k_2 \leq M+ C_{\text{diff}}$. This concludes the proof of the Lemma. 
\end{proof}
\subsection{Concluding the proof}
From Lemma \ref{n1n2} and Corollary \ref{diff} follows that 
\begin{align*}
\mathcal{C}\cap \mathcal{G}^2_\alpha \subset  \bigcup_{f_1 \leq C_{\text{diff}},  f_2 \leq C_{\text{diff}} }\bigcup_{|n_1|\leq C_{\text{inv}}, |n_2| \leq C_{\text{inv}}}\left((f^{-\circ n_1}, f^{-\circ n_2})(\mathcal{C})\cap S_{f_1,f_2}\right)
\end{align*}
 where 
\begin{align*}
S_{f_1,f_2}: f^{\circ f_1}(X) = f^{\circ f_2}(Y).
\end{align*}
(In fact it is enough to consider the curves for which $f_1 = 0$ or $f_2 = 0$). Now if the left hand side is infinite then the intersection of two curves in the union of curves $(f^{-\circ n_1},f^{-\circ n_2})(\mathcal{C}) \cap S_{f_1,f_2}$ contains a one dimensional component.  It follows that there exist $n,m$ such that $f^{\circ n}(\xi ) = f^{\circ m}(\eta)$ for the coordinate functions $\xi, \eta $ of $\mathcal{C}$ which contradicts our assumption on $\mathcal{C}$ not to be pre-periodic.
\section{Concluding remarks}\label{remarks}
In this section we discuss some issues surrounding effectivity and uniformity as well as future work. \\

It seems to be feasible to get an entirely effective version of Theorem \ref{thm}. The main issue seems to be to make Proposition \ref{prop1} effective. Almost all steps in the proof of Proposition \ref{prop1} can be made effective quite straightforwardly but especially in the case of Lemma \ref{upper2} more work is required as its proof uses something akin to a compactness argument. \\
In order to make Theorem \ref{equipotential} effective we can employ some sort of effective implicit function theorem such as in \cite[p.124]{langreal} and estimate the partial derivatives of the function $P(\Psi(z), \Psi(w))$ carefully which seems to be feasible to be done effectively.\\

A stumbling block on the way to effectivity for Theorem \ref{grandorbit} is that we use the results on the dynamical Bogomolov conjecture by Ghioca, Nguyen and Ye. However if the degree  $d$ is prime there is some hope that the methods in this paper might suffice. \\

For our final theorem we use  $o$-minimality (perhaps some weaker machinery would suffice) to find a uniform version of Theorem    \ref{equipotential}. 
\begin{thm}\label{uniform} There exists a constant $C_D$ depending only on $D$ such that any curve of total degree at most $D$ satisfies 
 \begin{align*}
 |\mathcal{C}\cap L_r^2|\leq C_D
 \end{align*}
for $\exp(-r)<  R/2$ unless $\mathcal{C}$ has the form in the conclusion of Theorem \ref{equipotential}
\end{thm}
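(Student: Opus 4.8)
The plan is to realise all the pairs $(\mathcal{C},r)$ as fibres of a single family definable in the o-minimal structure $\mathbb{R}_{\mathrm{an}}$ of restricted analytic functions, and then to invoke the uniform bound on the number of connected components of the fibres of such a family. Throughout, $f$ is the fixed monic non-exceptional polynomial of Theorem~\ref{equipotential}, so the constant will depend on $f$ as well as on $D$.

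First I would normalise the geometry. After conjugating $f$ by a translation $w\mapsto w-c$ --- which translates every $L_r$ and carries the family of curves of total degree $\le D$ to itself --- we may assume $0$ is a fixed point of $f$; since $R\le 1$, this forces $\Psi$ to be zero-free on $\Delta_R^\ast$, so that $G:=1/\Psi$ is holomorphic on the whole of $\Delta_R$ (the simple pole of $\Psi$ at $0$ becoming a simple zero of $G$). By \cite{milnor}, $g_f(\Psi(z))=-\log|z|$ on $\Delta_R^\ast$ and $\Psi$ maps $\Delta_R^\ast$ biholomorphically onto $\{w:\ g_f(w)>-\log R\}$; hence, because $\rho:=\exp(-r)<R/2<R$, we have $L_r=\Psi(S^{\rho})$, a real-analytic Jordan curve, and in particular $G$ is real-analytic on the \emph{closed} disc $\overline{\Delta}_{R/2}$. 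This is exactly where the factor $R/2$ is used, any fixed fraction of $R$ serving equally well, to obtain a compact domain on which $G$ is restricted analytic.

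Next I would pull the curve back by $\Psi$. Writing $P$ for the polynomial defining $\mathcal{C}$ and $P^\ast(X,Y)=X^{\deg_XP}Y^{\deg_YP}P(1/X,1/Y)$ for its reciprocal (whose coefficients depend linearly on those of $P$), one has, for $|s|=|t|=\rho$, that $G(s),G(t)$ are finite and nonzero, whence
\begin{align*}
P(\Psi(s),\Psi(t))=0\ \Longleftrightarrow\ P^\ast(G(s),G(t))=0 .
\end{align*}
Therefore $\mathcal{C}\cap L_r^2$ is carried bijectively by $(\Psi,\Psi)$ onto $\{(s,t):\ |s|=|t|=\rho,\ P^\ast(G(s),G(t))=0\}$. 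Now parametrise the curves of total degree at most $D$ by the sphere $S^N\subset\mathbb{R}^{N+1}$ of coefficient vectors, $N=\binom{D+2}{2}-1$, and consider
\begin{align*}
\Sigma=\bigl\{(P,\rho,s,t)\in S^N\times(0,R/2)\times\overline{\Delta}_{R/2}^{\,2}:\ |s|=|t|=\rho,\ P^\ast(G(s),G(t))=0\bigr\}.
\end{align*}
Since $G|_{\overline{\Delta}_{R/2}}$ is restricted analytic and $P^\ast$ is polynomial in $(X,Y)$ and linear in the coordinates of $P$, the set $\Sigma$ is definable in $\mathbb{R}_{\mathrm{an}}$. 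By the standard uniform bound on the number of connected components of the fibres of a definable family (a consequence of cell decomposition in o-minimal structures), there is a constant $C_D=C_D(D,f)$ bounding the number of connected components of every fibre $\Sigma_{(P,\rho)}$; as a finite subset of $\mathbb{R}^4$ has as many points as connected components, $|\mathcal{C}\cap L_r^2|\le C_D$ whenever $\mathcal{C}\cap L_r^2$ is finite.

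Finally, if $\mathcal{C}\cap L_r^2$ is infinite, then some irreducible component $\mathcal{C}_0$ of $\mathcal{C}$ already meets $L_r^2$ in infinitely many points, and Theorem~\ref{equipotential} forces $\mathcal{C}_0$ --- hence $\mathcal{C}$ --- to have the asserted form; this disposes of the dichotomy. The main obstacle is the verification that $\Sigma$ is genuinely definable \emph{uniformly in the parameters}, the delicate point being that $\rho$ ranges all the way down to $0$, so that the circles $S^{\rho}$ degenerate towards the origin; this is precisely why one works with $G=1/\Psi$, which extends holomorphically (indeed real-analytically on $\overline{\Delta}_{R/2}$) across the puncture, rather than with $\Psi$ itself. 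Once definability is secured, o-minimality yields the uniform bound with no further input beyond Theorem~\ref{equipotential}.
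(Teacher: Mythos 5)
Your argument is essentially the paper's: pull the intersection back through the uniformization to obtain a family of zero sets parametrised by the coefficients of $P$ and the radius, show this family is definable in $\mathbb{R}_{\mathrm{an}}$, invoke the uniform bound on the number of connected components of fibres, and dispose of the infinite case by Theorem~\ref{equipotential}. The one place where you go beyond the paper's very terse proof is in carefully handling the pole of $\Psi$ at $0$ (via the normalisation making $0$ a fixed point and passing to $G=1/\Psi$, which is holomorphic across the puncture) so that the definability over the full parameter range $\rho\in(0,R/2)$ is secure; this is a legitimate point the paper elides, but it does not change the overall strategy.
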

 \begin{proof}
 The complex-analytic function $\Psi(z)$ is definable in the structure $\mathbb{R}_{an}$ on the open disc  of radius $ R/2$ and the statement follows from standard properties of $o$-minimal structures since the sets 
\begin{align*}
\{(z,w) \in \C^2; P(\Psi(z), \Psi(w)) = 0, |z|= |w| = \phi\}
\end{align*}
as $P$ varies over polynomials of degree at most $D$ and $\phi$ in the interval $(0,R/2)$ form a definable family. There is a uniform bound on the number of connected components of the members of a definable family in $\R_{an}$ \cite{ominimal} and we deduce the present Theorem from Theorem \ref{equipotential}.   
 \end{proof}
 Theorem \ref{uniform} implies that we have a similar uniformity in orbits as in Proposition \ref{prop1}. However we are missing the strong  Galois bounds or in fact any arithmetic Galois-information in order to extend Theorem \ref{grandorbit} to infinite places. This is an interesting direction for future research. \\
 
 We can prove variations of Theorem \ref{thm} in that we can mix our statement with the dynamical Bogomolov conjecture.  For example our methods seem to allow to prove structure theorems for $\mathcal{S}_\alpha \times \text{Preper}$ (perhaps under some conditions on $\alpha$) where $\text{Preper}$ is the set of pre-periodic points.  This will be explored in a future paper. \\

 We now show that an extension of Theorem \ref{grandorbit} to pre-periodic $\alpha$ is not possible. For this let $g$ be a polynomial of degree $2$ defined over the reals, whose Julia set is connected and let $f = g^{\circ 3}$. Then every periodic external ray of $f$ lands \cite[Theorem 18.1]{milnor}. Note that $\Psi(z^2) = g(\Psi(z))$. Let $\zeta_3$ be primitive third root of unity and let 
 \begin{align*}
 E= \{\Psi(r\zeta_3); r \in (1,\infty)\}.
 \end{align*}
 We set $\alpha$ to be the limit point of $E$ as $r \rightarrow 1$. Then $f^{\circ 2}(\alpha) = \alpha$ as $f^{\circ 2}(E) = E$ and for
 \begin{align*}
 \mathcal{C}: (X,g(X))
 \end{align*}
holds $|\mathcal{C}\cap \mathcal{G}_\alpha^2| = \infty$. To see this we can consider  $\alpha_n$ the limiting point of $E_n = \{\Psi(r\zeta); r \in (1,\infty)\}$ where $\zeta = \zeta_{8^n}\zeta_3$ for a primitive $8^n$-th root of 1, $\zeta_{8^n}$. Then $f^{\circ n}(E_n) = f^{\circ n}(E)$ and thus $f^{\circ n}(\alpha_n) = f^{\circ n}(\alpha)$. But it also hols that  $f^{\circ 2n +1}(g(E_n)) = E_n$ and thus $(\alpha_n ,g(\alpha_n)) \in \mathcal{G}_\alpha^2$ for all $n \geq 1$. Moreover the set of $\alpha_n$ just constructed is infinite \cite[Lemma 18.3]{milnor}.\\
But the variety $\mathcal{C}$ is not in the zero-locus of $f^{\circ n}(X)- f^{\circ m}(Y)$ for any choice of non-zero integers $n,m$ and thus we can not extend Theorem \ref{grandorbit} to pre-periodic points. 
\subsection{A conjecture for grand orbits}
In this subsection we want to generalise Conjecture \ref{conj} to all projective varieties, following a suggestion of Ghioca. It is modelled after the conjecture of Ghioca and Tucker on pre-periodic varieties \cite{dmmconj}. The novelty is that we need to define an analogue of a co-set for dynamical systems.\\
We assume that everything is defined over some algebraically closed field $\mathcal{K}$ of characteristic 0. 
We let $\mathcal{X}$ be a projective variety and let $f: \mathcal{X} \rightarrow \mathcal{X}$ be an endomorphism of degree at least 2. We first give a definition of admissible tuple associated to $f, \mathcal{X}$. 

\begin{definition} Let $\mathcal{V} \subset \mathcal{X}$ be a subvariety. A tuple $(\mathcal{Y}, h, H,\tilde{\mathcal{V}}, \varphi )$ is admissible for $(\mathcal{X}, f, \mathcal{V})$ if the following holds. Firstly $\mathcal{Y}$ is a projective variety and $h, H$ are endomorphism of $\mathcal{Y}$ of degree at least 2 that commute.  Moreover $\varphi$ is a rational map $\varphi: \mathcal{Y} \rightarrow \mathcal{X}$ with finite fibres that satisfies $\varphi \circ f^{\circ n} = h\circ \varphi $ for some $n \geq 0$, as well as $\varphi(\tilde{\mathcal{V}}) = \mathcal{V}$.
\end{definition}
Note that we can take $\mathcal{Y} = \mathcal{Y}$ and $H=h = f, \tilde{\mathcal{V}} = \mathcal{V}$ as well as $\varphi$ to be the identity to obtain an admissible tuple. Now we can define a notion of a variety to be dynamically weakly special with respect to $f$. 
\begin{definition} We say that a subvariety $\mathcal{V}\subset \mathcal{X}$ is dynamically weakly special with respect to $f$ if there exists an admissible tuple $(\mathcal{Y}, h, H,\tilde{\mathcal{V}}, \varphi )$ for $(\mathcal{X}, f, \mathcal{V})$ such that either $\tilde{\mathcal{V}}$ is pre-periodic by $H$ or  $\mathcal{Y} = \mathcal{Y}_1 \times \mathcal{Y}_2$ (with $\dim(\mathcal{Y}_2) > 0$) and $H(\tilde{\mathcal{V}}) \subset \mathcal{Y}_1\times \{\alpha\}$ for some $\alpha \in \mathcal{Y}_2(\mathcal{K})$. 
\end{definition}
As in the introduction we define the grand orbit of a point $\alpha \in \mathcal{X}(\mathcal{K})$ by an endomorphism $f$ to be 
\begin{align*}
\mathcal{G}_\alpha(f) = \{\beta \in \mathcal{X}(\mathcal{K}); f^{\circ n}(\alpha) = f^{\circ m}(\beta) \text{ for some } n,m \in \Z_{\geq 0}  \}.
\end{align*}

Now let $\mathcal{X}_1, \dots, \mathcal{X}_n$ be projective varieties and $f_1, \dots, f_n$ be endomorphisms of $\mathcal{X}_1, \dots, \mathcal{X}_n$ respectively such that $f = (f_1, \dots, f_n)$ has degree at least 2. 
\begin{conjecture}\label{bigconj} Let $\mathcal{X} = \mathcal{X}_1 \times \dots \times \mathcal{X}_n$ and $(\alpha_1, \dots, \alpha_n) \in \mathcal{X}(\mathcal{K})$. If for an irreducible subvariety $\mathcal{V} \subset \mathcal{X}$ holds that 
\begin{align*}
\mathcal{V} \cap \mathcal{G}_{\alpha_1}(f_1)\times \dots \times \mathcal{G}_{\alpha_n}(f_n)
\end{align*}
is infinite, then $\mathcal{V}$ is dynamically weakly special with respect to $f$. 
\end{conjecture}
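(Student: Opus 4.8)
Conjecture \ref{bigconj} is open in this generality; the plan is to push the two-pronged approach of this paper — archimedean rigidity plus non-archimedean uniformization — which already yields the case $\mathcal{X}_1=\mathcal{X}_2=\mathbb{P}_1$, $\underline{f}=(f,f)$ in Theorem \ref{grandorbits}. The first thing to do is to split the conclusion into its two alternatives. If the infinite intersection consists of points of bounded canonical height for the height attached to $\underline{f}$, then one is in the realm of dynamical Manin--Mumford: the Yuan equidistribution theorem applied to the adelic metrization defined by $\underline{f}$ should force the Zariski closure to be a finite union of preperiodic subvarieties, giving the ``$\tilde{\mathcal{V}}$ preperiodic under $H$'' alternative. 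This is exactly the Ghioca--Nguyen--Ye theorem for split endomorphisms of $\mathbb{P}_1^n$ \cite{dynbogomolovcurves}, and for general $\mathcal{X}$ it is (a case of) the dynamical Manin--Mumford conjecture \cite{dmmconj}, which at present one must assume. The genuinely new regime, and the one the methods here address, is when the canonical height along the intersection is positive but bounded: there the statement is about passing from small orbits to grand orbits, i.e. from ``torsion translates'' to ``finite rank'', and Kummer theory is the tool.

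\textbf{Reductions.} I would first reduce $\mathcal{V}$ to a hypersurface by slicing with generic members of a very ample linear system; this keeps the intersection infinite after projecting to a suitable coordinate subproduct, at a B\'ezout-type cost, and the conclusion is reassembled afterward. Then, imitating Section \ref{proofofthm2}, I would reduce grand orbits to small orbits: from the height identities $\hat h_{f_i}(\beta_i)=(\deg f_i)^{\ell_i}\hat h_{f_i}(\alpha_i)$, N\'eron's height inequality on $\mathcal{V}$, a Kummer index bound (the $\mathbb{G}_m$-case of \S\ref{proofofthm2} generalizes to any $\mathcal{X}_i$ carrying a B\"ottcher-type uniformization at the chosen finite place $v$), and a dynamical Bogomolov lower bound for $\hat h_{\underline{f}}|_{\mathcal{V}}$, one bounds the differences $|\ell_i-\ell_j|$ and then each ``depth'' $n_i$ by a constant depending only on $\mathcal{V}$ and $\underline{\alpha}$. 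That converts the conjecture into a \emph{uniform} statement for intersections of iterated preimages of $\mathcal{V}$ with products of \emph{small} orbits $\prod_i\mathcal{S}_{\beta_i}(f_i)$, with $\beta_i$ running over a one-parameter family of orbit points — precisely the shape of Proposition \ref{prop1}.

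\textbf{The small-orbit core.} For the small-orbit statement one localizes at $v$, uniformizes each factor by its inverse B\"ottcher coordinate $\Psi_i$ (legitimate since the orbits escape $v$-adically), and pulls $\mathcal{V}$ back to an analytic subvariety of a polydisc. After applying an iterate, a point of $\mathcal{V}\cap\prod_i\mathcal{S}_{\beta_i}(f_i)$ reads as $(\zeta_1\phi_1,\dots,\zeta_n\phi_n)$ with $\zeta_i\in\mu_{(\deg f_i)^\infty}$ and $\phi_i=\Phi_i(\beta_i)$ transcendental. Two ingredients must then be upgraded to $n$ variables. First, a functional transcendence theorem saying that the pullback of an irreducible $\mathcal{V}$ is not contained in any torsion coset of $\mathbb{G}_m^n$ unless $\mathcal{V}$ is dynamically weakly special; this should follow from the resultant-and-substitution bootstrap of Theorem \ref{transcendence2}, the transcendence of the $\Psi_i$ (Theorem \ref{transcendence1}), and the Medvedev--Scanlon classification of periodic subvarieties of $(\mathbb{P}_1,f_i)^n$ \cite{medvedevscanlon}. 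Second, the auxiliary-function estimate: to a fixed Galois orbit of solutions one attaches a single analytic function of one variable, obtained by restricting to a generic line through the solution, which vanishes on the pulled-back Galois orbit, and one bounds its zero set by Poisson--Jensen / Newton-polygon as in \S\ref{combinatorics}, insensitively to the degree of the point. Matched against the Galois lower bound of Corollary 6.5 of \cite{polydynamics} (the reason for the coprimality to $\deg f_i$), this forces the order of $(\zeta_1,\dots,\zeta_n)$ to be bounded, closing the argument.

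\textbf{Main obstacle.} The hard part will be this last step in dimension $>1$: cutting the several-variable vanishing problem down to an honest one-variable Poisson--Jensen bound while preserving the arithmetic (the analogues of Lemmas \ref{bounded}--\ref{lowerbound}), and redoing the box combinatorics (Lemmas \ref{box1}--\ref{rootsof1}) for sublattices of $\mathbb{Z}^n$ with the exponent $c$ still in the admissible window. A second genuine gap is that for general projective $\mathcal{X}_i$, rather than $\mathbb{P}_1$, neither B\"ottcher coordinates nor a Medvedev--Scanlon-type classification of periodic subvarieties is available, so both the uniformization at $v$ and the functional transcendence input would have to be replaced. Finally, at archimedean places one needs a higher-dimensional analogue of the rigidity Theorem \ref{equipotential} — flagged as open in the concluding remarks — so even the $\mathbb{P}_1^n$ case is out of reach unless the escaping place can always be taken finite.
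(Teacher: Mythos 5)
Conjecture \ref{bigconj} is exactly that in the paper — a conjecture. The paper offers no proof of it: it only records that when all $\alpha_i$ are pre-periodic the statement follows from Conjecture 1.2 of \cite{dmmconj} (itself open), and it proves the single special case stated as Theorem \ref{grandorbits}, via Theorem \ref{grandorbit} and Proposition \ref{prop1}. Your proposal correctly identifies this status rather than claiming a proof, and the program you sketch for the accessible cases — splitting off the bounded-height/preperiodic regime via \cite{dynbogomolovcurves} and equidistribution, then reducing grand orbits to small orbits through the height identities, N\'eron's inequality, the Kummer index bound at a finite place of good reduction coprime to the degree, and finally the uniform small-orbit statement of Proposition \ref{prop1} with its Poisson--Jensen versus Galois-lower-bound tension — is precisely the route the paper takes for Theorem \ref{grandorbit}. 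The obstacles you flag (no B\"ottcher-type uniformization or Medvedev--Scanlon classification for general projective $\mathcal{X}_i$, the missing higher-dimensional archimedean rigidity, and the several-variable analogue of the auxiliary-function and box-combinatorics steps) are genuine and are consistent with what the paper itself leaves open in its concluding remarks; nothing in your outline overclaims, so there is no gap to repair beyond the openness of the conjecture itself.
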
 
If all points $\alpha_1, \dots, \alpha_n$ are pre-periodic then Conjecture \ref{bigconj} follows from Conjecture 1.2 in \cite{dmmconj} as we can take (in the notation there) $\mathcal{X} = X, \mathcal{V} = Z$ as well as $(Y, \Phi_{|Y},\Psi, i )$ as an admissable tuple where $i$ is the inclusion of $Y$ in $X$. 
\subsection*{Acknowledgements} A special thanks goes to Myrto Mavraki for answering a lot of questions about the arithmetic of dynamical systems and for reading a previous draft of this paper.  Another thanks to Holly Krieger for answering my first basic questions about the state of the art of the arithmetic of dynamical systems. It is also a great pleasure to thank my colleagues at Basel (some of them not there anymore) Gabriel Dill, Richard Griffon, Philipp Habegger, Lars Kühne, Myrto Mawraki, Julia Schneider and Robert Wilms for many discussions surrounding this paper. A big thanks goes to Laura Demarco and to Dragos Ghioca  for valuable advice and encouragement as well as helpful comments. Another big thanks goes to Federico Pellarin for answering questions about Mahler functions. Finally I would like to heartily thank Philipp Habegger and the mathematical department of Basel (and its staff) for continuing support on all fronts. I would also like to thank the referee for his careful reading and many helpful suggestions which have greatly improved the exposition.   
\bibliography{Galoisbounds}
\bibliographystyle{abbrv}

\end{document}